\newcommand{\Q}{{Q_{j}^{k}}}
\newtheorem{theorem}{Theorem}[section]
\newtheorem{proposition}[theorem]{Proposition}
\newtheorem{lemma}[theorem]{Lemma}
\newtheorem{corollary}[theorem]{Corollary}
\newtheorem{definition}[theorem]{Definition}
\newtheorem{remark}[theorem]{Remark}
\numberwithin{equation}{section}
\def\rn{{\mathbb R^n}}
\def\cc{{\mathbb C}}
\def\B{{\mathscr B}}
\def\M{{\mathscr M}}
\def\I{{\mathscr I}}
\def\T{{\mathscr T}}
\def\B{{\mathscr B}}
\def\A{{\mathscr A}}
\def\S{{\mathscr S}}
\def\F{{\mathscr F}}
\DeclareMathAlphabet{\mathpzc}{OT1}{pzc}{m}{it}
\def\bqo{C_{\widetilde j_0}B(Q_0)}
\def\bq{C_{\widetilde j_0}B(Q)}
\def\bo{C_{\widetilde j_0}B(Q_0)}
\def\bp{C_{\widetilde j_0}B(P_j)}
\def\cjo{C_{\widetilde j_0}}
\newcommand{\an}{\eta}
\newcommand{\f}{\frac}
\def\dashint{\Xint-}
\def\GTA{\mathcal{A}_{\eta, \mathcal{S}, p_0,\gamma}}
\let \d=\delta
\let \la=\lambda
\def\Xint#1{\mathchoice
   {\XXint\displaystyle\textstyle{#1}}%
   {\XXint\textstyle\scriptstyle{#1}}%
   {\XXint\scriptstyle\scriptscriptstyle{#1}}%
   {\XXint\scriptscriptstyle\scriptscriptstyle{#1}}%
   \!\int}
\def\XXint#1#2#3{{\setbox0=\hbox{$#1{#2#3}{\int}$}
     \vcenter{\hbox{$#2#3$}}\kern-.5\wd0}}
\DeclareMathOperator*{\esssup}{ess\,sup}
\let \d=\delta
\let \la=\lambda
\newcommand{\avf}{{\langle f\rangle}}
\def\d{\mathcal{D}}
\def\F{\mathcal{F}}
\def\N{\mathbb{N}}
\def\R{\mathbb{R}}
\def\S{\mathcal{S}}
\newcommand{\D}{\mathscr{D}}
\begin{document}
\title[The multilinear fractional sparse operator theory ...]
{\bf The multilinear fractional sparse operator theory I: pointwise domination and weighted estimate}

\author[X. Cen]{Xi Cen$^{*}$}
\address{Xi Cen\\
Department of applied mathmatics\\
	School of Mathematics and Physics\\
	Southwest University of Science and Technology\\
	Mianyang 621010 \\
	People's Republic of China}\email{xicenmath@gmail.com} 

\author[Z. Song]{Zichen Song}
\address{Zichen Song\\
	School of Mathematics and Stastics\\
	Xinyang Normal University\\
	Xinyang, 464000\\
	People's Republic of China}\email{zcsong@aliyun.com}

\date{December 31, 2024.}

\subjclass[2020]{42B20, 47B47, 42B25.}

\keywords{Multilinear fractional sparse operator, Calder\'on-Zygmund operator, higher order commutator, Weight, Space of homogeneous type.}

\thanks{$^{*}$ Corresponding author, Email: xicenmath@gmail.com}

\begin{abstract} 
How to establish some specific quantitative weighted estimates for the generalized commutator of multilinear fractional singular integral operator $\mathcal{T}_{\eta}^{{\bf b}}$ is the focus of this paper, which is defined by  $$\mathcal{T}_{\eta}^{{\bf b}}(\vec{f})(x):= \mathcal{T}_{\eta}\left((b_1(x) - b_1)^{\beta_1}f_1,\ldots,(b_m(x) - b_m)^{\beta_m}f_m\right)(x),$$
where $\mathcal{T}_{\eta}$ is a multilinear fractional singular integral operator, ${\bf b}:=({b_1}, \cdots ,{b_m})$ is a set of symbol functions, and $({\beta _1}, \cdots ,{\beta _m}) \in {\mathbb{N}_0^m}$.

Pointwise dominating the aforementioned commutator leads us to consider a class of higher order multi-symbol multilinear fractional sparse operator ${\mathcal A}_{\eta ,\S,\tau}^\mathbf{b,k,t}$ to achieve this long-cherished wish. Therefore, it suffices to construct its quantitative weighted estimates, which firstly include the characterization of several types of multilinear weighted conditions $A_{\vec p,q}^*$, $W_{\vec p,q}^\infty$, and $H_{\vec p,q}^\infty$. Within the scope of this work, Bloom type estimate for first order multi-symbol multilinear fractional sparse operator is established herein. Moreover, we derive two distinct Bloom type estimates for higher order multi-symbol multilinear fractional sparse operator by using {\tt "maximal weight method"} and {\tt "iterated weight method"} respectively, which not only refines some of Lerner's methods but greatly enhances the generality of our conclusions. 
Endpoint quantitative estimates for multilinear fractional singular integral operators and their first order commutators are also obtained as the last main result.
It is also worthy of highlighting that some important multilinear fractional operators are applicable to our results as applications.

Overall, our results extend the achievements of sparse operator theory in recent ten years under the setting of {\tt "multilinear"} , {\tt "fractional"}, {\tt "higher order commutator"}, and {\tt "space of homogeneous type"}.

\end{abstract}

\maketitle
\tableofcontents
\section{\bf Introduction}\label{Introduction}

\subsection{Background}
~~

In the past decade, sparse operator theory seems to have been greatly expanded, as it can control singular integral operators point by point or in sparse form. However, for the existing results of sparse operator theory, they are always in diagonal form, and non diagonal problems or what we call fractional problems have not been fully answered for a long time. This includes the problems for higher order commutator of multilinear fractional singular integral operators, Bloom type estimates of them, and so on. 

This paper will thoroughly solve these unsolved mysteries. For this, we need to introduce some basic concepts, which will be very helpful for our next. Since the results of this article are discussed under the setting of space of homogeneous type, it is necessary to first introduce some definitions about it. These information may be found in \cite{Yang2019, 2408}.


\begin{definition}
	Let $d: X \times X \rightarrow [0, \infty)$ be a positive function and $X$ be a set, then the quasi-metric space $(X, d)$ satisfies the following conditions:
	\begin{enumerate}
		\item  When $x=y$, $d(x, y)=0$.
		\item $d(x, y)=d(y, x)$ for all $x, y \in X$.
		\item  For all $x, y, z \in X$, there is a constant $A_0 \geq 1$ such that $d(x, y) \leq A_0(d(x, z)+d(z, y))$.
	\end{enumerate}
\end{definition}

\begin{definition}
	Let $\mu$ be a measure of a space $X$. For a quasi-metric ball $B(x,  r)$ and any $r>0$, if $\mu$ satisfies doubling condition, then there exists a doubling constant $C_\mu \geq 1$, such that  
	\begin{align}\label{def_hom}
    0<\mu(B(x, 2 r)) \leq C_\mu \mu(B(x, r))<\infty.
  \end{align}
\end{definition}

\begin{definition}
	For a non-empty set $X$ with a qusi-metric $d$, a triple $(X, d, \mu)$ is said to be a space of homogeneous type if $\mu$ is a regular measure which satisfies doubling condition on the $\sigma$-algebra, generated by open sets and quasi-metric balls.
\end{definition}

For the construction of dyadic cubes in space of homogeneous space, we refer reader to Sect. \ref{Pre.}.  Next we first introduce some important notation.

\medskip 
\medskip 
\medskip 

{\bf{Notation}}
~~

$L_b^{\infty}(X)$ is used to represent the set of $f \in L^{\infty}(X)$ with a bounded support.
A weight \(\omega: X \rightarrow [0, \infty]\) is a locally integrable function that satisfies \(0 < \omega(x) < \infty\) for almost every \(x \in X\). Given a weight \(\omega\), its associated measure is defined as \(d\omega(x) = \omega(x) d\mu(x)\), and we also denote $\omega(E)=\int_E \omega(x)\,d\mu$. 
Given a set $E \subseteq X$, a function $f$, and $\sigma$ is a weight, we denote weighted averages by
\[\avf_{\sigma,E}=\frac{1}{\sigma(E)}\left(\int_E f \sigma \,d\mu\right).\]
If $\sigma =1$, we denote $\avf_{\sigma,E}$ by $\avf_{E}$.

To simplify our discussion, we fix some notation. We always let \(\mathcal{S}\) be a sparse family, and set multi-symbols \(\mathbf{b} = (b_1, \ldots, b_{m}) \in (L_{loc}^1(X))^m\), multi-index \(\mathbf{k} = (k_1, \ldots, k_{m}) \in (\N_0)^m \), and multi-index \(\mathbf{t} = (t_1, \ldots, t_{m}) \in (\N_0)^m \).
We say that a multi-index \(\mathbf{k} = (k_1, \ldots, k_{m}) \in (\N_0)^m \) is bigger than a multi-index \(\mathbf{t} = (t_1, \ldots, t_{m}) \in (\N_0)^m \), if for any $i \in \left\{ {1, \cdots ,m} \right\}$, there holds that $k_i>t_i$. At this point, we always use $\mathbf{t} \prec \mathbf{k}$ or $ \mathbf{k} \succ \mathbf{t}$ to represent this.

We denote \(\tau_m = \{1, \ldots, m\}\). For any subset \(\tau \subseteq \tau_m\), let \(|\tau|\) represent the number of elements in \(\tau\), and define its complement as \(\tau^c = \tau_m \setminus \tau\). 

In order to simplify, we always let $\tau \subseteq \tau_m$ is a strictly monotonically increasing subsequence and write it by
$\tau = \left\{ {\tau (j)} \right\}_{j = 1}^{\left| \tau  \right|}.$
If $|\tau|=l \in \left\{ {1, \cdots ,m} \right\}$, then we sometime write it by \(\tau_{\ell}=\left\{ {\tau (j)} \right\}_{j = 1}^{\ell}\).

Let ${\bf b} \in {\left( {L_{loc}^1\left( X \right)} \right)^m}$. Given a $m$-sublinear operator $G$ and a multi-index \(\mathbf{k} = (k_1, \ldots, k_{m}) \in (\N_0)^m \), its generalized commutator can be defined by 
\begin{align*}
	G_{\tau_{\ell}}^{{\bf b, k}}(\vec{f})(x):=G\left((b_1(x) - b_1)^{\beta_1}f_1,\ldots,(b_m(x) - b_m)^{\beta_m}f_m\right)(x),
\end{align*}

where
\[
\beta_i = 
\begin{cases}
	k_i, & \text{if} \quad i \in \tau_{\ell}, \\
	0, & \text{if} \quad i \in \tau_{\ell}^{c}.
\end{cases}
\] 
When ${\bf k}=\vec{1}$, we denote it by $G_{\tau_{\ell}}^{{\bf b}}$.
When ${\bf k}=0$, the generalized commutator is back to $G$.

\medskip 
\medskip 
\medskip 
We now give some crucial conceptions about the multilinear fractional singular integral operators and their commutators. 

\begin{definition}
Let $\eta  \in [0,m)$, $\B$ is a quasi-Banach space, and $B(\cc,\B)$ is the space of all bounded operators from $\cc$ to $\B$. Set an operator-valued function $Q_\eta:(X^{m+1} \backslash \Delta ) \to B(\cc,\B),$ where $\Delta = \{ (x,\vec y) \in X^{m+1} :x = {y_1} =  \cdots  = {y_m}\}$. For any $\vec f \in {\left( {L_{loc}^1(X)} \right)^m}$, {\tt $\B$-valued $m$-linear fractional singular integral operator} $\mathcal{T}_{\eta}$ is defined by
    \begin{align*}
\mathcal{T}_{\eta}(\vec{f})(x):=\int_{X^m} Q_{\eta}\left(x, \vec y\right) \left(\prod_{j=1}^{m}f_j\left(y_j\right)\right) d \mu(\vec{y}).
    \end{align*}
  \end{definition}

We usually use the following Hörmander kernel, which is more general than multilinear Dini type  Calder\'on-Zygmund kernels.

\begin{definition}
	Let  $\eta  \in [0,m)$, $1 \le r \le\infty$, and $\mathcal D$ is a dydic lattice of $X$. Set an operator-valued function $K:(X^{m+1} \backslash \Delta ) \to B(\cc,\B),$ where $\Delta = \{ (x,\vec y) \in X^{m+1} :x = {y_1} =  \cdots  = {y_m}\}$. We say $K$ satisfies {\tt $\B$-valued $m$-linear  $L^r$-Hörmander condition} if 
	\begin{align*}
		\mathscr{H}_r:= \mathop {\sup }\limits_{Q \in {\mathcal D}} \mathop {\sup }\limits_{x,{x^\prime } \in \frac{1}{2}Q} \sum\limits_{k = 1}^\infty  \mu  {({2^k}Q)^{\frac{m}{r}}}{\left\| {{{\left\| {K(x, \cdot ) - K(x', \cdot )} \right\|}_\B}{\chi _{{{\left( {{2^k}Q} \right)}^m}\backslash {{\left( {{2^{k - 1}}Q} \right)}^m}}}\left(  \cdot  \right)} \right\|_{{L^{r'}}}} < \infty.
	\end{align*}
	We will write $\mathcal{H}_r$ for the class of kernels satisfying the {\tt $\B$-valued $m$-linear $L^r$-Hörmander condition}.
\end{definition}

\begin{remark}\label{HSIO}
Let $\mathcal{T}_{\eta}$ is a $\B$-valued $m$-linear fractional singular integral operator. We say $\mathcal{T}_{\eta}$ is a {\tt $\B$-valued $m$-linear fractional Hörmander type singular integral operator}, if its kernel $Q_\eta \in \mathcal{H}_r$ for some $r \in [1,\infty]$.
\end{remark}

Next, we need to establish several types of multilinear fractional weighted conditions, which are essential for studying  quantitative  weighted estimates.

\begin{definition}\label{vweight3}
	Let  $\mathcal D$ be a dyadic lattice of $X$, $p_i \in (1,\infty)$, $i=1,\cdots,m$, $\eta=\sum\limits_{i = 1}^m {\frac{1}{{{p_i}}}}-\frac{1}{{{q}}} \in [0,m)$, and $(u,\vec{\omega}):=(u,\omega_1,\ldots,\omega_m)$ is a multiple weight. We write $(u,\vec{\omega}) \in A_{\vec{p},q}(X)$ if it satisfies that
	\begin{align*}
		{\left[ u,\vec{\omega} \right]_{{A_{\vec p,q}}(X)}}: = \mathop {\sup }\limits_{Q \in \mathcal D} {\mu(Q)^{\eta - m}}{\left\| {u {\chi _Q}} \right\|_{{L^{q}}}}\prod\limits_{i = 1}^m {{{\left\| {\omega _i^{ - 1}{\chi _Q}} \right\|}_{{L^{{p_i}^\prime}}}}}  < \infty.
	\end{align*}
	Generally, ${A_{\vec p,q}}(X)$ is denoted by ${A_{\vec p,q}}$. While $u=\omega:=\prod\limits_{i = 1}^m {{\omega _i}}$, we often write ${\left[ (u,\vec{\omega}) \right]_{{A_{\vec p,q}}(X)}}$ as ${\left[ \vec{\omega} \right]_{A_{\vec p,q}(X)}}$.
	
	We write $(u,\vec{\omega}) \in A_{\vec{p},q}^{\star}(X)$ if
	\begin{align*}
		[u,\vec \omega]_{A^{\star}_{\vec p,q}(X)} := [u^{\frac{1}{q}},\omega_1^{\frac{1}{p_1}},\ldots,\omega_m^{\frac{1}{p_m}}]^q_{A_{\vec p,q}(X)}= \sup_{Q \in \mathcal{D}} \langle u\rangle_Q \prod_{i=1}^{m}\langle  \omega_i^{1-p_i'}\rangle_Q^{\frac{q}{p'_i}} < \infty.
	\end{align*}
	
	While $u=\omega:=\prod\limits_{i=1}^{m}\omega_i^{\frac{q}{p_i}}$, we always denote $[u,\vec\omega]_{A_{\vec p,q}^{\star}(X)}$ by $[\vec \omega]_{A_{\vec p,q}^{\star}(X)}$ for short. Moreover, if $\sigma_i=\omega_i^{1-p_i'}$, we define
	$
	\|u,\vec \sigma\|_{A^{\star}_{\vec p,q}(X)}:= [u,\vec\omega]_{A_{\vec p,q}^{\star}(X)}.
	$
\end{definition}

We will inevitably need to characterize the properties for each weight of a multiple weight, therefore, we have the following basic conclusion.

	\begin{lemma}[\cite{Xue2010}, Theorem 2.1]\label{vweight4}
		Under the assumption of Definition \ref{vweight3},
		$\vec{\omega} \in A_{\vec{p},q}^{\star}(X)$ implies that
		$$
		\left\{\begin{array}{l}
			\sigma_i \in A_{m p_i^{\prime}}(X) \subseteq A_\infty(X), \quad i=1, \ldots, m, \\
\omega \in A_{m q}(X) \subseteq A_\infty(X).
		\end{array}\right.
		$$
	\end{lemma}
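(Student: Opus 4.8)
The plan is to unwind the definition of $A_{\vec p,q}^{\star}(X)$ and extract from the single multilinear testing quantity $\sup_{Q}\langle\omega\rangle_Q\prod_{i}\langle\sigma_i\rangle_Q^{q/p_i'}$ (rewritten in terms of $\sigma_i=\omega_i^{1-p_i'}$ and $\omega=\prod_i\omega_i^{q/p_i}$) each of the scalar $A_p$ testing quantities. First I would record the elementary exponent bookkeeping: since $\sigma_i=\omega_i^{1-p_i'}$ we have $\omega_i=\sigma_i^{1-p_i}$, so $\omega_i^{q/p_i}=\sigma_i^{(1-p_i)q/p_i}=\sigma_i^{-q/p_i'}$, and hence $\langle\omega_i^{q/p_i}\rangle_Q$ pairs with $\langle\sigma_i\rangle_Q^{q/p_i'}$ in exactly the way that the $A_{mp_i'}$ condition for $\sigma_i$ and the $A_{mq}$ condition for $\omega$ require (recall $w\in A_s(X)$ iff $\sup_Q\langle w\rangle_Q\langle w^{1-s'}\rangle_Q^{s-1}<\infty$). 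This is the computation already carried out in \cite{Xue2010}, Theorem 2.1, in the Euclidean setting.

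Next I would verify that the proof transfers verbatim to a space of homogeneous type $(X,d,\mu)$: the only structural inputs used are (i) Hölder's inequality on a fixed ball/cube $Q$ with respect to $d\mu$, (ii) the possibility of splitting the product $\prod_{i=1}^m$ via the generalized Hölder inequality with exponents summing appropriately, and (iii) the fact that the dyadic lattice $\mathcal D$ of $X$ (constructed in Section \ref{Pre.}) together with the doubling property of $\mu$ lets one pass between averages over dyadic cubes and averages over quasi-metric balls up to the doubling constant $C_\mu$. None of these requires Euclidean structure. Concretely, fixing an index $i_0$ and a cube $Q$, one bounds $\langle\sigma_{i_0}\rangle_Q\langle\omega_{i_0}^{q/p_{i_0}\cdot(\text{appropriate power})}\rangle_Q^{mp_{i_0}'-1}$ by inserting the factors $\langle\omega\rangle_Q\prod_{i\neq i_0}\langle\sigma_i\rangle_Q^{q/p_i'}$ raised to suitable nonnegative powers and their reciprocals, each group being controlled either by $[\vec\omega]_{A_{\vec p,q}^{\star}(X)}$ or by trivial estimates $\langle\sigma_i\rangle_Q\langle\omega_i\rangle_Q^{\cdots}\ge$ something via Jensen; the exponents are chosen so that the total power of each $\langle\sigma_i\rangle_Q$, $i\neq i_0$, cancels. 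The analogous argument with $\omega$ in place of $\sigma_{i_0}$ gives $\omega\in A_{mq}(X)$.

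Finally, the inclusions $A_{mp_i'}(X)\subseteq A_\infty(X)$ and $A_{mq}(X)\subseteq A_\infty(X)$ are immediate from the definition of the $A_\infty(X)$ class (any $A_s$ class with $s<\infty$ embeds in $A_\infty$, and this monotonicity holds on spaces of homogeneous type). I expect the only mildly delicate point — hence the main obstacle — to be the precise choice of the nonnegative exponents in the Hölder splitting so that every auxiliary $\langle\sigma_i\rangle_Q$ factor cancels and one is left with exactly a power of $[\vec\omega]_{A_{\vec p,q}^{\star}(X)}$; this is a finite linear-algebra bookkeeping identity in the exponents $p_1,\dots,p_m,q$ and $\eta=\sum_i 1/p_i-1/q$, and once it is set up correctly the estimates are routine. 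Since the statement is quoted directly from \cite{Xue2010}, in the write-up I would either cite it and indicate the homogeneous-space adaptation, or reproduce the short exponent computation for completeness.
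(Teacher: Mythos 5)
The paper itself offers no internal proof of Lemma \ref{vweight4}: it simply cites \cite{Xue2010}, Theorem 2.1. So your default plan, to cite and indicate the homogeneous-space adaptation, does match what the paper does. Your sketch of a standalone argument is structurally the standard one (extract each scalar $A_p$ condition from the multilinear characteristic by H\"older), but the expression you set up to bound is not the right one, and the Jensen step you invoke is not needed in this direction. For $\sigma_{i_0}\in A_{mp_{i_0}'}(X)$ the quantity to control is $\langle\sigma_{i_0}\rangle_Q\,\langle\sigma_{i_0}^{-1/(mp_{i_0}'-1)}\rangle_Q^{mp_{i_0}'-1}$, whereas you wrote the hybrid $\langle\sigma_{i_0}\rangle_Q\langle\omega_{i_0}^{q/p_{i_0}\cdot(\cdots)}\rangle_Q^{mp_{i_0}'-1}$; the factor $q/p_{i_0}$ lives in the definition of $\omega=\prod_i\omega_i^{q/p_i}$, not in the scalar $A_{mp_{i_0}'}$ condition. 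This, together with your multiply-and-divide strategy with ``reciprocals'' and a Jensen lower bound, suggests you have not actually identified the H\"older splitting that closes the argument.

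Here is the computation your sketch leaves open. Since $\sigma_i=\omega_i^{1-p_i'}$ one has $\omega_i^{-1/p_i}=\sigma_i^{1/p_i'}$, and solving $\omega=\prod_i\omega_i^{q/p_i}$ for $\omega_{i_0}$ and using $p_{i_0}(p_{i_0}'-1)=p_{i_0}'$ gives the pointwise identity
\[
\sigma_{i_0}^{-1}=\omega_{i_0}^{p_{i_0}'-1}=\omega^{p_{i_0}'/q}\prod_{i\neq i_0}\sigma_i^{p_{i_0}'/p_i'}.
\]
Apply H\"older to $\sigma_{i_0}^{-1/(mp_{i_0}'-1)}$ with exponents $s_0=q(mp_{i_0}'-1)/p_{i_0}'$ and $s_i=p_i'(mp_{i_0}'-1)/p_{i_0}'$ for $i\neq i_0$; this is admissible because
\[
\sum_j\frac1{s_j}=\frac{p_{i_0}'}{mp_{i_0}'-1}\Bigl(\frac1q+\sum_{i\neq i_0}\frac1{p_i'}\Bigr)=\frac{p_{i_0}'(m-\eta)-1}{mp_{i_0}'-1}\le 1,
\]
the slack (present exactly when $\eta>0$) being absorbed by a constant factor. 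This yields $\langle\sigma_{i_0}^{-1/(mp_{i_0}'-1)}\rangle_Q^{mp_{i_0}'-1}\le\langle\omega\rangle_Q^{p_{i_0}'/q}\prod_{i\neq i_0}\langle\sigma_i\rangle_Q^{p_{i_0}'/p_i'}$; multiplying by $\langle\sigma_{i_0}\rangle_Q$ produces $\bigl(\langle\omega\rangle_Q\prod_i\langle\sigma_i\rangle_Q^{q/p_i'}\bigr)^{p_{i_0}'/q}$, hence $[\sigma_{i_0}]_{A_{mp_{i_0}'}(X)}\le[\vec\omega]_{A^\star_{\vec p,q}(X)}^{p_{i_0}'/q}$. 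The bound $[\omega]_{A_{mq}(X)}\le[\vec\omega]_{A^\star_{\vec p,q}(X)}$ follows the same way from $\omega^{-1}=\prod_i\sigma_i^{q/p_i'}$ together with the exponent sum $\frac{q}{mq-1}\sum_i\frac1{p_i'}=1-\frac{q\eta}{mq-1}\le 1$. Every step uses only H\"older on a fixed dyadic cube with respect to $d\mu$ and the inclusion $A_s(X)\subseteq A_\infty(X)$, so the argument does indeed transfer verbatim to $(X,d,\mu)$, as you claimed.
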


We would also like to introduce some  multilinear fractional $A_\infty$ conditions that will play an indelible role in our main results.

\begin{definition}\label{weight.1}
Let  $\mathcal D$ be a dyadic lattice of $X$, $p_i \in (1,\infty)$, $i=1,\cdots,m$, $\eta:=\sum\limits_{i = 1}^m {\frac{1}{{{p_i}}}}-\frac{1}{{{q}}} \in [0,m)$, and $\vec{\omega}:=(\omega_1,\ldots,\omega_m)$ is a multiple weight. 
	We write $\vec \omega \in W_{\vec p,q}^\infty(X)$ if
	\begin{equation}\label{eq:Fujii_constant}
		[\vec \omega]_{W_{\vec p,q}^\infty(X)}=\sup_{Q \in \mathcal D} \Big(\int_Q\prod^m_{i=1}M(\omega_i \mathbf \chi_Q)^{\frac{q}{p_i}}d \mu\Big)\Big(\int_Q\prod^m_{i=1}\omega_i^{\frac{q}{p_i}} d \mu\Big)^{-1}<\infty.
	\end{equation}
	We write $\vec \omega \in H_{\vec p,q}^\infty(X)$ if
	\begin{equation}\label{eq:Hruscev_constant}
		[\vec \omega]_{H_{\vec p,q}^\infty(X)}:= \sup_{Q \in \mathcal D} \prod_{i=1}^m \langle \omega_i\rangle_Q^{\frac q{p_i}} \exp\Big( \langle\log \omega_i^{-1}\rangle_Q \Big)^{\frac q{p_i}} <\infty.
	\end{equation}
\end{definition}

When $\eta = 0$, the space $W_{\vec{p}, q}^{\infty}$ simplifies to $W_{\vec{p}}^{\infty}$. Introduced by Fujii in \cite{Li18.9} and revisited by J.~M. Wilson in \cite{Li18.30}, the class $W_{\vec{p}}^{\infty}$ serves as a multilinear counterpart to the classical $A_{\infty}$ constant. Chen further explored the weight properties within $W_{\vec{p}}^{\infty}$ in \cite{L18.4}. 
Similarly, setting $\eta = 0$ for $H_{\vec{p}, q}^{\infty}$ results in a more natural multilinear $A_{\infty}$ constant, denoted as $H_{\vec{p}}^{\infty}$. This extension builds on the classical Hruscev $A_{\infty}$ constant introduced in \cite{Li18.11}.

\begin{remark}
Definitions \ref{vweight3} and \ref{weight.1} are based on dyadic lattice of $X$. In fact, according to the structure of dyadic cubes in space of homogeneous type, we will find that these characteristic constants given on dyadic lattice are completely equivalent to those on all ball of $X$. Ones may refer to \cite[Lemma 2.17]{2408} and \cite[Lemma 2.3]{Li2015_2}.
\end{remark}

In order to be able to  dominate the generalized commutators of multilinear singular integral operators pointwisely, we need to define the following sparse operators.

\begin{definition}\label{cla.sp.}
	Let $\eta  \in [0,m)$, $p_0\ge 1$, and $\gamma>0$. Given a $\delta$--sparse family $\S \subseteq \mathcal{D}$, we define $\GTA$ as follows,
	\begin{align*}
		\GTA(\vec{f})(x):=\left(\sum_{Q\in S} \left[ \mu(Q)^{\an} \prod_{i=1}^m \langle f_i \rangle_{Q,p_0}\right]^\gamma \mathbf \chi_{Q}(x)\right)^{1/\gamma},
	\end{align*}
	where for any cube $Q$,
	$$
	\langle f \rangle_{Q,p_0} := \left(\frac{1}{\mu(Q)} \int_Q |f(x)|^{p_0}d \mu \right)^{\frac{1}{p_0}}.
	$$
	In particular, when $\eta = 0$ and $p_0,\gamma=1$  we denote $\GTA  $ by $\mathcal{A}_{\mathcal{S}}$; when $\eta = 0$ and $p_0=1$  we denote $\GTA  $ by $\mathcal{A}_{\gamma, \mathcal{S}}$.
\end{definition}
 Moreover, when the natural regression is linear for \( m = 1 \), we denote the operators defined above by $A_{\eta, p_0, \gamma, \mathcal{S}}$.

\begin{definition}\label{new.sp.}
Let $\eta  \in [0,m)$, $ r \in [1, \infty)$, \(\mathcal{S}\) be a sparse family, and multi-symbols \(\mathbf{b} = (b_1, \ldots, b_{m}) \in (L_{loc}^1(X))^m\). Suppose that $\mathbf{t}$ and $\mathbf{k}$ are both multi-indexs with \(\mathbf{t} \prec \mathbf{k}\).
{\tt higher order multi-symbol multilinear fractional sparse operator} is defined by
\begin{align*}
 \quad {\mathcal A}_{\eta ,\S,\tau,r}^\mathbf{b,k,t} (\vec f)(x)&=\sum_{Q \in \mathcal{S}} \mu(Q)^{\an \cdot \frac{1}{r}}\\
 &\times \prod_{i \in \tau}  \left|b_i(x) - b_{i,Q}\right|^{k_i - t_i} \langle\left|f_i (b_i - b_{i,Q})^{t_i}\right|\rangle_{Q,r} \prod_{j \in \tau^c} \langle\left|f_j\right|\rangle_{Q,r} \chi_Q(x),\\
\end{align*}
  and {\tt first order multi-symbol multilinear fractional sparse operator} is defined by
  \begin{align*}
    {\mathcal A}_{\eta ,\S,\tau,r}^\mathbf{b} (\vec f)(x)& = \sum_{Q \in \S} \mu(Q)^{\an \cdot \frac{1}{r}}\\
    &\times \prod_{i \in \tau}\left|b_i(x)-b_{i, Q}\right|\left\langle f_i\right\rangle_{Q, r}  \prod_{j \in \tau_{\ell} \backslash \tau}\left\langle\left(b_j-b_{j, Q}\right) f_j\right\rangle_{Q, r}  \prod_{k \in \tau_m \backslash \tau_{\ell}}\left\langle f_k\right\rangle_{Q, r} \chi_Q(x).
  \end{align*}
  To illustrate the endpoint behavior, let $\tau \in \tau_m$, we define
\begin{align*}
  \mathcal{A}_{\eta,\mathcal{S}, L(\log L)^r}^{\tau^{c}}(\vec{f})(x) & :=\sum_{Q \in \mathcal{S}} \mu(Q)^{\an \cdot \frac{1}{r}}\prod_{i \in \tau}\left\langle f_i\right\rangle_{Q, r} \times \prod_{j \in \tau^{c}}\left\|f_j^r\right\|_{L(\log L)^r, Q}^{1 / r} \chi_Q(x). \\
  \mathcal{M}_{\eta,L(\log L)^r}^{\tau^{c}}(\vec{f})(x) & :=\sup _{Q \in \d} \mu(Q)^{\an \cdot \frac{1}{r}} \prod_{i \in \tau}\left\langle f_i\right\rangle_Q \times \prod_{j \in \tau^{c}}\left\|f_j\right\|_{L(\log L)^r, Q} \chi_Q(x).
  \end{align*}
 \end{definition}

\begin{remark}\label{order.rela.}
If $\mathbf{k}=\mathbf{t}=\vec{1}$, it is not difficult to see that ${\mathcal A}_{\eta ,\S,\tau,r}^\mathbf{b,k,t}$ corresponds with ${\mathcal A}_{\eta ,\S,\tau,r}^\mathbf{b}$. 
Regarding some other situations of ${\mathcal A}_{\eta ,\S,\tau,r}^\mathbf{b}$, we have the following:  
\begin{itemize}
    \item When $\tau = \tau_{\ell} = \{j\}$ with $j \in \tau_m$, we define 
    \begin{align*}
        {\mathcal A}_{\eta ,\S,r}^{b_j} (\vec f)(x) := {\mathcal A}_{\eta ,\S,\tau,r}^\mathbf{b} (\vec f)(x) = \sum_{Q \in \mathcal{S}}\mu(Q)^{\an \cdot \frac{1}{r}}\left|b_j(x)-b_{j, Q}\right|\langle | f_j| \rangle_{Q,r} \prod_{i \neq j}\langle | f_i| \rangle_{Q,r} \chi_Q(x).
    \end{align*}
    \item When $\tau_{\ell} = \{j\}$ with $j \in \tau_m$ and $\tau = \phi$, we define
    \begin{align*}
        {\mathcal A}_{\eta ,\S,r}^{\star b_j} (\vec f)(x) := {\mathcal A}_{\eta ,\S,\tau,r}^\mathbf{b} (\vec f)(x) = \sum_{Q \in \mathcal{S}}\mu(Q)^{\an \cdot \frac{1}{r}}\langle |\left(b_j-b_{j, Q}\right) f_j| \rangle_{Q,r} \prod_{i \neq j}\langle | f_i| \rangle_{Q,r} \chi_Q(x) .
    \end{align*}
\end{itemize}
Their linear versions can be given naturally,
\begin{itemize}
    \item The linear version of ${\mathcal A}_{\eta ,\S,r}^{b_j}$,
    \begin{align*}
			A^b_{\eta,\mathcal{S},r} f(x)=\sum_{Q \in \mathcal{S}}\mu(Q)^{\eta \cdot \frac{1}{r}}\left|b(x)-b_Q\right| \langle f \rangle_{Q,r} \chi_Q(x) ,
		\end{align*}
    \item The linear version of ${\mathcal A}_{\eta ,\S,r}^{\star b_j}$,
    \begin{align*}
A^{\star,b}_{\eta,\mathcal{S},r}  f(x)=\sum_{Q \in \mathcal{S}}\mu(Q)^{\eta \cdot \frac{1}{r}}\langle\left|b-b_Q\right| f\rangle_{Q,r} \chi_Q(x),
		\end{align*}
\end{itemize}
Analogously, one can define  the linear version of ${\mathcal A}_{\eta ,\S,\tau,r}^\mathbf{b,k,t}$ as follows,
\begin{align*}
  A_{\eta ,\S,\tau,r}^{b,k,t}f(x) = \sum_{Q\in \mathcal S}\mu(Q)^{\eta \cdot \frac{1}{r}} |b(x)-b_Q|^{k-t}
\langle |b(z)-b_Q|^t |f(z)| \rangle_{Q,r}\chi_Q(x).
\end{align*}
\end{remark}

{

}

	
The sparse bound theory of operators was first introduced by Lerner \cite{Lersmall} in 2016, where he employed the sparse operator \(A_{\S}\) to control Calderón-Zygmund operators. 
	\, \hspace{-20pt}{\bf Theorem $A_1$}(\cite{Lersmall}). {\it\
		Let $T$ be an $\omega$-Calderón-Zygmund operator with $\omega$ satisfying the Dini condition. Then, for every compactly supported $f \in L^1\left(\mathbb{R}^n\right)$, there exists a sparse family $\mathcal{S}$ such that for a.e. $x \in \mathbb{R}^n$,
		$$
		|T f(x)| \leq c_n\left(\|T\|_{L^2 \rightarrow L^2}+C_K+\|\omega\|_{\text {Dini }}\right) A_{\mathcal{S}}|f|(x) .
		$$
	}

Building on this, Lerner \cite{Ler2017} in 2017 extended the sparse domination theory to the commutators of Calderón-Zygmund operators. 

Before proceeding, we recall a fundamental definition. Let $T$ be a linear operator and $b$ a locally integrable function. The commutator of $T$ and $b$, denoted by $[b, T]$, is defined as
\[
[b, T]f(x) = b(x)T(f)(x) - T(bf)(x).
\]
For each integer $m \in \mathbb{N}$, the iterated commutators $T_b^m$ are defined recursively by
\begin{equation}\label{diedai_m}
    T_b^m f = [b, T_b^{m-1}]f \quad \text{with} \quad T_b^1 f = [b, T]f.
\end{equation}

	\, \hspace{-20pt}{\bf Theorem $A_2$}(\cite{Ler2017}). {\it\
		Let $T$ be an $\omega$-Calderón-Zygmund operator with $\omega$ satisfying the Dini condition, and let $b \in L_{\text {loc }}^1$. For every compactly supported $f \in L^{\infty}\left(\mathbb{R}^n\right)$, there exist $3^n$ dyadic lattices $\D^{(j)}$ and $\frac{1}{2.9^n}$-sparse families $\mathcal{S}_j \subseteq \D^{(j)}$ such that for a.e. $x \in \mathbb{R}^n$,
		\begin{align*}
			|[b, T] f(x)| \leq c_n C_T \sum_{j=1}^{3^n}\left(A^b_{0,\eta,\mathcal{S}_j,1} |f|(x)+A^{\star,b}_{0,\eta,\mathcal{S}_j,1}|f|(x)\right) .
		\end{align*}
	}

Further, Li \cite{Li2018} and Cao et al. \cite{Cao2018}  respectively generalized the above results to the multilinear Hörmander type singular integral operator and multilinear Littlewood-Paley square operator in 2017 .

	\, \hspace{-20pt}{\bf Theorem $A_3$}(\cite{Ler2017}). {\it\
		Assume that $T$ is bounded from $L^q \times \cdots \times L^q$ to $L^{q / m, \infty}$ and $\mathcal{M}_T$ is bounded from $L^r \times \cdots \times L^r$ to $L^{r / m, \infty}$, where $1 \leq q \leq r<\infty$. Then, for compactly supported functions $f_i \in L^r\left(\mathbb{R}^n\right), i=1, \ldots, m$, there exists a sparse family $\mathcal{S}$ such that for a.e. $x \in \mathbb{R}^n$,
		
		\begin{align*}
			& \left|T\left(f_1, \ldots, f_m\right)(x)\right| \\
			& \leq c_{n, q, r}\left(\|T\|_{L^q \times \cdots \times L^q \rightarrow L^{q / m, \infty}}+\left\|\mathcal{M}_T\right\|_{L^r \times \cdots \times L^r \rightarrow L^{r / m, \infty}}\right) \\
			& \quad \times \mathcal{A}_{0, \mathcal{S}, r, 1}(\vec f)(x).
		\end{align*}
	}
    
	\, \hspace{-20pt}{\bf Theorem $A_4$}(\cite{Cao2018}). {\it\
		Let $1 \leq r<\infty$ and $1 \leq \ell \leq m$. Assume that $\mathfrak{g}$ is bounded from $L^r \times \cdots \times L^r$ to $L^{r / m, \infty}$ and its kernel $\left\{K_t\right\}$ satisfies the m-linear $L^r$-Hörmander condition. Then, for any compactly supported functions $f_i \in L^r\left(\mathbb{R}^n\right), i=1, \ldots, m$, there exist $3^n$ sparse families $\mathcal{S}_j$ such that for a.e. $x \in \mathbb{R}^n$,
		$$
		\mathfrak{g}_b(\vec{f})(x) \leq c_{m, n, r} \mathfrak{C} \sum_{j=1}^{3^n} \sum_{\tau \subseteq \tau_{\ell}} {\mathcal A}_{0 ,\S,\tau,r}^\mathbf{b} (\vec f)(x).
		$$
	}

Also in 2017, Lerner et al. \cite{Lerner2018} resolved the problem of higher order commutators. They used a complete set of methods to solve the Bloom type estimates for higher order commutators of singular integral operators.

\, \hspace{-20pt}{\bf Theorem $A_5$}(\cite{Lerner2018}). {\it\
Let $\mu,\lambda\in A_{p}$, $1<p<\infty$. Further, let $\nu=\left(\frac{\mu}{\lambda}\right)^{\frac{1}{p}}$ and $m\in {\mathbb N}$.
 If $b\in {\mathrm{BMO}}_{\nu^{1/m}}$, then for every $\omega$-Calder\'on-Zygmund
operator $T$ on ${\mathbb R}^n$ with $\omega$ satisfying the Dini condition,
\begin{equation}\label{itcase}
\|T_{b}^{m}f\|_{L^{p}(\lambda)}\leq c_{n,m,T}\|b\|_{{\mathrm {BMO}}_{\nu^{1/m}}}^{m}\left([\lambda]_{A_{p}}[\mu]_{A_{p}}\right)^{\frac{m+1}{2}\max\left\{ 1,\frac{1}{p-1}\right\} }\|f\|_{L^{p}(\mu)}.
\end{equation}
}

Meanwhile, Rivera-Ríos et al. \cite{Rivera2017} fully extended the results of \cite{Lerner2018} to the fractional case and pioneered the higher order commutators Bloom type estimates.

\, \hspace{-20pt}{\bf Theorem $A_6$}(\cite{Rivera2017} ). {\it\
Let $0<\alpha<n$. Let $m$ be a non-negative integer. For every  $f\in\mathcal{C}_{c}^{\infty}(\mathbb{R}^{n})$ and $b\in L_{\text{loc }}^{m}(\mathbb{R}^{n})$,
there exist a family $\{\mathcal{D}_j\}_{j=1}^{3^{n}}$ of dyadic lattices and a family $\{\mathcal{S}_j\}_{j=1}^{3^n}$ of sparse
families such that $\mathcal{S}_{j} \subseteq \mathcal{D}_{j}$, for each $j$, and 
\[
|(I_{\alpha})_{b}^{m}f(x)|\leq c_{n,m,\alpha}\sum_{j=1}^{3^{n}}\sum_{h=0}^{m}\binom{m}{h}\mathcal{A}_{\alpha,\mathcal{S}_{j}}^{m,h}(b,f)(x),\qquad \text{a.e. } x\in\mathbb{R}^n,
\]
where, for a sparse family $\mathcal{S}$, $\mathcal{A}_{\alpha,\mathcal{S}}^{m,h}(b,\cdot)$ is the sparse operator given by
\[
\mathcal{A}_{\alpha,\mathcal{S}}^{m,h}(b,f)(x)=\sum_{Q\in\mathcal{S}}|b(x)-b_{Q}|^{m-h}|Q|^{\frac{\alpha}{n}}|f(b-b_{Q})^{h}|_{Q}\chi_{Q}(x).
\]
}
	
From 2018 to 2019, Yang et al. \cite{Yang2019} show us how to solve the sparse dominate problem over space of homogeneous type $(X,d,\mu)$. Especially, the relationship between the dyadic cubes of $X$ and the ball is well handled.

\, \hspace{-20pt}{\bf Theorem $A_7$}(\cite{Yang2019} ). {\it\
Let $T$ be the Calder\'on--Zygmund operator as in Definition 1.1 of \cite{Yang2019} and let $b\in L^1_{loc}(X)$. For every  $f\in L^\infty(X)$ with bounded support,
there exist $\mathcal{K}$ dyadic systems $\mathcal D^{\mathfrak{k}}, \mathfrak{k}=1,2,\ldots,\mathcal{K}$ and $\eta$-sparse families $\mathcal S_\mathfrak{k} \subseteq \mathcal D^\mathfrak{k}$ such that
for a.e. $x\in X$,
\begin{align}\label{sparse domination high}
|T_b^m(f)(x)|\leq C \sum_{\mathfrak{k} = 1}^{\mathcal{K}}\sum_{t=0}^k C_k^t A_{0,\S,\tau,1}^{b,k,t}f(x),
\end{align}
where $C_m^k:= {  m! \over (m-k)! \cdot k! }$.
}
	
In 2020, Cao et al. \cite{Cao2020} established the maximal domination and sparse domination for multilinear pseudo--differential operators and their multilinear commutators, and thus gave their quantitative weighted estimates. Let us first recall the following definition:

Let $\sigma$ be a symbol. For $\vec f \in {\mathscr{S}^m}$, $m$-linear pseudodifferential operator $T_\sigma$ is given by
$$
T_\sigma(\vec{f})(x) = \int_{(\mathbb{R}^n)^m} \sigma(x, \vec{\xi})\, e^{2\pi i x \cdot (\xi_1 + \cdots + \xi_m)}\, \widehat{f}_1(\xi_1) \cdots \widehat{f}_m(\xi_m)\, d\vec{\xi},
$$
where the Fourier transform $\widehat{f}$ of a function $f$ is defined by
$$
\widehat{f}(\xi) = \int_{\mathbb{R}^n} e^{-2\pi i x \cdot \xi}\, f(x)\, dx.
$$
Consider the $m$-linear operator $T_\sigma$ and a vector of locally integrable functions $\mathbf{b} = \left(b_1, \ldots, b_m\right)$. We define the $m$-linear commutator of $T_\sigma$ with respect to $\mathbf{b}$ as follows:

For each $j = 1, \ldots, m$, define the $j$-th commutator by
$$
\left[b_j, T_\sigma\right]_j(\vec{f})(x) = b_j(x)\, T_\sigma(\vec{f})(x) - T_\sigma\left(f_1, \ldots, b_j f_j, \ldots, f_m\right)(x).
$$
Then, the $m$-linear commutator is given by
$$
T_{\sigma, \Sigma \mathbf{b}}(\vec{f})(x) = \sum_{j=1}^m \left[b_j, T_\sigma\right]_j(\vec{f})(x).
$$

\, \hspace{-20pt}{\bf Theorem $A_{8}$}(\cite{Cao2020}). {\it\
Let $\sigma \in S_{\rho, \delta}^r(n, m)$ with $0 \leq \rho, \delta \leq 1$, and $r<m n(\rho-1)$. Then, for every compactly supported functions $f_i, i=1, \ldots, m$, there exist $3^n+1$ sparse collections $\mathcal{S}$ and $\left\{\mathcal{S}_i\right\}_{i=1}^{3^n}$ such that
$$
\left|T_\sigma(\vec{f})(x)\right| \lesssim \mathcal{A}_{\mathcal{S}}(\vec{f})(x), \text { a.e. } x \in \mathbb{R}^n,
$$
and
$$
\left|T_{\sigma, \Sigma \mathbf{b}}(\vec{f})(x)\right| \lesssim \sum_{i=1}^{3^n} \sum_{j=1}^m\left({\mathcal A}_{0,\S,1}^{ b_j}(\vec{f})(x)+{\mathcal A}_{0,\S,1}^{\star b_j}(\vec{f})(x)\right), \text { a.e. } x \in \mathbb{R}^n
$$
}

\, \hspace{-20pt}{\bf Theorem $A_{8}$}(\cite{Cao2020}). {\it\
Let $0<\gamma<\min \{\epsilon, 1 / m\}, 0 \leq \rho, \delta \leq 1, r<m n(\rho-1)$, and $\sigma \in$ $S_{\rho, \delta}^r(n, m)$. Then there holds that
$$
M_\gamma^{\sharp}\left(T_{\sigma, \Sigma \mathbf{b}}(\vec{f})\right)(x) \leq C_{\epsilon, \gamma}\|\mathbf{b}\|_{B M O}\left(\sum_{j=1}^m \mathcal{M}_{L(\log L)}^j(\vec{f})(x)+M_\epsilon\left(T_\sigma(\vec{f})\right)(x)\right) .
$$
}

In 2021, Zhang \cite{zhang2023} considered sparse bound for the first order 1-symbol commutator of multilinear maximal singular integral operators with Dini-type kernels, and then gaven their Bloom type estimates.

\, \hspace{-20pt}{\bf Theorem $A_9$}(\cite{zhang2023}). {\it\
Let $T$ be an $\omega$-Calderón-Zygmund operator on $\mathbb{R}^n$ with $\omega$ satisfying the Dini condition, $b \in L_{{loc}}^1$. Then, for every $f \in L^{\infty}\left(\mathbb{R}^n\right)$ with compact support, there exist $3^n$ dyadic lattices $\mathscr{D}^{(j)}$ and $\frac{1}{2 \cdot 9^n}$-sparse families $\mathcal{S}_j \subseteq \mathscr{D}^{(j)}, j 
 =1, \ldots, 3^n$, such that for a.e. $x \in \mathbb{R}^n$, we have
$$
T_{b, k}^* f(x) \leq c_n C_T \sum_{j=0}^{3^n} \sum_{t=0}^kC_k^t A_{0,\S,\tau,1}^{b,k,t}f(x).
$$

Moreover, let $\mu, \lambda \in A_p, 1<p<\infty, \nu=\left(\frac{\mu}{\lambda}\right)^{\frac{1}{p}}$, and $k \in \mathbb{N}$.
If $b \in \mathrm{BMO}_{\nu^{1 / k}}$, and $\omega$ satisfies the Dini condition, then for every $\omega$-Calderón-Zygmund operator $T$ on $\mathbb{R}^n$, we have
$$
\left\|T_{b, k}^* f\right\|_{L^p(\lambda)} \leq c_{n, k, T}\|b\|_{\mathrm{BMO}_{\nu^{1 / k}}}\left([\lambda]_{A_p}[\mu]_{A_p}\right)^{\frac{k+1}{2} \max \left(1, \frac{1}{p-1}\right)}\|f\|_{L^p(\mu)}.
$$
}

In 2022, Moen et al. \cite{Moen2022} bulit first order multi-symbol linear fractional sparse operators and then deduced the sparse bound for commutors of fractional integral.

\, \hspace{-20pt}{\bf Theorem $A_{10}$}(\cite{Moen2022}). {\it\
Let $0<\alpha<n, I_\alpha$ be fractional integral operators and $\tau_m=\{1, \cdots, m\}$. Given ${\bf b}(x)=\left(b_1(x), \cdots, b_m(x)\right)$ with $b_i \in L_{{loc}}^1\left(\mathbb{R}^n\right)$, there exists a constant $C=$ $C(n, \alpha)$ so that for any $f \in L_c^{\infty}\left(\mathbb{R}^n\right)$, there exists $3^n$ sparse families $\mathcal{S}_j$ of dyadic cubes such that
$$
\left|I_{\alpha, b} f(x)\right| \leq C \sum_{j=1}^{3^n} \sum_{\tau \subseteq \tau_m} T_{\mathcal{S}_j, {\bf b}}^{\alpha, \tau}(f)(x),
$$
where
\begin{align*}
    T_{\mathcal{S}, \mathbf{b}}^{\alpha, \tau} f(x)=\sum_{Q \in \mathcal{S}}|Q|^{\frac{\alpha}{n}}\left(\prod_{i \in \tau}\left|b_i(x)-\left(b_i\right)_Q\right| f_{Q_l} \prod_{l \in \tau^c}\left|b_l(y)-\left(b_l\right)_Q\right||f(y)| d y\right) \chi_Q(x).
\end{align*}
}

In 2024,  Lorist et al. \cite{Lorist2024} constructed a specific sparse form for sparse bound, which further broadens the Bloom type estimates for higher order commutators.

\, \hspace{-20pt}{\bf Theorem $A_{11}$}(\cite{Lorist2024}). {\it\
Let $1\le r<s\le\infty, m\in {\mathbb N}$ and let $T$ be a sublinear operator.
Assume that $T$ and ${\mathcal M}^{\#}_{T,s}$ are locally weak $L^r$-bounded. Then there exist $C_{m,n}>1$ and $\la_{m,n}<1$ so that,
for any $f,g\in L^{\infty}_c({\mathbb R}^n)$ and $b\in L^1_{\text{loc}}({\mathbb R}^n)$, there is a $\frac{1}{2\cdot 3^n}$-sparse collection of cubes
${\mathcal S}$ such that
\begin{align*}
\int_{{\mathbb R}^n}|T_b^mf||g|&\le C\Big(\sum_{Q\in {\mathcal S}}\big\langle |b-\langle b\rangle_Q|^{m}|f|\big\rangle_{r,Q}\big\langle|g|\big\rangle_{s',Q}|Q|\\
&\hspace{1cm}+\sum_{Q\in {\mathcal S}}\big\langle |f|\big\rangle_{r,Q}\big\langle |b-\langle b\rangle_Q|^{m}|g|\big\rangle_{s',Q}|Q|\Big),
\end{align*}
where
\begin{equation}\label{constant}
C:=C_{m, n}\left(\varphi_{T, r}\left(\lambda_{m, n}\right)+\varphi_{\mathcal{M}_{T, s}^*, r}\left(\lambda_{m, n}\right)\right).
\end{equation}

}


Inspired by these excellent predecessors, this paper will thoroughly solve the problem of pointwise sparse domination for $\B$-valued multilinear fractional Hörmander type singular integral operators and their generalized commutators over space of homogeneous type $(X,d,\mu)$. That is to say, we will use the {\tt higher order multi-symbol multilinear fractional sparse operators} for sparse bound, followed by quantitative weighted estimates of such sparse operators.

From now on, we default that $(X,d,\mu)$ is space of homogeneous type, $\mathcal D$ is a dyadic lattice of $X$,  $\B$ is a quasi-Banach space, and $\mathcal{T}_{\eta}$ is a {\tt $\B$-valued $m$-linear fractional Hörmander type singular integral operator} and its kernel $Q_\eta \in \mathcal{H}_r$ for some $r \in [1,\infty]$ (see Remark \ref{HSIO} for its definition).
In the proof for our main results, we sometimes use the exponent $p$, which is always defined by the H\"older relation, which means that $\frac{1}{p}: = \sum\limits_{i = 1}^m {\frac{1}{{{p_i}}}}$. Now, we introduce the main results of this paper.


\subsection{Multilinear fractional sparse domination principle}
~~

The multilinear fractional sparse domination principle we have long dreamed of is given as follows.

\begin{theorem}\label{Sparse.to.Cb}
Let $0<{\tilde{r}}<\infty$, $1 \leq r<\infty$, and $1 \leq \ell \leq m$. Assume that $\mathcal{T}_{\eta}$ is bounded from $L^r(X) \times \cdots \times L^r(X)$ to $L^{\tilde{r}, \infty}(X,\B)$ with $\an:=\frac{m}{r}-\frac{1}{\tilde{r}} \in [0,m)$. Then, for any $\vec f \in {\left( {L_b^\infty (X)} \right)^m}$,  there exist $\mathcal{K}$ dyadic lattices $\mathscr{D}^\mathfrak{k}, \mathfrak{k}=1,2, \ldots, \mathcal{K}$  and $\delta$--sparse families $\mathcal{S}_\mathfrak{k} \subseteq \mathscr{D}^\mathfrak{k}$ such that for $\mu$-almost every $x \in X$,
\begin{align}\label{zhang:th1.6}
  \|\mathcal{T}_{\eta,\tau_{\ell}}^{{\bf b, k}}(\vec{f})(x)\|_{\B} \lesssim C_{\mathcal{T}_\eta} \sum_{{\mathfrak{k}}=1}^{\mathcal{K}}\sum_{\tau \subseteq \tau_{\ell}} \sum_{{\bf k, t}} {\mathcal A}_{\eta ,\S_\mathfrak{k},\tau,r}^\mathbf{b,k,t}(\vec f)(x),
  \end{align}
   where $$C_{\mathcal{T}_{\eta}}=\|\mathcal{T}_\eta\|_{L^t \times \cdots \times L^t \rightarrow L^{\tilde{t}, \infty}}+\left\|\mathcal{M}_{\mathcal{T}_{\eta}}\right\|_{L^r \times \cdots \times L^r \rightarrow L^{\tilde{r}, \infty}},$$
   \begin{align*}
    \sum_{{\bf k, t}} :=\sum_{t_{\tau(1)}=0}^{k_{\tau(1)}} \sum_{t_{\tau(2)}=0}^{k_{\tau(2)}} \cdots \sum_{t_{\tau({\ell})}=0}^{k_{\tau({|\tau|})}}\Big(\prod_{i \in \tau_{|\tau|}}C_{k_i}^{t_i}\Big),
   \end{align*}
    and  
    $$C_{k_i}^{t_i}=\frac{t_{i}!}{k_{i}!\cdot\left(t_i-k_i\right)!}.$$
\end{theorem}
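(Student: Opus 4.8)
The plan is to prove the pointwise sparse domination by adapting the now-standard "local mean oscillation / recursive grid-selection" scheme of Lerner, Li, Cao--Xue--Yabuta, and Yang--Yang--Yuan to the fractional multilinear setting over a space of homogeneous type, carefully tracking the extra symbol factors $(b_i(x)-b_i)^{\beta_i}$. First I would fix the dyadic machinery: invoke the Hytönen--Kairema construction of the $\mathcal{K}$ adjacent dyadic systems $\mathscr{D}^{\mathfrak{k}}$ on $(X,d,\mu)$, so that every ball is comparable to a cube in one of these systems. This reduces matters to proving, for a fixed cube $Q_0$ and the corresponding dyadic lattice, a "local" domination of $\|\mathcal{T}_{\eta,\tau_\ell}^{\mathbf{b,k}}(\vec f\chi_{(3Q_0)^m})\|_{\B}$ on $Q_0$ by the sum over a sparse subfamily of $Q_0$, and then iterating via a Calderón--Zygmund-type stopping-time argument in $Q_0$.

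The heart of the argument is the grand maximal truncated operator. I would introduce $\mathcal{M}_{\mathcal{T}_\eta}$ (as already appearing in the statement through $C_{\mathcal{T}_\eta}$) and use the weak-type bounds $\mathcal{T}_\eta: L^r\times\cdots\times L^r\to L^{\tilde r,\infty}$ and $\mathcal{M}_{\mathcal{T}_\eta}: L^r\times\cdots\times L^r\to L^{\tilde r,\infty}$ to select, inside $Q_0$, a pairwise-disjoint collection of maximal stopping cubes $\{P_j\}$ whose union has measure $\le \tfrac12\mu(Q_0)$ (choosing the threshold constant large, depending on the two operator norms), outside of which both $\|\mathcal{T}_\eta(\vec f\chi_{(3Q_0)^m})\|_{\B}$ and the relevant truncations are controlled by the product of $L^r$-averages $\mu(Q_0)^{\an/r}\prod_i\langle f_i\rangle_{Q_0,r}$. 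The key algebraic step — and this is where the "fractional" and "higher order" aspects enter — is the binomial-type identity
\[
(b_i(x)-b_i)^{k_i} \;=\; \sum_{t_i=0}^{k_i} C_{k_i}^{t_i}\,(b_i(x)-b_{i,Q})^{k_i-t_i}\,(b_{i,Q}-b_i)^{t_i},
\]
applied coordinatewise for $i\in\tau_\ell$ (with $b_{i,Q}=\langle b_i\rangle_Q$). Expanding the product over $i\in\tau_\ell$ distributes $\mathcal{T}_{\eta,\tau_\ell}^{\mathbf{b,k}}$ into a finite sum, indexed exactly by the subsets $\tau\subseteq\tau_\ell$ (recording which factors keep a nonzero $k_i-t_i$) and the admissible multi-indices $\mathbf{t}$ with $\mathbf{t}\preceq\mathbf{k}$ on $\tau$, of terms in which the "outer" factor $\prod_{i\in\tau}|b_i(x)-b_{i,Q}|^{k_i-t_i}$ is frozen at $x$ and pulls out of the operator, while the "inner" factors $(b_i-b_{i,Q})^{t_i}$ are absorbed into the input functions $f_i$. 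Each such term is then of the form already bounded by a single generalized commutator / weighted average, producing precisely the summand ${\mathcal A}_{\eta,\S_{\mathfrak{k}},\tau,r}^{\mathbf{b,k,t}}$ with the combinatorial weight $\prod_{i\in\tau_{|\tau|}}C_{k_i}^{t_i}$ that appears in $\sum_{\mathbf{k,t},\tau}$.

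With the one-step local estimate in hand, I would run the standard recursion: apply the local estimate to $Q_0$, obtain the "good" principal term $\mu(Q_0)^{\an/r}\prod_i\langle|f_i(b_i-b_{i,Q_0})^{t_i}|\rangle_{Q_0,r}\chi_{Q_0}$ plus an error supported on $\bigcup_j P_j$ with $\sum_j\mu(P_j)\le\tfrac12\mu(Q_0)$, then re-apply to each $P_j$ (noting $P_j\subseteq 3Q_0$ so the truncations match up), and sum the geometric series to build the $\delta$-sparse family $\mathcal{S}_{\mathfrak k}$ with $\delta=\tfrac12$ (equivalently $\tfrac13$-Carleson). Summing over the finitely many dyadic systems $\mathfrak{k}=1,\dots,\mathcal{K}$ yields \eqref{zhang:th1.6}, with the implied constant absorbing $c_{m,n,r}$ and the explicit $C_{\mathcal{T}_\eta}$ coming from the weak-type norms used in the stopping-time selection. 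The main obstacle I anticipate is controlling $\mathcal{T}_\eta(\vec f)$ pointwise a.e. outside the stopping cubes \emph{with the $\B$-valued Hörmander kernel}: one must show that the kernel-difference tail $\sum_k \mu(2^kQ)^{m/r}\|\,\|K(x,\cdot)-K(x',\cdot)\|_{\B}\,\chi_{(2^kQ)^m\setminus(2^{k-1}Q)^m}\|_{L^{r'}}<\infty$ (i.e. $\mathscr{H}_r$) transfers into the needed bound for the grand maximal truncation $\mathcal{M}_{\mathcal{T}_\eta}$ and, crucially, that the off-diagonal decay still dominates the fractional growth $\mu(2^kQ)^{\an/r}$ — this is exactly the point where the hypothesis $\an=\tfrac{m}{r}-\tfrac1{\tilde r}\in[0,m)$ and the $L^r$-Hörmander (rather than pointwise size) condition must be used, and where the homogeneous-type doubling enters to sum the geometric tail; handling the commutator contributions simultaneously requires the standard observation $\|(b_i-b_{i,Q})\chi_{(3Q_0)^m}\|$-type factors are harmless because they are already displayed inside the sparse-operator averages and need no separate estimate.
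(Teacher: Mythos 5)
Your proposal is correct and follows essentially the same route as the paper: adjacent dyadic systems from Hytönen--Kairema, the grand maximal truncation $\mathcal{M}_{\mathcal{T}_\eta}$ and its weak-type bound (the paper's Lemma~3.1, cited from prior work) together with a Calder\'on--Zygmund stopping-time selection of $\{P_j\}$ with $\sum_j\mu(P_j)\le\tfrac12\mu(Q_0)$, the binomial recentering $(b_i(x)-b_i(y_i))^{k_i}=\sum_{t_i}C_{k_i}^{t_i}(-1)^{t_i}(b_i(x)-b_{i,Q})^{k_i-t_i}(b_i(y_i)-b_{i,Q})^{t_i}$ to split the generalized commutator into outer and inner parts, and the recursion/geometric summation to extract a $\delta$-sparse family on each grid. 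The "main obstacle" you flag is exactly what the paper isolates as Lemma~3.1 and Lemma~3.2, and your observation that the $L^r$-H\"ormander tail must absorb the fractional factor $\mu(2^kQ)^{\eta/r}$ is precisely the point of hypothesis $\eta\in[0,m)$.
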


Combining Remark~\ref{order.rela.}, we have following result to control $\mathcal{T}_{\eta}$ and $\mathcal{T}_{\eta,\tau_{\ell}}^{\bf b}$.
\begin{corollary}\label{tui1}
    Under the assumption of Theorem \ref{Sparse.to.Cb}, 
    for any $\vec f \in {\left( {L_b^\infty (X)} \right)^m}$,  there exist $\mathcal{K}$ dyadic lattices $\mathscr{D}^\mathfrak{k}, \mathfrak{k}=1,2, \ldots, \mathcal{K}$  and $\delta$--sparse families $\mathcal{S}_\mathfrak{k} \subseteq \mathscr{D}^\mathfrak{k}$ such that for $\mu$-almost every $x \in X$,
\begin{align} \label{Sparse.to.g}
  &\big\|\mathcal{T}_{\eta}(\vec{f})(x)\big\|_{\B} \lesssim C_{\mathcal{T}_\eta} \sum_{Q \in \mathcal{S}} {\mathcal A}_{\eta,\S,r}(\vec f)(x),\\ \label{2.7_1}
&\|\mathcal{T}_{\eta,\tau_{\ell}}^{\bf b}(\vec{f})(x)\|_{\B} \lesssim C_{\mathcal{T}_\eta}\sum_{{\mathfrak{k}}=1}^{\mathcal{K}} \sum_{\tau \subseteq \tau_{\ell}} {\mathcal A}_{\eta ,\S_\mathfrak{k},\tau,r}^\mathbf{b}(\vec f)(x),
  \end{align}
   where $$C_{\mathcal{T}_{\eta}}=\|\mathcal{T}_\eta\|_{L^t \times \cdots \times L^t \rightarrow L^{\tilde{t}, \infty}}+\left\|\mathcal{M}_{\mathcal{T}_{\eta}}\right\|_{L^r \times \cdots \times L^r \rightarrow L^{\tilde{r}, \infty}}.$$
\end{corollary}

\subsection{Multilinear weighted estimates}
~~

To begin with, we present the quantitative weighted estimates of \(\mathcal A_{\eta,\mathcal S,p_0,\gamma}\), as detailed in Theorems \ref{Horm.pro}--\ref{Thm:3}.


\begin{theorem}[$A^{\star}_{\vec{p},q}$ estimate]\label{Horm.pro}
Let $\mathcal D$ be a dyadic lattice of $X$, $\mathcal{S} \subseteq \d$ be a sparse family, $\gamma  > 0$, $0<q<\infty$, $1 < p_i < \infty$, $i=1,\cdots,m$, and $\eta:=\sum\limits_{i = 1}^m {\frac{1}{{{p_i}}}}-\frac{1}{{{q}}} \in [0,m)$
	If $\gamma \geq 1$ and $\vec{\omega} \in A^{\star}_{\vec{p},q}(X)$, then
\begin{align}\label{classical_A}
    \sup_{\mathcal{S} \subseteq \d}\|\mathcal A_{\eta,\mathcal S,p_0,\gamma}(\vec f)\|_{{L^{p_1}(X,\omega_1)}\times \cdots \times {L^{p_m}(X,\omega_m)}\rightarrow L^{q}(X,\omega)} \lesssim \mathcal [\vec{\omega}]_{A^{\star}_{\vec{p},q}(X)}^{\max\{\f{1}{\gamma},\tfrac{p_1'}{q},\dotsc,\tfrac{p_m'}{q}\} \times \frac{1}{p_0}}.
\end{align}
\end{theorem}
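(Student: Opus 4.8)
The plan is to reduce the multilinear $A^\star_{\vec p,q}$ estimate for the fractional sparse operator $\mathcal A_{\eta,\mathcal S,p_0,\gamma}$ to a testing/duality argument on the sparse family, exactly in the spirit of the Lacey–Moen–Pérez–Torres scheme but carried out over the space of homogeneous type $(X,d,\mu)$. First I would record the standard reductions: (i) by the substitution $f_i\mapsto f_i^{p_0}$ it suffices to treat $p_0=1$, at the cost of replacing the exponents $p_i,q$ by $p_i/p_0,q/p_0$ and $\eta$ by $\eta/p_0$ — this explains the factor $\tfrac1{p_0}$ in the exponent; and (ii) by the sparseness of $\mathcal S$ one may pass from $\langle\cdot\rangle_{Q,p_0}$ to a sum over pairwise disjoint principal subsets $E_Q\subseteq Q$ with $\mu(E_Q)\geq\delta\mu(Q)$. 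So the core estimate to prove is, with $p_0=1$ and $\gamma\geq1$,
\begin{align*}
  \Big\| \Big(\sum_{Q\in\mathcal S}\big[\mu(Q)^{\eta}\textstyle\prod_{i=1}^m\langle f_i\rangle_Q\big]^{\gamma}\chi_Q\Big)^{1/\gamma}\Big\|_{L^q(\omega)}
  \lesssim [\vec\omega]_{A^\star_{\vec p,q}(X)}^{\max(1/\gamma,\,p_1'/q,\dots,p_m'/q)}\prod_{i=1}^m\|f_i\|_{L^{p_i}(\omega_i)}.
\end{align*}

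The key steps, in order: First, I would split into the ``easy'' range $q\geq\gamma$ and the ``hard'' range $q<\gamma$. When $q\geq\gamma$, since $\ell^\gamma\hookrightarrow\ell^q$ one bounds the $\ell^\gamma$-sum by the $\ell^q$-sum, so $\|\mathcal A_{\eta,\mathcal S,1,\gamma}(\vec f)\|_{L^q(\omega)}^q\le\sum_Q[\mu(Q)^\eta\prod_i\langle f_i\rangle_Q]^q\omega(Q)$; this is then handled directly by the linearization below, and the exponent that emerges is the $\max(p_i'/q)$ part. When $q<\gamma$ one must argue by duality of $L^{q/\gamma}$-type or, more robustly, dualize the vector-valued object against $L^{(q/\gamma)'}$; here the $1/\gamma$ in the max appears. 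Second — the main computation — I would linearize: freeze the sparse sum and write $\sum_Q\mu(Q)^{\eta q}\prod_i\langle f_i\rangle_Q^q\,\omega(Q)$, insert the weights by Hölder on each average, $\langle f_i\rangle_Q=\langle f_i\omega_i^{1/p_i}\cdot\omega_i^{-1/p_i}\rangle_Q\leq\langle |f_i|^{p_i}\omega_i\rangle_Q^{1/p_i}\langle\sigma_i\rangle_Q^{1/p_i'}$ with $\sigma_i=\omega_i^{1-p_i'}$, recognize the product $\mu(Q)^{\eta q}\omega(Q)\prod_i\langle\sigma_i\rangle_Q^{q/p_i'}$ as controlled by $[\vec\omega]_{A^\star_{\vec p,q}(X)}$ times a product of measures (using Definition~\ref{vweight3} and the normalization $\mu(Q)^{\eta-m}$ vs. $\mu(Q)^{\eta}$ bookkeeping), and reduce to a sum $\sum_Q\prod_i\langle|f_i|^{p_i}\omega_i\rangle_Q^{q/p_i}\cdot(\text{measure}(Q))$. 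Third, I would dominate that sum by a product of dyadic maximal functions: by Hölder in the sum over $Q$ with exponents $p_i/q$ (legitimate since $\sum q/p_i=\eta+q/q\cdot(\dots)$ — precisely $\sum_i q/p_i = q\eta/\!\ldots$, i.e. the exponents are conjugate after accounting for the fractional gain), bound each factor by $\|M^{\mathcal D}_{\sigma_i}(f_i\,\cdot)\|$-type quantities and invoke the Fefferman–Stein/Buckley-type bound for the dyadic maximal operator on $(X,d,\mu)$, namely $\|M^{\mathcal D}\|_{L^{p}(w)\to L^{p}(w)}\lesssim [w]_{A_p}^{1/(p-1)}$, which is available because Lemma~\ref{vweight4} guarantees $\sigma_i\in A_{mp_i'}\subseteq A_\infty$ and $\omega\in A_{mq}\subseteq A_\infty$. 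Collecting the powers of $[\sigma_i]_{A_{mp_i'}}$ and $[\omega]_{A_{mq}}$ through the known relation between these and $[\vec\omega]_{A^\star_{\vec p,q}}$ gives the claimed exponent.

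The main obstacle I expect is the sharp tracking of the exponent $\max(1/\gamma,p_1'/q,\dots,p_m'/q)$: one must be careful that the $q<\gamma$ duality step does not inflate the power of $[\vec\omega]_{A^\star_{\vec p,q}}$ beyond $1/\gamma$, and that in the linearization the ``fractional'' scaling $\mu(Q)^{\eta}$ is absorbed into the $A^\star_{\vec p,q}$ constant with power exactly $1$ (not more), so that the residual maximal-function estimates contribute only the $p_i'/q$ terms. A secondary technical point is that all of this is on a space of homogeneous type, so I would use the dyadic structure from Section~\ref{Pre.} and the fact (noted in the Remark after Definition~\ref{weight.1}) that the dyadic $A^\star_{\vec p,q}$ constant is comparable to the balls version, together with the doubling property to run the stopping-time/principal-cube argument underlying the reduction to disjoint $E_Q$'s; none of these cause real difficulty but they must be invoked explicitly rather than quoting the Euclidean literature verbatim.
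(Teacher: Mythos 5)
Your overall roadmap (reduce to $p_0=1$ by a change of variables, exploit the sparse structure through the disjoint sets $E_Q$, use H\"older to extract the $A^\star_{\vec p,q}$ constant, and close with a dyadic maximal function) is aligned with the paper's strategy, but two of your steps contain genuine errors that would derail the argument.

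First, your case split is reversed, and the inequality you cite goes the wrong way. You claim that when $q\geq\gamma$ the embedding $\ell^\gamma\hookrightarrow\ell^q$ gives $\|\mathcal A_{\eta,\mathcal S,1,\gamma}(\vec f)\|_{L^q(\omega)}^q\leq\sum_Q[\mu(Q)^\eta\prod_i\langle f_i\rangle_Q]^q\omega(Q)$. But for $q\geq\gamma$ the pointwise bound runs in the opposite direction: $\bigl(\sum_Q a_Q^\gamma\chi_Q(x)\bigr)^{1/\gamma}\geq\bigl(\sum_Q a_Q^q\chi_Q(x)\bigr)^{1/q}$, so this step produces a \emph{lower} bound, not an upper bound. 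The paper handles this by setting $\theta=\min\{q,\gamma\}$ and dominating $\mathcal A_{\eta,\mathcal S,1,\gamma}$ pointwise by $\mathcal A_{\eta,\mathcal S,1,\theta}$ (correct direction since $\theta\leq\gamma$), then writing $\|\mathcal A_{\eta,\mathcal S,1,\gamma}(\vec f\sigma)\|_{L^q(u)}^\theta\leq\|[\mathcal A_{\eta,\mathcal S,1,\theta}(\vec f\sigma)]^\theta\|_{L^{q/\theta}(u)}$ and dualizing against $L^{(q/\theta)'}(u)$. The ``easy, no-duality'' regime is therefore $q\leq\gamma$ (where $\theta=q$ and $(q/\theta)'=\infty$), while the $L^{(q/\gamma)'}$-duality regime is $q\geq\gamma$ — the opposite of what you wrote. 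Note also that your suggestion to ``dualize the vector-valued object against $L^{(q/\gamma)'}$'' when $q<\gamma$ cannot work as stated: $q/\gamma<1$, so $L^{q/\gamma}$ is not a normed space and has no useful duality.

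Second, your maximal-function step is the wrong tool and would fail to give the claimed sharp exponent. You propose invoking a Buckley-type bound $\|M^{\mathcal D}\|_{L^p(w)\to L^p(w)}\lesssim[w]_{A_p}^{1/(p-1)}$ and then ``collecting powers of $[\sigma_i]_{A_{mp_i'}}$ and $[\omega]_{A_{mq}}$.'' That introduces extraneous Muckenhoupt constants that do not appear in the target bound $[\vec\omega]_{A^\star_{\vec p,q}(X)}^{\max(1/\gamma,p_1'/q,\dots,p_m'/q)}$, and tracing them back to $A^\star_{\vec p,q}$ with the correct power would require a separate sharp equivalence that you have not established. The paper instead uses the $\sigma_i$-weighted dyadic maximal function $M^{\sigma_i}_{\mathcal D}$ (obtained after rewriting $f_i\mapsto f_i\sigma_i$ in the averages), for which $\|M^{\sigma}_{\mathcal D}\|_{L^p(\sigma)\to L^p(\sigma)}\leq p'$ with no Muckenhoupt hypothesis at all. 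This is what allows the $A^\star_{\vec p,q}$ constant to appear with exactly the exponent $\beta=\max(1/\theta,p_1'/q,\dots,p_m'/q)$, which coincides with $\max(1/\gamma,p_1'/q,\dots,p_m'/q)$ in both regimes of $\theta$. You also do not isolate the one truly crucial algebraic identity — that $u^{1/(m-\eta)q}\prod_i\sigma_i^{1/(m-\eta)p_i'}\equiv 1$ pointwise, so that $\mu(E_Q)\leq u(E_Q)^{1/(m-\eta)q}\prod_i\sigma_i(E_Q)^{1/(m-\eta)p_i'}$ — which is precisely what converts $\mu(Q)$-factors into the weight masses needed to run H\"older over the disjoint $E_Q$'s.
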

The quantitative characterization of $\mathcal A_{\eta,\mathcal S,p_0,\gamma}(\vec f)$ has been previously addressed in the following cases.
\begin{itemize}
    \item When $\eta = 0$, $p_0=1$, $\gamma=1$, the above result is established in \cite{LMS} by Moen et al.
    \item When $\eta = 0$, $p_0=1$, $\gamma \geq 1$ the above result is established in \cite{Hormozi2017} by Hormozi et al.
\end{itemize}
\begin{theorem}[$A^{\star}_{\vec{p},q}-A_\infty$ estimate]\label{Thm:1}
Let $\mathcal D$ be a dyadic lattice of $X$, $\mathcal{S} \subseteq \d$ be a sparse family, $\gamma  > 0$, $0<q<\infty$, $1 \leq p_0 < p_i < \infty$,  $i=1,\cdots,m$, and $\eta:=\sum\limits_{i = 1}^m {\frac{1}{{{p_i}}}}-\frac{1}{{{q}}} \in [0,m)$.
Set $\|u,\vec \sigma\|_{A_{\vec p/p_0,q/p_0}(X)}<\infty$ and $u, \sigma_i \in A_\infty(X)$, for $i=1,\cdots, m$. If $\gamma \ge p_0$, then
\begin{align*}
 &\quad \sup_{\mathcal{S} \subseteq \d} \|\mathcal{A}_{\eta,\mathcal S,p_0,\gamma} (\vec f)\|_{L^{p_1}(X,\omega_1)\times \cdots \times L^{p_m}(X,\omega_m)\rightarrow L^q(X,u)} \\
& \lesssim \|u,\vec \sigma\|_{A^{\star}_{\vec{p}/p_0,q/p_0}(X)}^{\frac 1q}\Big( \prod_{i=1}^m [\sigma_i]_{A_\infty(X)}^{\frac 1{p_i}} +[u]_{A_\infty(X)}^{(\frac 1\gamma-\frac 1q)_+}\sum_{j=1}^{m} \prod_{i\neq j}[\sigma_i]_{A_\infty(X)}^{\frac 1{p_i}} \Big) ,
\end{align*}
where
$
\left(\frac 1\gamma-\frac 1q\right)_+:=\max\left\{\frac 1\gamma-\frac 1q, 0\right\}.
$

If $0<\gamma < p_0$, then the above result still holds for all $q>\gamma $.
\end{theorem}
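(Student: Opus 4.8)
The plan is to derive Theorem~\ref{Thm:1} from the model case $p_0=1,\ \gamma\ge 1$ by a scaling argument, and then to run a two–weight sparse Carleson–embedding estimate with sharp $A_\infty$ dependence. When $\gamma\ge p_0$, I would first reduce to $p_0=1$: setting $F_i:=|f_i|^{p_0}$, $\bar p_i:=p_i/p_0$, $\bar q:=q/p_0$, $\bar\gamma:=\gamma/p_0\ge1$, $\bar\eta:=p_0\eta=\sum_i\tfrac1{\bar p_i}-\tfrac1{\bar q}$, one checks from Definition~\ref{cla.sp.} that $\mathcal A_{\eta,\mathcal S,p_0,\gamma}(\vec f)^{p_0}=\mathcal A_{\bar\eta,\mathcal S,1,\bar\gamma}(\vec F)$, that $\|F_i\|_{L^{\bar p_i}(\omega_i)}=\|f_i\|_{L^{p_i}(\omega_i)}^{p_0}$, and that the pair $(u,\vec\sigma)$ together with its characteristic $\|u,\vec\sigma\|_{A^\star_{\vec p/p_0,q/p_0}(X)}$ is exactly the data of the rescaled problem ($\sigma_i=\omega_i^{1-\bar p_i'}$); since $p_0\ge 1$, the elementary inequalities $\|a\|_{\ell^{p_0}}\le\|a\|_{\ell^1}$ and $(A+B)^{1/p_0}\le A^{1/p_0}+B^{1/p_0}$ let one take the $p_0$-th root through the asserted bound. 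When $0<\gamma<p_0$ this rescaling is not available, but the hypothesis $q>\gamma$ supplies $q/\gamma>1$, which is precisely what the duality branch below requires; that branch is then carried out verbatim with the exponents $p_i/p_0,q/p_0$ and the $p_0$-averages $\langle\cdot\rangle_{Q,p_0}$, so I only describe $p_0=1$.

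In the model case write $a_Q:=\mu(Q)^{\eta}\prod_{i=1}^m\langle f_i\rangle_Q$, so that $\mathcal A_{\eta,\mathcal S,1,\gamma}(\vec f)^{\gamma}=\sum_{Q\in\mathcal S}a_Q^{\gamma}\chi_Q$. For $\gamma>q$ the inclusion $\ell^{q}\hookrightarrow\ell^{\gamma}$ gives the pointwise bound $\mathcal A_{\eta,\mathcal S,1,\gamma}(\vec f)\le\mathcal A_{\eta,\mathcal S,1,q}(\vec f)$, and $(1/\gamma-1/q)_+=0$, so the case $\gamma>q$ reduces to $\gamma=q$. When $\gamma=q$ one has $\|\mathcal A_{\eta,\mathcal S,1,q}(\vec f)\|_{L^q(u)}^q=\sum_{Q\in\mathcal S}a_Q^{q}\,u(Q)$; here I would substitute $f_i=g_i\sigma_i$ with $\sigma_i=\omega_i^{1-p_i'}$ (so $\|g_i\|_{L^{p_i}(\sigma_i)}=\|f_i\|_{L^{p_i}(\omega_i)}$ and $\langle f_i\rangle_Q=\langle\sigma_i\rangle_Q\langle g_i\rangle_{\sigma_i,Q}$), use the identity $\eta q+1=q\sum_i\tfrac1{p_i}$ to split $\mu(Q)^{\eta q}\prod_i\langle\sigma_i\rangle_Q^{q}\,u(Q)=\big(\langle u\rangle_Q\prod_i\langle\sigma_i\rangle_Q^{q/p_i'}\big)\prod_i\sigma_i(Q)^{q/p_i}$ and bound the first factor by $\|u,\vec\sigma\|_{A^\star_{\vec p,q}(X)}$ (which, on unwinding Definition~\ref{vweight3}, is $\sup_Q\langle u\rangle_Q\prod_i\langle\sigma_i\rangle_Q^{q/p_i'}$). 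Since $\mathcal S$ is $\delta$-sparse and each $\sigma_i\in A_\infty(X)$ by Lemma~\ref{vweight4}, the sequences $\{\sigma_i(Q)\}_{Q\in\mathcal S}$ are $\sigma_i$-Carleson with constant $\lesssim[\sigma_i]_{A_\infty(X)}$, and feeding this into the weighted (multilinear, fractional) Carleson embedding machinery yields $\sum_Q a_Q^q u(Q)\lesssim\|u,\vec\sigma\|_{A^\star_{\vec p,q}(X)}\prod_i[\sigma_i]_{A_\infty(X)}^{q/p_i}\prod_i\|f_i\|_{L^{p_i}(\omega_i)}^q$, which after taking the $q$-th root is the first term of the claimed bracket.

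For $\gamma<q$ (which also absorbs the case $0<\gamma<p_0$), I would instead dualize: pick $h\ge 0$ with $\|h\|_{L^{(q/\gamma)'}(u)}=1$ and $\|\mathcal A_{\eta,\mathcal S,1,\gamma}(\vec f)\|_{L^q(u)}^{\gamma}=\sum_{Q\in\mathcal S}a_Q^{\gamma}\,u(Q)\langle h\rangle_{u,Q}$, perform the same linearization $f_i=g_i\sigma_i$, and extract $\|u,\vec\sigma\|_{A^\star_{\vec p,q}(X)}$ as above (now with the power $\gamma$ in place of $q$, which leaves a compensating factor $\mu(Q)^{1-\gamma/q}$ matched against $u(Q)^{1-\gamma/q}$ in $u(Q)\langle h\rangle_{u,Q}$). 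Estimating the remaining sum, the $g_i$-side contributes each $[\sigma_i]_{A_\infty(X)}$ with exponent $1/p_i$, while on the $h$-side the boundedness of the dyadic maximal operator $\mathcal M^u_{\mathcal S}$ on $L^{(q/\gamma)'}(u)$ — equivalently, the quantitative reverse-H\"older control of $u$ applied across the sparse stopping structure — produces the factor $[u]_{A_\infty(X)}^{(1/\gamma-1/q)_+}$; because one factor must be treated asymmetrically in this step, summing over which index plays that role yields the sum $\sum_j\prod_{i\ne j}[\sigma_i]_{A_\infty(X)}^{1/p_i}$. Combining the two ranges of $\gamma$ and undoing the $p_0$-rescaling completes the proof.

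The main obstacle is the power bookkeeping. In the base case $\gamma=q$, the presence of the fractional factor $\mu(Q)^{\eta}$ (equivalently $\eta>0$, so $\sum_i q/p_i=1+\eta q>1$) prevents a naive one-shot generalized H\"older over $\sum_{Q}$, and one must run the fractional Carleson embedding — splitting the extra $\mu(Q)^{\eta q}$ via the sparse/stopping structure while keeping each $[\sigma_i]_{A_\infty(X)}$ at the exponent $1/p_i$. In the case $\gamma<q$ the difficulty is sharper: the bracket produced by the $A^\star$-extraction carries the power $\gamma/p_i'$ on $\langle\sigma_i\rangle_Q$ rather than the $q/p_i'$ appearing in $A^\star_{\vec p,q}$, so one has to interpolate between the ``$\gamma$-scaling'' and the ``$q$-scaling'' and distribute the deficit $q-\gamma$ among the $m$ factors and the overlap function so that $[u]_{A_\infty(X)}$ comes out with exponent exactly $(1/\gamma-1/q)_+$ and no worse. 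A secondary, routine point is the homogeneous-space infrastructure — that $\delta$-sparse families are Carleson with respect to $A_\infty(X)$ measures on $(X,d,\mu)$ and that the dyadic and ball characteristic constants agree — which is taken from the references already cited.
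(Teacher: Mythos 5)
Your high-level decomposition into the cases $\gamma\ge q$ (reduce via $\ell^q\hookrightarrow\ell^\gamma$, then a direct Carleson argument) and $\gamma<q$ (dualize against $h\in L^{(q/\gamma)'}(u)$) is sound, and your $p_0$-rescaling is carried out correctly. But there is a genuine gap in the $\gamma<q$ branch: the straightforward argument you describe — extract the $A^\star$ characteristic at the $\gamma/q$ power, then apply generalized H\"older across $\sum_{Q\in\mathcal S}$ and Carleson embedding for each stopping sum — produces the bound
$\|u,\vec\sigma\|^{1/q}[u]_{A_\infty}^{(1/\gamma-1/q)_+}\prod_{i=1}^m[\sigma_i]_{A_\infty}^{1/p_i}$,
where $[u]_{A_\infty}$ multiplies the \emph{full} product $\prod_i[\sigma_i]^{1/p_i}$. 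This is not the theorem's constant, which has the refined form $\prod_i[\sigma_i]^{1/p_i}+[u]^{(1/\gamma-1/q)_+}\sum_j\prod_{i\ne j}[\sigma_i]^{1/p_i}$ with $[u]$ coupled only to a sub-product missing one $\sigma_j$. To obtain that refined form one cannot simply H\"older in a single pass; the paper builds $m+1$ independent stopping families $\mathcal F_1,\dots,\mathcal F_m,\mathcal H$ (coronas for each $g_i$ w.r.t.~$\sigma_i$ and for $h$ w.r.t.~$u$), rearranges the sum over $Q\in\mathcal S$ according to the nesting order of the stopping parents $(F_1,\dots,F_m,H)$, and handles each nesting order separately. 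The term $I$ (where $H$ is innermost) yields $\prod_i[\sigma_i]^{1/p_i}$ with no $[u]$, while the terms $II,III,\dots$ (where some $F_j$ is innermost) yield $[u]^{(1/\gamma-1/q)_+}\prod_{i\ne j}[\sigma_i]^{1/p_i}$; the sum over $j$ is exactly the sum over which family sits innermost. You say ``one factor must be treated asymmetrically... summing over which index plays that role yields the sum,'' which correctly flags the issue, but your proposal contains no mechanism that would realize this asymmetry — without the corona decomposition, the factors cannot be separated in this way. The engine driving each term is the testing inequality of Lemma~\ref{lm:testing} (proved via Proposition~\ref{dyadicsum} and Proposition~\ref{kolmogorov}), which is precisely where the delicate interpolation between the ``$\gamma$-scaling'' and the ``$q$-scaling'' that you identify as the main obstacle is actually resolved; your sketch has no counterpart to it.

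A secondary but real conceptual slip: you attribute the $[u]_{A_\infty}$ factor to ``the boundedness of the dyadic maximal operator $\mathcal M^u_{\mathcal S}$ on $L^{(q/\gamma)'}(u)$.'' That operator is bounded with a constant depending only on the exponent (cf.~\eqref{dyadicmaximal}); it contributes no $A_\infty$ dependence and is used only to discharge $\|h\|_{L^{(q/\gamma)'}(u)}=1$. The factor $[u]_{A_\infty}^{(1/\gamma-1/q)_+}$ arises instead from the \emph{Carleson embedding relative to $u$} applied to the sparse family nested in a stopping cube of $\mathcal H$, i.e.~from $\sum_{Q\subseteq H,\,Q\in\mathcal S}\langle u\rangle_Q\mu(Q)\lesssim[u]_{A_\infty}\,u(H)$ — exactly parallel to how each $[\sigma_i]_{A_\infty}^{1/p_i}$ arises from a $\sigma_i$-Carleson embedding. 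Your $\gamma\ge q$ branch, by contrast, is both correct and cleaner than what the paper does there (the paper routes that case through Lemma~\ref{lm:equivalent} and a two-family corona, whereas your direct H\"older$+$Carleson computation — after noting $\sum_i q/p_i=1+\eta q\ge1$ so the generalized H\"older over $\sum_Q$ applies via the $x^s\le x$ trick — suffices and yields the single term $\prod_i[\sigma_i]^{1/p_i}$).
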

%
\begin{theorem}[${A^{\star}_{\vec{p},q}}-W_{\vec p,q}^\infty$ estimate]\label{Horm.pro_2}
Let $\mathcal D$ be a dyadic lattice of $X$, $\mathcal{S} \subseteq \d$ be a sparse family, $\gamma  > 0$, $0<q<\infty$, $1 \leq p_0 < p_i < \infty$, $i=1,\cdots,m$, $\eta:=\sum\limits_{i = 1}^m {\frac{1}{{{p_i}}}}-\frac{1}{{{q}}} \in [0,m)$, and $t:=q/\gamma$. Set $u$ and $\vec\sigma$ be weights satisfying that $\|u,\vec \sigma\|_{A^{\star}_{\vec p,q}(X)}<\infty$. If $p_0 \le \gamma < \mathop {\min }\limits_{1 \le i \le m} {p_i}$, then
\begin{align}
  \label{eq:mixedbound}\notag
& \quad \sup_{\mathcal{S} \subseteq \d} \|\mathcal{A}_{\eta,\mathcal S, p_0,\gamma}(\vec f )\|_{L^{p_1}(X,\omega_1)\times \cdots \times L^{p_m}(X,\omega_m)\rightarrow L^q(X,u)} \\
&\le  \|u,\vec \sigma\|_{A^{\star}_{\vec{p}/p_0,q/p_0}(X)}^{1/q}\Big([\vec \sigma]_{W_{\vec p,q}^\infty(X)}^{1/q}+ \sum_{i=1}^m [\vec \sigma^i]_{W_{ (\vec p,q)^i}^\infty(X)}^{1/{\gamma(\frac{p_i}\gamma)'}}\Big),
\end{align}
where $[\vec \sigma^i]_{W^{\infty}_{ (\vec p,q)^i}(X)}=1$ if $q\le \gamma$, and otherwise,
\begin{align*}
[\vec \sigma^i]_{W^{\infty}_{(\vec p,q)^i}(X)}=\sup_{Q\in\d} \Big(  \int_Q M &(\mathbf \chi_Q u)^{\frac{(p_i/\gamma)'}{t'}} \prod_{j\neq i} M (\mathbf \chi_Q\sigma_j)^{\frac{(p_i/\gamma)'}{p_j/\gamma}} d \mu\Big)\Big(\int_Q u^{\frac{(p_i/\gamma)'}{t'}}\prod_{j\neq i} \sigma_j^{\frac{(p_i/\gamma)'}{p_j/\gamma}}d \mu \Big)^{-1}.
\end{align*}
If  $0<\gamma < p_0$, then the above result still holds for all $q>\gamma $.
\end{theorem}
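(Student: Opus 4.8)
The plan is to follow the standard "Carleson-embedding + testing" scheme adapted to the fractional, multilinear, quasi-Banach setting, and to reduce everything to the already-established $A^\star_{\vec p,q}-A_\infty$ estimate (Theorem \ref{Thm:1}) after replacing the $A_\infty$ characteristics of the $\sigma_i$'s by the $W^\infty_{\vec p,q}$ constant. First I would dualize: since $t = q/\gamma \ge 1$ (the case $q \le \gamma$ being the degenerate one in which the auxiliary $W^\infty$ constants are set to $1$), write $\|\mathcal A_{\eta,\mathcal S,p_0,\gamma}(\vec f)\|_{L^q(u)}^\gamma = \|(\mathcal A_{\eta,\mathcal S,p_0,\gamma}(\vec f))^\gamma\|_{L^{t}(u)}$ and test against a nonnegative $h \in L^{t'}(u)$ with $\|h\|_{L^{t'}(u)} = 1$. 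Expanding the $\gamma$-th power of the sum over $Q \in \mathcal S$, the sparseness of $\mathcal S$ lets me linearize via the principal-cube / stopping-time decomposition exactly as in the proof of Theorem \ref{Thm:1}, so that the cross terms are controlled and one is left with a sum $\sum_{Q} \mu(Q)^{\eta\gamma/r}\,\big(\prod_i \langle f_i\rangle_{Q,p_0}\big)^\gamma \int_Q h\, u\, d\mu$ up to a harmless geometric-series factor.

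Next I would run the Carleson-embedding argument in the weight $u$: the key point is that $\int_Q h\,u\,d\mu \le \mu(Q)\,\langle h\rangle_Q^{u}\,\langle u\rangle_Q$ and, since $u$ need not be $A_\infty$ here, the testing constant is produced instead by the inequality $\int_Q M(\chi_Q u)\,d\mu \lesssim$ (the $W^\infty$-type quantity) $\int_Q u\,d\mu$, combined with the analogous bounds $\int_Q M(\chi_Q\sigma_i)^{q/p_i}\,d\mu \le [\vec\sigma]_{W^\infty_{\vec p,q}}\int_Q \prod_i \sigma_i^{q/p_i}$ from \eqref{eq:Fujii_constant}. Splitting the summation according to whether one "spends" the maximal-function gain on the output weight $u$ or on one of the input weights $\sigma_i$ produces exactly the two groups of terms in \eqref{eq:mixedbound}: the first, $[\vec\sigma]_{W^\infty_{\vec p,q}(X)}^{1/q}$, coming from distributing all the gain on the inputs, and the second, $\sum_i [\vec\sigma^i]_{W^\infty_{(\vec p,q)^i}(X)}^{1/(\gamma(p_i/\gamma)')}$, coming from moving one factor onto $u$ via Hölder with exponents $(p_i/\gamma)'$ and $(p_i/\gamma)$, which is precisely where the mixed constant $[\vec\sigma^i]_{W^\infty_{(\vec p,q)^i}}$ with its $t'$- and $(p_j/\gamma)$-weighted maximal functions is generated. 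The hypothesis $p_0 \le \gamma < \min_i p_i$ is what guarantees $(p_i/\gamma)' < \infty$ and that the inner averages $\langle f_i\rangle_{Q,p_0}$ can be raised to the $\gamma$ power and re-Hölderized against $\sigma_i^{-1}$ with exponent $p_i/\gamma > 1$.

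Finally, the factor $\|u,\vec\sigma\|_{A^\star_{\vec p/p_0,q/p_0}(X)}^{1/q}$ appears, as in Theorem \ref{Thm:1}, simply as the supremum of $\langle u\rangle_Q \prod_i \langle\sigma_i\rangle_Q^{(q/p_0)/(p_i/p_0)'}$ pulled out of each Carleson term before summing the sparse series, and for $0 < \gamma < p_0$ one reduces to the case $\gamma = p_0$ by the elementary monotonicity $\mathcal A_{\eta,\mathcal S,p_0,\gamma} \le \mathcal A_{\eta,\mathcal S,p_0,p_0}$ pointwise when $\gamma \le p_0$ together with the restriction $q > \gamma$ to keep $t = q/\gamma > 1$. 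The main obstacle I anticipate is bookkeeping the fractional power $\mu(Q)^{\eta/r}$ through the stopping-time decomposition in a space of homogeneous type: unlike the diagonal case, one cannot simply absorb it into an $A_\infty$ average, so I must carry it as an explicit weight in the Carleson-embedding theorem and verify that the doubling property of $\mu$ (Definition with constant $C_\mu$) still yields the geometric decay $\sum_{Q' \subseteq Q,\, Q' \in \mathcal S} \mu(Q')^{1+\epsilon} \lesssim \mu(Q)^{1+\epsilon}$ needed to sum the principal cubes — this is the step where the quasi-metric constant $A_0$ and the dyadic-cube construction of Section \ref{Pre.} must be invoked carefully.
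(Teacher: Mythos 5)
Your high-level strategy matches the paper's: the proof is a modification of Theorem \ref{Thm:1}, with the key move being to replace the $A_\infty$ characteristics $[\sigma_i]_{A_\infty}$ that appear in that argument by the $W^\infty_{\vec p,q}$ constant, via the Carleson estimate $\sum_{Q\subseteq F,\,Q\in\mathcal S}\langle\sigma_1\rangle_Q^{q/p_1}\langle\sigma_2\rangle_Q^{q/p_2}\mu(Q)^{1+\eta q}\lesssim \int_F\prod_i M(\chi_F\sigma_i)^{q/p_i}\,d\mu\le [\vec\sigma]_{W^\infty_{\vec p,q}}\int_F\prod_i\sigma_i^{q/p_i}\,d\mu$, and the $\|u,\vec\sigma\|_{A^\star}^{1/q}$ factor coming from the testing lemma (Lemma \ref{lm:testing}/\eqref{eq:testing}). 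Your identification of where the two families of constants $[\vec\sigma]_{W^\infty}$ and $[\vec\sigma^i]_{W^\infty}$ arise (depending on whether the maximal-function gain lands on the input weights or is traded against $u$ via H\"older with exponents $(p_i/\gamma)'$) is also correct.

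Two inaccuracies are worth flagging. First, the reduction from $\|\mathcal A(\vec f)^\gamma\|_{L^t(u)}$ to a linear sum is \emph{not} achieved by ``expanding the $\gamma$-th power of the sum'' — for $\gamma>1$ that power does not distribute over the sparse sum, and sparseness does not linearize it. The paper instead invokes Lemma \ref{lm:equivalent}, which sandwiches the operator between two versions carrying dyadic weighted maximal functions $M_{\mathcal D}^{\sigma_i}$, thereby trading the nonlinearity in $\gamma$ for boundedness of these maximal functions on $L^{p_i/\gamma}(\sigma_i)$ (this is exactly where the hypothesis $\gamma<\min_i p_i$ is used). Second, your treatment of the case $0<\gamma<p_0$ is wrong: the inequality $\mathcal A_{\eta,\mathcal S,p_0,\gamma}\le\mathcal A_{\eta,\mathcal S,p_0,p_0}$ goes the other way (for $\gamma\le p_0$ one has $\|a\|_{\ell^{p_0}}\le\|a\|_{\ell^\gamma}$, so the $\gamma$-operator is the larger one), and this pointwise comparison cannot be used to reduce to $\gamma=p_0$. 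The paper handles $\gamma<p_0$ instead by the substitution $f_i\mapsto f_i^{p_0}$, $\eta\mapsto p_0\eta$, $\gamma\mapsto\gamma/p_0$ (already set up in the proof of Theorem \ref{Horm.pro}), after which the reduced parameter $\gamma/p_0<1$ is admissible in Lemma \ref{lm:equivalent} provided the reduced condition $q/p_0>\gamma/p_0$, i.e.\ $q>\gamma$, holds. Finally, your worry about ``geometric decay $\sum_{Q'\subseteq Q}\mu(Q')^{1+\epsilon}\lesssim\mu(Q)^{1+\epsilon}$'' is directed at the wrong bottleneck: the relevant summability is given by Proposition \ref{kolmogorov} inside Lemma \ref{lm:testing}, and the $\mu(Q)^{\eta}$ factor is absorbed into the exponent structure of the $A^\star_{\vec p,q}$ constant (via \eqref{eq2-thm1.1}), not handled via the quasi-metric constant.
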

\begin{theorem}[${A^{\star}_{\vec{p},q}}-{H_{\vec{p},q}^\infty}$ estimate]\label{Thm:3}
Let $\mathcal D$ be a dyadic lattice of $X$, $\mathcal{S} \subseteq \d$ be a sparse family, $\gamma  > 0$, $0<q<\infty$, $1 \leq p_0 < p_i < \infty$, $i=1,\cdots,m$, $\eta:=\sum\limits_{i = 1}^m {\frac{1}{{{p_i}}}}-\frac{1}{{{q}}} \in [0,m)$, and $t=q/\gamma$. Set $u$ and $\vec\sigma$ be weights satisfying that $\|u,\vec \sigma\|_{A^{\star}_{\vec p,q}(X)}<\infty$. If $p_0 \le \gamma < \mathop {\min }\limits_{1 \le i \le m} {p_i}$, then
  \begin{align}
    \label{eq:mixedboundH}\notag
  &\quad \sup_{\mathcal{S} \subseteq \d} \|\mathcal{A}_{\eta, \mathcal S,p_0,\gamma}(\vec f )\|_{L^{p_1}(X,\omega_1)\times \cdots \times L^{p_m}(X,\omega_m)\rightarrow L^q(X,u)} \\
  &\le  \|u,\vec \sigma\|_{A^{\star}_{\vec{p}/p_0,q/p_0}(X)}^{1/q}\Big( [\vec{\sigma}]_{H_{\vec{p},q}^\infty(X)}^{\frac{1}{q}} + \sum_{i=1}^m [\sigma_i]_{H_{(\vec{p},q)^i}^\infty(X)}^{\frac{1}{p_i'}} \Big),
  \end{align}
  where $[\vec \sigma^i]_{H_{(\vec p,q)^i}^\infty(X)}=1$ if $q\le \gamma$, and otherwise,
  \begin{equation}\begin{split}
  [\vec \sigma^i]_{H_{(\vec p,q)^i}^\infty(X)}=\sup_{Q\in\d} &\langle u \rangle_{Q}^{p_i'(\frac 1\gamma-\frac 1q)_+} \exp{\left(\dashint_Q \log u^{-1}\right)}^{p_i'(\frac 1\gamma-\frac 1q)_+} \\ &\times\prod_{j\neq i} \langle \sigma_i \rangle_{Q}^{p_i'/p_j} \exp{\left(\dashint_Q \log\sigma_i^{-1}\right)}^{p_i'/p_j}.
  \end{split}\end{equation}
  If  $0<\gamma < p_0$, then the above result still holds for all $q>\gamma $.
  \end{theorem}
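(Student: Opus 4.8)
The plan is to reduce the $H^\infty$-type bound to the $W^\infty$-type bound already recorded in Theorem \ref{Horm.pro_2}, by showing that the Hruščev-type constant $[\vec\sigma]_{H^\infty_{\vec p,q}(X)}$ dominates (up to dimensional constants) the Fujii–Wilson-type constant $[\vec\sigma]_{W^\infty_{\vec p,q}(X)}$, and similarly for the off-diagonal constants $[\vec\sigma^i]_{H^\infty_{(\vec p,q)^i}(X)}$ versus $[\vec\sigma^i]_{W^\infty_{(\vec p,q)^i}(X)}$. The key analytic input is the pointwise/integrated comparison between $\int_Q M(\mathbf\chi_Q w)\,d\mu$ and $\langle w\rangle_Q\,\mu(Q)\exp(\langle\log w^{-1}\rangle_Q)^{-1}$ for a single weight $w$ on a cube $Q$; this is the homogeneous-space analogue of the classical fact (going back to Hruščev / Hytönen–Pérez) that the Wilson maximal average of a weight is controlled by the product of its average and the exponential of $\langle\log w\rangle_Q$. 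First I would isolate this one-weight lemma: for $Q\in\mathcal D$ and a weight $w$,
\[
\int_Q M(\mathbf\chi_Q w)\,d\mu \lesssim \mu(Q)\,\langle w\rangle_Q\cdot\frac{1}{\exp\big(\langle\log w^{-1}\rangle_Q\big)^{-1}} \quad\text{when normalized appropriately,}
\]
more precisely $\frac{1}{\mu(Q)}\int_Q M(\mathbf\chi_Q w)\,d\mu \lesssim \exp\big(\langle\log w\rangle_Q\big)\cdot\frac{\langle w\rangle_Q}{\exp(\langle\log w\rangle_Q)}$, which after raising to the relevant powers $q/p_i$ and multiplying over $i$ gives $[\vec\sigma]_{W^\infty_{\vec p,q}(X)}\lesssim [\vec\sigma]_{H^\infty_{\vec p,q}(X)}$; the dyadic structure of $X$ (Sect.~\ref{Pre.}) makes $M$ comparable to its dyadic version so the argument runs in the homogeneous-space setting without change.

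The second step is to carry out the same comparison for the perturbed constants appearing in the sums over $i$: the $W^\infty_{(\vec p,q)^i}$ constant is built from $\int_Q M(\mathbf\chi_Q u)^{(p_i/\gamma)'/t'}\prod_{j\neq i}M(\mathbf\chi_Q\sigma_j)^{(p_i/\gamma)'/(p_j/\gamma)}\,d\mu$, and I would show this is bounded by the corresponding $H^\infty_{(\vec p,q)^i}$ expression $\langle u\rangle_Q^{\,p_i'(\frac1\gamma-\frac1q)_+}\exp(\dashint_Q\log u^{-1})^{p_i'(\frac1\gamma-\frac1q)_+}\prod_{j\neq i}\langle\sigma_i\rangle_Q^{p_i'/p_j}\exp(\dashint_Q\log\sigma_i^{-1})^{p_i'/p_j}$ by the same one-weight maximal-function comparison applied to each factor, after checking that the exponents $(p_i/\gamma)'/t'$ collapse to $p_i'(\frac1\gamma-\frac1q)_+$ — this is a bookkeeping identity using $t=q/\gamma$, $t'=\frac{q/\gamma}{q/\gamma-1}$, and $(p_i/\gamma)'=\frac{p_i/\gamma}{p_i/\gamma-1}$, and in the regime $q\le\gamma$ both constants are declared to equal $1$, so nothing to prove there. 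Once both comparisons are in hand, substituting into the conclusion \eqref{eq:mixedbound} of Theorem \ref{Horm.pro_2} and using $[\vec\sigma]^{1/q}_{W}\lesssim[\vec\sigma]^{1/q}_{H}$ and $[\vec\sigma^i]^{1/(\gamma(p_i/\gamma)')}_{W}\lesssim[\sigma_i]^{1/p_i'}_{H}$ (the exponent match $\gamma(p_i/\gamma)'=p_i'$ when... actually one checks $\gamma\cdot\frac{p_i/\gamma}{p_i/\gamma-1}=\frac{p_i}{p_i/\gamma-1}$, and the $W$-to-$H$ passage absorbs the discrepancy into the one-weight lemma's exponent) yields exactly \eqref{eq:mixedboundH}. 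The final claim for $0<\gamma<p_0$ with $q>\gamma$ follows by the same reduction since Theorem \ref{Horm.pro_2} already contains that extension.

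The main obstacle I anticipate is the precise verification of the one-weight comparison $\int_Q M(\mathbf\chi_Q w)\,d\mu\lesssim\mu(Q)\langle w\rangle_Q\exp(\langle\log w\rangle_Q)^{-1}\cdot(\text{something})$ in the space-of-homogeneous-type setting with the correct implied constant, together with tracking how the various fractional exponents $q/p_i$, $(p_i/\gamma)'$, $t'$ interact — the algebra of exponents is delicate because $\gamma$, $q$, and the $p_i$ all float, and one must be careful that the $(\cdot)_+$ truncation in the $H^\infty_{(\vec p,q)^i}$ constant is matched on the $W^\infty$ side by the stipulation that those constants are $1$ when $q\le\gamma$. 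A secondary point requiring care is that the $H^\infty$ constant uses $\sigma_i$ (not $\sigma_j$) inside the product over $j\neq i$ as written, so one must respect that indexing when applying the one-weight lemma factor by factor. If the direct reduction to Theorem \ref{Horm.pro_2} proves awkward because of exponent mismatches, the fallback is to rerun the proof of Theorem \ref{Horm.pro_2} from scratch, replacing every invocation of the Fujii–Wilson $A_\infty$ bound $\int_Q M(\mathbf\chi_Q w)\lesssim[w]_{A_\infty}w(Q)$ by the Hruščev-type bound in terms of the logarithmic mean, which is the route taken in the Euclidean higher-order commutator literature and adapts verbatim given the dyadic structure on $X$.
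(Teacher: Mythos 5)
Your primary plan — reducing Theorem \ref{Thm:3} to Theorem \ref{Horm.pro_2} via a comparison $[\vec\sigma]_{W^\infty_{\vec p,q}}\lesssim[\vec\sigma]_{H^\infty_{\vec p,q}}$ — has a genuine gap, and the one-weight lemma you state to drive it is wrong as written. The displayed inequality
$\frac{1}{\mu(Q)}\int_Q M(\mathbf\chi_Q w)\,d\mu\lesssim\exp(\langle\log w\rangle_Q)\cdot\frac{\langle w\rangle_Q}{\exp(\langle\log w\rangle_Q)}$
collapses after cancellation to $\frac{1}{\mu(Q)}\int_Q M(\mathbf\chi_Q w)\,d\mu\lesssim\langle w\rangle_Q$, which is false (its supremum over $Q$ is exactly the Fujii--Wilson constant and can be arbitrarily large). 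The genuine one-weight fact you want is the Hyt\"onen--P\'erez comparison $\frac{1}{w(Q)}\int_Q M(\mathbf\chi_Q w)\lesssim\langle w\rangle_Q\exp(\langle\log w^{-1}\rangle_Q)$, but feeding this into the fractional $W^\infty$ constant of Definition~\ref{weight.1} — where the relevant quantity is $\frac{\int_Q\prod_i M(\sigma_i\mathbf\chi_Q)^{q/p_i}}{\int_Q\prod_i\sigma_i^{q/p_i}}$ with exponents $q/p_i<1$ — already in the scalar case $m=1$, $q<p_1$ produces (via Jensen on both numerator and denominator, then Hyt\"onen--P\'erez cube-by-cube) a bound of the form $[\sigma]_{H}^{\,2q/p_1}$, i.e.\ an extra power of the Hru\v{s}\v{c}ev constant. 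Substituting $[\vec\sigma]_{W}\lesssim[\vec\sigma]_{H}^{2}$ into \eqref{eq:mixedbound} yields $[\vec\sigma]_{H}^{2/q}$, not the asserted $[\vec\sigma]_{H}^{1/q}$, so even a corrected version of the reduction does not recover the theorem as stated. In the genuinely multilinear case the situation is worse: the products $\prod_i M(\sigma_i\mathbf\chi_Q)^{q/p_i}$ sit inside a single integral and do not factor, so there is no clean one-weight-at-a-time comparison between the multilinear $W^\infty$ and $H^\infty$ constants. The paper does \emph{not} prove or use any such comparison.

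Your fallback paragraph is much closer to what the paper actually does, and with one more ingredient it would give a proof. The paper reruns the stopping-time / testing-lemma machinery of Theorem \ref{Thm:1} (and Theorem \ref{Horm.pro_2}) but, at the point where the $W^\infty$ argument uses $\sum_{Q'\subseteq F}\langle\sigma_1\rangle_{Q'}^{q/p_1}\langle\sigma_2\rangle_{Q'}^{q/p_2}\mu(Q')\lesssim\int_F\prod_i M(\sigma_i\mathbf\chi_F)^{q/p_i}$, it instead applies the definition of $[\vec\sigma]_{H^\infty_{\vec p,q}}$ to replace $\prod_i\langle\sigma_i\rangle_{Q'}^{q/p_i}$ by $[\vec\sigma]_{H}\prod_i\exp(\dashint_{Q'}\log\sigma_i)^{q/p_i}$, separates the factors by H\"older, and then controls each $\sum_{Q'\subseteq F}\exp(\dashint_{Q'}\log\sigma_i)\mu(Q')$ by $\sigma_i(F)$ using the $L^p$-boundedness of the \emph{logarithmic maximal function} $M_0(f)=\sup_Q\exp(\dashint_Q\log|f|)\mathbf\chi_Q$ together with the disjointness of the sparse sets $E_{Q'}$. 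That $L^p$-boundedness of $M_0$ (for all $0<p<\infty$, the reference in the paper is \cite[Lemma 2.1]{HP}) is the decisive substitute for the Carleson/Fujii--Wilson embedding in the $W^\infty$ proof; it is the piece your fallback sketch is missing when it says only "replace every invocation of the Fujii--Wilson bound by the Hru\v{s}\v{c}ev-type bound." Without naming the role of $M_0$ and how the sparse sum over $\exp(\dashint_{Q'}\log\sigma_i)\mu(Q')$ closes, the argument does not yet go through.
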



  

\begin{theorem}\label{Caopro_1}
Let $\mathcal D$ be a dyadic lattice of $X$, $0<q < \infty$, $1 \leq r < p_i < \infty$, and $\eta:=\sum\limits_{i = 1}^m {\frac{1}{{{p_i}}}}-\frac{1}{{{q}}} \in [0,m)$. Set $u, \sigma_j \in A_\infty(X)$, for each $j \in \tau^c$, and $\omega_i=\sigma_i^{1-p_i / r}$, $i=1, \ldots, m$. If ${\bf b} \in \mathrm{B M O}(X)^m$, then for any $\tau \subseteq \tau_m$ and a sparse family $\mathcal{S}$,
  \begin{align}\label{cao.e0}
    & \left\|{\mathcal A}_{\eta ,\S,\tau,r}^\mathbf{b}\right\|_{\prod\limits_{i=1}^{m} L^{p_i}\left(X,\omega_i\right) \rightarrow L^{q}(X,u)} \lesssim \mathcal{C}_0  \left\|{\mathcal A}_{\eta ,\S}\right\|_{\prod\limits_{i=1}^{m} L^{p_i}\left(X,\omega_i\right) \rightarrow L^{q}(X,u)},
    \end{align}
  where 
  \begin{align*}
    \mathcal{C}_0 = [u]_{A_{\infty}}^{|\tau|} \prod_{j \in \tau^c}[\sigma_j]_{A_{\infty}} \prod_{i=1}^m\|b_i\|_{\mathrm{B M O}(X)}.
  \end{align*}
  \end{theorem}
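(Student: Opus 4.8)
The plan is to deduce the weighted bound for the multi-symbol commutator sparse operator from the one for the plain sparse operator $\mathcal A_{\eta,\mathcal S}$ by \emph{peeling off}, one at a time, the two kinds of oscillation factors built into $\mathcal A^{\mathbf b}_{\eta,\mathcal S,\tau}$: the ``inner'' ones $\langle(b_j-b_{j,Q})f_j\rangle_{Q,r}$ with $j\in\tau^c$, and the ``outer'' ones $\prod_{i\in\tau}|b_i(x)-b_{i,Q}|$. Each peeling will cost exactly one factor $\|b\|_{\mathrm{BMO}(X)}$ and one $A_\infty$ characteristic, which is how the constant $\mathcal C_0$ arises. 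I would run the argument by duality: for $q>1$ write $\|\mathcal A^{\mathbf b}_{\eta,\mathcal S,\tau}(\vec f)\|_{L^q(X,u)}=\int_X\mathcal A^{\mathbf b}_{\eta,\mathcal S,\tau}(\vec f)\,h\,u\,d\mu$ with $h\ge0$ and $\|h\|_{L^{q'}(X,u)}\le1$, expand the defining sum over $Q\in\mathcal S$, and estimate each cube's local contribution; the range $0<q\le1$ is treated by the same scheme together with the standard linearization of the $L^q$-``norm'' of a sum $\sum_Q a_Q\chi_Q$ used for plain sparse operators.

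For an inner factor, fix $j\in\tau^c$; since $\sigma_j=\omega_j^{1-p_j/r}\in A_\infty(X)$, writing $\langle\sigma_j\rangle_Q=\sigma_j(Q)/\mu(Q)$ one has
\[
\langle(b_j-b_{j,Q})f_j\rangle_{Q,r}^r=\langle\sigma_j\rangle_Q\cdot\frac1{\sigma_j(Q)}\int_Q|b_j-b_{j,Q}|^r\,\frac{|f_j|^r}{\sigma_j}\,\sigma_j\,d\mu,
\]
and I would apply the generalized Hölder inequality in the Orlicz pair $(\exp L^{1/r},\,L(\log L)^r)$ with respect to the measure $\sigma_j\,d\mu$ on $Q$, combined with the weighted John--Nirenberg inequality $\|b_j-b_{j,Q}\|_{\exp L,\,Q,\,\sigma_j}\lesssim[\sigma_j]_{A_\infty}\|b_j\|_{\mathrm{BMO}(X)}$. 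This bounds $\langle(b_j-b_{j,Q})f_j\rangle_{Q,r}$ by $[\sigma_j]_{A_\infty}\|b_j\|_{\mathrm{BMO}(X)}$ times $\big(\langle\sigma_j\rangle_Q\,\||f_j|^r\sigma_j^{-1}\|_{L(\log L)^r,\,Q,\,\sigma_j}\big)^{1/r}$. A standard $L(\log L)^k$ sparse self-improvement (valid on spaces of homogeneous type, with constant depending only on $k$ and the sparseness constant) then replaces that weighted Orlicz average, over a refined sparse family, by an ordinary one; since $\langle\sigma_j\rangle_Q\cdot\tfrac1{\sigma_j(Q)}\int_Q|f_j|^r\sigma_j^{-1}\sigma_j=\langle f_j\rangle^r_{Q,r}$, the $\sigma_j$-weights recombine to produce exactly a plain $r$-average of $f_j$. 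Doing this for every $j\in\tau^c$ turns all inner oscillations into plain $r$-averages and produces the factor $\prod_{j\in\tau^c}[\sigma_j]_{A_\infty}\|b_j\|_{\mathrm{BMO}(X)}$.

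At this stage the dual pairing has the form $\sum_{Q}\mu(Q)^{\eta/r}c_Q\int_Q\prod_{i\in\tau}|b_i-b_{i,Q}|\,h\,u\,d\mu$ over a refined sparse family, with $c_Q$ collecting the (now commutator-free) $r$-averages of $\vec f$. Using $u\in A_\infty(X)$, the submultiplicativity $\exp L\cdot\exp L\subseteq\exp L^{1/2}$ (iterated to $\exp L^{1/|\tau|}$), so that $\prod_{i\in\tau}|b_i-b_{i,Q}|$ has $\exp L^{1/|\tau|}$-Luxemburg norm over $(Q,u\,d\mu)$ at most $[u]_{A_\infty}^{|\tau|}\prod_{i\in\tau}\|b_i\|_{\mathrm{BMO}(X)}$ by weighted John--Nirenberg, together with the generalized Hölder inequality in $(\exp L^{1/|\tau|},L(\log L)^{|\tau|})$ with respect to $u\,d\mu$, gives the local bound $[u]_{A_\infty}^{|\tau|}\big(\prod_{i\in\tau}\|b_i\|_{\mathrm{BMO}(X)}\big)\,u(Q)\,\|h\|_{L(\log L)^{|\tau|},\,Q,\,u}$. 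One more $L(\log L)^{|\tau|}$ sparse self-improvement replaces $\|h\|_{L(\log L)^{|\tau|},Q,u}$ by $\langle h\rangle_{u,Q'}$, so the pairing becomes $\lesssim\mathcal C_0$ times $\sum_{Q'}\mu(Q')^{\eta/r}c_{Q'}\,u(Q')\langle h\rangle_{u,Q'}$, which is precisely $\mathcal C_0$ times the $L^q(X,u)$--$L^{q'}(X,u)$ pairing of $\mathcal A_{\eta,\mathcal S'}(\vec f)$ with $h$ on a sparse family $\mathcal S'$ of controlled sparseness constant. Taking the supremum over $h$, and using that the operator norm of $\mathcal A_{\eta,\mathcal S'}$ between the stated spaces is controlled by that of $\mathcal A_{\eta,\mathcal S}$ (a family-independent quantity depending only on the sparseness constant and the weights, cf.\ Theorems \ref{Horm.pro}--\ref{Thm:3}), yields \eqref{cao.e0}.

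The main difficulty is the exact bookkeeping of the $A_\infty$ characteristics: obtaining precisely $[u]_{A_\infty}^{|\tau|}$ and the first powers $[\sigma_j]_{A_\infty}$ — rather than larger powers — forces one to combine the \emph{sharp} weighted John--Nirenberg inequality with the Orlicz submultiplicativity in the right order, and to check that the two auxiliary facts invoked over $(X,d,\mu)$ — the $L(\log L)^k$ sparse self-improvement and the family-independent comparison of sparse-operator norms — hold with constants independent of the particular sparse family (and with the weight recombination $\langle\sigma_j\rangle_Q\,\langle\,\cdot\,\rangle_{\sigma_j,Q}=\langle\,\cdot\,\rangle_Q$ surviving the passage to the refined family). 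Once these are in place, the remaining estimates are routine.
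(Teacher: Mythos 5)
Your proposal runs a genuinely different argument from the paper's. The paper keeps the sparse family $\mathcal S$ \emph{fixed} and shifts the weight/Orlicz complexity into the \emph{input functions}: after the unweighted generalized H\"older inequality in $(\exp L^{1/r},L(\log L)^r)$ and the unweighted John--Nirenberg inequality, the surviving Orlicz averages $\|f_j^r\sigma_j\|_{L(\log L)^r,Q}$ and $\mu(Q)\|gu\|_{L(\log L)^{|\tau|},Q}$ are converted into ordinary averages of \emph{weighted maximal functions} of $f_j$ and $g$ via Lemma~\ref{lemma:Cao3.1_1}--\eqref{lemma:Cao3.1_2}; this produces exactly $\mathcal A_{\eta,\mathcal S}$ (same family) applied to $\big(M_{\sigma_i}(f_i^r)\sigma_i\big)^{1/r}$, $\big(M_{\sigma_j}(|f_j|^{rs_j})^{1/s_j}\sigma_j\big)^{1/r}$, and the $[\sigma_j]_{A_\infty}$, $[u]_{A_\infty}$ factors fall out linearly from that lemma, while the boundedness of $M_{\sigma_i}$ on $L^{p_i/r}(\sigma_i)$ removes the maximal functions at the end. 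You instead propose weighted H\"older inequalities with respect to $\sigma_j\,d\mu$, a \emph{weighted} sharp John--Nirenberg estimate $\|b_j-b_{j,Q}\|_{\exp L,Q,\sigma_j}\lesssim[\sigma_j]_{A_\infty}\|b_j\|_{\mathrm{BMO}}$, and then an $L(\log L)^k$ sparse self-improvement to discharge the Orlicz norms by passing to a refined sparse family $\mathcal S'$.

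There are two points to be careful about. First, the sparse self-improvement step replaces $\mathcal S$ by some $\mathcal S'\supsetneq\mathcal S$, so the pairing you end with is that of $\mathcal A_{\eta,\mathcal S'}(\vec f)$, not $\mathcal A_{\eta,\mathcal S}(\vec f)$. It is \emph{not} true that $\|\mathcal A_{\eta,\mathcal S'}\|_{\mathrm{op}}\lesssim\|\mathcal A_{\eta,\mathcal S}\|_{\mathrm{op}}$ for two arbitrary sparse families of the same sparseness constant; what is true is that both are bounded by a family-independent quantity (e.g.\ the constants in Theorems~\ref{Horm.pro}--\ref{Thm:3}). Your argument therefore proves $\|\mathcal A^{\mathbf b}_{\eta,\mathcal S,\tau}\|_{\mathrm{op}}\lesssim\mathcal C_0\,\sup_{\mathcal S'}\|\mathcal A_{\eta,\mathcal S'}\|_{\mathrm{op}}$ rather than \eqref{cao.e0} literally; for every downstream use (the remark immediately after the theorem) this is equivalent, but it is a weaker formal statement. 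The paper's device — changing the functions rather than the family — avoids this entirely. Second, the linear $[\sigma_j]_{A_\infty}$-dependence in the weighted John--Nirenberg norm $\|b-b_Q\|_{\exp L,Q,\sigma_j}$ is correct (it follows from the sharp weighted good-$\lambda$ / reverse-H\"older argument), but it is less standard than the unweighted version and needs to be stated and cited carefully; the paper bypasses it by never working with weighted Luxemburg norms of $b-b_Q$. Similarly, the $L(\log L)^k$ sparse self-improvement you invoke has to be formulated \emph{with respect to $\sigma_j\,d\mu$} so that the $\langle\sigma_j\rangle_Q$ factor recombines to a plain $r$-average on the refined cubes; this is doable but not a citation to the usual unweighted statement. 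With these two points repaired (or by simply proving the weaker $\sup_{\mathcal S'}$ version and noting it suffices for the applications), your argument goes through.
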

\begin{remark}
  Under the assumption of Theorem \ref{Caopro_1}, by setting different weight conditions, we can obtain the following estimates:
\begin{itemize}
  \item If $\vec \omega \in A^{\star}_{\vec p,q}(X)$, it follows from Theorem \ref{Horm.pro} that
  \begin{align}\label{cao.e1}
    &\sup_{\mathcal{S} \subseteq \d}\left\|{\mathcal A}_{\eta ,\S,\tau,r}^\mathbf{b}\right\|_{\prod\limits_{i=1}^{m} L^{p_i}\left(X,\omega_i\right) \rightarrow L^{q}(X,u)} \overset{\text{Theorem \ref{Horm.pro}}}{\lesssim} \mathcal{C}_1,
    \end{align}
where 
\begin{align*}
  \mathcal{C}_1= \mathcal{C}_0 \times [\vec{\omega}]_{A^{\star}_{\vec{p} / r,q/r}(X)}^{\max\limits_i\left\{\frac{1}{r}, \frac{1}{q}\left(\frac{p_i}{r}\right)^{\prime}\right\}}.
\end{align*}
\item   Suppose that $u$ and $\vec\sigma$ be weights satisfying that $\|u,\vec\sigma\|_{A^{\star}_{\vec p,q}(X)}<\infty$ and $u, \sigma_i \in A_\infty(X)$, for $i=1 \cdots m$.
Then, we can obtain various quantitative estimates as follows, which are \eqref{cao.e2}, \eqref{cao.e3}, and \eqref{cao.e4}.
\begin{list}{\rm (\theenumi)}{\usecounter{enumi}\leftmargin=1cm \labelwidth=1cm \itemsep=0.2cm \topsep=.2cm \renewcommand{\theenumi}{\roman{enumi}}}
  \item \begin{align}\label{cao.e2}
    &\quad \sup_{\mathcal{S} \subseteq \d}  \left\|{\mathcal A}_{\eta ,\S,\tau,r}^\mathbf{b}\right\|_{\prod\limits_{i=1}^{m} L^{p_i}\left(X,\omega_i\right) \rightarrow L^{q}(X,u)} \overset{\text{Theorem \ref{Thm:1}}}{\lesssim} \mathcal{C}_{2},
    \end{align}
  \begin{align*}
    \mathcal{C}_2= \mathcal{C}_0 \times \|u,\vec \sigma\|_{A^{\star}_{\vec{p},q}(X)}^{\frac 1q}\Big( \prod_{i=1}^m [\sigma_i]_{A_\infty(X)}^{\frac 1{p_i}} +[u]_{A_\infty(X)}^{\max\left\{1-\frac 1q, 0\right\}}\sum_{j=1}^{m} \prod_{i\neq j}[\sigma_i]_{A_\infty(X)}^{\frac 1{p_i}} \Big).
  \end{align*}
    \item \begin{align}\label{cao.e3}
    &\quad \sup_{\mathcal{S} \subseteq \d} \left\|{\mathcal A}_{\eta ,\S,\tau,r}^\mathbf{b}\right\|_{\prod\limits_{i=1}^{m} L^{p_i}\left(X,\omega_i\right) \rightarrow L^{q}(X,u)} \overset{\text{Theorem \ref{Horm.pro_2}}}{\lesssim} \mathcal{C}_{3},
    \end{align}
    where 
    \begin{align*}
      \mathcal{C}_3=\mathcal{C}_0 \times \|u,\vec{\sigma}\|_{A^{\star}_{\vec{p},q}(X)}^{1/q}\Big([\vec \sigma]_{W_{\vec p,q}^\infty(X)}^{1/q}+ \sum_{i=1}^m [\vec \sigma^i]_{W_{ (\vec p,q)^i}^\infty(X)}^{1/{({p_i})'}}\Big),
    \end{align*}
     $[\vec \sigma^i]_{W^{\infty}_{ (\vec p,q)^i}(X)}=1$ if $q\le 1$, and otherwise,
    \begin{align*}
    [\vec \sigma^i]_{W^{\infty}_{(\vec p,q)^i}(X)}=\sup_{Q \in \d} \Big(  \int_Q M &(\mathbf \chi_Q u)^{\frac{(p_i)'}{q'}} \prod_{j\neq i} M (\mathbf \chi_Q\sigma_j)^{\frac{(p_i)'}{p_j}} d \mu\Big) \\ &\times\Big(\int_Q u^{\frac{(p_i)'}{q'}}\prod_{j\neq i} \sigma_j^{\frac{(p_i)'}{p_j}}d \mu \Big)^{-1}.
    \end{align*}
    \item \begin{align}\label{cao.e4}
    &\quad \sup_{\mathcal{S} \subseteq \d} \left\|{\mathcal A}_{\eta ,\S,\tau,r}^\mathbf{b}\right\|_{\prod\limits_{i=1}^{m} L^{p_i}\left(X,\omega_i\right) \rightarrow L^{q}(X,u)} \overset{\text{Theorem \ref{Thm:3}}}{\lesssim} \mathcal{C}_{4},
        \end{align}
  \begin{align*}
  \mathcal{C}_4=\mathcal{C}_0 
  \times \|u,\vec{\sigma}\|_{A^{\star}_{\vec{p},q}(X)}^{1/q}\Big( [\vec{\sigma}]_{H_{\vec{p},q}^\infty(X)}^{\frac{1}{q}} + \sum_{i=1}^m [\sigma_i]_{H_{(\vec{p},q)^i}^\infty(X)}^{\frac{1}{p_i'}} \Big),
\end{align*}
where $[\vec \sigma^i]_{H_{(\vec p,q)^i}^\infty(X)}=1$ if $q\le 1$ and otherwise,
  \begin{equation}\begin{split}
  [\vec \sigma^i]_{H_{(\vec p,q)^i}^\infty(X)}=\sup_{Q \in \d} &\langle u \rangle_{Q}^{p_i'{\max\left\{1-\frac 1q, 0\right\}}} \exp{\left(\dashint_Q \log u^{-1}\right)}^{p_i'{\max\left\{1-\frac 1q, 0\right\}}} \\ &\times\prod_{j\neq i} \langle \sigma_i \rangle_{Q}^{p_i'/p_j} \exp{\left(\dashint_Q \log\sigma_i^{-1}\right)}^{p_i'/p_j}.
  \end{split}\end{equation}
  \end{list}
\end{itemize}
\end{remark}


\subsection{Bloom type estimate for first order multi-symbol multilinear fractional sparse operator}
~~

For Theorem \ref{Caopro_1}, by setting \( r = 1 \), we obtain a new method to estimate, which combines $\mathrm{BMO}(X)$ with Bloom type estimate.
\begin{theorem}\label{est.new.}
  Let $\mathcal D$ be a dyadic lattice of $X$, $\mathcal{S} \subseteq \d$ be a sparse family, $0 < q < \infty$, $1 < p_i < \infty$, $i=1,\cdots,m$, with $\eta:=\sum\limits_{i = 1}^m {\frac{1}{{{p_i}}}}-\frac{1}{{{q}}} \in [0,m)$. If $\nu \in A_q$, \(b_j \in {\mathrm{BMO}}_{\nu_j}(X)\), and $\omega_j \in A_{p_j}(X) $ for each \( j \in \tau^c\), then there exists a sparse family $\tilde{\mathcal{S}}$ satisfying ${\mathcal{S}} \subseteq \tilde{\mathcal{S}} \subseteq \d$ such that
    \begin{align}\label{mix2,0}
      &\quad \left\|{\mathcal A}_{\eta ,\S,\tau,r}^\mathbf{b}\right\|_{\prod\limits_{i=1}^{m} L^{p_i}\left(X,\omega_i\right) \rightarrow L^{q}(X,\nu)} \lesssim \mathcal{N}_{0} \left\|\mathcal{A}_{\eta,\tilde{\mathcal{S}}}\right\|_{\prod\limits_{i=1}^{m} L^{p_i}(X,\varpi_i)\rightarrow L^{q}(X,\nu)},
      \end{align}
where $\varpi_i:= \begin{cases}\omega_i, & i \in \tau \\ \lambda_i, & i \notin \tau\end{cases}$, $\nu_j:=\lambda_j^{-\frac{1}{p_j}} \omega_j^{\frac{1}{p_j}} $ for each $ j \in \tau^c$, and 
\begin{align*}
 {\mathcal{N}_{0}}=   \prod_{i \in \tau}\|b_i\|_{\mathrm{B M O}(X)} \prod_{j \in \tau^c}\|b_j\|_{{\mathrm{BMO}}_{\nu_j}(X)}
\times [\nu]_{A_{\infty}}^{|\tau|} 
\prod_{j \in \tau^c} \left[\omega_j\right]_{A_{p_j}(X)}^{\max \big\{1, \frac{p_j^{\prime}}{p_j}\big\}} .
\end{align*}
\end{theorem}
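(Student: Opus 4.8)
The plan is to reduce the Bloom-type estimate for the first order multi-symbol sparse operator $\mathcal A_{\eta,\S,\tau}^{\mathbf b}$ to the unweighted-symbol sparse operator $\mathcal A_{\eta,\tilde{\mathcal S}}$ by peeling off the BMO factors one coordinate at a time, exactly as in the proof of Theorem~\ref{Caopro_1} but now tracking the Bloom weight $\nu_j$ for the indices $j\in\tau^c$. Recall that $\mathcal A_{\eta,\S,\tau}^{\mathbf b}$ has, inside each cube $Q\in\S$, factors $|b_i(x)-b_{i,Q}|$ for $i\in\tau$ (the ``outer'' symbols, evaluated at the point $x$) and $\langle(b_j-b_{j,Q})f_j\rangle_{Q}$ for $j\in\tau^c$ (the ``inner'' symbols). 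The first step is to handle the outer factors: for $i\in\tau$ one uses the standard pointwise-on-$Q$ replacement of $|b_i(x)-b_{i,Q}|$ together with a Cauchy--Schwarz / John--Nirenberg argument across the sparse family to produce a factor $\|b_i\|_{\mathrm{BMO}(X)}\,[\nu]_{A_\infty}$ per index $i$, absorbing the resulting enlarged sparse family into a new sparse family — this is where $[\nu]_{A_\infty}^{|\tau|}$ and $\prod_{i\in\tau}\|b_i\|_{\mathrm{BMO}(X)}$ come from, identically to $\mathcal C_0$ in Theorem~\ref{Caopro_1}.

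The genuinely new step is the second one: the inner factors $\langle(b_j-b_{j,Q})f_j\rangle_{Q}$ for $j\in\tau^c$. Here I would write $f_j$ as $f_j=g_j\cdot \lambda_j^{1/p_j}\omega_j^{-1/p_j}$ against the Bloom weight structure, so that $(b_j-b_{j,Q})f_j = \big[(b_j-b_{j,Q})\nu_j^{-1}\big]\cdot(g_j\,\lambda_j^{1/p_j})$ up to the correct powers, and then invoke the Bloom-BMO estimate: for $b_j\in\mathrm{BMO}_{\nu_j}(X)$ one has $\langle|b_j-b_{j,Q}|\,\nu_j^{-1}\rangle_{Q}\lesssim\|b_j\|_{\mathrm{BMO}_{\nu_j}(X)}$ on average, up to an $A_\infty$-type loss. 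The upshot of this maneuver is that each inner factor is replaced by $\|b_j\|_{\mathrm{BMO}_{\nu_j}(X)}\langle f_j\rangle_{Q}$ but now the natural weight attached to the $j$-th slot has switched from $\omega_j$ to $\lambda_j$ — which is precisely the definition $\varpi_j=\lambda_j$ for $j\notin\tau$ in the statement. The loss incurred at this stage is controlled by $[\omega_j]_{A_{p_j}(X)}^{\max\{1,p_j'/p_j\}}$, coming from the weighted weak-type / maximal function bounds needed to sum the sparse series with the Bloom weight in place; this accounts for the remaining factor in $\mathcal N_0$.

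After both peeling steps, what remains is a sum over a (possibly different, but still $\delta$-sparse) family $\tilde{\mathcal S}$ of the bare fractional sparse operator $\mathcal A_{\eta,\tilde{\mathcal S}}$ applied to $\vec f$, with the slots weighted by $\varpi_i$, which is exactly the right-hand side of \eqref{mix2,0}. I would organize the induction on $|\tau|$ and $|\tau^c|$ so that each peeling is a single clean lemma, mirroring the structure already used for Theorem~\ref{Caopro_1}; the sparse family only enlarges by a controlled dilation at each step, and one uses the standard fact that a finite union of sparse families is (after splitting) sparse.

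The main obstacle I anticipate is the bookkeeping in the inner-factor step: one must verify that the change of weight from $\omega_j$ to $\lambda_j$ is compatible with the hypothesis $\omega_j\in A_{p_j}(X)$ and produces exactly the quantitative constant $[\omega_j]_{A_{p_j}(X)}^{\max\{1,p_j'/p_j\}}$ rather than something weaker, and that the Bloom-BMO average bound can be applied \emph{inside} the sparse sum uniformly over $Q\in\S$ while still allowing the sparse family to be re-selected. Getting the self-improvement / John--Nirenberg step for $\mathrm{BMO}_{\nu_j}$ to interact correctly with the sparse summation over a space of homogeneous type — rather than $\mathbb R^n$ — is the delicate point; but since the dyadic structure of $X$ (Section~\ref{Pre.}) gives the same John--Nirenberg and $A_\infty$ machinery, this should go through with only notational changes. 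Everything downstream of these two peeling lemmas is then routine.
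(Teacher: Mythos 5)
Your high-level architecture --- peel the outer factors $|b_i(x)-b_{i,Q}|$, $i\in\tau$, by the $\exp L$/Orlicz John--Nirenberg argument (giving $\prod_{i\in\tau}\|b_i\|_{\mathrm{BMO}}$ and $[\nu]_{A_\infty}^{|\tau|}$), then peel the inner factors $\langle(b_j-b_{j,Q})f_j\rangle_Q$, $j\in\tau^c$, so that the weight in slot $j$ switches from $\omega_j$ to $\lambda_j$ --- matches the paper's structure. But your inner-factor mechanism has a genuine gap. The bound you quote, $\langle|b_j-b_{j,Q}|\,\nu_j^{-1}\rangle_Q\lesssim\|b_j\|_{\mathrm{BMO}_{\nu_j}(X)}$, is not what the definition gives and is not correct in general: from Definition~\ref{MWbmo} one has $\int_Q|b_j-b_{j,Q}|\,d\mu\leq\|b_j\|_{\mathrm{BMO}_{\nu_j}}\,\nu_j(Q)$, i.e.\ $\langle|b_j-b_{j,Q}|\rangle_Q\leq\|b_j\|_{\mathrm{BMO}_{\nu_j}}\langle\nu_j\rangle_Q$, whereas dividing by $\nu_j(x)$ \emph{pointwise inside the integral} is a different and generally much larger quantity. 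Moreover the inner factor is the average of a \emph{product}, $\langle|b_j-b_{j,Q}|\,|f_j|\rangle_Q$, so even a correct average-oscillation bound for $b_j$ alone would not factor out of it without a H\"older loss that destroys the constants.

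The step your proposal is missing is the paper's use of Lemma~\ref{zhang:6.1}: a Lerner-type \emph{pointwise sparse domination} of the oscillation, $|b_j(x)-b_{j,Q}|\lesssim\sum_{P\in\tilde{\mathcal S}_j,\,P\subseteq Q}\Omega(b_j,P)\chi_P(x)$ for $x\in Q$, valid on a refined sparse family $\tilde{\mathcal S}_j\supseteq\mathcal S$. Plugging this into $\int_Q|b_j-b_{j,Q}|\,|f_j|$, bounding $\Omega(b_j,P)\leq\|b_j\|_{\mathrm{BMO}_{\nu_j}}\langle\nu_j\rangle_P$, and resumming gives the key claim~\eqref{9.16}: $\langle|b_j-b_{j,Q}|\,|f_j|\rangle_Q\lesssim\|b_j\|_{\mathrm{BMO}_{\nu_j}}\,\langle A_{\tilde{\mathcal S}_j}(|f_j|)\,\nu_j\rangle_Q$. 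It is at this point that the linear sparse operator $A_{\tilde{\mathcal S}_j}$ appears, and the factor $[\omega_j]_{A_{p_j}(X)}^{\max\{1,p_j'/p_j\}}$ is then exactly the sharp weighted bound $\|A_{\tilde{\mathcal S}_j}\|_{L^{p_j}(\omega_j)\to L^{p_j}(\omega_j)}\lesssim[\omega_j]_{A_{p_j}}^{\max\{1,p_j'/p_j\}}$, applied after noting the algebraic identity $\lambda_j\nu_j^{p_j}=\omega_j$ --- not ``weak-type / maximal-function bounds needed to sum the sparse series'' as you suggest. Without the sparse-domination lemma for $(b-b_Q)$ your peeling of the $\tau^c$ factors does not close.
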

\begin{remark}\label{mix.remark}
  Under the assumption of Theorem \ref{est.new.}, by setting different weight conditions, we can obtain the following estimates:
\begin{itemize}
  \item If $({({\omega _i})_{i \in \tau }},{({\lambda _j})_{j \in {\tau ^c}}}) \in A^{\star}_{\vec p,q}(X)$ with $\nu = \prod\limits_{i \in \tau}\omega_i^{\frac{q}{p_i}}\prod\limits_{j \in \tau^c}\lambda_j^{\frac{q}{p_j}}$. It follows from Theorem \ref{Horm.pro} that
  \begin{align}\label{nm1}
    &\quad \sup_{\mathcal{S} \subseteq \d} \left\|{\mathcal A}_{\eta ,\S,\tau,r}^\mathbf{b}\right\|_{\prod\limits_{i=1}^{m} L^{p_i}\left(X,\omega_i\right) \rightarrow L^{q}(X,\nu)} \overset{\text{Theorem \ref{Horm.pro}}}{\lesssim} \mathcal{N}_{1},
    \end{align}
    where 
    \begin{align*}
      \mathcal{N}_{1}=\mathcal{N}_{0} \times [((\omega_i)_{i \in \tau},(\lambda_j)_{j \in \tau^c})]_{A^{\star}_{\vec p,q}(X)}^{\max\big\{1,\tfrac{p_1'}{q},\dotsc,\tfrac{p_m'}{q}\big\}}.
    \end{align*}
  \item 
  Let the weights \(\nu\) and \(\vec{\sigma}\) satisfy \( \|\nu, \vec{\sigma}\|_{A^{\star}_{\vec{p}, q}(X)} < \infty \), where $
  \nu = \prod\limits_{i \in \tau} \omega_i^{\frac{q}{p_i}} \prod\limits_{j \in \tau^c} \lambda_j^{\frac{q}{p_j}}$ with $
  \sigma_i = \begin{cases}
  \omega_i^{1 - p_i'}, & \text{if } i \in \tau, \\
  \lambda_i^{1 - p_i'}, & \text{if } i \in \tau^c,
  \end{cases} $.
Suppose that  \( \nu, \sigma_i \in A_\infty(X) \) for all \( i = 1, \dotsc, m \).
  Then, we can obtain various quantitative estimates as follows, which are \eqref{nm2}, \eqref{nm3}, and \eqref{nm4}.
  \begin{list}{\rm (\theenumi)}{\usecounter{enumi}\leftmargin=1cm \labelwidth=1cm \itemsep=0.2cm \topsep=.2cm \renewcommand{\theenumi}{\roman{enumi}}}
  \item \begin{align}\label{nm2}
    &\quad \sup_{\mathcal{S} \subseteq \d} \left\|{\mathcal A}_{\eta ,\S,\tau,r}^\mathbf{b}\right\|_{\prod\limits_{i=1}^{m} L^{p_i}\left(X,\omega_i\right) \rightarrow L^{q}(X,\nu)} \overset{\text{Theorem \ref{Thm:1}}}{\lesssim} \mathcal{N}_{2},
    \end{align}
    where
\begin{align*}
  \mathcal{N}_2&=\mathcal{N}_0 \times \|\nu,\vec \sigma\|_{A^{\star}_{\vec{p},q}(X)}^{\frac 1q}\Big( \prod_{i=1}^m [\sigma_i]_{A_\infty(X)}^{\frac 1{p_i}} +[\nu]_{A_\infty(X)}^{\max\left\{1-\frac 1q, 0\right\}}\sum_{j=1}^{m} \prod_{i\neq j}[\sigma_i]_{A_\infty(X)}^{\frac 1{p_i}} \Big),
    \end{align*}
  \item   \begin{align}\label{nm3}
    &\quad \sup_{\mathcal{S} \subseteq \d} \left\|{\mathcal A}_{\eta ,\S,\tau,r}^\mathbf{b}\right\|_{\prod\limits_{i=1}^{m} L^{p_i}\left(X,\omega_i\right) \rightarrow L^{q}(X,\nu)} \overset{\text{Theorem \ref{Horm.pro_2}}}{\lesssim} \mathcal{N}_{3},
    \end{align}
    where 
    \begin{align*}
      \mathcal{N}_3=\mathcal{N}_0 \times \|\nu,\vec{\sigma}\|_{A^{\star}_{\vec{p},q}(X)}^{1/q}\Big([\vec \sigma]_{W_{\vec p,q}^\infty(X)}^{1/q}+ \sum_{i=1}^m [\vec \sigma^i]_{W_{ (\vec p,q)^i}^\infty(X)}^{1/{({p_i})'}}\Big),
    \end{align*}
     $[\vec \sigma^i]_{W^{\infty}_{ (\vec p,q)^i}(X)}=1$ if $q\le 1$, and otherwise,
    \begin{align*}
    [\vec \sigma^i]_{W^{\infty}_{(\vec p,q)^i}(X)}=\sup_{Q \in \d} \Big(  \int_Q M &(\mathbf \chi_Q \nu)^{\frac{(p_i)'}{q'}} \prod_{j\neq i} M (\mathbf \chi_Q\sigma_j)^{\frac{(p_i)'}{p_j}} d \mu\Big)\Big(\int_Q \nu^{\frac{(p_i)'}{q'}}\prod_{j\neq i} \sigma_j^{\frac{(p_i)'}{p_j}}d \mu \Big)^{-1}.
    \end{align*}
  \item   \begin{align}\label{nm4}
    &\quad \sup_{\mathcal{S} \subseteq \d} \left\|{\mathcal A}_{\eta ,\S,\tau,r}^\mathbf{b}\right\|_{\prod\limits_{i=1}^{m} L^{p_i}\left(X,\omega_i\right) \rightarrow L^{q}(X,\nu)} \overset{\text{Theorem \ref{Thm:3}}}{\lesssim} \mathcal{N}_{4},
      \end{align}
\begin{align*}
  \mathcal{N}_4=\mathcal{N}_0 
  \times \|\nu,\vec{\sigma}\|_{A^{\star}_{\vec{p},q}(X)}^{1/q}\Big( [\vec{\sigma}]_{H_{\vec{p},q}^\infty(X)}^{\frac{1}{q}} + \sum_{i=1}^m [\sigma_i]_{H_{(\vec{p},q)^i}^\infty(X)}^{\frac{1}{p_i'}} \Big),
\end{align*}
where $\|\vec \sigma^i\|_{H_{(\vec p,q)^i}^\infty(X)}=1$ if $q\le 1$ and otherwise,
  \begin{equation}\begin{split}
  [\vec \sigma^i]_{H_{(\vec p,q)^i}^\infty(X)}=\sup_{Q\in\d} &\langle \nu \rangle_{Q}^{p_i'{\max\left\{1-\frac 1q, 0\right\}}} \exp{\left(\dashint_Q \log \nu^{-1}\right)}^{p_i'{\max\left\{1-\frac 1q, 0\right\}}} \\ &\times\prod_{j\neq i} \langle \sigma_i \rangle_{Q}^{p_i'/p_j} \exp{\left(\dashint_Q \log\sigma_i^{-1}\right)}^{p_i'/p_j}.
  \end{split}\end{equation}

  \end{list}
\end{itemize}

\end{remark}

\subsection{Bloom type estimates for higher order multi-symbol multilinear fractional sparse operator}
~~

  Next, we perform the Bloom type estimation of the sparse operators that we focus on in this paper, see Theorem \ref{zhang2018:th1.7} and \ref{path2}. Moreover, we introduce new  characterization based on this, see Remark  \ref{remarkb1} and \ref{remarkb2}.  
\subsection*{\tt Bloom type estimates by maximal weight method }
~~

\begin{theorem}\label{zhang2018:th1.7}
Let $\mathcal D$ be a dyadic lattice of $X$, $\mathcal{S} \subseteq \d$ be a sparse family, \(1 < q < \infty\), \(1 < p_i < \infty\) for each \(i = 1, \ldots, m\), and
$\eta := \sum\limits_{i=1}^m \frac{1}{p_i} - \frac{1}{q} \in [0, m)$.
Set \(\mathbf{t}\) and \(\mathbf{k}\) be multi-indices satisfying \(\mathbf{t} \prec \mathbf{k}\), and
$t := \sum\limits_{i \in \tau} k_i - \sum\limits_{i \in \tau} t_i - 1$.
Suppose that \(\mu_0, \lambda \in A_q(X)\) and \(\mu_i, v_i \in A_{p_i}(X)\), for each  \(i \in \tau\). Define the weights
$\eta_i := \left(\frac{\mu_i}{v_i}\right)^{\frac{1}{t_i p_i}}$ and the maximal weight $\eta_0(x) := \max\limits_{i \in \tau} \{\eta_i(x)\}$.
If \(b_i \in \mathrm{BMO}_{\eta_i}(X)\) for all \(i \in \tau\) and
$\eta_0 = \left(\frac{\mu_0}{\lambda}\right)^{\frac{1}{(t+1) q}}$,
then there exists a sparse family $\tilde{\mathcal{S}}$ satisfying ${\mathcal{S}} \subseteq \tilde{\mathcal{S}} \subseteq \d$ such that
  \begin{align}\label{max.weight_}
    &\quad  \left\|{\mathcal A}_{\eta ,\S,\tau}^\mathbf{b,k,t}\right\|_{\prod\limits_{i=1}^{m} L^{p_i}\left(X,\mu_i\right) \rightarrow L^{q}(X,\lambda)} \lesssim   \mathcal{W}_{0} \left\|\mathcal{A}_{\eta,\tilde{\mathcal{S}}}\right\|_{\prod\limits_{i=1}^{m} L^{p_i}(X,\varpi_i)\rightarrow L^{q}(X,\mu_0)},
    \end{align}
  where $\varpi_i= \begin{cases}v_i, & i \in \tau \\ \mu_i, & i \notin \tau\end{cases}$, and
  \begin{align*}
    \mathcal{W}_{0}
    &=\prod\limits_{i \in \tau} \left(\left([\mu_i]^{\frac{t_i+1}{2}}_{A_{p_i}(X)} \left[v_i\right]^{\frac{t_i-1}{2}}_{A_{p_i}(X)}\right)^{\max \left\{1, \frac{1}{p_i-1}\right\}}\left([\lambda]_{A_{q}(X)} 
    [\mu_0]_{A_{q}(X)}\right)^{\frac{t+1}{2}\max\left\{ 1,\frac{1}{q-1}\right\} }\right)\\
    &\quad \times \prod_{i \in \tau} \|b_i\|_{\mathrm{BMO}_{\eta_0}(X)}^{k_i-t_i} \|b_i\|_{\mathrm{BMO}_{\eta_i}(X)}^{t_i}.\\
  \end{align*}
  \end{theorem}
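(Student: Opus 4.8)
The plan is to reduce the higher order sparse operator $\mathcal{A}_{\eta,\mathcal{S},\tau}^{\mathbf{b},\mathbf{k},\mathbf{t}}$ to the plain sparse operator $\mathcal{A}_{\eta,\tilde{\mathcal{S}}}$ through a telescoping-plus-iteration argument, absorbing each symbol factor into the weights one at a time. The crucial observation is that the operator carries, for each $i\in\tau$, both an \emph{outside} factor $|b_i(x)-b_{i,Q}|^{k_i-t_i}$ and an \emph{inside} factor $(b_i-b_{i,Q})^{t_i}$ sitting inside the average $\langle\,\cdot\,\rangle_{Q,r}$ (here $r=1$). First I would handle the outside factors: using the standard BMO estimate that for $x\in Q'\subseteq Q$ with $Q,Q'\in\tilde{\mathcal{S}}$ one has $|b_{i,Q'}-b_{i,Q}|\lesssim \|b_i\|_{\mathrm{BMO}}$ times the number of intermediate generations, and the fact (Lemma~\ref{vweight4}-type reasoning) that sparse families can be refined so that pointwise $|b_i(x)-b_{i,Q}|\le \sum_{Q'\ni x, Q'\subseteq Q}\|b_i\|_{\mathrm{BMO}_{\eta_0}}\eta_0(x)$ up to a constant depending only on the structural constants of $X$; summing the geometric-type series over the sparse tree converts each outside power $|b_i(x)-b_{i,Q}|^{k_i-t_i}$ into a factor $\|b_i\|_{\mathrm{BMO}_{\eta_0}}^{k_i-t_i}\,\eta_0(x)^{k_i-t_i}$ attached to the new sparse operator, at the cost of replacing $\mathcal{S}$ by a larger (still sparse) family $\tilde{\mathcal{S}}$. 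After doing this for every $i\in\tau$, the accumulated outside weight is $\eta_0(x)^{\,t+1}$ precisely because $\sum_{i\in\tau}(k_i-t_i) = t+1$, matching the exponent in the definition $\eta_0 = (\mu_0/\lambda)^{1/((t+1)q)}$.

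Next I would absorb the inside factors. For each $i\in\tau$ I want to estimate $\langle |f_i(b_i-b_{i,Q})^{t_i}|\rangle_{Q,1}$ by something comparable to $\langle |f_i|\rangle_{Q,1}$ after changing the ambient weight on the $i$-th slot from $\mu_i$ to $v_i$. This is exactly a Bloom-type/Hölder maneuver: write $|b_i-b_{i,Q}|^{t_i}\le$ (something controlled in $L(\log L)^{t_i}$-Orlicz average via the John–Nirenberg inequality for $\mathrm{BMO}_{\eta_i}$), then use the generalized Hölder inequality for Orlicz pairs together with the $A_{p_i}$ condition on $\mu_i$ and $v_i$ to pull out $\|b_i\|_{\mathrm{BMO}_{\eta_i}}^{t_i}$ and the ratio weight $\eta_i^{t_i}$. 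Because $\eta_i \le \eta_0$ pointwise by definition of the maximal weight, every $\eta_i^{t_i}$ that appears can be majorized by $\eta_0^{t_i}$, so all the ratio weights coalesce into a single power of $\eta_0$; combined with the outside contribution this is what forces the target weight relation $\eta_0 = (\mu_0/\lambda)^{1/((t+1)q)}$ to close the estimate. Bookkeeping the BMO norms from both stages yields the product $\prod_{i\in\tau}\|b_i\|_{\mathrm{BMO}_{\eta_0}}^{k_i-t_i}\|b_i\|_{\mathrm{BMO}_{\eta_i}}^{t_i}$ in $\mathcal{W}_0$, and the quantitative $A_{p_i}$/$A_q$ dependence comes from the known sharp constants in the weighted John–Nirenberg and Bloom embedding estimates — this is the source of the exponents $\frac{t_i\pm1}{2}$ and $\frac{t+1}{2}$, obtained by iterating the first-order Bloom bound $t_i$ (resp.\ $t+1$) times and taking geometric means of the $A_{p_i}$ characteristics of $\mu_i$ and $v_i$ (resp.\ $\lambda$ and $\mu_0$), exactly as in Theorem~$A_5$ and Theorem~$A_6$.

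The main obstacle I anticipate is the simultaneous handling of \emph{several} symbols with \emph{mixed} inside/outside multiplicities while keeping the weight exponents consistent: one must be careful that the sparse family $\tilde{\mathcal{S}}$ produced after absorbing $b_{\tau(1)}$ is still adequate for absorbing $b_{\tau(2)}$, and that iterating the refinement $|\tau|$ times does not destroy sparseness (it does not, since a bounded number of dyadic-generation shifts of a sparse family remains sparse with a worse but still admissible constant). A secondary technical point is justifying that the maximal weight $\eta_0$ genuinely dominates each $\eta_i$ in \emph{every} place they occur — both in the outside telescoping and inside the Orlicz Hölder step — and that $\eta_0$ itself is a well-behaved weight; here I would invoke that a finite pointwise maximum of $A_\infty$-type ratio weights is again of the required form, together with the hypothesis $\eta_0 = (\mu_0/\lambda)^{1/((t+1)q)}$ which is imposed precisely so that the final application of the plain sparse operator bound (Theorem~\ref{Horm.pro} in the unweighted-symbol case) goes through with the stated constant $\mathcal{W}_0$. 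Once these reductions are in place, the claimed inequality follows by composing the chain of estimates and invoking the boundedness of $\mathcal{A}_{\eta,\tilde{\mathcal{S}}}$ from $\prod_i L^{p_i}(X,\varpi_i)$ to $L^q(X,\mu_0)$.
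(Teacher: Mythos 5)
Your broad architecture is the right one: you correctly separate the outside factors $|b_i(x)-b_{i,Q}|^{k_i-t_i}$ (responsible for the target weight shift $\lambda\to\mu_0$ governed by $\eta_0$) from the inside factors $(b_i-b_{i,Q})^{t_i}$ inside the average (responsible for the source shifts $\mu_i\to v_i$ governed by $\eta_i$), and you have the count $\sum_{i\in\tau}(k_i-t_i)=t+1$ right. But the mechanism you describe for both stages is not what is needed, and one step would actively break the estimate.

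For the outside factors, you say a telescoping ``geometric-type series over the sparse tree'' converts the power into a pointwise factor $\eta_0(x)^{t+1}$ attached to the sparse operator. That is not what happens. The paper first dominates $|b_i(x)-b_{i,Q}|$ by Lemma~\ref{zhang:6.1} as a sparse sum $\|b_i\|_{\mathrm{BMO}_{\eta_0}}\sum_{P\subseteq Q}\eta_{0,P}\chi_P(x)$, and then the combinatorial identity \eqref{xxing} converts the $(t+1)$-st power of that sum paired against $|g\lambda|$ into the \emph{iterated sparse operator} $A^{t+1}_{\tilde{\mathcal{S}},\eta_0}(|g\lambda|)$. This is not a pointwise weight factor: it only becomes useful after a duality step and repeated use of the self-adjointness of $A_{\tilde{\mathcal{S}}}$ (neither of which you mention), which let you transfer the iterated operators and bound $\|A_{\tilde{\mathcal{S}}}(\cdot)\|_{L^q(\lambda\eta_0^{jq})}$ step by step, shifting the ambient weight $\lambda\to\lambda\eta_0^q\to\cdots\to\lambda\eta_0^{(t+1)q}=\mu_0$ and accumulating one factor of $[\lambda\eta_0^{jq}]_{A_q}^{\max\{1,1/(q-1)\}}$ at each stage. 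Hölder's inequality on the weight characteristics then produces $([\lambda]_{A_q}[\mu_0]_{A_q})^{(t+1)/2}$.

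For the inside factors, your proposed route via John--Nirenberg and a generalized Orlicz Hölder inequality is genuinely a different technique from the one used here; it is the approach of Theorem~\ref{Caopro_1} and Theorem~\ref{est.new.} for the first-order case, and it naturally yields $A_\infty$-type characteristics rather than the $A_{p_i}$-characteristic exponents $\frac{t_i+1}{2}$ and $\frac{t_i-1}{2}$. The theorem you are proving uses the exact same device as for the outside factors: dominate $|b_i(y)-b_{i,Q}|^{t_i}$ inside the average by $\|b_i\|_{\mathrm{BMO}_{\eta_i}}^{t_i}(\sum_P\eta_{i,P}\chi_P)^{t_i}$ and convert via \eqref{xxing} into $A^{t_i}_{\tilde{\mathcal{S}},\eta_i}(|f_i|)$, then bound the iterated operator on $L^{p_i}$ with weight shifting $v_i\to v_i\eta_i^{p_i}\to\cdots\to v_i\eta_i^{t_ip_i}=\mu_i$, accumulating $A_{p_i}$ characteristics.

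The most serious error is the claim that ``every $\eta_i^{t_i}$ that appears can be majorized by $\eta_0^{t_i}$, so all the ratio weights coalesce into a single power of $\eta_0$.'' This would destroy the proof: on the $i$-th source slot you must use exactly $\eta_i=(\mu_i/v_i)^{1/t_ip_i}$ so that $v_i\eta_i^{t_ip_i}=\mu_i$; replacing $\eta_i$ by $\eta_0$ there does not land you on $\mu_i$. The stated constant $\mathcal{W}_0$ itself shows this: the outside exponent is measured in $\|b_i\|_{\mathrm{BMO}_{\eta_0}}^{k_i-t_i}$, while the inside exponent keeps $\|b_i\|_{\mathrm{BMO}_{\eta_i}}^{t_i}$. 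The inequality $\eta_i\le\eta_0$ is used only in the outside telescoping, to upgrade $\mathrm{BMO}_{\eta_i}\hookrightarrow\mathrm{BMO}_{\eta_0}$ and merge the $|\tau|$ outside dominations into a single sum with $\eta_0$. As a minor point, your concern about iterating the refinement $|\tau|$ times and the structure of the pointwise max $\eta_0$ is moot: Lemma~\ref{zhang:6.1} gives one refined sparse family $\tilde{\mathcal{S}}$ that serves every symbol, and the only property of $\eta_0$ used is the imposed identity $\eta_0=(\mu_0/\lambda)^{1/(t+1)q}$, not any $A_\infty$-type membership.
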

  \begin{remark}\label{remarkb1}
  Under the assumption of Theorem \ref{zhang2018:th1.7}, by setting different weighted conditions, we can obtain the following estimates:
  \begin{itemize}
    \item If $({({v_i})_{i \in \tau }},{({\mu _j})_{j \in {\tau ^c}}}) \in A^{\star}_{\vec p,q}(X)$ with $\mu_0 = \prod\limits_{i \in \tau} v_i^{\frac{q}{p_i}} \prod\limits_{j \in \tau^c} \mu_j^{\frac{q}{p_j}}$, it follows from Theorem \ref{Horm.pro} that
    \begin{align}\label{bloom1_1}
  \quad \sup_{\mathcal{S} \subseteq \d}\left\|{\mathcal A}_{\eta ,\S,\tau}^\mathbf{b,k,t}\right\|_{\prod\limits_{i=1}^{m} L^{p_i}\left(X,\mu_i\right) \rightarrow L^{q}(X,\lambda)}\overset{\text{Theorem \ref{Horm.pro}}}{\lesssim} {\mathcal{W}_1},
  \end{align}
  where
  \begin{align*}
    {\mathcal{W}_1}&=\mathcal{W}_0 \times [((v_i)_{i \in \tau},(\mu_j)_{j \in \tau^c})]^{\max\limits_{i \in \{1,\ldots,m\}}\{1,\tfrac{p_i'}{q}\}}_{A^{\star}_{\vec p,q}(X)}.
  \end{align*}
  \item  If $\mu_0$ and $\vec\sigma$ be weights satisfying that $\|\mu_0,\vec\sigma\|_{A^{\star}_{\vec p,q}(X)}<\infty$ with $\mu_0 = \prod\limits_{i \in \tau} v_i^{\frac{q}{p_i}} \prod\limits_{j \in \tau'} \mu_j^{\frac{q}{p_j}}$, and $\mu_0, \sigma_i \in A_\infty(X)$ for $i=1 \cdots m$, where $\sigma_i= \begin{cases}v_i^{1-p_i'}, & i \in \tau \\ \mu_i^{1-p_i'}, & i \in \tau^c\end{cases}$.
  
  Then, we can obtain various quantitative estimates as follows, which are \eqref{bloom1_2}, \eqref{bloom1_3}, and \eqref{bloom1_4}.
  \begin{list}{\rm (\theenumi)}{\usecounter{enumi}\leftmargin=1cm \labelwidth=1cm \itemsep=0.2cm \topsep=.2cm \renewcommand{\theenumi}{\roman{enumi}}}
    \item \begin{align}\label{bloom1_2}
      \quad \sup_{\mathcal{S} \subseteq \d} \left\|{\mathcal A}_{\eta ,\S,\tau}^\mathbf{b,k,t}\right\|_{\prod\limits_{i=1}^{m} L^{p_i}\left(X,\mu_i\right) \rightarrow L^{q}(X,\lambda)}\overset{\text{Theorem \ref{Thm:1}}}{\lesssim} {\mathcal{W}_2},
      \end{align}
    where
  \begin{align*}
      {\mathcal{W}_2}
      &={\mathcal{W}_0} \times\|\mu_0,\vec \sigma\|_{A^{\star}_{\vec{p},q}(X)}^{\frac 1q}\left( \prod_{i=1}^m [\sigma_i]_{A_\infty(X)}^{\frac 1{p_i}} +[\mu_0]_{A_\infty(X)}^{{\max\left\{1-\frac 1q, 0\right\}}}\sum_{j=1}^{m} \prod_{i\neq j}[\sigma_i]_{A_\infty(X)}^{\frac 1{p_i}} \right).
  \end{align*}
  \item 
  \begin{align}\label{bloom1_3}
      \quad \sup_{\mathcal{S} \subseteq \d} \left\|{\mathcal A}_{\eta ,\S,\tau}^\mathbf{b,k,t}\right\|_{\prod\limits_{i=1}^{m} L^{p_i}\left(X,\mu_i\right) \rightarrow L^{q}(X,\lambda)}\overset{\text{Theorem \ref{Horm.pro_2}}}{\lesssim} {\mathcal{W}_3},
      \end{align}
  where
  \begin{align*}
      {\mathcal{W}_3}&={\mathcal{W}_0} \times \|\mu_0,\vec{\sigma}\|_{A^{\star}_{\vec p,q}(X)}^{1/q}\left([\vec \sigma]_{W_{\vec p,q}^\infty(X)}^{1/q}+ \sum_{i=1}^m [\vec \sigma^i]_{W_{ (\vec p,q)^i}^\infty(X)}^{1/{(p_i)'}}\right).
  \end{align*}
  and
  $[\vec \sigma^i]_{W_{\vec (\vec p,q)^i}(X)}=1$ if $q\le 1$ and otherwise,
    \begin{align*}
    [\vec \sigma^i]_{W_{\vec (\vec p,q)^i}(X)}=\sup_{Q\in\d} \Big(  \int_Q M &(\mathbf \chi_Q \mu_0)^{\frac{(p_i)'}{q'}} \prod_{j\neq i} M (\mathbf \chi_Q\sigma_j)^{\frac{(p_i)'}{p_j}} d \mu\Big)\Big(\int_Q \mu_0^{\frac{(p_i)'}{q'}}\prod_{j\neq i} \sigma_j^{\frac{(p_i)'}{p_j}}d \mu \Big)^{-1}.
    \end{align*}
    \item   
    \begin{align}\label{bloom1_4}
        \quad \sup_{\mathcal{S} \subseteq \d} \left\|{\mathcal A}_{\eta ,\S,\tau}^\mathbf{b,k,t}\right\|_{\prod\limits_{i=1}^{m} L^{p_i}\left(X,\mu_i\right) \rightarrow L^{q}(X,\lambda)}\overset{\text{Theorem \ref{Thm:3}}}{\lesssim} {\mathcal{W}_4},
        \end{align}
    where
    \begin{align*}
        {\mathcal{W}_4}&=\mathcal{W}_0 \times \|\mu_0,\vec\sigma\|_{{A^{\star}_{\vec p,q}(X)}}^{\frac 1q} \left([\vec \sigma]_{H_{\vec p,q}^\infty(X)}^{1/q}+  \sum_{i=1}^m [\vec \sigma^i]_{H_{(\vec p,q)^i}^\infty(X)}^{1/{p_i'}}\right).
    \end{align*}
    and $[\vec \sigma^i]_{H_{(\vec p,q)^i}^\infty(X)}=1$ if $q\le 1$ and otherwise,
    \begin{equation*}\begin{split}
    [\vec \sigma^i]_{H_{(\vec p,q)^i}^\infty(X)}=\sup_{Q \in \d} &\langle \mu_0 \rangle_{Q}^{p_i'( 1-\frac 1q)_+} \exp{\left(\dashint_Q \log \mu_0^{-1}\right)}^{p_i'( 1-\frac 1q)_+}\prod_{j\neq i} \langle \sigma_i \rangle_{Q}^{p_i'/p_j} \exp{\left(\dashint_Q \log\sigma_i^{-1}\right)}^{p_i'/p_j}.
    \end{split}\end{equation*}
  \end{list} 
  \end{itemize}
  \end{remark}

  \subsection*{\tt Bloom type estimates by iterated weight method}
~~

Distinguished from the method above enables us to obtain a new bloom type estimate, whose technique is called the iterated weight method. 

  \begin{theorem}\label{path2}
Let $\mathcal D$ be a dyadic lattice of $X$, $\mathcal{S} \subseteq \d$ be a sparse family, \(1 < q < \infty\), $1 < p_i < \infty$, $i=1,\ldots,m$, and $\an:=\sum\limits_{i = 1}^m {\frac{1}{{{p_i}}}}-\frac{1}{{{q}}} \in [0,m)$. Set $\mathbf{t}$ and $\mathbf{k}$ are both multi-indexs with \(\mathbf{t} \prec \mathbf{k}\).  Suppose that $\zeta$ is a weight, $\zeta_{\mathbf s}, \vartheta_{\mathbf s} \in A_{p_{\mathbf s}}(X)$,  and \(\lambda, \xi_{\mathbf s} \in A_q(X)\), for every ${\mathbf s} \in \tau:=(\tau(1),\cdots,\tau(|\tau|))$. For $i \in \left\{ {1, \cdots ,\left| \tau  \right|} \right\}$, we define the iterated weights $\eta_{\tau(i)}$ by
$$\eta_{\tau(i)}:= \begin{cases}(\zeta/\xi_{\tau(1)})^{1/q}, & i ={1},\\ (\xi_{\tau(i)}/\xi_{\tau(i+1)})^{{1}/{(k_{\tau(i)} - t_{\tau(i)})q}}, &2 \le i \le |\tau|-1,\\
(\xi_{\tau(|\tau|)}/\lambda)^{1/(k_{\tau(|\tau|)} - t_{\tau(|\tau|)})q}, & i =|\tau|.\\
\end{cases}$$
If \(b_{\mathbf s} \in \mathrm{BMO}_{\eta_{\mathbf s}}(X)\), for each \({\mathbf s} \in \tau\), and \(\eta_{\tau{(|\tau|)}} \in A_q(X)\),  then there exists a sparse family $\tilde{\mathcal{S}}$ satisfying ${\mathcal{S}} \subseteq \tilde{\mathcal{S}} \subseteq \d$ such that
  \begin{align}\label{iter.weight_}
 \left\|{\mathcal A}_{\eta ,\S,\tau}^\mathbf{b,k,t}\right\|_{\prod\limits_{i=1}^{m} L^{p_i}\left(X,\zeta_i\right) \rightarrow L^q\left(X,\lambda\right)}
 \lesssim \mathcal{I}_0 \left\|\mathcal{A}_{\eta,\tilde{\mathcal{S}}}\right\|_{  \prod\limits_{i=1}^{m} L^{p_i}(X,\varpi_i)\rightarrow L^{q}(X,\zeta)},
\end{align}
where $\varpi_i= \begin{cases}\vartheta_i, & i \in \tau \\ \zeta_i, & i \notin \tau\end{cases}$.  We elaborate the composition of $\mathcal{I}_0 = \mathcal{I}_{I} \times \mathcal{I}_{II} \times \mathcal{I}_{III} \times  \prod\limits_{i \in \tau} \left([\zeta_i]^{\frac{t_i + 1}{2}}_{A_{p_i}(X)} \left[\vartheta_i\right]^{\frac{t_i-1}{2}}_{A_{p_i}(X)}\right)^{\max \left\{1, \frac{1}{p_i-1}\right\}}$ in detail as follows.
\begin{align*}
	\mathcal{I}_{I}& :=[\lambda]_{A_q(X)}[\eta_{\tau({|\tau|})}]_{A_q(X)}^{\frac{k_{\tau(|\tau|)} - t_{\tau(|\tau|)} + 1}{2}\max\{1,\frac{1}{q-1}\}},\\
	\mathcal{I}_{II}&:=\prod_{i = 2}^{|\tau|-1}\big([\xi_{\tau(i+1)}]^{\frac{k_{\tau(i)} - t_{\tau(i)} + 1}{2}}_{A_{q}(X)} \left[\xi_{\tau(i)}\right]^{\frac{k_{\tau(i)} - t_{\tau(i)} - 1}{2}}_{A_{q}(X)}\big) ^{\max \left\{1, \frac{1}{q-1}\right\}},\\
\mathcal{I}_{III}&:=\big([\xi_{\tau(2)}]^{\frac{k_{\tau(1)} - t_{\tau(1)} - 2}{2}}_{A_{q}(X)} \left[\xi_{\tau(1)}\right]^{\frac{k_{\tau(1)} - t_{\tau(1)}}{2}}_{A_{q}(X)}\big) ^{\max \left\{1, \frac{1}{q-1}\right\}}.
\end{align*}
\end{theorem}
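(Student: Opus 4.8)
The plan is to prove \eqref{iter.weight_} by \emph{discharging the commutator factors one symbol at a time}, processing $\tau(1),\tau(2),\dots,\tau(|\tau|)$ in succession (the index $r$ being $1$ throughout, so $\langle\cdot\rangle_Q$ denotes $L^1$ averages over $Q$). Each pass removes all $k_{\tau(i)}$ occurrences of $b_{\tau(i)}-b_{\tau(i),Q}$ from the operator --- the $t_{\tau(i)}$ that sit inside the average and the $k_{\tau(i)}-t_{\tau(i)}$ that sit outside --- at the cost of powers of $\|b_{\tau(i)}\|_{\mathrm{BMO}_{\eta_{\tau(i)}}}$, of one controlled transfer of the $\tau(i)$-th source weight $\zeta_{\tau(i)}\rightsquigarrow\vartheta_{\tau(i)}$, and of one controlled transfer of the target weight along the telescoping chain $\lambda\rightsquigarrow\xi_{\tau(|\tau|)}\rightsquigarrow\cdots\rightsquigarrow\xi_{\tau(1)}\rightsquigarrow\zeta$ that is encoded in the weights $\eta_{\tau(i)}$. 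After the $|\tau|$-th pass the operator has become the plain fractional sparse operator $\mathcal A_{\eta,\tilde{\mathcal S}}$ acting from $\prod_i L^{p_i}(X,\varpi_i)$ to $L^q(X,\zeta)$, which is the right-hand side of \eqref{iter.weight_}.

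\textbf{One-step recursion.} The engine is a weighted, fractional, homogeneous-space version of Lerner's ``Calderón--Zygmund decomposition of a sparse operator''. Fixing $i\in\tau$ and writing the current operator as $\sum_{Q}\mu(Q)^{\an}\,|b_i(x)-b_{i,Q}|^{\,a}\,\langle |f_i|\,|b_i-b_{i,Q}|^{\,c}\,R_Q\rangle_Q\,\chi_Q(x)$ with $R_Q$ depending only on the remaining symbols and inputs, one runs the stopping-time construction on the family $\S$: each exterior factor $b_i(x)-b_{i,Q}$ and each interior factor $b_i(y)-b_{i,Q}$ is split along the stopping chain connecting $Q$ to its stopping children. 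The ``local'' contributions are operators of the same shape with exponent $a-1$ (resp.\ $c-1$) on a family that is again sparse with a comparable constant, while the ``telescoping'' contributions carry a constant $b_{i,Q'}-b_{i,Q}$ that, by the weighted John--Nirenberg inequality on $(X,d,\mu)$, is $\lesssim\|b_i\|_{\mathrm{BMO}_{w}}\,\langle w\rangle_Q$ for the relevant Bloom weight $w$. Summing the resulting geometric series and invoking the sharp quantitative bounds already established (Theorem \ref{Caopro_1} and Theorems \ref{Horm.pro}--\ref{Thm:3}) shows that each discharged factor costs one power of $\|b_i\|_{\mathrm{BMO}_{w}}$ and transfers the pertinent weight through $w$, with a loss $\big([\,\cdot\,]_{A_q}[\,\cdot\,]_{A_q}\big)^{\frac12\max\{1,\frac1{q-1}\}}$ for a target-side transfer and $\big([\,\cdot\,]_{A_{p_i}}[\,\cdot\,]_{A_{p_i}}\big)^{\frac12\max\{1,\frac1{p_i-1}\}}$ for a source-side transfer.

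\textbf{Iteration and assembly.} Iterating the one-step recursion $t_{\tau(i)}$ times on the interior factors of $b_{\tau(i)}$ produces the factor $\|b_{\tau(i)}\|_{\mathrm{BMO}_{\eta_{\tau(i)}}}^{t_{\tau(i)}}$ together with $\big([\zeta_{\tau(i)}]_{A_{p_{\tau(i)}}}^{(t_{\tau(i)}+1)/2}[\vartheta_{\tau(i)}]_{A_{p_{\tau(i)}}}^{(t_{\tau(i)}-1)/2}\big)^{\max\{1,1/(p_{\tau(i)}-1)\}}$ --- summing over $i\in\tau$ this is precisely the last product in $\mathcal I_0$. Iterating it $k_{\tau(i)}-t_{\tau(i)}$ times on the exterior factors moves the target weight one link down the chain above; collecting the $A_q$-constant losses over the three regimes $i=|\tau|$, $2\le i\le|\tau|-1$ and $i=1$ yields exactly $\mathcal I_I$, $\mathcal I_{II}$ and $\mathcal I_{III}$ (the exponent shift in $\mathcal I_{III}$ reflecting that the last link uses the exponent $1/q$ in $\eta_{\tau(1)}$ rather than $1/((k_{\tau(i)}-t_{\tau(i)})q)$), together with $\prod_{i\in\tau}\|b_{\tau(i)}\|_{\mathrm{BMO}_{\eta_{\tau(i)}}}^{k_{\tau(i)}-t_{\tau(i)}}$. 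Multiplying the contributions of the $|\tau|$ passes and absorbing the $\mathrm{BMO}$ norms into the constant gives $\mathcal I_0$ and hence \eqref{iter.weight_}.

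\textbf{Main obstacle.} I expect the principal difficulty to be twofold. First, one must verify that every application of the one-step recursion returns a family that is again sparse with a sparseness constant bounded uniformly over all the iterations --- the homogeneous-space analogue of Lerner's lemma, where one must check that neither the fractional weight $\mu(Q)^{\an}$ nor the auxiliary factors $R_Q$ interfere with the stopping-time structure. Second, there is the bookkeeping required to land on \emph{exactly} the exponents $(k_{\tau(i)}-t_{\tau(i)}\pm1)/2$ and the $\max\{1,\tfrac1{q-1}\}$, $\max\{1,\tfrac1{p_i-1}\}$ powers, which forces one to use the sharp form of each weighted estimate at every step and to track with care which of $\lambda,\xi_{\tau(i)},\eta_{\tau(i)},\zeta$ and $\zeta_{\tau(i)},\vartheta_{\tau(i)}$ is paired with which. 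The hypothesis $\eta_{\tau(|\tau|)}\in A_q(X)$ is exactly what legitimizes the first target-weight transfer out of $\lambda$; membership of the remaining intermediate weights in $A_q(X)$ then follows from the assumed $A_q$-memberships of $\lambda$ and the $\xi_{\mathbf s}$ via reverse-Hölder stability.
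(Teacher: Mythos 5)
You have the right global architecture---peel off the commutator factors symbol by symbol, paying one $\mathrm{BMO}_{\eta}$ norm per factor while sliding the target weight down the chain $\lambda\to\xi_{\tau(|\tau|)}\to\cdots\to\xi_{\tau(1)}\to\zeta$ and the source weights $\zeta_i\to\vartheta_i$, then invoke the sharp bound for $\mathcal A_{\eta,\tilde{\mathcal S}}$---but the mechanism you propose has a genuine gap. Your ``one-step recursion'' splits a single copy of $b_i(x)-b_{i,Q}$ along a stopping chain, claiming the local part is ``an operator of the same shape with exponent $a-1$.'' It is not: after the split, the remaining $a-1$ exterior copies and all $c$ interior copies still reference the original cube $Q$ while the local residual references the stopping child $Q'$, so you are left with mixed-reference products $|b_i(x)-b_{i,Q}|^{a-1}|b_i(x)-b_{i,Q'}|$ that are not of the declared form, and iterating this does not close. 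The paper avoids this entirely: Lemma~\ref{zhang:6.1} is applied \emph{once}, replacing every $|b_i(x)-b_{i,Q}|$ by the same $Q$-independent sparse sum $C\sum_{R\in\tilde{\mathcal S},\,R\subseteq Q}\Omega(b_i,R)\chi_R(x)$, which can then be raised freely to any power $k_i-t_i$ without reference-cube ambiguity. Consequently the ``uniform sparseness over iterations'' problem you flag as your main obstacle never arises---$\tilde{\mathcal S}$ is constructed once and for all; it is an artifact of your recursion, not of the theorem.

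You also omit the dualization that the paper's argument hinges on. The paper pairs against $g\in L^{q'}(\lambda)$ at the outset, uses the algebraic identity \eqref{xxing} (namely $\int_Q|h|\big(\sum_{P\subseteq Q}\eta_P\chi_P\big)^l\lesssim\int_Q A_{\tilde{\mathcal S},\eta}^l(|h|)$) and its sequential form \eqref{path2_1} to package the product of the $|\tau|$ oscillation sums into a composition $A_{\tilde{\mathcal S},\eta_{\tau(1)}}^{k_{\tau(1)}-t_{\tau(1)}}\circ\cdots\circ A_{\tilde{\mathcal S},\eta_{\tau(|\tau|)}}^{k_{\tau(|\tau|)}-t_{\tau(|\tau|)}}$ applied to $g\lambda$, then exploits the self-adjointness of $A_{\tilde{\mathcal S}}$ to swing these onto the dual side. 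Each application of the linear case of Theorem~\ref{Horm.pro} shifts the weight by $\eta_{\tau(i)}^{q}$; summing the losses through the H\"older interpolation $\prod_{j=1}^{l-1}[\lambda^{1-j/l}\xi^{j/l}]_{A_q}\le([\lambda]_{A_q}[\xi]_{A_q})^{(l-1)/2}$ gives the $\mathcal I_I,\mathcal I_{II},\mathcal I_{III}$ exponents. The interior factors are handled separately as $A_{\tilde{\mathcal S},\eta_{\tau(i)}}^{t_{\tau(i)}}(|f_{\tau(i)}|)$, giving the last product in $\mathcal I_0$. Without the duality and the self-adjointness trick, there is no visible way to transfer the iterated weighted sparse operators from the ``$b(x)$'' side to the test-function side, so this is a missing ingredient and not a cosmetic difference.
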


  \begin{remark}\label{remarkb2}
     Under the assumption of Theorem \ref{path2}, if $\eta_{\bf s} = (\zeta_{\bf s}/\vartheta_{\bf s})^{1/t_{\bf s} p_{\bf s}}$ for each ${\bf s} \in \tau$,
then by setting different weight conditions, we can obtain the following estimates:
  \begin{itemize}
      \item  If $({({\vartheta _i})_{i \in \tau }},{({\zeta _j})_{j \in {\tau ^c}}}) \in A^{\star}_{\vec p,q}(X)$. It follows from Theorem \ref{Horm.pro} that
  \begin{align}\label{bloom2_1}
      \quad \sup_{\mathcal{S} \subseteq \d}  \|{\mathcal A}_{\eta ,\S,\tau}^\mathbf{b,k,t}\|_{\prod\limits_{i=1}^{m} L^{p_i}\left(X,\zeta_i\right) \rightarrow L^q\left(X,\lambda\right)}\overset{\text{Theorem \ref{Horm.pro}}}{\lesssim} {\mathcal{I}_1},
      \end{align}
  where
  \begin{align*}
      {\mathcal{I}_1}
      &= \mathcal{I}_0 \times [((\vartheta_i)_{i \in \tau},(\zeta_j)_{j \in \tau^c})]^{\max\limits_{i \in \tau_{m}}\{1,\tfrac{p_i'}{q}\}}_{A^{\star}_{\vec p,q}(X)}.
  \end{align*}
  \item If $\zeta$ and $\vec\sigma$ be weights satisfying that $\|\zeta,\vec\sigma\|_{A^{\star}_{\vec p,q}(X)}<\infty$ with $\zeta = \prod\limits_{i \in \tau} \vartheta_i^{\frac{q}{p_i}} \prod\limits_{j \in \tau^c} \zeta_j^{\frac{q}{p_j}}$, and $\zeta, \sigma_i \in A_\infty(X)$ for $i=1,\ldots,m$, where $\sigma_i= \begin{cases}\vartheta_i^{1-p_i'}, & i \in \tau \\ \zeta_i^{1-p_i'}, & i \in \tau^c \end{cases}$. 
  Then, we can obtain various quantitative estimates as follows, which are \eqref{bloom2_2}, \eqref{bloom2_3}, and \eqref{bloom2_4}.
  \begin{list}{\rm (\theenumi)}{\usecounter{enumi}\leftmargin=1cm \labelwidth=1cm \itemsep=0.2cm \topsep=.2cm \renewcommand{\theenumi}{\roman{enumi}}}
  \item
  \begin{align}\label{bloom2_2}
  \quad \sup_{\mathcal{S} \subseteq \d}  \|{\mathcal A}_{\eta ,\S,\tau}^\mathbf{b,k,t}\|_{\prod\limits_{i=1}^{m} L^{p_i}\left(X,\zeta_i\right) \rightarrow L^q\left(X,\lambda\right)}\overset{\text{Theorem \ref{Thm:1}}}{\lesssim}{\mathcal{I}_2},
  \end{align}
  where
  \begin{align*}
      {\mathcal{I}_2}=
      \mathcal{I}_0 \times \|\zeta,\vec \sigma\|_{A^{\star}_{\vec{p},q}(X)}^{\frac 1q}
        \Big( \prod_{i \in \tau} [\sigma_i]_{A_\infty(X)}^{\frac 1{p_i}} +[\zeta]_{A_\infty(X)}^{(1  - \frac 1q)_+}\sum_{j=1}^{m} \prod_{i\neq j}[\sigma_i]_{A_\infty(X)}^{\frac 1{p_i}} \Big).
  \end{align*}
  \item \begin{align}\label{bloom2_3}
    \quad \sup_{\mathcal{S} \subseteq \d}  \|{\mathcal A}_{\eta ,\S,\tau}^\mathbf{b,k,t}\|_{\prod\limits_{i=1}^{m} L^{p_i}\left(X,\zeta_i\right) \rightarrow L^q\left(X,\lambda\right)}\overset{\text{Theorem \ref{Horm.pro_2}}}{\lesssim}{\mathcal{I}_3},
    \end{align}
    where
    \begin{align*}
        {\mathcal{I}_3}&= \mathcal{I}_0 \times \|\zeta,\vec{\sigma}\|_{A^{\star}_{\vec p,q}(X)}^{1/q}\Big([\vec \sigma]_{W_{\vec p,q}^\infty(X)}^{1/q}+ \sum_{i=1}^m[\vec \sigma^i]_{W_{ (\vec p,q)^i}^\infty(X)}^{1/{(p_i)'}}\Big).
    \end{align*}
    and
    $[\vec \sigma^i]_{W^{\infty}_{\vec (\vec p,q)^i}(X)}=1$ if $q\le 1$ and otherwise,
      \begin{align*}
      \|\vec \sigma^i\|_{W^{\infty}_{\vec (\vec p,q)^i}(X)}=\sup_{Q \in \d} \Big(  \int_Q M (\mathbf \chi_Q \zeta)^{\frac{(p_i)'}{q'}} \prod_{j\neq i} M (\mathbf \chi_Q\sigma_j)^{\frac{(p_i)'}{p_j}} d \mu\Big) \times\Big(\int_Q \zeta^{\frac{(p_i)'}{q'}}\prod_{j\neq i} \sigma_j^{\frac{(p_i)'}{p_j}}d \mu \Big)^{-1}.
      \end{align*}
  \item \begin{align}\label{bloom2_4}
      \quad \sup_{\mathcal{S} \subseteq \d}  \|{\mathcal A}_{\eta ,\S,\tau}^\mathbf{b,k,t}\|_{\prod\limits_{i=1}^{m} L^{p_i}\left(X,\zeta_i\right) \rightarrow L^q\left(X,\lambda\right)}\overset{\text{Theorem \ref{Thm:3}}}{\lesssim} {\mathcal{I}_4},
    \end{align}
    where
    \begin{align*}
        {\mathcal{I}_4}&=\mathcal{I}_0 \times \|\zeta,\vec\sigma\|_{{A^{\star}_{\vec p,q}(X)}}^{\frac 1q} \Big([\vec \sigma]_{H_{\vec p,q}^\infty(X)}^{1/q}+  \sum_{i=1}^m [\vec \sigma^i]_{H_{(\vec p,q)^i}^\infty(X)}^{1/{p_i'}}\Big).
    \end{align*}
    and $[\vec \sigma^i]_{H_{(\vec p,q)^i}^\infty(X)}=1$ if $q\le 1$ and otherwise,
    \begin{equation*}\begin{split}
    [\vec \sigma^i]_{H_{(\vec p,q)^i}^\infty(X)}=\sup_{Q \in \d}\langle \zeta \rangle_{B}^{p_i'( 1-\frac 1q)_+} \exp{\left(\dashint_Q \log \zeta^{-1}\right)}^{p_i'( 1-\frac 1q)_+}\prod_{j\neq i} \langle \sigma_i \rangle_{B}^{p_i'/p_j} \exp{\left(\dashint_Q \log\sigma_i^{-1}\right)}^{p_i'/p_j}.
    \end{split}\end{equation*}
  \end{list}
  \end{itemize}
  \end{remark}

\subsection{Endpoint quantitative estimates}
~~

We present endpoint quantitative estimates for $\mathcal{T}_{\eta}$  as the last main result.

\begin{theorem}\label{endingBound}
Let $\mathcal D$ be a dyadic lattice of $X$, $\mathcal{S} \subseteq \d$ be a sparse family, $0 < \tilde{r} < \infty$, $1 \leq r < \infty$, $1 \leq \ell \leq m$, and ${\tau _\ell} \subseteq {\tau _m}$. Assume that the operator $\mathcal{T}_{\eta}$ is bounded from $L^r(X) \times \cdots \times L^r(X)$ to $L^{\tilde{r}, \infty}(X,\B)$ with $\eta := \frac{m}{r} - \frac{1}{\tilde{r}} \in [0, m)$. If $\vec{\omega} \in A^{\star}_{\vec{1}, q_0}(X)$, then for any $\lambda > 0$, we have
    \begin{align}\label{eq:LlogL_1}
\sup_{\mathcal{S} \subseteq \d} \omega\left(\left\{ x \in X \;\bigg|\; {\mathcal A}_{\eta,\S,r}(\vec f)(x) > \lambda^m \right\}\right) \lesssim [\vec{\omega}]_{A^{\star}_{\vec{1}, q_0}(X)} \prod_{i=1}^m \left( \int_{X} \frac{|f_i(x)|^r}{\lambda^r} \omega_i \, d\mu \right)^{q_0}.
\end{align}
Furthermore, if ${\bf b} = (b_{1}, \ldots, b_{m}) \in \mathrm{BMO}^{m}(X)$ and $\vec{\omega} \in A^{\star}_{\vec{1}, q_0}(X)$, then for any $\lambda > 0$, the following estimate holds:
\begin{align}\label{eq:LlogL_2}
\sup_{\mathcal{S} \subseteq \d} \omega\left(\left\{ x \in X \;\bigg|\;   {\mathcal A}_{\eta ,\S,\tau,r}^\mathbf{b}(\vec f)(x) > \lambda^m \right\}\right) \lesssim_{\bf b} [\vec{\omega}]_{A^{\star}_{\vec{1}, q_0}(X)} \prod_{i=1}^m \left( \int_{X} \Phi_{r, \ell} \left( \frac{|f_i(x)|}{\lambda} \right) \omega_i \, d\mu \right)^{q_0},
\end{align}
where $\Phi_{r, \ell}(t) = t^r \left(1 + (\log^+ t)^{r \ell}\right)$ and $q_0 := \frac{1}{m - \eta}$. Moreover, the exponent $\ell$ is optimal in the sense that it cannot be replaced by any nonnegative number $k$ with $k < \ell$.
\end{theorem}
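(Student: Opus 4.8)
The plan is to deduce both displays from the pointwise sparse domination already established and then to prove weighted weak-type endpoint bounds for the two fractional sparse operators involved; throughout one uses that $q_0=\tfrac{r}{rm-\eta}$ satisfies $mq_0\ge 1$ (since $\eta\ge 0$) and $rm-\eta>0$ (since $\eta<m\le rm$). \textit{Step 1 (reduction to sparse operators).} By the Corollary to Theorem~\ref{Sparse.to.Cb}, $\|\mathcal{T}_{\eta}(\vec f)\|_{\B}\lesssim C_{\mathcal{T}_{\eta}}\,\mathcal A_{\eta,\mathcal S,r}(\vec f)$ and $\|\mathcal{T}_{\eta,\tau_\ell}^{\mathbf b}(\vec f)\|_{\B}\lesssim C_{\mathcal{T}_{\eta}}\sum_{\mathfrak k=1}^{\mathcal K}\sum_{\tau\subseteq\tau_\ell}{\mathcal A}_{\eta,\mathcal S_{\mathfrak k},\tau,r}^{\mathbf b}(\vec f)$ $\mu$-a.e., with finitely many $\delta$-sparse families; since $C_{\mathcal{T}_{\eta}}<\infty$ by hypothesis it suffices to establish \eqref{eq:LlogL_1} with $\mathcal A_{\eta,\mathcal S,r}(\vec f)$ replacing $\|\mathcal{T}_{\eta}(\vec f)\|_{\B}$ and \eqref{eq:LlogL_2} with a single ${\mathcal A}_{\eta,\mathcal S,\tau,r}^{\mathbf b}(\vec f)$, $\tau\subseteq\tau_\ell$; the finitely many terms are then recombined (subadditivity of $t\mapsto t^{q_0}$ if $q_0\le 1$, the triangle inequality for the $L^{q_0}$-type quantity otherwise) and absorbed into the implicit constant.

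\textit{Step 2 (the noncommutator endpoint).} Using $\langle f_i\rangle_{Q,r}=\langle|f_i|^{r}\rangle_Q^{1/r}$, set $g_i:=|f_i|^{r}$ and rescale to $\lambda=1$, so that one must bound $\omega\big(\{\,x:\sum_{Q\in\mathcal S}\mu(Q)^{\eta/r}\prod_i\langle g_i\rangle_Q^{1/r}\chi_Q(x)>1\,\}\big)$ by $[\vec\omega]_{A^{\star}_{\vec 1,q_0}}\prod_i\big(\int_X g_i\,\omega_i\,d\mu\big)^{q_0}$. I would run a Calder\'on--Zygmund stopping time on $\mathcal S$, sorting cubes by the dyadic size of the symbol $\Lambda_Q:=\mu(Q)^{\eta/r}\prod_i\langle g_i\rangle_Q^{1/r}$ and selecting at each level the maximal cubes; the disjointness of the major subsets $E_Q$ of a $\delta$-sparse family turns $\sum\mu(Q)$ into a genuine sum over pairwise disjoint sets. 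On a selected cube the defining inequality of $A^{\star}_{\vec 1,q_0}$ gives $\omega(Q)\lesssim[\vec\omega]_{A^{\star}_{\vec 1,q_0}}\,\mu(Q)\prod_i(\essinf_Q\omega_i)^{q_0}$, and combined with $\langle g_i\rangle_Q\,\essinf_Q\omega_i\le\langle g_i\omega_i\rangle_Q$ and the selection threshold, the power $\mu(Q)^{\eta/r}$ is exactly calibrated --- via the relation defining $q_0$ --- so that the multilinear summation $\sum_j\prod_i a_{ij}^{q_0}\le\prod_i(\sum_j a_{ij})^{q_0}$, valid since $mq_0\ge 1$, closes the estimate with a single power of $[\vec\omega]_{A^{\star}_{\vec 1,q_0}}$.

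\textit{Step 3 (the commutator endpoint).} Now $\Lambda_Q$ acquires the pointwise factor $\prod_{i\in\tau}|b_i(x)-b_{i,Q}|$ together with averages of the form $\langle(b_j-b_{j,Q})f_j\rangle_{Q,r}$. The averages are absorbed by the generalized H\"older inequality for the Young pair $\bar\Phi(t)\approx t(\log^+ t)^{\ell}$, $\bar\Psi(t)\approx e^{t^{1/\ell}}-1$: since $b_i\in\BMO(X)$, John--Nirenberg gives $\|\prod_{i\in\tau_\ell}|b_i-b_{i,Q}|\|_{\bar\Psi,Q}\lesssim\prod_{i\in\tau_\ell}\|b_i\|_{\BMO(X)}$, so each affected slot contributes $\|f_j\|_{\bar\Phi,Q}$, which after $r$-averaging is $\lesssim\langle\Phi_{r,\ell}(|f_j|)\rangle_Q^{1/r}$. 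The pointwise terms $\prod_{i\in\tau}|b_i(x)-b_{i,Q}|\chi_Q(x)$ are treated by a principal/stopping-cube hierarchy: telescoping $b_i(x)-b_{i,Q}$ through the oscillations $b_{i,Q'}-b_{i,Q''}$ of the chain of sparse cubes containing $x$, each $\lesssim\|b_i\|_{\BMO(X)}$, reduces them --- up to $\prod_{i\in\tau}\|b_i\|_{\BMO(X)}$ and a further logarithmic loss that the $(\log^+)^{\ell}$-bump absorbs --- to bumped averages of the same type. Re-running the Step~2 stopping time with $g_i:=\Phi_{r,\ell}(|f_i|)$ then yields \eqref{eq:LlogL_2}. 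I expect this step --- coupling the pointwise $|b_i(x)-b_{i,Q}|$ factors to the Orlicz machinery while keeping the dependence on $[\vec\omega]_{A^{\star}_{\vec 1,q_0}}$ linear --- to be the principal obstacle.

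\textit{Step 4 (sharpness of $\ell$).} To show $\Phi_{r,\ell}$ cannot be relaxed to $\Phi_{r,k}$ with $k<\ell$, it is enough to contradict \eqref{eq:LlogL_2} for a single admissible $(X,d,\mu)$ and $\mathcal{T}_{\eta}$. Take $X=\mathbb R^n$, $\mathcal{T}_{\eta}$ an $m$-linear fractional integral, $\omega_i\equiv 1$, $b_i(x)=\log\frac1{|x|}\in\BMO(\mathbb R^n)$ for $i\in\tau_\ell$, and $f_i=\chi_{B(0,\varepsilon)}$. A direct computation shows that near the origin $\mathcal{T}_{\eta,\tau_\ell}^{\mathbf b}(\vec f)$ is comparable to $(\log\tfrac1{|x|})^{\ell}$ times the underlying fractional integral, so the left side of \eqref{eq:LlogL_2} is $\gtrsim\lambda^{-q_0}(\log\tfrac1\lambda)^{\ell q_0}$ as $\lambda\to0^+$, while the right side computed with $\Phi_{r,k}$ is $O\big(\lambda^{-q_0}(\log\tfrac1\lambda)^{k q_0}\big)$ --- impossible for $k<\ell$. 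This is the classical sharpness example adapted to the first-order multi-symbol fractional setting.
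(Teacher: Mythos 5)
Your reduction in Step~1 matches the paper. Steps~2 and~3, however, take a genuinely different route from the paper, and Step~2 as sketched does not close. The paper never tries to prove the weak bound directly for the sparse form: it first establishes a pointwise estimate for the \emph{sharp maximal function} of the sparse operator in terms of the multilinear Orlicz maximal operator applied to $\vec f^{\,r}$ and raised to the $1/r$ power (inequality~\eqref{Cao:3.4}), then proves the weak bound only for that maximal operator via a Vitali covering argument (Lemma~\ref{Caolemma2.11}), and finally transfers to the sparse operator through the Fefferman--Stein good-$\lambda$ lemma (Lemma~\ref{Caolemma2.7}(ii)). In that Vitali step the threshold reads $\prod_i\langle g_i\rangle_B>\lambda\mu(B)^{-\eta/r}$ and, with $q_0(m-\eta/r)=1$, the dangling powers of $\mu(B)$ cancel identically. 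If instead you stop \emph{directly} on the sparse symbol $\Lambda_Q=\mu(Q)^{\eta/r}\prod_i\langle g_i\rangle_Q^{1/r}$ as you propose (with $g_i=|f_i|^r$), the threshold becomes $\prod_i\langle g_i\rangle_Q>\lambda^{rm}\mu(Q)^{-\eta}$ --- the $1/r$ moves from the power of $\mu(Q)$ to the power of $\lambda$ --- and the same computation, using $\omega(Q)\lesssim[\vec\omega]\mu(Q)\prod_i(\operatorname{ess\,inf}_Q\omega_i)^{q_0}$ and $\operatorname{ess\,inf}_Q\omega_i\le\langle g_i\omega_i\rangle_Q/\langle g_i\rangle_Q$, yields
\begin{equation*}
\omega(Q)\lesssim[\vec\omega]\,\lambda^{-rmq_0}\,\mu(Q)^{1-q_0(m-\eta)}\prod_{i=1}^m\Big(\int_Q g_i\omega_i\,d\mu\Big)^{q_0},\qquad 1-q_0(m-\eta)=\frac{(r-1)\eta}{rm-\eta}.
\end{equation*}
This residual exponent vanishes only when $r=1$ or $\eta=0$; for $r>1$ and $\eta>0$ it is strictly positive, so the sum over disjoint stopping cubes carries an uncontrolled factor $\mu(Q)^{(r-1)\eta/(rm-\eta)}$ and does not close with the claimed $[\vec\omega]_{A^\star_{\vec 1,q_0}}$ dependence. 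Your assertion that ``the power $\mu(Q)^{\eta/r}$ is exactly calibrated'' is therefore the gap: the $L^r$-average on the function side and the $\mu(Q)^{\eta/r}$ on the fractional side are \emph{not} on the same footing, which is precisely what the sharp-maximal detour is designed to repair.

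Step~3 is likewise a departure from the paper's route, which uses the recursive sharp-maximal inequality of Theorem~\ref{Cao2018:th3.3} (an induction over $\sigma\subsetneq\tau$, feeding smaller commutator pieces back into $M_\epsilon$) rather than a single telescoping through a principal-cube hierarchy; your version would further inherit the Step~2 calibration problem once you ``re-run Step~2 with $g_i:=\Phi_{r,\ell}(|f_i|)$''. You flag this as ``the principal obstacle,'' and I agree --- it is not worked out, and as stated it cannot work without fixing Step~2 first. Step~4 is content the paper actually omits: the theorem \emph{asserts} optimality of the exponent $\ell$ but gives no example, so your logarithmic-symbol sketch, while only a sketch (the claim $\mathcal{T}^{\mathbf b}_{\eta,\tau_\ell}(\vec f)\approx(\log\tfrac1{|x|})^\ell\mathcal I_\eta(\vec f)$ near the origin and the exact asymptotics of both sides need to be checked against the $\tau\subseteq\tau_\ell$ splitting and the $L^r$-averages), is a useful supplement that the paper's proof section does not supply.
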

Leveraging the sparse bounds Corollary \ref{tui1}, we obtain the following results.
\begin{corollary}
Under the assumption of Theorem \ref{endingBound}.
If $\vec{\omega} \in A^{\star}_{\vec{1}, q_0}(X)$, then for any $\lambda > 0$, we have
\begin{align*}
\omega\left(\left\{ x \in X \;\bigg|\; \|\mathcal{T}_{\eta}(\vec{f})(x)\|_{\B} > \lambda^m \right\}\right) \lesssim [\vec{\omega}]_{A^{\star}_{\vec{1}, q_0}(X)} \prod_{i=1}^m \left( \int_{X} \frac{|f_i(x)|^r}{\lambda^r} \omega_i \, d\mu \right)^{q_0}.
\end{align*}
Furthermore, if ${\bf b} = (b_{1}, \ldots, b_{m}) \in \mathrm{BMO}^{m}(X)$ and $\vec{\omega} \in A^{\star}_{\vec{1}, q_0}(X)$, then for any $\lambda > 0$, the following estimate holds:
\begin{align*}
\omega\left(\left\{ x \in X \;\bigg|\; \big\|\mathcal{T}_{\eta,\tau_{\ell}}^{\bf b}(\vec{f})(x)\big\|_{\B} > \lambda^m \right\}\right) \lesssim_{\bf b} [\vec{\omega}]_{A^{\star}_{\vec{1}, q_0}(X)} \prod_{i=1}^m \left( \int_{X} \Phi_{r, \ell} \left( \frac{|f_i(x)|}{\lambda} \right) \omega_i \, d\mu \right)^{q_0},
\end{align*}
where $\Phi_{r, \ell}(t) = t^r \left(1 + (\log^+ t)^{r \ell}\right)$ and $q_0 := \frac{1}{m - \eta}$. Moreover, the exponent $\ell$ is optimal in the sense that it cannot be replaced by any nonnegative number $k$ with $k < \ell$.
\end{corollary}

\subsection{Applications}
~~

In this subsection, 
We want to apply our main results to some important multilinear fractional operators and their commutators.
Let us start with some definitions.

For $\eta  \in \left[ {0,m} \right)$, every ball $B \subseteq X$, we define the multilinear fractional averaging operator by
$${\A_{\eta,B}}(\vec f)(x): = {\mu(B)^{\eta}}\left( {\prod\limits_{i = 1}^m {{{\left\langle {{f_i}} \right\rangle }_B}} } \right){\chi _B}(x),$$
The multilinear fractional maximal operator $\M_\eta$ on the spaces of homogeneous type is defined as
$$
\M_\eta (\vec{f})(x) : =\sup _{B \subseteq X}{\A_{\eta,B}}(|f_1|,\cdots,|f_m|)(x).
$$
If $X=\rn$, we take $\eta=\frac{\alpha}{n}$ and denote $\M_\eta$ by $\M_\alpha$. If $m=1$, we denote $\M_\eta$ by $M_\eta$. If $\eta=0$, we deonte $\M_\eta$ by $\M$.

\begin{definition}
Let $\eta  \in [0,m)$, $\B$ is a quasi-Banach space, and $B(\cc,\B)$ is the space of all bounded operators from $\cc$ to $\B$. Set an operator-valued function $Q_\eta:(X^{m+1} \backslash \Delta ) \to B(\cc,\B),$ where $\Delta = \{ (x,\vec y) \in X^{m+1} :x = {y_1} =  \cdots  = {y_m}\}$. Suppose that $\T_{\eta}$ is a $\B$-valued $m$-linear fractional singular integral operator. We say $\T_\eta$ is a {\tt $\B$-valued multilinear fractional singular integral operator with Dini type kernel}, if for any $\vec f \in {(L_b^\infty(X))^m}$ and each $x \notin \mathop  \cap \limits_{i = 1}^m {supp}{f_i}$,

	\begin{align*}
\T_{\eta}(\vec{f})(x):=\int_{X^m} Q_{\eta}\left(x, \vec y\right) \left(\prod_{j=1}^{m}f_j\left(y_j\right)\right) d\mu (\vec y),
	\end{align*}
	
where Dini type kernel $Q_\eta$ satisfies that for 

\begin{itemize}
\item (1) ${\left\| {{Q_\eta }\left( {x,\vec y} \right)} \right\|_\B} \lesssim {\left( {\sum\limits_{i = 1}^m {\mu (B(x,d(x,{y_i})))} } \right)^{\eta  - m}}$,\\
		
\item (2)  For each $j \in \left\{ {0, \cdots ,m} \right\}$, whenever $d\left( {{{y_j'}},{y_j}} \right) \le \frac{1}{2}\max \left\{ {d\left( {{y_0},{y_i}} \right):i = 1, \cdots ,m} \right\}$,
\begin{align*}
&{\left\| {{Q_\eta }\left( {{y_0}, \cdots ,{{y_j'}}, \cdots ,{y_m}} \right) - {Q_\eta }\left( {{y_0}, \cdots ,{y_j}, \cdots ,{y_m}} \right)} \right\|_\B}, \\
\lesssim & {\left( {\sum\limits_{i = 1}^m {\mu (B({y_0},d({y_0},{y_i})))} } \right)^{\eta  - m}}w\left( {\frac{{d\left( {{{y'}_j},{y_j}} \right)}}{{\sum\limits_{i = 1}^m {d\left( {{y_0},{y_j}} \right)} }}} \right),
\end{align*}
where $w$ is increasing, $w(0)=0$, and ${\left[ w \right]_{Dini}} = \int_0^1 {\frac{{w\left( t \right)}}{t}dt}  < \infty$.
\end{itemize}

If there exist some points $\left\{ {{s_1}, \cdots ,{s_m},{\tilde s}} \right\}$ with $\eta : = \sum\limits_{i = 1}^m {\frac{1}{{{s_i}}} - \frac{1}{{\tilde s}}} \in [0,m)$ such that $T_\eta$ is bounded from $\prod\limits_{i = 1}^m {{L^{{s_i}}}(X)}$ to ${L^{\tilde s,\infty }}(X,\B)$, then we say that $\T_\eta$ is a {\tt $\B$-valued multilinear fractional Dini type Calder\'on-Zygmund operator}.
\end{definition}

It is worth noting that {\tt multilinear fractional integral operator} $\I_\eta$ is a special case of {\tt $\B$-valued multilinear fractional Dini type Calder\'on-Zygmund operator}, which is defined by 
\begin{align*}
	{\I_\eta }(\vec f)(x) = \int_{{X^m}} {{{\left( {\sum\limits_{i = 1}^m {\mu (B(x,d(x,{y_i})))} } \right)}^{\eta  - m}}\prod\limits_{i = 1}^m {{f_i}({y_i})} } d\mu (\vec y),
\end{align*}
where $d\mu (\vec y) = d\mu ({y_1}) \cdots d\mu ({y_m})$.

This fact that 
$${\M_\eta }(\vec f)(x) \le {m^{\eta  - m}}{\I_\eta }(|\vec f|)(x).$$
is also valid under the space of homogeneous type setting, which follows from that
\[{\I_\eta }(|\vec f|)(x) \ge {m^{\eta  - m}}\mu {(B(x,r))^{\eta  - m}}\int_{d(x,{y_1}) \le r} { \cdots \int_{d(x,{y_m}) \le r} {\prod\limits_{i = 1}^m {|{f_i}({y_i})|} d\mu (\vec y)} }.\]

Similar to the proof of \cite[Proposition 3.1]{Li2018_s}, by the corresponding method, we can obtain that 

\begin{proposition}\label{app.1}
Under the same assumption of above, if {\tt $\T_\eta$ is a $\B$-valued multilinear fractional singular integral operator with Dini type kernel}, then $\T_\eta$ is a {\tt $\B$-valued multilinear fractional Hörmander type singular integral operator}.
\end{proposition}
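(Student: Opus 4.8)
The plan is to show that a $\B$-valued multilinear fractional singular integral operator with a Dini type kernel $Q_\eta$ automatically satisfies the $\B$-valued $m$-linear $L^1$-Hörmander condition (indeed with $r=1$, which is the weakest member of the family $\mathcal{H}_r$), and that it has the required weak-type boundedness off the diagonal. First I would fix a dyadic cube $Q \in \mathcal{D}$ with "center" $x_Q$ and radius $\sim \ell(Q)$, take two points $x, x' \in \frac12 Q$, and for each annular region $(2^kQ)^m \setminus (2^{k-1}Q)^m$ I would estimate
$$\left\| \|Q_\eta(x,\cdot) - Q_\eta(x',\cdot)\|_\B \, \chi_{(2^kQ)^m \setminus (2^{k-1}Q)^m} \right\|_{L^1(d\mu(\vec y))}.$$
On this annulus at least one coordinate $y_j$ satisfies $d(x_Q, y_j) \sim 2^k\ell(Q)$, so $d(x,x') \lesssim \ell(Q) \le \frac12 \max_i d(x, y_i)$ and the smoothness hypothesis (2) applies (with $y_0$ playing the role of $x$ and the perturbation in the first slot). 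This yields the pointwise bound
$$\|Q_\eta(x,\vec y) - Q_\eta(x',\vec y)\|_\B \lesssim \Big(\sum_{i=1}^m \mu(B(x, d(x,y_i)))\Big)^{\eta-m} w\!\left(\frac{d(x,x')}{\sum_i d(x,y_i)}\right) \lesssim \mu(2^kQ)^{\eta - m}\, w(C 2^{-k}),$$
using the doubling property to compare $\mu(B(x,d(x,y_i)))$ with $\mu(2^kQ)$ on the annulus and the monotonicity of $w$.

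Next I would integrate this bound over the annulus. Since $(2^kQ)^m$ has measure $\mu(2^kQ)^m$ and, by doubling, $\mu(2^kQ)^m \lesssim C_\mu^{mk}\mu(Q)^m$ while $\mu(2^kQ)^{\eta-m}$ carries the compensating decay through $\eta - m < 0$, the annular $L^1$ norm is controlled by $C\, \mu(2^kQ)^{\eta}\, w(C2^{-k})$. Multiplying by the normalizing factor $\mu(2^kQ)^{m/r} = \mu(2^kQ)^m$ in the definition of $\mathscr{H}_1$ and summing in $k$, the measure powers must telescope against the decay built into $\eta - m$; the crucial point is that when $\eta = \sum 1/s_i - 1/\tilde s$ comes from the boundedness exponents, the homogeneity works out so that the $k$-series reduces to $\sum_{k\ge 1} w(C2^{-k})$, which is comparable to $\int_0^1 \frac{w(t)}{t}\,dt = [w]_{\mathrm{Dini}} < \infty$ by the standard comparison between a dyadic sum and the Dini integral. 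Taking the supremum over $x, x' \in \frac12 Q$ and over $Q \in \mathcal{D}$ gives $\mathscr{H}_1 \lesssim [w]_{\mathrm{Dini}} < \infty$, hence $Q_\eta \in \mathcal{H}_1 \subseteq \mathcal{H}_r$ for every $r \ge 1$. Combined with the assumed weak-type bound $\prod_i L^{s_i}(X) \to L^{\tilde s, \infty}(X,\B)$ in the definition of the Dini type operator, this is exactly what it means for $\T_\eta$ to be a $\B$-valued multilinear fractional Hörmander type singular integral operator (Remark \ref{HSIO}).

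The main obstacle I anticipate is the bookkeeping of the measure powers on a general space of homogeneous type: unlike $\R^n$, where $\mu(2^kQ) \sim 2^{kn}\mu(Q)$ exactly, here one only has the two-sided doubling estimate, so one must be careful that the "gain" $\mu(2^kQ)^{\eta-m}$ genuinely dominates the "loss" $\mu(2^kQ)^m$ from the volume of the annulus in the right way — and in the fractional case $\eta > 0$ there is a priori a positive power $\mu(2^kQ)^\eta$ left over, which does \emph{not} decay in $k$. The resolution is that the decay in $k$ comes entirely from the Dini modulus $w(C2^{-k}) \to 0$, not from the measure factors, so the argument needs the refined dyadic-sum-versus-Dini-integral comparison rather than a crude geometric-series bound; isolating the worst coordinate on each annulus and handling the remaining $m-1$ coordinates by a trivial $L^1$ estimate against $\mu(2^kQ)$ is the technical heart. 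Since \cite[Proposition 3.1]{Li2018_s} carries out precisely this type of computation in the non-fractional setting, I would follow that template, inserting the extra factor $\mu(2^kQ)^\eta$ and verifying it is absorbed by the normalization $\mu(2^kQ)^{m/r}$ together with the relation $\eta = \frac{m}{r} - \frac{1}{\tilde r}$ when $r = 1$; the details are routine once the annular decomposition is set up.
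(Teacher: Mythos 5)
Your approach is the one the paper intends --- reduce to showing $Q_\eta\in\mathcal{H}_r$ by an annular computation patterned on \cite[Proposition~3.1]{Li2018_s} --- and your pointwise bound $\|Q_\eta(x,\vec y)-Q_\eta(x',\vec y)\|_{\B}\lesssim\mu(2^kQ)^{\eta-m}\,w(C2^{-k})$ on the annulus is correct. But the summation step is where the argument breaks down, not where ``the homogeneity works out.'' Since that pointwise bound is essentially uniform on the annulus, which has measure $\lesssim\mu(2^kQ)^m$, for every $r\in[1,\infty]$ the $k$-th term in $\mathscr{H}_r$ satisfies
\[
\mu(2^kQ)^{m/r}\,\big\|\,\|Q_\eta(x,\cdot)-Q_\eta(x',\cdot)\|_{\B}\,\chi_{(2^kQ)^m\setminus(2^{k-1}Q)^m}\big\|_{L^{r'}}\ \lesssim\ \mu(2^kQ)^{\eta}\,w(C2^{-k}),
\]
the exponent being $m/r+(\eta-m)+m/r'=\eta$. (Also note that for $r=1$ the norm in the definition of $\mathscr{H}_1$ is $L^{r'}=L^\infty$, not the $L^1$ norm you write, though this does not change the resulting power.) When $\eta>0$ the factor $\mu(2^kQ)^{\eta}$ grows geometrically in $k$ and cannot be cancelled by a Dini modulus: the series $\sum_k\mu(2^kQ)^{\eta}w(C2^{-k})$ need not converge, and even if it did the residual $\mu(Q)^{\eta}$ makes $\sup_{Q\in\mathcal D}$ infinite. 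The relation $\eta=\sum_i 1/s_i-1/\tilde s$ concerns only the assumed weak boundedness and has no bearing on this kernel estimate, so it cannot produce the telescoping you invoke; your concluding claim that the extra $\mu(2^kQ)^{\eta}$ ``is absorbed by the normalization $\mu(2^kQ)^{m/r}$'' is exactly the step that fails, since that normalization has already been spent producing the $\mu(2^kQ)^{\eta}$.

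What you have actually uncovered is that the paper's formula for $\mathscr{H}_r$ carries only the $\eta=0$ normalization inherited from \cite{Li2018_s}, which is a non-fractional reference. The natural fractional normalization --- and the one under which Proposition~\ref{app.1} is true and your computation becomes a complete proof --- replaces $\mu(2^kQ)^{m/r}$ by $\mu(2^kQ)^{m/r-\eta}$ in the definition of $\mathscr{H}_r$. Then the $k$-th term is $\lesssim w(C2^{-k})$, the series is $\lesssim[w]_{\mathrm{Dini}}$ uniformly in $Q$, the containment $\mathcal{H}_1\subseteq\mathcal{H}_r$ persists, and the estimate couples correctly with the fractional maximal function needed in Lemma~\ref{Condi.Mg}. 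So the missing ingredient is not the dyadic-sum-versus-Dini-integral comparison (you have that right), nor the isolation of the worst coordinate on each annulus (also right), but an explicit acknowledgment that the stated Hörmander normalization must be corrected by $\mu(2^kQ)^{-\eta}$ before the Dini sum can close. One further minor point worth writing out: the smoothness hypothesis (2) applies only when $d(x,x')\le\tfrac12\max_i d(x,y_i)$, which with the stated constants can fail on the first annulus $k=1$; this is standard and fixable by starting the sum at a slightly larger $k_0$ and handling finitely many initial annuli directly from the size condition (1), but it should be checked.
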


It follows immediately from  Proposition \ref{app.1} and Theorems \ref{Sparse.to.Cb}, \ref{endingBound} that 

\begin{theorem}\label{app.2}
Under the same assumption of Theorem \ref{Sparse.to.Cb} and Theorem \ref{endingBound} respectively, the {\tt $\B$-valued multilinear fractional Dini type Calder\'on-Zygmund operator} $\T_\eta$,  {\tt multilinear fractional integral operator} $\I_\eta$, {\tt multilinear fractional maximal operator $\M_\eta$}, and their gerneralized commutators can enjoy the  multilinear fractional sparse domination principle: Theorem \ref{Sparse.to.Cb} and endpoint estimates: Theorem \ref{endingBound}.
\end{theorem}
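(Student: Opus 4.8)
The plan is to reduce every operator in the list --- together with its generalized commutators --- to the situation already handled in Theorems \ref{Sparse.to.Cb} and \ref{endingBound}. Concretely, I would show that each of $\T_\eta$, $\I_\eta$ and $\M_\eta$ can be realized as, or pointwise dominated by, a $\B$-valued $m$-linear fractional H\"ormander type singular integral operator whose kernel lies in $\mathcal{H}_r$ and which is bounded from $L^r(X)\times\cdots\times L^r(X)$ into $L^{\tilde r,\infty}(X,\B)$; once this is in place, the sparse domination principle \eqref{zhang:th1.6} (hence also \eqref{Sparse.to.g} and \eqref{2.7_1}) and the endpoint estimates \eqref{eq:LlogL_1}--\eqref{eq:LlogL_2} follow by a direct application of those two theorems.

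First I would treat $\T_\eta$. By definition, a $\B$-valued multilinear fractional Dini type Calder\'on--Zygmund operator is bounded from $\prod_{i=1}^m L^{s_i}(X)$ into $L^{\tilde s,\infty}(X,\B)$ for some exponents with $\eta=\sum_i s_i^{-1}-\tilde s^{-1}$, which is exactly the boundedness assumption entering Theorems \ref{Sparse.to.Cb} and \ref{endingBound}, while Proposition \ref{app.1} guarantees that its kernel belongs to $\mathcal{H}_r$. Hence the two theorems apply to $\T_\eta$ and to $\T_{\eta,\tau_{\ell}}^{\mathbf{b,k}}$ verbatim. If one insists on the diagonal weak-type bound with all exponents equal to $r$, it is recovered from the off-diagonal bound together with the kernel size estimate by the usual Kolmogorov-type / multilinear extrapolation argument for operators with a fractional kernel.

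Next I would observe that $\I_\eta$ is itself a $\B$-valued multilinear fractional Dini type Calder\'on--Zygmund operator, so that the preceding case applies to it and to $\I_{\eta,\tau_{\ell}}^{\mathbf{b,k}}$. Its kernel $Q_\eta(x,\vec y)=\big(\sum_{i=1}^m\mu(B(x,d(x,y_i)))\big)^{\eta-m}$ satisfies size condition (1) by inspection; for the smoothness condition (2) I would note that when $d(y_j',y_j)\le\tfrac12\max_i d(y_0,y_i)$ the quasi-triangle inequality (constant $A_0$) and the doubling property (constant $C_\mu$) make $\sum_i\mu(B(y_0,d(y_0,y_i)))$ and its analogue with $y_j'$ in place of $y_j$ comparable, up to a multiplicative factor that is a power of $d(y_j',y_j)/\sum_i d(y_0,y_i)$; combined with the local Lipschitz bound for $x\mapsto x^{\eta-m}$ away from the origin, this yields condition (2) with a modulus $w(t)=t^{\varepsilon}$, which is plainly Dini. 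Boundedness of $\I_\eta$ from $\prod_i L^{s_i}(X)$ into $L^{\tilde s,\infty}(X,\B)$ is the Hardy--Littlewood--Sobolev inequality on $(X,d,\mu)$, with the weak endpoint when some $s_i=1$.

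Finally I would handle $\M_\eta$ by monotonicity. From $\M_\eta(\vec f)(x)\le m^{\eta-m}\I_\eta(|\vec f|)(x)$ and the nonnegativity of the kernel of $\I_\eta$ one gets
\[
\big|\M_{\eta,\tau_{\ell}}^{\mathbf{b,k}}(\vec f)(x)\big|\le m^{\eta-m}\,\I_\eta\big(|b_1(x)-b_1|^{\beta_1}|f_1|,\dots,|b_m(x)-b_m|^{\beta_m}|f_m|\big)(x)\le m^{\eta-m}\big|\I_{\eta,\tau_{\ell}}^{\mathbf{b,k}}(|f_1|,\dots,|f_m|)(x)\big|,
\]
which by the previous step is pointwise dominated by $\sum_{\mathfrak{k}}\sum_{\tau\subseteq\tau_{\ell}}\sum_{\mathbf{k,t},\tau}\mathcal{A}_{\eta,\S_{\mathfrak{k}},\tau,r}^{\mathbf{b,k,t}}$, whose building blocks already carry only the absolute values $|b_i(x)-b_{i,Q}|^{k_i-t_i}$ and $\langle|f_i(b_i-b_{i,Q})^{t_i}|\rangle_{Q,r}$; likewise the super-level set on the left of \eqref{eq:LlogL_1}--\eqref{eq:LlogL_2} is monotone under this pointwise domination, so the endpoint estimates for $\I_\eta$ pass to $\M_\eta$ and to $\M_{\eta,\tau_{\ell}}^{\mathbf{b}}$. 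I expect the only genuinely non-bookkeeping point to be the verification of the Dini smoothness of the kernel of $\I_\eta$ on an abstract space of homogeneous type, where one must keep careful track of how $A_0$ and $C_\mu$ propagate through the comparison of the ball-measure sums; everything else is a direct appeal to Proposition \ref{app.1} and Theorems \ref{Sparse.to.Cb} and \ref{endingBound} together with the elementary bound $\M_\eta\le m^{\eta-m}\I_\eta$.
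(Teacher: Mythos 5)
Your approach coincides with the paper's, which invokes Proposition \ref{app.1} (Dini kernel $\Rightarrow$ H\"ormander kernel) together with the weak boundedness built into the definition of a Dini type Calder\'on--Zygmund operator to apply Theorems \ref{Sparse.to.Cb} and \ref{endingBound} directly, treats $\I_\eta$ as a special case of such an operator, and then handles $\M_\eta$ by the elementary pointwise domination $\M_\eta \le m^{\eta-m}\I_\eta(|\cdot|)$ --- all of which the paper records in the two short paragraphs preceding the statement. Your added detail --- verifying condition (2) of the Dini kernel for $\I_\eta$ by comparing the ball-measure sums at $y_j$ and $y_j'$ through $A_0$ and $C_\mu$ to extract a modulus $w(t)=t^\varepsilon$, and citing the Hardy--Littlewood--Sobolev inequality on $(X,d,\mu)$ for the off-diagonal weak-type bound --- is sound bookkeeping that the paper leaves implicit.

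One inequality in your display for $\M_\eta$ is reversed. Since $\I_\eta$ has a nonnegative kernel and is hence monotone, $\I_\eta(|g_1|,\dots,|g_m|) \ge |\I_\eta(g_1,\dots,g_m)|$, so the step
\[
\I_\eta\big(|b_1(x)-b_1|^{\beta_1}|f_1|,\dots\big)(x)\le \big|\I_{\eta,\tau_\ell}^{\mathbf{b,k}}(|f_1|,\dots,|f_m|)(x)\big|
\]
fails whenever some $\beta_i$ is odd (the right-hand side is $\big|\I_\eta\big((b_1(x)-b_1)^{\beta_1}|f_1|,\dots\big)(x)\big|$, which is the smaller of the two). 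Fortunately you do not need it: stop after the first inequality $\M_{\eta,\tau_\ell}^{\mathbf{b,k}}(\vec f)(x)\le m^{\eta-m}\I_\eta\big(|b_i(x)-b_i|^{\beta_i}|f_i|\big)(x)$ and observe that the proof of Theorem \ref{Sparse.to.Cb} is indifferent to this distinction. There, after the algebraic expansion of $\prod_i(b_i(x)-b_i(y_i))^{k_i}$, absolute values are taken at once, which amounts to the binomial-plus-triangle-inequality majorization
\[
|b_i(x)-b_i(y_i)|^{k_i}\le\sum_{t_i=0}^{k_i}C_{k_i}^{t_i}\,|b_i(x)-b_{i,R_{Q_0}}|^{k_i-t_i}\,|b_i(y_i)-b_{i,R_{Q_0}}|^{t_i},
\]
and everything downstream uses only nonnegative quantities. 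Hence the sparse bound applies verbatim to $\I_\eta\big(|b_i(x)-b_i|^{\beta_i}|f_i|\big)$, which is exactly what the $\M_\eta$ domination hands you. With this correction your argument is complete and matches the paper's intended reasoning.
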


\begin{remark}
(1) By Theorem \ref{app.2}, we can get the domination estimates and endpoint estimates for {\tt $\B$-valued multilinear fractional Dini type Calder\'on-Zygmund operator} $\T_\eta$. It is worth mentioning that the Hörmander type parameter $r$ can be given in $[1,\infty)$ arbitrarily, since the multilinear fractional  Dini type Calder\'on-Zygmund theory has been set up in \cite{ZhangWu2023}.\\
(2) If we take $B$ as a specific $L^2(\Omega,dv)$, such as $L^2(\R_ + ^{n + 1},\frac{{dzdt}}{{{t^{n + 1}}}})$,  then we can obtain the new multilinear fractional Littlewood-Paley sparse domination theory, which straightway extend the results by Cao et al. in \cite{Cao2018}. So how to select these $L^2(\Omega,dv)$? Ones may refer to \cite{Cao2018, Si2021, Xue2021,Xue2015}.
\end{remark}

\subsection{Organization}
~~

The structure of rest of the paper is as follows. In Sect. \ref{Pre.}, we build and give some fundamental lemmas, which play a important roles in our proof. In Sect. \ref{SDP}, the multilinear fractional sparse domination principle will be proven. Finally, we will give all of the proofs of quantitative weighted estimates in Sect. \ref{weight.estimate}.

\section{\bf  Preliminaries }\label{Pre.}


\subsection{Dyadic lattices}
~~
In $(X, d, \mu)$, a countable family $\d:=\bigcup\limits_{k \in \mathbb{Z}} \d_k$, where $\d_k:=\left\{Q_\alpha^k: \alpha \in \mathscr{A}_k\right\}$, is called a system of dyadic cubes with parameters $\delta \in(0,1)$ and $0<a_1 \leq A_1<\infty$ if it satisfies the following properties:
\begin{itemize}
  \item For each integer \( k \), the set \( X \) can be written as a disjoint union of the cubes \( Q_\alpha^k \) indexed by \( \alpha \in \mathscr{A}_k \):
$
  X = \bigcup\limits_{\alpha \in \mathscr{A}_k} Q_\alpha^k$;
  \item If $\ell \geq k$, then either $Q_\beta^{\ell} \subseteq Q_\alpha^k$ or $Q_\alpha^k \cap Q_\beta^{\ell} = \emptyset$;
  \item For each $(k, \alpha)$ and for each $\ell \leq k$, there exists a unique $\beta$ such that $Q_\alpha^k \subseteq Q_\beta^{\ell}$;
  \item For each $(k, \alpha)$, there exists at most $M$ indices $\beta$ such that $Q_\beta^{k+1} \subseteq Q_\alpha^k$, and$
  Q_\alpha^k = \bigcup\limits_{Q \in \d_{k+1}, Q \subseteq Q_\alpha^k} Q$;
  \item
\begin{equation}\label{eq:contain}
  B\left(x_\alpha^k, a_1 \delta^k\right) \subseteq Q_\alpha^k \subseteq B\left(x_\alpha^k, A_1 \delta^k\right)=: B\left(Q_\alpha^k\right);
\end{equation}
\item 
If $\ell \geq k$ and $Q_\beta^{\ell} \subseteq Q_\alpha^k$, then $B\left(Q_\beta^{\ell}\right) \subseteq B\left(Q_\alpha^k\right)$.
\end{itemize}
The set $Q_\alpha^k$ is a dyadic cube of generation $k$, centered at point $x_\alpha^k \in Q_\alpha^k$ with sidelength $\delta^k$.

From the properties of the dyadic system and doubling measure, we conclude that there exists a constant \( C_{\mu, 0} \), depending only on \( C_\mu \) as defined in \eqref{def_hom} and the earlier constants \( a_1, A_1 \), such that for any cubes \( Q_\alpha^k \) and \( Q_\beta^{k+1} \) with \( Q_\beta^{k+1} \subseteq Q_\alpha^k \),
\begin{align}\label{Cmu0}
  \mu(Q^{k+1}_\beta) \leq \mu(Q^k_\alpha) \leq C_{\mu,0} \mu(Q^{k+1}_\beta).
\end{align}
\begin{proposition}
    [\cite{Yang2019}]\label{theorem dyadic cubes}
On the metric measure space $(X, d, \mu)$, there exists a dyadic lattices  with parameters $0 < \delta \leq (12A_0^3)^{-1}$, $a_1 = (3A_0^2)^{-1}$, and $A_1 = 2A_0$. This construction relies on a fixed countable set of center points $x^k_\alpha$ satisfying
\[
d(x^k_\alpha, x^k_\beta) \geq \delta^k \quad \text{for } \alpha \neq \beta,
\]
\[
\min_\alpha d(x, x^k_\alpha) < \delta^k \quad \forall x \in X,
\]
and a partial order $\leq$ on index pairs $(k,\alpha)$. Furthermore, the system can be constructed such that
\[
\overline{Q}^k_\alpha = \overline{\{x^\ell_\beta \mid (\ell,\beta) \leq (k,\alpha)\}},
\]
\[
\widetilde{Q}^k_\alpha = \operatorname{int} \overline{Q}^k_\alpha = \left( \bigcup_{\gamma \neq \alpha} \overline{Q}^k_\gamma \right)^c,
\]
\[
\widetilde{Q}^k_\alpha \subseteq Q^k_\alpha \subseteq \overline{Q}^k_\alpha,
\]
where $Q^k_\alpha$ are derived from the closed sets $\overline{Q}^k_\alpha$ and the open sets $\widetilde{Q}^k_\alpha$ through finitely many set operations.
\end{proposition}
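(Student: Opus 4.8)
The plan is to reproduce, in the quasi-metric setting of \cite{Yang2019}, the by-now standard construction of nested dyadic systems on a space of homogeneous type (originating with Hyt\"onen and Kairema), so the proof is really a careful selection-and-bookkeeping procedure rather than a new idea. First I would fix $\delta\le(12A_0^3)^{-1}$ and, for each generation $k\in\mathbb Z$, choose by a greedy/Zorn maximality argument a maximal $\delta^k$-separated set of center points $\{x^k_\alpha:\alpha\in\mathscr A_k\}\subseteq X$; maximality yields at once the separation $d(x^k_\alpha,x^k_\beta)\ge\delta^k$ for $\alpha\ne\beta$ and the density $\min_\alpha d(x,x^k_\alpha)<\delta^k$ for every $x\in X$. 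The essential refinement is to make the nets \emph{coherent across scales}: ordering generations downward, once the level-$k$ net is fixed one selects the level-$(k+1)$ net so that each $x^{k+1}_\beta$ is assigned a single ``parent'' $x^k_\alpha$ with $d(x^{k+1}_\beta,x^k_\alpha)<\delta^k$ (possible by $\delta^k$-density of the coarser net) and so that each $x^k_\alpha$ parents at least one point of the finer net. Iterating the parent map defines the partial order: $(\ell,\beta)\le(k,\alpha)$ means the chain of parents from level $\ell$ up to level $k$ carries $x^\ell_\beta$ to $x^k_\alpha$.

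Next I would \emph{define} $\overline Q^k_\alpha:=\overline{\{x^\ell_\beta:(\ell,\beta)\le(k,\alpha)\}}$ and $\widetilde Q^k_\alpha:=\operatorname{int}\overline Q^k_\alpha=\big(\bigcup_{\gamma\ne\alpha}\overline Q^k_\gamma\big)^c$, and establish the two-sided metric containment $B(x^k_\alpha,a_1\delta^k)\subseteq\widetilde Q^k_\alpha$ and $\overline Q^k_\alpha\subseteq \overline B(x^k_\alpha,A_1\delta^k)$ with $a_1=(3A_0^2)^{-1}$ and $A_1=2A_0$. The outer bound comes from iterating the quasi-triangle inequality (constant $A_0$) along a descending chain of parents, each step of size below $\delta^{j}$, and summing the resulting geometric series — this is where $\delta\le(12A_0^3)^{-1}$ enters. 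The inner bound uses the separation $d(x^k_\alpha,x^k_\gamma)\ge\delta^k$ together with the fact that every center point is assigned through the parent map to a \emph{nearby} ancestor, so a point within $a_1\delta^k$ of $x^k_\alpha$ cannot lie in the closure of the descendants of a distinct $x^k_\gamma$. The $\{\widetilde Q^k_\alpha\}_\alpha$ are pairwise disjoint and the $\{\overline Q^k_\alpha\}_\alpha$ cover $X$; to upgrade this to an exact Borel partition one assigns each boundary point of $\bigcup_\alpha\overline Q^k_\alpha\setminus\bigcup_\alpha\widetilde Q^k_\alpha$ to a single cube in an order-compatible way — concretely, through the finer generations, placing $x$ in the unique level-$k$ cube that contains the level-$(k+1)$ cube to which $x$ has already been assigned — producing Borel sets $Q^k_\alpha$ with $\widetilde Q^k_\alpha\subseteq Q^k_\alpha\subseteq\overline Q^k_\alpha$, $X=\bigsqcup_\alpha Q^k_\alpha$, nestedness $Q^\ell_\beta\subseteq Q^k_\alpha$ whenever $(\ell,\beta)\le(k,\alpha)$ and $\ell\ge k$, and the containment \eqref{eq:contain}. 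Finally, the children of $x^k_\alpha$ all lie in a ball $B(x^k_\alpha,C(A_0)\delta^k)$ and are $\delta^{k+1}$-separated, so a packing argument with the doubling condition bounds their number by a geometric constant $M=M(A_0,C_\mu)$; the same packing argument together with \eqref{eq:contain} gives \eqref{Cmu0} with $C_{\mu,0}=C_{\mu,0}(C_\mu,a_1,A_1)$.

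I expect the real obstacle to lie not in selecting the separated nets (routine) but in the simultaneous bookkeeping that turns the nested families $\{\overline Q^k_\alpha\}$ and $\{\widetilde Q^k_\alpha\}$ into honest cubes $Q^k_\alpha$ that at once (i) partition $X$ exactly at each level, (ii) are nested across \emph{all} levels, and (iii) retain the two-sided metric containment; this forces one to propagate a single consistent boundary-assignment rule through all generations (a diagonal argument over the countable index set) while checking that no such assignment can push $Q^k_\alpha$ outside $\overline B(x^k_\alpha,A_1\delta^k)$ or shrink it inside $B(x^k_\alpha,a_1\delta^k)$. Since this delicate step is carried out in full in \cite{Yang2019}, I would simply invoke their construction, the stated values of $\delta,a_1,A_1$ being exactly those recorded there.
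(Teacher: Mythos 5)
Your sketch is a faithful outline of the Hyt\"onen--Kairema construction that underlies Proposition~\ref{theorem dyadic cubes}, and the paper itself gives no independent proof: it states the result with citation to \cite{Yang2019} exactly as you ultimately propose to do. Since both you and the paper defer the technical bookkeeping (coherent nets, boundary assignment, the two-sided containment with the stated $\delta,a_1,A_1$) to the cited reference, your approach matches the paper's.
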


We also recall the following remark from \cite[Section 2.3]{KLPW}.
\begin{remark}\label{p2.3}
For a fixed point $x_0 \in X$ and each $k \in \mathbb{Z}$, there exists an index $\alpha$ such that $x_0 = x_\alpha^k$, the center of $Q_\alpha^k \in \d_k$.
\end{remark}
\begin{definition}\label{MWbmo}
Let $\omega \in A_\infty$. A locally integrable function $b \in L^1_{\text{loc}}(X)$ belongs to ${ BMO}_\omega(X)$ provided that
\[
\|b\|_{{ BMO}_\omega(X)} := \sup_{B} \frac{1}{\omega(B)} \int_B |b(x) - b_B| \, d\mu(x) < \infty,
\]
where
\[
b_B := \frac{1}{\mu(B)} \int_B b(x) \, d\mu(x)
\]
and the supremum is taken over all balls $B \subseteq X$.
  \end{definition}
\subsection{Adjacent Systems of Dyadic Cubes}
~~

For a space of homogeneous type $(X, d, \mu)$, a finite collection $\{\mathscr{D}^\mathfrak{k}\}_{\mathfrak{k}=1}^{\mathcal{K}}$ of dyadic families is called a \textit{collection of adjacent dyadic systems} with parameters $\delta \in (0,1)$, $0 < a_1 \leq A_1 < \infty$, and $1 \leq C_{adj} < \infty$ provided that:

\begin{list}{\rm (\theenumi)}{\usecounter{enumi}\leftmargin=1cm \labelwidth=1cm \itemsep=0.2cm \topsep=.2cm \renewcommand{\theenumi}{\roman{enumi}}}
  \item \textbf{Individual Systems:} Each $\mathscr{D}^\mathfrak{k}$ is a dyadic system with parameters $\delta$, $a_1$, and $A_1$.
  \item \textbf{Adjacency Condition:}
  For every ball $B(x, r) \subseteq X$ with $\delta^{k+3} < r \leq \delta^{k+2}$ for some $k \in \mathbb{Z}$, there exists $\mathfrak{k} \in \{1, 2, \ldots, \mathcal{K}\}$ and a cube $Q \in \mathscr{D}^\mathfrak{k}$ of generation $k$ centered at ${}^\mathfrak{k} x^k_\alpha$ such that
  \[
  d(x, {}^\mathfrak{k} x^k_\alpha) < 2A_0 \delta^{k}
  \]
  and
  \begin{equation}\label{eq:ball;included}
    B(x,r)\subseteq Q\subseteq B(x,C_{adj}r).
\end{equation}
\end{list}

From \cite{HK}, we recall the following construction.

\begin{proposition}\label{thm:existence2}
    Let $(X, d, \mu)$ be a space of homogeneous type. Then there exists a finite collection $\{\mathscr{D}^\mathfrak{k}\}_{t=1}^{\mathcal{K}}$ of adjacent dyadic systems with parameters $\delta \in \left(0, \frac{1}{96 A_0^6}\right)$, $a_1 = \frac{1}{12 A_0^4}$, $A_1 = 4 A_0^2$, and $C = 8 A_0^3 \delta^{-3}$. The center points ${}^\mathfrak{k}x^k_\alpha$ of cubes $Q \in \mathscr{D}^\mathfrak{k}_k$ satisfy, for each $\mathfrak{k} \in \{1, 2, \ldots, \mathcal{K}\}$,
\[
d({}^\mathfrak{k}x^k_\alpha, {}^\mathfrak{k}x^k_\beta) \geq \frac{\delta^k}{4 A_0^2} \quad (\alpha \neq \beta)
\]
and
\[
\min_{\alpha} d(x, {}^\mathfrak{k}x^k_\alpha) < 2 A_0 \delta^k \quad \text{for all } x \in X.
\]
Moreover, these adjacent systems can be constructed so that each $\mathscr{D}^\mathfrak{k}$ satisfies the distinguished center point property  in Remark \ref{p2.3}.
\end{proposition}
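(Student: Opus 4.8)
The plan is to reduce the construction of adjacent dyadic systems over a general space of homogeneous type to the single-system construction recorded in Proposition~\ref{theorem dyadic cubes}, carried out simultaneously for a carefully chosen finite family of ``shifted'' reference point sets, and then to verify the adjacency condition \eqref{eq:ball;included} by a ball-chasing argument. First I would fix, for each generation $k\in\mathbb{Z}$, a maximal $\delta^k$-separated set in $X$ (these exist by a Zorn's lemma argument using only the quasi-metric structure); such a set is automatically $\delta^k$-dense. The key combinatorial input is that, because $(X,d)$ is geometrically doubling (a consequence of the doubling property of $\mu$), one can select a \emph{bounded} number $\mathcal{K}=\mathcal{K}(A_0)$ of such separated sets ${}^{\mathfrak{k}}\!\{x^k_\alpha\}$, $\mathfrak{k}=1,\dots,\mathcal{K}$, with the crucial covering feature: for every $x\in X$ and every scale $\delta^k$ there is at least one index $\mathfrak{k}$ whose point ${}^{\mathfrak{k}}x^k_\alpha$ nearest to $x$ lies \emph{well inside} the cube it generates, i.e. at distance $\lesssim\delta^k$ from $x$ but comfortably far (on the scale $\delta^{k-1}$) from all competing centers of the same system. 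This is exactly the mechanism by which a single dyadic cube in one of the $\mathcal{K}$ systems can be made to swallow an arbitrary ball $B(x,r)$.

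Next I would run the construction of Proposition~\ref{theorem dyadic cubes} (with the stated parameters $\delta\in(0,\tfrac{1}{96A_0^6})$, $a_1=\tfrac{1}{12A_0^4}$, $A_1=4A_0^2$) \emph{separately} for each of the $\mathcal{K}$ reference systems; this yields $\mathcal{K}$ genuine dyadic lattices $\mathscr{D}^{\mathfrak{k}}$, each satisfying the nestedness, the at-most-$M$-children property, and the metric sandwich \eqref{eq:contain}
\[
B\big({}^{\mathfrak{k}}x^k_\alpha,\,a_1\delta^k\big)\subseteq Q\subseteq B\big({}^{\mathfrak{k}}x^k_\alpha,\,A_1\delta^k\big),
\]
together with the distinguished-center-point property of Remark~\ref{p2.3}. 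It remains to check the adjacency condition: given a ball $B(x,r)$ with $\delta^{k+3}<r\le\delta^{k+2}$, pick the index $\mathfrak{k}$ furnished by the covering feature above, let $Q\in\mathscr{D}^{\mathfrak{k}}_k$ be the generation-$k$ cube containing the associated center ${}^{\mathfrak{k}}x^k_\alpha$ with $d(x,{}^{\mathfrak{k}}x^k_\alpha)<2A_0\delta^k$. Then on the one hand, for $y\in B(x,r)$ the quasi-triangle inequality gives $d({}^{\mathfrak{k}}x^k_\alpha,y)\le A_0(d({}^{\mathfrak{k}}x^k_\alpha,x)+d(x,y))\le A_0(2A_0\delta^k+r)< a_1\delta^{k-?}$ after absorbing constants, which places $y$ inside the parent cube of $Q$ at the appropriate generation and, tracking the generation bookkeeping against $r\le\delta^{k+2}$, inside $Q$ itself; so $B(x,r)\subseteq Q$. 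On the other hand $Q\subseteq B({}^{\mathfrak{k}}x^k_\alpha,A_1\delta^k)\subseteq B(x,C_{adj}r)$ with $C_{adj}=8A_0^3\delta^{-3}$, using $A_0(2A_0\delta^k+A_1\delta^k)\le C_{adj}\delta^{k+3}<C_{adj}r$ and the choice of parameters; this gives \eqref{eq:ball;included}. The separation and density estimates for the centers ${}^{\mathfrak{k}}x^k_\alpha$ are inherited verbatim from the maximal-separated-set construction.

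The main obstacle is the second step: arranging the bounded family of reference point sets so that \emph{every} ball at \emph{every} scale is captured ``deep inside'' some cube of some system, and then bookkeeping the interplay between the quasi-metric constant $A_0$, the dilation $\delta$, and the three-generation offset in $\delta^{k+3}<r\le\delta^{k+2}$ so that all the constant chases close with precisely $a_1=\tfrac{1}{12A_0^4}$, $A_1=4A_0^2$, and $C_{adj}=8A_0^3\delta^{-3}$. This is the part that is genuinely quantitative rather than soft; the cardinality bound $\mathcal{K}=\mathcal{K}(A_0)$ comes from geometric doubling, and the containment \eqref{eq:ball;included} is then a finite sequence of applications of the quasi-triangle inequality with the constants tuned as above. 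All of this is carried out in \cite{HK}, to which we refer for the detailed verification; we have only indicated how the pieces fit together.
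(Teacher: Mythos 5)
The paper does not actually prove this proposition; it is quoted verbatim from Hyt\"onen--Kairema \cite{HK}, prefaced only by ``From \cite{HK}, we recall the following construction.'' Your outline is a reasonable high-level summary of that same HK construction and, like the paper, ultimately defers to \cite{HK} for the quantitative bookkeeping (which you flag yourself with the unresolved placeholder $\delta^{k-?}$), so in effect you take exactly the route the paper takes.
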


We recall from \cite[Remark 2.8]{KLPW} that the number $\mathcal{K}$ of adjacent dyadic systems satisfies
\[
\mathcal{K} = \mathcal{K}(A_0, \widetilde{A}_1, \delta) \leq \widetilde{A}_1^6 \left(\frac{A_0^4}{\delta}\right)^{\log_2 \widetilde{A}_1},
\]
where $\widetilde{A}_1$ is the geometrically doubling constant (see \cite[Section 2]{KLPW}).

\begin{definition}\label{D:sparse}
  For $0 < \delta < 1$, a collection $\mathcal{S} \subseteq \mathcal{D}$ of dyadic cubes is \textit{$\delta$-sparse} if for each $Q \in \mathcal{S}$, there exists a measurable subset $E_Q \subseteq Q$ with $\mu(E_Q) \geq \delta \mu(Q)$, and the family $\{E_Q\}_{Q \in \mathcal{S}}$ are pairwise disjoint.
\end{definition}

Let  $\mathcal D$ be a dyadic lattice of $X$, the dyadic maximal function is defined by
\[
  M_{\mathcal{D}}^{\sigma}(f) = \sup_{Q \in \mathcal{D}} \frac{1}{\sigma(Q)} \int_Q |f(x)| \sigma \, d\mu.
\]
It is well-known that
\begin{equation}\label{dyadicmaximal}
  \|M_{\mathcal{D}}^{\sigma} f\|_{L^p(X,\sigma)} \leq p' \|f\|_{L^p(X,\sigma)}, \quad 1 < p < \infty.
\end{equation}

\begin{lemma}[\cite{Yang2019}]\label{zhang:6.1}
  Let $0 < \gamma < 1$, $\mathcal{D}$ be a dyadic lattice in $X$, and $\mathcal{S} \subseteq \mathcal{D}$ a $\gamma$-sparse family. For $b \in L^1_{\text{loc}}(X)$, there exists a $\frac{\gamma}{2(\gamma+1)}$-sparse family $\tilde{\mathcal{S}} \subseteq \mathcal{D}$ containing $\mathcal{S}$ such that for each $Q \in \tilde{\mathcal{S}}$,
  \[
    |b(x) - b_Q| \leq C \sum_{\substack{R \in \tilde{\mathcal{S}} \\ R \subseteq Q}} \langle\left|b(x)-b_R\right| \rangle_{R} \chi_R(x) \quad \text{a.e. } x \in Q.
  \]
\end{lemma}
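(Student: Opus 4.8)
\textit{Proof idea.} The plan is to prove this by a \emph{local Calder\'on--Zygmund stopping-time decomposition} carried out separately inside each cube of $\mathcal{S}$, and then to merge the resulting stopping cubes into a single enlarged family $\tilde{\mathcal{S}}$. Throughout, write $\Omega(b,R):=\dashint_R|b-b_R|\,d\mu$ for the mean oscillation of $b$ on a cube $R$. First I would fix $Q_0\in\mathcal{S}$ and build a stopping tree $\mathcal{F}(Q_0)$ recursively: put $Q_0\in\mathcal{F}(Q_0)$, and for an already-selected $R\in\mathcal{F}(Q_0)$ declare its children in $\mathcal{F}(Q_0)$ to be the maximal dyadic cubes $P\subsetneq R$ with $\dashint_P|b-b_R|\,d\mu>2\,\Omega(b,R)$. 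Since these $P$ are pairwise disjoint and each has dyadic average exceeding $2\Omega(b,R)$, Chebyshev's inequality gives $\sum_P\mu(P)\le\frac{1}{2\Omega(b,R)}\int_R|b-b_R|\,d\mu=\tfrac12\mu(R)$; hence the sets $E_R:=R\setminus\bigcup_PP$ are pairwise disjoint with $\mu(E_R)\ge\tfrac12\mu(R)$, so $\mathcal{F}(Q_0)$ is a $\tfrac12$-sparse subfamily of $\mathcal{D}$ contained in $Q_0$. (Note this packing step uses only dyadic averages, not doubling.)

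Next I would establish the pointwise bound. For $\mu$-a.e.\ $x\in Q_0$ let $Q_0=R_0\supsetneq R_1\supsetneq R_2\supsetneq\cdots$ enumerate the cubes of $\mathcal{F}(Q_0)$ containing $x$. If this chain is finite, ending at $R_N$, then $x$ lies in no child of $R_N$, so the dyadic Lebesgue differentiation theorem on $(X,d,\mu)$ (valid since $b\in L^1_{\mathrm{loc}}$) yields $|b(x)-b_{R_N}|\le 2\,\Omega(b,R_N)$ for a.e.\ such $x$. For consecutive cubes, $R_{j+1}$ is a \emph{maximal} stopping cube inside $R_j$, so its dyadic parent $\widehat{R}_{j+1}$ satisfies $\dashint_{\widehat{R}_{j+1}}|b-b_{R_j}|\,d\mu\le 2\,\Omega(b,R_j)$, and the local doubling estimate $\mu(\widehat{R}_{j+1})\le C_{\mu,0}\,\mu(R_{j+1})$ from \eqref{Cmu0} gives $|b_{R_{j+1}}-b_{R_j}|\le\dashint_{R_{j+1}}|b-b_{R_j}|\,d\mu\le 2C_{\mu,0}\,\Omega(b,R_j)$. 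Telescoping,
\[
|b(x)-b_{Q_0}|\le\sum_{j=0}^{N-1}|b_{R_{j+1}}-b_{R_j}|+|b(x)-b_{R_N}|\le 2C_{\mu,0}\sum_{j=0}^{N}\Omega(b,R_j)\le C\!\!\sum_{\substack{R\in\mathcal{F}(Q_0)\\ x\in R}}\!\!\Omega(b,R),
\]
with $C=2C_{\mu,0}$. When the chain is infinite the set of such $x$ has measure $\le\lim_j(\tfrac12)^j\mu(Q_0)=0$ and may be discarded, so the displayed inequality holds $\mu$-a.e.\ on $Q_0$.

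Finally I would set $\tilde{\mathcal{S}}:=\bigcup_{Q_0\in\mathcal{S}}\mathcal{F}(Q_0)$. It contains $\mathcal{S}$ since $Q_0\in\mathcal{F}(Q_0)$, and the inequality just proved is exactly the one asserted in the lemma. To verify that $\tilde{\mathcal{S}}$ is $\tfrac{\gamma}{2(\gamma+1)}$-sparse, I would use that each $\mathcal{F}(Q_0)$ obeys the packing bound $\sum_{R\in\mathcal{F}(Q_0),\,R\subseteq S}\mu(R)\le 2\mu(S)$ inside $Q_0$, and combine this with the packing property of the $\gamma$-sparse family $\mathcal{S}$ (for which $\sum_{Q\in\mathcal{S},\,Q\subseteq S}\mu(Q)\le\tfrac1\gamma\mu(S)$); a cube of $\tilde{\mathcal{S}}$ strictly below a given $R$ is either a stopping cube of some tree sitting below $R$ or a cube of $\mathcal{S}$, and summing the two contributions bounds the measure of such cubes by $\bigl(1-\tfrac{\gamma}{2(\gamma+1)}\bigr)\mu(R)$, which is precisely what is needed to choose the disjointified pieces $E_R$ with $\mu(E_R)\ge\tfrac{\gamma}{2(\gamma+1)}\mu(R)$.

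I expect the main obstacle to be exactly this last bookkeeping for the sparseness constant of the merged family: a single dyadic cube may be selected as a stopping cube for several distinct $Q_0\in\mathcal{S}$ and may simultaneously lie in $\mathcal{S}$, so one must combine the Carleson/packing constant of $\mathcal{S}$ with those of the individual stopping trees without double counting, and it is this combination that produces the factor $\tfrac{\gamma}{2(\gamma+1)}$. By contrast, the passage from the Euclidean setting to the quasi-metric doubling space is routine, amounting to replacing the Euclidean doubling constant by $C_{\mu,0}$ from \eqref{Cmu0} in the parent-to-child average comparison and invoking the dyadic Lebesgue differentiation theorem, which holds on every space of homogeneous type.
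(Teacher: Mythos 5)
Your stopping-tree construction inside each $Q_0\in\mathcal{S}$ and the telescoping pointwise estimate are correct and match the argument in the cited reference (originally Lerner--Ombrosi--Rivera-R\'{\i}os in $\mathbb R^n$): the Chebyshev step making each $\mathcal{F}(Q_0)$ a $\tfrac12$-sparse subfamily, the parent-to-child comparison via \eqref{Cmu0}, the Lebesgue differentiation at the terminal cube, and the observation that the subtree below any $R\in\mathcal{F}(Q_0)$ is again a stopping tree (so the displayed inequality holds for every $Q\in\tilde{\mathcal{S}}$, not only $Q\in\mathcal{S}$) are all in order.

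The gap is exactly where you anticipated it — the sparseness of $\tilde{\mathcal{S}}=\bigcup_{Q_0}\mathcal{F}(Q_0)$ — and it is a genuine one. Your claim that a cube $R'\in\tilde{\mathcal{S}}$ with $R'\subsetneq R$ is "either a stopping cube of some tree sitting below $R$ or a cube of $\mathcal{S}$" is not correct: $R'$ may be a proper stopping cube of $\mathcal{F}(Q_0)$ for a $Q_0\in\mathcal{S}$ with $Q_0\supsetneq R$, and a $\gamma$-sparse $\mathcal{S}$ can contain an arbitrarily long nested chain of such $Q_0$'s above $R$, each tree potentially depositing new cubes of total measure comparable to $\mu(R)$ inside $R$. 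Nothing in the unmodified construction controls this, so the Carleson constant of $\tilde{\mathcal{S}}$ is not simply "the sum of the two contributions." Independently of the overcounting, the sets $E_R$ cannot be the naive complement $R\setminus\bigcup\{\text{maximal proper }\tilde{\mathcal{S}}\text{-subcubes of }R\}$: already the maximal proper $\mathcal{S}$-subcubes of $R$ can exhaust $R$ (possible whenever $\gamma\le\tfrac12$), so that complement may be null. Passing from a Carleson packing bound to disjoint sets $E_R$ with $\mu(E_R)\ge\delta\mu(R)$ requires the Lerner--Nazarov selection argument, not the complement.

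The fix in the reference is to truncate each $\mathcal{F}(Q_0)$ at the next generation of $\mathcal{S}$, i.e.\ to attribute a stopping cube $R$ only to the \emph{minimal} $Q_0\in\mathcal{S}$ containing it. Then every $R\in\tilde{\mathcal{S}}\setminus\mathcal{S}$ lies in a unique tree, and for a fixed $Q$ the packing splits into: the $\mathcal{S}$-cubes $\subseteq Q$ (contributing $\le\tfrac1\gamma\mu(Q)$); proper stopping cubes of trees rooted at $Q_0\subseteq Q$ (at most $\mu(Q_0)$ per tree, hence $\le\tfrac1\gamma\mu(Q)$ in total); and proper stopping cubes of the single tree rooted at the minimal $\mathcal{S}$-cube above $Q$ ($\le 2\mu(Q)$). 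This yields the Carleson constant $\tfrac2\gamma+2=\tfrac{2(\gamma+1)}\gamma$, hence $\tfrac{\gamma}{2(\gamma+1)}$-sparseness. The price is that a telescoping chain in a truncated tree may stop upon reaching a descendant $\mathcal{S}$-cube and must hand off to the tree rooted there; this coupling between the construction and the pointwise estimate is exactly what your proposal left out by building the trees independently and then taking a blind union.
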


\section{\bf Proof for multilinear fractional sparse domination principle}\label{SDP}
In this section, we will illustrate the sparse domination theorem by introducing the grand maximal truncated operator.

We define the grand maximal truncated operator \(\mathcal{M}_{\mathcal{T}_{\eta}}\) as
\begin{align*}
\mathcal{M}_{\mathcal{T}_{\eta}}(\vec{f})(x) := \sup_{B \ni x} \esssup_{\xi \in B} \big\| \mathcal{T}_{\eta}\big(\vec{f} \chi_{X \setminus C_{\widetilde{j}_0} B}\big)(\xi) \big\|_{\B},
\end{align*}
where the supremum is over all balls \(B \subseteq X\) containing \(x\). The integer \(\widetilde{j}_0\) is the smallest integer satisfying
\begin{align}\label{widetildej0}
2^{\widetilde{j}_0} > \max\{3A_0,\, 2A_0 \cdot C_{{adj}}\},
\end{align}
and we set \(C_{\widetilde{j}_0} := 2^{\widetilde{j}_0 + 2} A_0\). Here, \(C_{{adj}}\) is a constant mentioned in preliminaries.

Given a ball \(B_0 \subseteq X\) and a point \(x \in B_0\), we define the local grand maximal truncated operator \(\mathcal{M}_{\mathcal{T}_{\eta}, B_0}\) by
\begin{align*}
\mathcal{M}_{\mathcal{T}_{\eta}, B_0}(\vec{f})(x) := \sup_{B \ni x,\, B \subseteq B_0} \esssup_{\xi \in B} \big\| T\big(\vec{f} \chi_{C_{\widetilde{j}_0} B_0 \setminus C_{\widetilde{j}_0} B}\big)(\xi) \big\|_{\B}.
\end{align*}

Then, we claim two Lerner type lemmas that are critical tools for the proof in this section involving the Calderón-Zygmund decomposition.

  \begin{lemma}\label{Condi.Mg}
Let $1 \leq r<\infty$ and $\frac{m}{r}-\frac{1}{\tilde{r}}=\an \in [0,m)$. Under the assumption of Theorem \ref{Sparse.to.Cb}, there holds that
    \begin{align}\label{Condi.Mg_1}
      \left\|\mathcal{M}_{\mathcal{T}_{\eta}}\right\|_{L^r \times \cdots \times L^r \rightarrow L^{\tilde{r}, \infty}} \lesssim 1.
    \end{align}
    \end{lemma}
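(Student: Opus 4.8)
The plan is to dominate $\mathcal{M}_{\mathcal{T}_{\eta}}$ pointwise by two operators that are already of weak type $(L^r\times\cdots\times L^r,L^{\tilde r})$, and then add up. First I would set, for the fractional parameter $\an=\tfrac{m}{r}-\tfrac1{\tilde r}$, the $r$-fractional maximal operator $\mathcal{M}_{r,\an}(\vec f)(x):=\sup_{B\ni x}\mu(B)^{\an}\prod_{i=1}^{m}\langle|f_i|\rangle_{B,r}$ and, for a fixed $s\in(0,\min\{1,\tilde r\})$, the sub-linearized average $\mathcal{M}_{s}g:=\mathcal{M}(|g|^{s})^{1/s}$, where $\mathcal{M}$ is the Hardy--Littlewood maximal operator on $X$. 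The heart of the proof would be the pointwise bound
\[
\mathcal{M}_{\mathcal{T}_{\eta}}(\vec f)(x)\lesssim(\mathscr{H}_r+C_{\mathcal{T}_{\eta}})\,\mathcal{M}_{r,\an}(\vec f)(x)+\mathcal{M}_{s}(\|\mathcal{T}_{\eta}\vec f\|_{\B})(x),\qquad\text{a.e. }x\in X.
\]
Granting it, \eqref{Condi.Mg_1} follows at once: $\mathcal{M}_{r,\an}$ maps $L^r\times\cdots\times L^r$ into $L^{\tilde r,\infty}(X)$ precisely because $\an=\tfrac{m}{r}-\tfrac1{\tilde r}$ (one reduces this to $\mathcal{M}_{r,\an}(\vec f)\lesssim\prod_i M_{r,\an/m}f_i$ and Hölder for weak-type spaces), $\mathcal{T}_{\eta}$ has the required weak bound by hypothesis, and $\mathcal{M}_{s}$ maps $L^{\tilde r,\infty}\to L^{\tilde r,\infty}$ since $|g|^{s}\in L^{\tilde r/s,\infty}$ with $\tilde r/s>1$. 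Summing the three contributions yields $\|\mathcal{M}_{\mathcal{T}_{\eta}}\|_{L^r\times\cdots\times L^r\to L^{\tilde r,\infty}}\lesssim\mathscr{H}_r+C_{\mathcal{T}_{\eta}}\lesssim1$.

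To establish the pointwise bound I would fix $x$ and a ball $B\ni x$, use Proposition \ref{thm:existence2} to pick an adjacent dyadic cube $Q$ with $B\subseteq Q\subseteq C_{adj}B$, and note that, by the choice \eqref{widetildej0} of $\widetilde j_0$, any $\xi,\xi'\in B$ lie in $\tfrac12 Q$ while $\cjo B$ absorbs the dyadic dilates of $Q$ up to a controlled generation. For a.e. $\xi,\xi'\in B$ I split $\mathcal{T}_{\eta}(\vec f\chi_{X\setminus\cjo B})(\xi)$ as $[\mathcal{T}_{\eta}(\vec f\chi_{X\setminus\cjo B})(\xi)-\mathcal{T}_{\eta}(\vec f\chi_{X\setminus\cjo B})(\xi')]+\mathcal{T}_{\eta}(\vec f\chi_{X\setminus\cjo B})(\xi')$. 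For the bracketed difference I would insert the kernel representation, decompose $(X\setminus\cjo B)^{m}$ into the annuli $(2^{k}Q)^{m}\setminus(2^{k-1}Q)^{m}$ together with one bridging piece, apply Hölder with exponents $(r',r,\dots,r)$ on each annulus using $\|f_i\chi_{2^kQ}\|_{L^r}=\mu(2^kQ)^{1/r}\langle|f_i|\rangle_{2^kQ,r}$, and sum in $k$ invoking $Q_{\eta}\in\mathcal{H}_r$ (its supremum over dyadic cubes applies since $\xi,\xi'\in\tfrac12 Q$); the $\mu(2^kQ)^{m/r}$ weights produced by Hölder pair with those in the definition of $\mathscr{H}_r$ and, after using $\tfrac{m}{r}=\an+\tfrac1{\tilde r}$, leave $\lesssim\mathscr{H}_r\,\mathcal{M}_{r,\an}(\vec f)(x)$, uniformly in $\xi,\xi'$.

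For the value at a good point I would expand
\[
\mathcal{T}_{\eta}(\vec f\chi_{X\setminus\cjo B})(\xi')=\mathcal{T}_{\eta}\vec f(\xi')-\sum_{\emptyset\neq S\subseteq\tau_m}\mathcal{T}_{\eta}\big((f_i\chi_{\cjo B})_{i\in S},(f_j\chi_{X\setminus\cjo B})_{j\notin S}\big)(\xi'),
\]
take $\B$-norms, raise to the power $s$, and average over $\xi'\in B$; this is legitimate because each summand lies in $L^{\tilde r,\infty}(X,\B)\subseteq L^{s}_{\loc}$, so Kolmogorov's inequality $\inf_B(\cdot)\le(\tfrac1{\mu(B)}\int_B(\cdot)^{s})^{1/s}$ is available. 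The $\mathcal{T}_{\eta}\vec f$-term contributes $\mathcal{M}_{s}(\|\mathcal{T}_{\eta}\vec f\|_{\B})(x)$; the all-near term $S=\tau_m$ is handled by the weak bound for $\mathcal{T}_{\eta}$ which, after the Kolmogorov average and the identities $\mu(\cjo B)\approx\mu(B)$ (doubling) and $\tfrac{m}{r}-\tfrac1{\tilde r}=\an$, gives $\lesssim C_{\mathcal{T}_{\eta}}\,\mathcal{M}_{r,\an}(\vec f)(x)$ since $x\in B\subseteq\cjo B$; and every mixed term ($\emptyset\neq S\subsetneq\tau_m$) has a support separated from the diagonal $\Delta$ over $B$, so the annular/Hölder scheme of the previous step again bounds it by $\lesssim\mathscr{H}_r\,\mathcal{M}_{r,\an}(\vec f)(x)$. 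Putting the two steps together yields the pointwise bound.

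I expect the main obstacle to be exactly this far-part (and mixed-term) estimate: transferring from balls $B$ to adjacent dyadic cubes $Q$ so that the supremum defining $\mathscr{H}_r$ can be invoked, choosing the annular decomposition of $(X\setminus\cjo B)^m$ (including the bridging region between $\cjo B$ and the first dyadic annulus) correctly, and checking that the $\mu(2^kQ)$-powers produced by Hölder match those built into $\mathcal{H}_r$ once the fractional scaling $\an=\tfrac{m}{r}-\tfrac1{\tilde r}$ is used --- all in the space-of-homogeneous-type setting. This is a careful but essentially routine adaptation of the Lerner-type scheme behind Theorems $A_3$, $A_4$ and $A_7$.
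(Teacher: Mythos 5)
Your overall plan---dominate $\mathcal M_{\mathcal T_\eta}$ pointwise by a fractional maximal operator plus $\mathcal M_s\bigl(\|\mathcal T_\eta\vec f\|_\B\bigr)$, then invoke the weak bounds for each piece---is precisely the Lerner / Cao--Yabuta scheme that the paper defers to (\cite[Lemma 4.1]{Cao2018}), and the deduction of \eqref{Condi.Mg_1} from such a pointwise bound (via the reduction $\mathcal M_{r,\eta}(\vec f)\lesssim\prod_i M_{r,\eta/m}f_i$, weak-type H\"older, and $\mathcal M_s\colon L^{\tilde r,\infty}\to L^{\tilde r,\infty}$ for $s<\tilde r$) is correct. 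Two steps in the proof of the pointwise bound, however, are not justified as written.

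The more serious gap concerns the mixed terms. You split $\mathcal T_\eta(\vec f\chi_{X\setminus\cjo B})(\xi')$ via inclusion--exclusion and assert that each mixed term $\mathcal T_\eta\bigl((f_i\chi_{\cjo B})_{i\in S},(f_j\chi_{X\setminus\cjo B})_{j\notin S}\bigr)(\xi')$, $\emptyset\neq S\subsetneq\tau_m$, is bounded by ``the annular/H\"older scheme of the previous step.'' But that scheme is carried entirely by the H\"ormander hypothesis, which estimates the \emph{difference} $\|K(\xi,\cdot)-K(\xi',\cdot)\|_\B$ on annuli; it supplies no size bound for $K(\xi',\cdot)$ at a single point. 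Inserting a further reference point $\xi''$ only reproduces the same quantity at $\xi''$, while Kolmogorov plus the weak bound of $\mathcal T_\eta$ leaves the non-local factors $\|f_j\chi_{X\setminus\cjo B}\|_{L^r}$, which are not controlled by any maximal function of $\vec f$ at $x$. The standard way to avoid this is to fold the mixed terms into the H\"ormander-controlled piece rather than into the good-point evaluation: write $g_B(\xi):=\mathcal T_\eta(\vec f)(\xi)-\mathcal T_\eta(\vec f\chi_{\cjo B})(\xi)$ and note that the integrand of $g_B$ is supported on $X^m\setminus(\cjo B)^m$ (all the ``not-all-near'' configurations at once), so that the difference $g_B(\xi)-g_B(\xi')$ admits the annular decomposition and the H\"ormander bound with the mixed contributions already absorbed; the good-point value then reduces to $\mathcal T_\eta(\vec f)(\xi')$ and $\mathcal T_\eta(\vec f\chi_{\cjo B})(\xi')$ only, both of which Kolmogorov handles. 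As written, your proposal does not take this route and leaves the mixed terms unjustified.

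A secondary issue is the fractional scaling in the H\"ormander pairing. H\"older on the $k$-th annulus produces $\mu(2^kQ)^{m/r}\prod_i\langle|f_i|\rangle_{2^kQ,r}$, and matching the $\mu(2^kQ)^{m/r}$ against the weight in $\mathscr H_r$ naturally leaves $\mathscr H_r\sup_k\prod_i\langle|f_i|\rangle_{2^kQ,r}$, i.e.\ the non-fractional maximal function, which maps $L^r\times\cdots\times L^r$ to $L^{r/m,\infty}$ but not to $L^{\tilde r,\infty}$ when $\eta>0$. To reach $\mathcal M_{r,\eta}$ one must peel off $\mu(2^kQ)^{\eta}$ inside the $k$-sum, which transfers the weight to $\mu(2^kQ)^{m/r-\eta}=\mu(2^kQ)^{1/\tilde r}$ rather than the $\mu(2^kQ)^{m/r}$ appearing in the paper's definition of $\mathscr H_r$; the identity $m/r=\eta+1/\tilde r$ is used, but it does not by itself reconcile the two weights, and this exponent mismatch should be checked against the intended (possibly fractional-adjusted) formulation of the $\mathscr H_r$ condition.
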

Since the proof of this lemma closely follows \cite[Lemma 4.1]{Cao2018}, we will omit it here and proceed to the formal proof.

\begin{lemma}\label{Lemma:preLi2018}
Let $1 \leq r<\infty$, $1 \leq t<\infty$, and $\frac{m}{t}-\frac{1}{\tilde{t}}=\an \in [0,m)$. Suppose that $\mathcal{T}_\eta$ be bounded from $L^t(X) \times \cdots \times L^t(X) \rightarrow L^{\tilde{t}, \infty}(X,\B)$. Then for a.e. $x \in B_0$,
  \begin{align}\label{theoremA_11}
    \big\|\mathcal{T}_{\eta}(\vec{f} \cdot \chi_{\cjo B_0})(x)\big\|_{\B} \lesssim \|\mathcal{T}_{\eta}\|_{L^t \times \cdots \times L^t \rightarrow L^{\tilde{t}, \infty}} \mu(\cjo B_0)^{\an \cdot \frac{1}{r}}\prod_{i=1}^m\big|f_i(x)\big|+\mathcal{M}_{\mathcal{T}_{\eta}, B_0}(\vec{f})(x).
  \end{align}
  \end{lemma}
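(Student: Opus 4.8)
The plan is to establish \eqref{theoremA_11} by a pointwise decomposition argument of Lerner type, carried out on the ball $B_0$. First I would fix $x \in B_0$ and a point $y \in B_0$ that will serve as an auxiliary evaluation point near $x$; the idea is to write $\vec f \chi_{\cjo B_0} = \vec f \chi_{\cjo B_0 \setminus \cjo B} + \vec f \chi_{\cjo B}$ for a suitably chosen small ball $B \ni x$ with $B \subseteq B_0$, and then expand the $m$-linear operator $\mathcal{T}_\eta$ applied to the sum into a sum of $2^m$ terms indexed by subsets $\sigma \subseteq \tau_m$, where for $i \in \sigma$ we keep the piece $f_i \chi_{\cjo B}$ and for $i \notin \sigma$ we keep $f_i \chi_{\cjo B_0 \setminus \cjo B}$. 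The "full local" term (all $i \in \sigma$) is $\mathcal{T}_\eta(\vec f \chi_{\cjo B})(y)$, and every term with at least one index in $\sigma^c$ involves a function supported away from $\cjo B$, hence is controlled in norm by $\mathcal{M}_{\mathcal{T}_\eta, B_0}(\vec f)(x)$ by the very definition of the local grand maximal truncated operator.

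The key analytic step is then to bound $\|\mathcal{T}_\eta(\vec f \chi_{\cjo B})(y)\|_\B$. Here I would use the weak-type hypothesis $\mathcal{T}_\eta: L^t \times \cdots \times L^t \to L^{\tilde t,\infty}(X,\B)$ together with a Kolmogorov-type inequality: since $\an = \tfrac{m}{t} - \tfrac{1}{\tilde t}$, on the ball $\cjo B$ the weak $L^{\tilde t}$ bound converts, via Kolmogorov's inequality over $\cjo B$ at an exponent strictly below $\tilde t$, into the estimate
\[
\Big(\frac{1}{\mu(\cjo B)}\int_{\cjo B}\|\mathcal{T}_\eta(\vec f\chi_{\cjo B})\|_\B^{s}\,d\mu\Big)^{1/s} \lesssim \|\mathcal{T}_\eta\|_{L^t\times\cdots\times L^t\to L^{\tilde t,\infty}}\,\mu(\cjo B)^{\an}\prod_{i=1}^m\Big(\frac{1}{\mu(\cjo B)}\int_{\cjo B}|f_i|^t\,d\mu\Big)^{1/t},
\]
for any $s$ with $0 < s < \tilde t$. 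Because $y$ ranges over $B_0$ but $\cjo B$ shrinks to $x$, I would choose $B = B(x,\rho)$ and let $\rho \to 0$; the point is that the averages $\big(\tfrac{1}{\mu(\cjo B)}\int_{\cjo B}|f_i|^t\big)^{1/t}$ converge to $|f_i(x)|$ at a.e.\ Lebesgue point $x$ (valid in spaces of homogeneous type by Lebesgue differentiation), and $\mu(\cjo B)^{\an}$ is absorbed after reindexing into the form $\mu(\cjo B_0)^{\an/r}$ — wait, more carefully, the factor we need is $\mu(\cjo B_0)^{\an \cdot \frac1r}$, so I would instead keep the ball at the scale of $B_0$ rather than shrinking: the cleanest route is to bound $\|\mathcal{T}_\eta(\vec f \chi_{\cjo B_0})(y)\|_\B$ directly by taking $B$ to be $B_0$ itself in one of the pieces, giving the term $\mathcal{T}_\eta(\vec f\chi_{\cjo B_0})(y)$ whose $L^s$-average over $\cjo B_0$ is $\lesssim \|\mathcal{T}_\eta\|\,\mu(\cjo B_0)^{\an}\prod_i \langle|f_i|\rangle_{\cjo B_0,t}$, and then use $\an = m/t - 1/\tilde t$ together with $1 \le r < \infty$ and a Hölder/doubling comparison to pass from the $L^t$-average to the pointwise value $|f_i(x)|$ times $\mu(\cjo B_0)^{\an/r}$; the passage uses that $\prod_i\langle|f_i|\rangle_{\cjo B_0,t} \le \prod_i \|f_i\|_{L^\infty}$ is not enough, so the honest argument replaces the global bound by a limiting/differentiation argument that I sketch below.

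More precisely, the correct and standard implementation: choose $B = B(x,\rho) \subseteq B_0$, split into the $2^m$ pieces as above, and observe $\mathcal{T}_\eta(\vec f\chi_{\cjo B})(y) \to $ a quantity controlled by $\lim_{\rho\to 0}\Big(\fint_{\cjo B}\|\mathcal{T}_\eta(\vec f\chi_{\cjo B})\|_\B^s\Big)^{1/s}$, apply Kolmogorov to get $\lesssim \|\mathcal{T}_\eta\|\,\mu(\cjo B)^{\an}\prod_i\langle|f_i|\rangle_{\cjo B,t}$, and then note $\mu(\cjo B)^{\an} = \mu(\cjo B)^{m/t-1/\tilde t}$; combining with $\prod_i\langle|f_i|\rangle_{\cjo B,t}^t = \prod_i \fint_{\cjo B}|f_i|^t$ and the differentiation theorem, the product $\mu(\cjo B)^{\an}\prod_i\langle |f_i|\rangle_{\cjo B, t}$ at a common Lebesgue point of $|f_1|^t,\dots,|f_m|^t$ behaves like $\mu(\cjo B)^{\an}\prod_i|f_i(x)|$, and since $\cjo B \subseteq \cjo B_0$ monotonicity of measures and $\an \ge 0$ give $\mu(\cjo B)^{\an} \le \mu(\cjo B_0)^{\an}$; but we want exponent $\an/r$, which follows because one only needs $\mu(\cjo B)^{\an \cdot \frac1r}$ after renormalizing the averages at exponent $r$ inside the sparse operator (this is why the statement carries $\frac1r$). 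The main obstacle I anticipate is this bookkeeping between the weak-type exponent $\tilde t$, the Hörmander/averaging exponent $r$, and the fractional exponent $\an$ — in particular getting the exponent on $\mu(\cjo B_0)$ to come out as $\an \cdot \frac1r$ rather than $\an$ — together with making sure the Kolmogorov inequality is applied at an admissible exponent $s < \tilde t$ and that the limiting argument is justified $\mu$-a.e.; the error terms from the $\sigma^c \neq \varnothing$ pieces are routine once $\mathcal{M}_{\mathcal{T}_\eta, B_0}$ is invoked, and the boundedness of $\mathcal{M}_{\mathcal{T}_\eta}$ itself is already supplied by Lemma \ref{Condi.Mg}.
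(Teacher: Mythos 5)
Your overall strategy matches the paper's: isolate the fully local piece on $\cjo B(x,s)$, absorb the off-diagonal pieces into $\mathcal{M}_{\mathcal{T}_\eta,B_0}(\vec f)(x)$, and use the weak-type hypothesis together with Lebesgue differentiation of $|f_i|^t$ to handle the local piece as $s\to 0$. Two points are worth flagging.

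First, the step where you bound the pointwise value $\|\mathcal{T}_\eta(\vec f\chi_{\cjo B})(y)\|_\B$ by an $L^s$-average via Kolmogorov needs a mechanism to pass from a pointwise evaluation at $x$ to an average over a shrinking neighbourhood, and this is exactly where the paper inserts the \emph{approximate continuity} device: it fixes $x$ a point of approximate continuity of $\mathcal{T}_\eta(\vec f\chi_{\cjo B_0})$, defines the sets $E_s(x)=\{y\in B(x,s):\|\mathcal{T}_\eta(\vec f\chi_{\cjo B_0})(y)-\mathcal{T}_\eta(\vec f\chi_{\cjo B_0})(x)\|_\B<\epsilon\}$ with $\mu(E_s(x))/\mu(B(x,s))\to 1$, and then bounds $\|\mathcal{T}_\eta(\vec f\chi_{\cjo B_0})(x)\|_\B$ by $\essinf_{y\in E_s(x)}\|\mathcal{T}_\eta(\vec f\chi_{\cjo B(x,s)})(y)\|_\B$ plus $\mathcal{M}_{\mathcal{T}_\eta,B_0}(\vec f)(x)$, using $\essinf_{E}\|g\|_\B\le\mu(E)^{-1/\tilde t}\|g\|_{L^{\tilde t,\infty}}$. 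Your proposal alludes to a limiting argument but never sets up the replacement of $x$ by nearby $y\in E_s(x)$, which is the piece that makes the conversion legitimate a.e.; without it the statement ``$\mathcal{T}_\eta(\vec f\chi_{\cjo B})(y)\to$ a quantity controlled by $\lim(\fint\cdots)^{1/s}$'' is not a proof.

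Second, your explanation for the exponent $\mu(\cjo B_0)^{\an\cdot\frac1r}$ (``renormalizing the averages at exponent $r$ inside the sparse operator'') is not what is going on. After applying the weak-type bound the local term carries a factor $\mu(E_s(x))^{\an}$; since $\an\ge 0$ and $\mu(E_s(x))\to 0$, this factor tends to $0$ when $\an>0$ (so the local term vanishes in the limit and the first summand in \eqref{theoremA_11} is just a harmless nonnegative majorant), while for $\an=0$ every power of the measure equals $1$ and the local term survives as $\|\mathcal{T}_\eta\|\prod_i|f_i(x)|$. In both cases the stated bound with exponent $\an\cdot\frac1r$ holds, and the $\frac1r$ plays no active role in this lemma; it is chosen simply to match the form in which the lemma is invoked in the proof of Theorem~\ref{Sparse.to.Cb}. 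Your hesitation about this exponent is understandable but the resolution is simpler than you suggest.
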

  \begin{proof}
    Without loss generality, we only need to prove the case $m=2$. That is,
    \begin{align*}
      & \big\|\mathcal{T}_{\eta}\big(f_1 \chi_{C_{\widetilde j_0}B_0}, f_2 \chi_{C_{\widetilde j_0}B_0}\big)(x)\big\|_{\B} \\
      & \quad \leq \|\mathcal{T}_{\eta}\|_{L^t \times L^t \rightarrow L^{\tilde{t}, \infty}}\mu(C_{\widetilde j_0}B_0)^{\an \cdot \frac{1}{r}}\prod_{i=1}^{2}\big|f_i(x)\big|+\mathcal{M}_{\mathcal{T}_{\eta}, B_0}\big(f_1, f_2\big)(x).
  \end{align*}
  
  Suppose that $x$ be a point of approximate continuity of 
  $\mathcal{T}_{\eta}\big(f_1 \chi_{C_{\widetilde j_0}B_0}, f_2 \chi_{C_{\widetilde j_0}B_0}\big)$, and $x \in \operatorname{int} B_0$.
  Then for every $\epsilon>0$, the sets
  \begin{align*}
    &E_S(x):=\quad \big\{y \in B(x, s):\big\|\mathcal{T}_{\eta}\big(f_1 \chi_{C_{\widetilde j_0}B_0}, f_2 \chi_{C_{\widetilde j_0}B_0}\big)(y)-\mathcal{T}_{\eta}\big(f_1 \chi_{C_{\widetilde j_0}B_0}, f_2 \chi_{C_{\widetilde j_0}B_0}\big)(x)\big\|_{\B}<\epsilon\big\}.
  \end{align*}
  satisfy $\lim\limits_{s \rightarrow 0} \frac{\mu(E_s(x))}{\mu(B(x, s))}=1$. 
  
  Let $s>0$ be so small that $B(x, s) \subseteq B_0$.
   Then for a.e. $y \in E_S(x)$,
   \begin{align*}
    \big\|\mathcal{T}_{\eta}\big(f_1 \chi_{C_{\widetilde j_0}B_0}, f_2 \chi_{C_{\widetilde j_0}B_0}\big)(x)\big\|_{\B} &<\big\|\mathcal{T}_{\eta}\big(f_1 \chi_{C_{\widetilde j_0}B_0}, f_2 \chi_{C_{\widetilde j_0}B_0}\big)(y)\big\|_{\B}+\epsilon \\
    & \leq\big\|\mathcal{T}_{\eta}\big(f_1 \chi_{C_{\widetilde j_0}B(x, s)}, f_2 \chi_{C_{\widetilde j_0}B(x, s)}\big)(y)\big\|_{\B} \\
    &\quad+\mathcal{M}_{\mathcal{T}_{\eta}, B_0}(f_1, f_2)(x)+\epsilon.
    \end{align*}  
  It follows that
  \begin{align*}
    & \big\|\mathcal{T}_{\eta}\big(f_1 \chi_{C_{\widetilde j_0}B_0}, f_2 \chi_{C_{\widetilde j_0}B_0}\big)(x)\big\|_{\B} \\
    & \leq \underset{y \in E_S(x)}{\operatorname{essinf}}\big\|\mathcal{T}_{\eta}\big(f_1 \chi_{C_{\widetilde j_0}B(x, s)}, f_2 \chi_{C_{\widetilde j_0}B(x, s)}\big)(y)\big\|_{\B} \\
    & \quad+\mathcal{M}_{\mathcal{T}_{\eta}, B_0}\big(f_1, f_2\big)(x)+\epsilon \\
    & \leq \mu(E_S(x))^{-\frac{1}{\tilde{t}}}\big\|\mathcal{T}_{\eta}\big(f_1 \chi_{C_{\widetilde j_0}B(x, s)}, f_2 \chi_{C_{\widetilde j_0}B(x, s)}\big)\big\|_{L^{\tilde{t}, \infty}(\B)} \\
    & \quad+\mathcal{M}_{\mathcal{T}_{\eta}, B_0}\big(f_1, f_2\big)(x)+\epsilon \\
    & \leq\|\mathcal{T}_{\eta}\|_{L^t \times L^t \rightarrow L^{\tilde{t}, \infty}} \frac{1}{\mu(E_S(x))^{2 /{t}}} \cdot \mu(E_S(x))^{\an } \prod_{i=1}^2\bigg(\int_{C_{\widetilde j_0}B(x, s)}\big|f_i(y)\big|^t \mathrm{~d} y\bigg)^{\frac{1}{{t}}} \\
    & \quad+\mathcal{M}_{\mathcal{T}_{\eta}, B_0}\big(f_1, f_2\big)(x)+\epsilon \\
    \intertext{Assuming additionally that $x$ is a Lebesgue point of $\big|f_1\big|^t$ and $\big|f_2\big|^t$ and letting subsequently $s \rightarrow 0$ and $\epsilon \rightarrow 0$. Since $\mu(E_S(x))^{\an } \rightarrow 0 \,\, (s \rightarrow 0)$, the above}
    & \lesssim \|\mathcal{T}_{\eta}\|_{L^t \times L^t \rightarrow L^{\tilde{t}, \infty}}\mu(\bo)^{\an \cdot \frac{1}{r}}\prod_{i=1}^{2}\big|f_i(x)\big|+\mathcal{M}_{\mathcal{T}_{\eta}, B_0}\big(f_1, f_2\big)(x).
    \end{align*}  
  Then we obtain our desired.
  \end{proof}

\begin{proof}[\bf Proof of Theorem \ref{Sparse.to.Cb}]
~

Without loss of generality, we consider $m=3$. We will apply the methods from \cite[Theorem 1.1]{Li2018_s} and \cite[Theorem 3.1]{Lersmall} to establish \eqref{zhang:th1.6}.

\item[\textbullet] \textbf{Step 1} Firstly, we point out that starting from any fixed ball $B_0$, we can find a way to cover the entire homogeneous space $X$.

We define the annuli $U_j:= 2^{j+1}B_0\backslash 2^j B_0$, $j\geq0$ and we choose $j_0$ to be the smallest integer such that
\begin{align}\label{j0}
j_0>\widetilde j_0\quad{\rm and}\quad 2^{j_0}> 4A_0.
\end{align}
Next, for each $U_j$, we choose the balls 
\begin{align}\label{Bjl}
\{ \widetilde B_{j,\ell} \}_{\ell=1}^{L_j}
\end{align}
 centred in $U_j$ and with radius $2^{j-\widetilde j_0}r$ to cover $U_j$. From the geometric doubling property \cite[p. 67]{CW1}, it is direct to see that 
\begin{align}\label{CA0mu}
 \sup _j L_j \leq C_{A_0,\mu,\widetilde j_0},
\end{align}  
 where $C_{A_0,\mu,\widetilde j_0}$ is an absolute constant depending only on $A_0$, $\widetilde j_0$ and $C_\mu$.

We now examine the properties of the balls \( \widetilde{B}_{j,\ell} = B(x_{j,\ell}, 2^{j - \widetilde{j}_0} r) \) and their enlarged versions \( C_{{adj}} \widetilde{B}_{j,\ell} = B(x_{j,\ell}, C_{{adj}} 2^{j - \widetilde{j}_0} r) \), where \( C_{{adj}} \) is a constant. We claim that
\begin{align}\label{claim1}
C_{{adj}} \widetilde{B}_{j,\ell} \cap U_{j + j_0} = \emptyset, \quad \text{for all } j \geq 0 \text{ and } \ell = 1, \dotsc, L_j,
\end{align}
and
\begin{align}\label{claim2}
C_{{adj}} \widetilde{B}_{j,\ell} \cap U_{j - j_0} = \emptyset, \quad \text{for all } j \geq j_0 \text{ and } \ell = 1, \dotsc, L_j.
\end{align}
Proofs of these claims are provided in Yang et al.\,\cite{Yang2019}, which are omitted here.

These properties show that each enlarged ball \( C_{{adj}} \widetilde{B}_{j,\ell} \) intersects at most \( 2j_0 + 1 \) annuli \( U_j \). Additionally, for every \( j \) and \( \ell \), the ball \( C_{\widetilde{j}_0} \widetilde{B}_{j,\ell} \) contains \( B_0 \).
Given the ball \( B_0 \), equation \eqref{eq:ball;included} guarantees the existence of an integer \( t_0 \in \{1, 2, \dotsc, \mathcal{K}\} \) and a cube \( Q_0 \in \mathscr{D}^{t_0} \) such that 
\( B_0 \subseteq Q_0 \subseteq C_{{adj}} B_0\).

Let \( B(Q_0) \) be the ball containing \( Q_0 \) with measure comparable to \( \mu(Q_0) \) as described in \eqref{eq:contain}. Then, \( B(Q_0) \) includes \( B_0 \) and satisfies
$\mu(B(Q_0)) \lesssim \mu(B_0)$,
where the implicit constant depends only on \( C_{{adj}} \), \( C_\mu \), and \( A_1 \).

\vspace{1cm}

  \item[\textbullet] \textbf{Step 2} Secondly, based on the previous discussion, we will focus our proof on the local version of \eqref{2.7_1}. Without loss of generality, we only consider the case where $\tau_{\ell}=\{1,2,3\}$ and $\tau=\{1,2\}$.
  
  We show that there exists a {${1\over 2}$}--sparse family $\mathcal F^{t_0} \subseteq  \mathscr{D}^{t_0}(Q_0)$, such that for a.e.
  $x\in B_0$,
  \begin{align}\notag
    & \quad \|\mathcal{T}_{\eta,\tau_{\ell}}^{{\bf b, k}}(\vec{f}\chi_{\bqo})(x)\|_{\B} \\ \notag
    & \leq C_{\mathcal{T}_\eta}\sum_{{\bf k, t}} \sum_{Q \in \mathcal{F}^{t_0}}\Bigg( \mu(\bq)^{\an \cdot \frac{1}{r}} \prod_{i =1}^2 \left|b_i(x) - b_{i,R_Q}\right|^{k_i - t_i} \\ \label{1015_1}
    &\quad \quad \quad\quad \quad \quad \quad \quad \quad  \times \langle\left|f_i (b_i - b_{i,R_Q})^{t_i}\right|^r\rangle^{\frac{1}{r}}_{\bq} \langle\left|f_3\right|^r\rangle^{\frac{1}{r}}_{\bq} \Bigg) \chi_Q(x), 
 \end{align}
 where the operator notation $\sum\limits_{{\bf k, t}}$ means $\sum\limits_{t_1=0}^{k_1} \sum\limits_{t_2=0}^{k_2} \Big( \prod\limits_{i=1}^{2} C_{k_i}^{t_i} \Big)$. Setting $R_Q$ is the dyadic cube in $\mathscr{D}^{\mathfrak{k}}$ for some $\mathfrak{k} \in \{1, 2, \ldots, \mathcal{K}\}$ such that
$C_{\widetilde j_0} B(Q)\subseteq R_Q \subseteq C_{adj}\cdot C_{\widetilde j_0} B(Q)$, where $B(Q)$ is defined as in \eqref{eq:contain}, $j_0$ defined as in \eqref{j0} and $\widetilde j_0$  defined as in \eqref{widetildej0}.

    It suffices to show that we can choose a family of pairwise disjoint cubes $\{P_j\}\subseteq \mathscr{D}^{t_0}(Q_0)$ with $\sum\limits_j\mu(P_j)\leq {1\over 2}\mu(Q_0)$ and such that for a.e. $x\in B_0$,
    \begin{align}\notag
    & \quad \big\|\mathcal{T}_{\eta,\tau_{\ell}}^{{\bf b, k}}(f_1\chi_{\bq},f_2\chi_{\bq},f_3\chi_{\bq})(x)\big\|_{\B}\chi_{Q_0}(x) \\\notag
    & \lesssim C_{\mathcal{T}_{\eta}} \sum_{{\bf k, t}}\Bigg(\mu(\bq)^{\an \cdot \frac{1}{r}}\Big(\prod_{i=1}^{2}|b_i(x)-b_{i,R_{Q_0}}|^{k_i-t_i}\\\notag
    &\quad \quad \quad  \times\langle|f_i(b_i-b_{i,R_{Q_0}})^{t_i}|^r\rangle^{\frac{1}{r}}_{\bqo}\Big)  \langle\left|f_3\right|^r\rangle^{\frac{1}{r}}_{\bqo} \Bigg) \chi_{Q_0}(x) \\\label{eq:suff.to.A_}
    & \quad\quad \quad \quad+\sum_j \big\|\mathcal{T}_{\eta,\tau_{\ell}}^{{\bf b, k}} ( f_1 \chi_{\bp},f_2 \chi_{\bp},f \chi_{\bp})(x)\big\|_{\B} \chi_{P_j}(x).
    \end{align}

Indeed, let $\left\{P_{j_0}\right\}=\left\{Q_0\right\},\left\{P_{j_1}\right\}=\left\{P_j\right\}$ and $\left\{P_{j_1,\cdots,j_k}\right\}$ are the cubes obtained at the $k$-th stage of the iterative process. By above setting, on the one hand, 
\begin{align*}
&\quad\|\mathcal{T}_{\eta,\tau_{\ell}}^{{\bf b, k}}(f_1\chi_{\cjo B(P_{j_0})},f_2\chi_{\cjo B(P_{j_0})},f_3\chi_{\cjo B(P_{j_0})})\|_{\B}\chi_{Q_0}(x) \\
  &  \lesssim  C_{\mathcal{T}_{\eta}} \sum_{{\bf k, t}}\Bigg(\mu(\cjo B(P_{j_0}))^{\an \cdot \frac{1}{r}}\Big(\prod_{i=1}^{2}|b_i(x)-b_{i,R_{B(P_{j_0})}}|^{k_i-t_i}\\
  &\quad \quad \quad \quad \quad\times \langle|f_i(b_i-b_{i,R_{B(P_{j_0})}})^{t_i}|^r\rangle^{\frac{1}{r}}_{\cjo B(P_{j_0})} \Big) \langle\left|f_3\right|^r\rangle^{\frac{1}{r}}_{\cjo B(P_{j_0})}\Bigg) \chi_{P_{j_0}} 
  \\
  & \quad \quad \quad \quad\quad \quad+  \sum_{j_1} \big\|\mathcal{T}_{\eta,\tau_{\ell}}^{{\bf b, k}}(f_1\chi_{\cjo B(P_{j_1})},f_2\chi_{\cjo B(P_{j_1})},f_3\chi_{\cjo B(P_{j_1})})\|_{\B}\chi_{P_{j_1}}\\
  & \lesssim C_{\mathcal{T}_{\eta}} \sum_{{\bf k, t}} \sum_{j_0,j_1}  \Bigg( \mu(\cjo B(P_{j_0,j_1}))^{\an \cdot \frac{1}{r}} \Big( \prod_{i=1}^{2}|b_i(x)-b_{i,R_{B(P_{j_0,j_1})}}|^{k_i-t_i}\\
  & \quad \quad \quad \quad \quad \times \langle|f_i(b_i-b_{i,R_{B(P_{j_0,j_1})}})^{t_i}|^r\rangle^{\frac{1}{r}}_{\cjo B(P_{j_0,j_1})} \Big) \langle\left|f_3\right|^r\rangle^{\frac{1}{r}}_{\cjo B(P_{j_0,j_1})} \Bigg)\chi_{P_{j_0,j_1}} \\
  &  \quad \quad \quad \quad\quad \quad + \sum_{j_1,j_2} \big\|\mathcal{T}_{\eta,\tau_{\ell}}^{{\bf b, k}}(f_1\chi_{\cjo B(P_{j_1,j_2})},f_2\chi_{\cjo B(P_{j_1,j_2})},f_3\chi_{\cjo B(P_{j_1,j_2})})\big\|_{\B}\chi_{P_{j_1,j_2}}\\
  & \cdots \\
  & \lesssim C_{\mathcal{T}_{\eta}} \sum_{{\bf k, t}} \sum_{j_0,\cdots,j_k}  \Bigg( \mu(\cjo B(P_{j_0, \cdots,j_k}))^{\an \cdot \frac{1}{r}}\Big( \prod_{i=1}^{2}|b_i(x)-b_{i,R_{B(P_{j_0, \cdots,j_k})}}|^{k_i-t_i}\\
  & \quad \quad \quad \quad \quad \times \langle|f_i(b_i-b_{i,R_{B(P_{j_0, \cdots,j_k})}})^{t_i}|^r\rangle^{\frac{1}{r}}_{\cjo B(P_{j_0, \cdots,j_k})}\Big)  \langle\left|f_3\right|^r\rangle^{\frac{1}{r}}_{\cjo B(P_{j_0, \cdots,j_k})} \Bigg) \chi_{P_{j_0, \cdots,j_k}} \\
  &  \quad \quad \quad \quad\quad \quad + \sum_{j_1,\cdots,j_k} \big\|\mathcal{T}_{\eta,\tau_{\ell}}^{{\bf b, k}}(f_1\chi_{\cjo B(P_{j_1, \cdots,j_k})},f_2\chi_{\cjo B(P_{j_1, \cdots,j_k})},f_3\chi_{\cjo B(P_{j_1, \cdots,j_k})})\big\|_{\B}\chi_{P_{j_1, \cdots,j_k}}.\\
\end{align*}
Due to the fact that 
\begin{align*}
  \sum_{j_1,\cdots,j_k}\mu(P_{j_1,\cdots,j_k}) \leq \sum_{j_1,\cdots,j_{k-1}}\frac{1}{2}\mu(P_{j_1,\cdots,j_{k-1}})\lesssim \frac{1}{2^{k}}\mu(B_0).
\end{align*}

On the other hand,
\begin{align*}
  &\lim_{k \rightarrow \infty} \sum_{j_1,\cdots,j_k} \big\|\mathcal{T}_{\eta,\tau_{\ell}}^{{\bf b, k}}(f_1\chi_{\cjo B(P_{j_1, \cdots,j_k})},f_2\chi_{\cjo B(P_{j_1, \cdots,j_k})},f_3\chi_{\cjo B(P_{j_1, \cdots,j_k})}\big\|_{\B}\chi_{P_{j_1, \cdots,j_k}} \\
  &\quad \quad \quad \leq \big\|\mathcal{T}_{\eta,\tau_{\ell}}^{{\bf b, k}}(f_1\chi_{\cjo B(P_{j_1, \cdots,j_k})},f_2\chi_{\cjo B(P_{j_1, \cdots,j_k})},f_3\chi_{\cjo B(P_{j_1, \cdots,j_k})}\big\|_{\B}\chi_{\lim\limits_{k \rightarrow \infty}\bigcup\limits_{j_1, \cdots,j_k}P_{j_1, \cdots,j_k}} = 0.\\
\end{align*}

Therefore, it follows from two aspects that
\begin{align*}
  &\quad \big\|\mathcal{T}_{\eta,\tau_{\ell}}^{{\bf b, k}}(f_1\chi_{\bqo},f_2\chi_{\bqo},f_3\chi_{\bqo})\big\|_{\B}\chi_{Q_0} \\
  & \lesssim C_{\mathcal{T}_{\eta}} \lim_{k \rightarrow \infty} \bigg( \sum_{{\bf k, t}}\sum_{j_0,\cdots,j_k}   \mu(\cjo B(P_{j_0, \cdots,j_k}))^{\an \cdot \frac{1}{r}} |b_j(x)-b_{j,R_{B(P_{j_0, \cdots,j_k})}}|^{k_i-t_i}\\
  & \quad \quad \quad \quad \quad \times \langle|f(b_j-b_{j,R_{B(P_{j_0, \cdots,j_k})}})^{t_i}|^r\rangle^{\frac{1}{r}}_{\cjo B(P_{j_0, \cdots,j_k})}  \langle\left|f_3\right|^r\rangle^{\frac{1}{r}}_{\cjo B(P_{j_0, \cdots,j_k})} \chi_{P_{j_0, \cdots,j_k}} \\
  &  \quad \quad \quad \quad\quad \quad + \sum_{j_1,\cdots,j_k} \big\|\mathcal{T}_{\eta,\tau_{\ell}}^{{\bf b, k}}(f_1\chi_{\cjo B(P_{j_1, \cdots,j_k})},f_2\chi_{\cjo B(P_{j_1, \cdots,j_k})},f_3\chi_{\cjo B(P_{j_1, \cdots,j_k})})\big\|_{\B}\chi_{P_{j_1, \cdots,j_k}}\bigg)\\
  & \lesssim C_{\mathcal{T}_{\eta}} \lim_{k \rightarrow \infty} \sum_{{\bf k, t}} \sum_{j_0,\cdots,j_k}  \Bigg( \mu(\cjo B(P_{j_0, \cdots,j_k}))^{\an \cdot \frac{1}{r}} \Big(\prod_{i=1}^{2}|b_i(x)-b_{i,R_{B(P_{j_0, \cdots,j_k})}}|^{k_i-t_i}\\
  & \quad \quad \quad \quad \quad \times \langle|f_i(b_i-b_{i,R_{B(P_{j_0, \cdots,j_k})}})^{t_i}|^r\rangle^{\frac{1}{r}}_{\cjo B(P_{j_0, \cdots,j_k})}\Big)  \langle\left|f_3\right|^r\rangle^{\frac{1}{r}}_{\cjo B(P_{j_0, \cdots,j_k})}\Bigg) \chi_{P_{j_0, \cdots,j_k}} \\
  &=: \sum_{{\bf k, t}} \Bigg(\sum\limits_{Q \in \F^{t_0}}  \mu(\cjo B(Q))^{\an \cdot \frac{1}{r}}\Big(\prod_{i=1}^{2} |b_i(x)-b_{i,R_{B(Q)}}|^{k_i-t_i}\\
  & \quad \quad \quad \quad \quad \times \langle|f_i(b_i-b_{i,R_{B(Q)}})^{t_i}|^r\rangle^{\frac{1}{r}}_{\cjo B(Q)}\Big)  \langle\left|f_3\right|^r\rangle^{\frac{1}{r}}_{\cjo B(Q)}\Bigg)  \chi_{Q} .\\
\end{align*}
Consequently, \eqref{1015_1} is valid, so we just need to prove \eqref{eq:suff.to.A_} next.

We observe that for any arbitrary family of disjoint cubes $\{P_j\}\subseteq \mathscr{D}^{t_0}(Q_0)$, we have that
\begin{align*}
  & \quad \big\|\mathcal{T}_{\eta,\tau_{\ell}}^{{\bf b, k}}\big(f_1 \chi_{\bqo},f_2 \chi_{\bqo},f_3 \chi_{\bqo}\big)(x)\big\|_{\B} \chi_{Q_0} \\
  & =\big\|\mathcal{T}_{\eta,\tau_{\ell}}^{{\bf b, k}}\big(f_1 \chi_{\bqo},f_2 \chi_{\bqo},f_3 \chi_{\bqo}\big)(x)\big\|_{\B} \chi_{Q_0 \backslash \cup_j P_j}\\
  &\quad \quad  +\sum_j \|\mathcal{T}_{\eta,\tau_{\ell}}^{{\bf b, k}}\big(f_1 \chi_{\bqo},f_2 \chi_{\bqo},f_3 \chi_{\bqo}\big)(x)\|_{\B} \chi_{P_j} \\
  & \leq \big\|\mathcal{T}_{\eta,\tau_{\ell}}^{{\bf b, k}}\big(f_1 \chi_{\bqo},f_2 \chi_{\bqo},f_3 \chi_{\bqo}\big)(x)\big\|_{\B} \chi_{Q_0 \backslash \cup_j P_j}\\
  & \quad \quad +\sum_j \big\|\mathcal{T}_{\eta,\tau_{\ell}}^{{\bf b, k}}\big(f_1 \chi_{\bqo \backslash \bp},f_2 \chi_{\bqo \backslash \bp},f_3 \chi_{\bqo \backslash \bp}\big)(x)\big\|_{\B} \chi_{P_j}\\
  &\quad \quad + \sum_j \|\mathcal{T}_{\eta,\tau_{\ell}}^{{\bf b, k}}\big(f_1 \chi_{\bp},f_2 \chi_{\bp},f_3 \chi_{\bp}\big)(x)\|_{\B} \chi_{P_j}\\
  &:=I_1 +I_2 + \sum_j \|\mathcal{T}_{\eta,\tau_{\ell}}^{{\bf b, k}}\big(f_1 \chi_{\bp},f_2 \chi_{\bp},f_3 \chi_{\bp}\big)(x)\|_{\B} \chi_{P_j}.
  \end{align*}
Suppose that
\begin{align}
      I_1 + I_2 &  \lesssim C_{\mathcal{T}_{\eta}} \sum_{{\bf k, t}}\Bigg(\mu(\bq)^{\an \cdot \frac{1}{r}}\Big(\prod_{i=1}^{2}|b_i(x)-b_{i,R_{Q_0}}|^{k_i-t_i} \notag\\
      &\quad \quad \quad  \times\langle|f_i(b_i-b_{i,R_{Q_0}})^{t_i}|^r\rangle^{\frac{1}{r}}_{\bqo}\Big)  \langle\left|f_3\right|^r\rangle^{\frac{1}{r}}_{\bqo} \Bigg) \chi_{Q_0}(x). \label{CZresults_3}
      \end{align}
Then \eqref{eq:suff.to.A_} follows directly from \eqref{CZresults_3}, concluding that \eqref{1015_1}. We will now prove \eqref{CZresults_3} as follows.

We will use Calderón--Zygmund decomposition.
Note that 
\begin{align*}
  &\quad \prod_{i=1}^{2} (b_i(x) - b_i(y_i))^{k_i} \\
  &= \sum_{t_1=0}^{k_1} \sum_{t_2=0}^{k_2} \Big( \prod_{i=1}^{2} C_{k_i}^{t_i} (-1)^{t_i} [b_i(x) - b_{i,R_{Q_0}}]^{k_i - t_i} [b_i(y_i) - b_{i,R_{Q_0}}]^{t_i} \Big).
\end{align*}
Combining with the definition of $\mathcal{T}_{\eta,\tau_{\ell}}^{{\bf b, k}}$, we conclude that 
\begin{align*}
&\quad I_1+I_2\\
&\leq \sum_{{\bf k, t}}\prod_{i=1}^{2}\big|b_i(x)-b_{i,R_{Q_0}}\big|^{k_i-t_i} \\
  &  \times \big\|\mathcal{T}_{\eta}^{}\big(\big(b_1-b_{1,R_{Q_0}}\big)^{t_1}f_1\chi_{\bqo},\big(b_2-b_{2,R_{Q_0}}\big)^{t_2}f_2\chi_{\bqo},f_3 \chi_{\bqo}\big)(x)\big\|_{\B} \chi_{Q_0 \backslash \cup_j P_j} \\
  &+\sum_{{\bf k, t}} \prod_{i=1}^{2}\big|b_i(x)-b_{i,R_{Q_0}}\big|^{k_i-t_i} \big\| \mathcal{T}_{\eta}^{}\big(\big(b_1-b_{1,R_{Q_0}}\big)^{t_1}f_1\chi_{\bqo \backslash \bp},\\
  &\quad \quad \quad \quad \quad \big(b_2-b_{2,R_{Q_0}}\big)^{t_2}f_2\chi_{\bqo \backslash \bp},f_3 \chi_{\bqo \backslash \bp}\big)(x)\|_{\B} \chi_{P_j}.
  \end{align*}  

Now, we write
\begin{align*}
H_1(x):=&\big\|\mathcal{T}_{\eta}^{}\big(\big(b_1-b_{1,R_{Q_0}}\big)^{t_1}f_1\chi_{\bqo},\big(b_2-b_{2,R_{Q_0}}\big)^{t_2}f_2\chi_{\bqo},f_3 \chi_{\bqo}\big)(x)\big\|_{\B} \chi_{Q_0 \backslash \cup_j P_j},\\
H_2(x):=&\big\| \mathcal{T}_{\eta}^{}\big(\big(b_1-b_{1,R_{Q_0}}\big)^{t_1}f_1\chi_{\bqo \backslash \bp},\\
&\big(b_2-b_{2,R_{Q_0}}\big)^{t_2}f_2\chi_{\bqo \backslash \bp},f_3 \chi_{\bqo \backslash \bp}\big)(x)\|_{\B} \chi_{P_j}.
\end{align*}
In order to verify \eqref{CZresults_3}, it suffices to estimate $H_1$ and $H_2$.

Denote $E=\bigcup\limits_{\tau \subseteq \tau_\ell} \bigcup\limits_{t_i =0}^{k_i}\big(E_{t_i,\tau}^1 \cup E_{t_i,\tau}^2\big)$, where
\begin{align*}
  E_{t_i,\tau}^{1} &:= \big\{ x \in B_0:\prod_{i =1}^2\big|(b_i(x)-b_{R_{i,Q_0}})^{t_i} f_i\big|  |f_3| \big. \\
  &\quad \quad \quad  \big. >\alpha \prod_{i =1}^2 \langle\big|\big(b_i-b_{i,R_{Q_0}}\big)^{t_i} f_i\big|^r\rangle^{\frac{1}{r}}_{\bqo}\langle\big|f_3\big|^r\rangle^{\frac{1}{r}}_{\bqo} \big\},\\
  E_{t_i,\tau}^{2} &:= \big\{ x \in B_{0} : \mathcal{M}_{\mathcal{T}_{\eta}^{}, B_0}\big(\big(b_1-b_{1,R_{Q_0}}\big)^{t_1}f_1\chi_{\bqo},\big(b_2-b_{2,R_{Q_0}}\big)^{t_2}f_2\chi_{\bqo},f_3 \chi_{\bqo}\big)(x)\big.\\
  &\quad \quad \quad  \big. > \alpha \prod_{i =1}^2 \mu(\bqo)^{\an \cdot \frac{1}{r}}\langle\big|\big(b_i-b_{i,R_{Q_0}}\big)^{t_i} f_i\big|^r\rangle^{\frac{1}{r}}_{\bqo} \langle\big|f_3\big|^r\rangle^{\frac{1}{r}}_{\bqo} \big\}.
  \end{align*}
    
  By Lemma \ref{Condi.Mg}, choosing \( \alpha \) sufficiently large (depending on \( C_{\widetilde{j}_0} \), \( C_{{adj}} \), \( C_\mu \), and \( A_1 \) from \eqref{eq:contain}), we have
  \begin{align}\label{CZeq_1}
    \mu(E) \leq \frac{1}{4 C_{\mu, 0}} \mu\left(B_0\right),
  \end{align}
  where \( C_{\mu,0} \) is defined in \eqref{Cmu0}.
  
  Next, we apply the Calderón--Zygmund decomposition to \( \chi_E \) on \( B_0 \) at the height $
  \lambda = \frac{1}{2 C_{\mu,0}}$ and 
  obtain pairwise disjoint cubes \( \{P_j\} \subseteq \mathscr{D}^{t_0}(Q_0) \) such that
  \begin{align}\label{CZeq_2}
    \frac{1}{2 C_{\mu,0}} \mu(P_j) \leq \mu(P_j \cap E) \leq \frac{1}{2} \mu(P_j),
  \end{align}
  and
  \[
  \mu\Big( E \setminus \bigcup_j P_j \Big) = 0.
  \]
  Consequently,
  \begin{align}\label{CZeq_3}
    \sum_j \mu(P_j) \leq \frac{1}{2} \mu(B_0) \quad \text{with} \quad P_j \cap E^c \neq \emptyset \text{ for each } j.
  \end{align}
    
  In the following we proceed to the estimation of $H_1$ and $H_2$.
By the definition of $E_{t_i,\tau}^{2}$ and ${P_j} \cap E^c \ne \emptyset $, for $x \in P_j$, we have
  \begin{align*}
   &\quad H_2(x)\\
    &\le \underset{x\in P_j}{\operatorname{ess} \inf } \big\|\mathcal{M}_{\mathcal{T}_{\eta}^{}, B_0}\big(\big(b_1-b_{1,R_{Q_0}}\big)^{t_1}f_1\chi_{\bqo},\big(b_2-b_{2,R_{Q_0}}\big)^{t_2}f_2\chi_{\bqo},f_3 \chi_{\bqo}\big)(x)\big\|_{\B}\\
    &  \leq C_{\mathcal{T}_\eta} \prod_{i =1}^2 \mu(\bqo)^{\an \cdot \frac{1}{r}}\prod_{i=1}^{2}\langle\big|\big(b_i-b_{i,R_{Q_0}}\big)^{t_i} f_i\big|^r\rangle^{\frac{1}{r}}_{\bqo} \langle|f_3|^r\rangle^{\frac{1}{r}}_{\bqo},
  \end{align*} 

Last but not least, $H_1$ will be bounded as follows. It follows from Lemma \ref{Lemma:preLi2018} and the construction of $E$ that for a.e. $x \in B_0\backslash\bigcup\limits_j P_j$, 
    \begin{align*}
 H_1(x) &\le \|\mathcal{T}_{\eta}\|_{L^t \times \cdots \times L^t \rightarrow L^{\tilde{t}, \infty}} \mu(\cjo B_0)^{\an \cdot \frac{1}{r}} \prod_{i =1}^2 \left|b_i(x) - b_{i,R_Q}\right|^{k_i - t_i} \prod_{i=1}^3\big|f_i(x)\big|+\mathcal{M}_{\mathcal{T}_{\eta}, B_0}(\vec{f})(x)\\
&\lesssim C_{\mathcal{T}_\eta} \mu(\bqo)^{\an \cdot \frac{1}{r}} \prod_{i=1}^{2}\langle\big|\big(b_i-b_{i,R_{Q_0}}\big)^{t_i} f_i\big|^r\rangle^{\frac{1}{r}}_{\bqo}\langle|f_3|^r\rangle^{\frac{1}{r}}_{\bqo}.
  \end{align*}

Hence, \eqref{CZresults_3}, \eqref{eq:suff.to.A_}, and \eqref{1015_1} are valid.
    
\item[\textbullet] \textbf{Step 3} In this section, we will extend the local results \eqref{eq:suff.to.A_} to $X$.  
Assume that $f_i$, for $i=1,\cdots,m$, are supported in a ball $B_0 \subseteq X$. Then, we have \( X = \bigcup\limits_{j=0}^\infty 2^j B_0 \).

For each $j \geq 0$, define the annulus $U_j = 2^{j+1} B_0 \setminus 2^j B_0$ and cover it with balls $\{\widetilde B_{j,\ell}\}_{\ell=1}^{L_j}$ as in {\bf Step 1}. 
For each ball $\widetilde B_{j,\ell}$, there exist $t_{j,\ell} \in \{1, 2, \ldots, \mathcal K\}$ and $\widetilde Q_{j,\ell} \in \mathscr{D}^{t_{j,\ell}}$ such that 
$\widetilde B_{j,\ell} \subseteq \widetilde Q_{j,\ell} \subseteq C_{{adj}} \widetilde B_{j,\ell}$.

Furthermore, since $C_{\widetilde j_0} \widetilde B_{j,\ell}$ covers $B_0$, the enlargement $C_{\widetilde j_0} B(\widetilde Q_{j,\ell})$ also covers $B_0$. 
Applying \eqref{1015_1} to each $\widetilde B_{j,\ell}$ yields a $\frac{1}{2}$-sparse family $\mathcal{\widetilde F}_{j,\ell} \subseteq \mathscr{D}^{t_{j,\ell}}(\widetilde Q_{j,\ell})$ such that \eqref{1015_1} holds for almost every $x \in \widetilde B_{j,\ell}$.

Now we set $\mathcal F := \bigcup\limits_{j,\ell} \mathcal{\tilde F}_{j,\ell}$.
Note that the balls $C_{adj}\widetilde B_{j,\ell}$ are overlapping at most $C_{A_0,\mu,\widetilde j_0}(2j_0+1)$ times, where $C_{A_0,\mu,\widetilde j_0}$ is the constant in {\bf Step 1}.   Then we obtain that
$\mathcal F $ is a ${1\over 2C_{A_0,\mu,\widetilde j_0}(2j_0+1)} $--sparse family and for a.e. $x\in X$,
    \begin{align}\notag
      & \quad \big\|\mathcal{T}_{\eta,\tau_{\ell}}^{{\bf b, k}}(\vec{f})(x)\big\|_{\B} \\ \notag
      & \lesssim C_{\mathcal{T}_\eta}\sum_{\tau \subseteq \tau_{\ell}}\sum_{{\bf k, t}} \sum_{Q \in \mathcal{F}^{t_0}} \left(\mu(\bq)^{\an \cdot \frac{1}{r}} \prod_{i \in \tau} \left|b_i(x) - b_{i,R_Q}\right|^{k_i - t_i}\right. \\ \notag
      &\quad \quad \quad\quad \quad \quad \quad \quad \quad \left. \times \langle\left|f_i (b_i - b_{i,R_Q})^{t_i}\right|^r\rangle^{\frac{1}{r}}_{\bq} \prod_{j \in \tau'} \langle\left|f_i\right|^r\rangle^{\frac{1}{r}}_{\bq} \right) \chi_Q(x).
      \end{align}

Note that $\overline C'\mu(C_{\widetilde j_0}B(Q)) \leq \mu(R_Q)\leq \overline C\mu(C_{\widetilde j_0}B(Q))$, where $\overline C'$ and $\overline C$ depends only on $C_\mu$ and $C_{adj}$. Then we obtain
\begin{align}\notag
  & \quad \|\mathcal{T}_{\eta,\tau_{\ell}}^{{\bf b, k}}(\vec{f})(x)\|_{\B} \\ \notag
  & \lesssim C_{\mathcal{T}_\eta}\sum_{\tau \subseteq \tau_{\ell}}\left(\sum_{{\bf k, t}} \sum_{Q \in \mathcal{F}^{t_0}} \mu(R_Q)^{\an \cdot \frac{1}{r}} \prod_{i \in \tau} \left|b_i(x) - b_{i,R_Q}\right|^{k_i - t_i}\right. \\ \notag
  &\quad \quad \quad\quad \quad \quad \quad \quad \quad \left. \times \langle\left|f_i (b_i - b_{i,R_Q})^{t_i}\right|^r\rangle^{\frac{1}{r}}_{R_Q} \prod_{j \in \tau'} \langle\left|f_i\right|^r\rangle^{\frac{1}{r}}_{R_Q} \right) \chi_Q(x).
  \end{align}

Last, we define  
$ \mathcal S_{\mathfrak{k}}:=\{R_Q\in \mathscr D^\mathfrak{k}:\ Q\in\mathcal F\}, \ \ \mathfrak{k}\in\{1,2,\ldots,\mathcal K\}. $  
Given that $\mathcal F$ is ${1\over 2C_{A_0,\mu,\widetilde j_0}(2j_0+1)}$--sparse, it follows that each family $\mathcal S_{t}$ is ${1\over 2C_{A_0,\mu,\widetilde j_0}(2j_0+1)\overline c}$--sparse.  
We then set  
$$\delta:={1\over 2C_{A_0,\mu,\widetilde j_0}(2j_0+1)\overline c}, $$  
where $\overline c$ is a constant depending only on $\overline C$, $C_{\widetilde j_0}$, and the doubling constant $C_\mu$.

This proof is finished. \qedhere
\end{proof}

\section{\bf Proof for Quantitative weighted estimates}\label{weight.estimate}

\subsection{Proof of Theorem~\ref{Horm.pro}}
~~

Let us first consider the case where the proof can be simplified to $p_0 = 1$. The proof of \eqref{classical_A} is equivalent to: if $\sum\limits_{i} \frac{1}{p_i} - \frac{1}{q} = \an$ and $\vec \omega \in A^{\star}_{\vec p, q}(X)$, then
\begin{align*}
 \|\mathcal{A}_{\eta,\S,p_0,\gamma}(\vec{f})\|_{L^q(u)} \lesssim C \prod_{i=1}^m \|f_i\|_{L^{p_i}(\omega_i)},
\end{align*}           
where $u:=\omega$.

By a simple substitution, we conclude that if $\sum\limits_{i} \frac{1}{p_i/p_0} - \frac{1}{q/p_0} = \an \cdot p_0$, $\vec \omega \in A^{\star}_{\vec p/p_0,q/p_0}(X)$, then
\begin{align*}
\|\mathcal{A}_{p_0\eta,\S,1,\frac{\gamma}{p_0}}(\vec{f})\|_{L^q(u)} \lesssim C^{p_0} \prod_{i=1}^m \|f_i\|_{L^{p_i}(\omega_i)}.
\end{align*}

Therefore, if we denote by $\mathcal N(\vec p, \gamma, \omega, \sigma)$ the best constant for the case $p_0 = 1$, then for a general $p_0$, the best constant would be $\mathcal N(\vec p / p_0, \gamma / p_0, \omega, \sigma)^{1/p_0}$. Thus, it suffices to consider the case $p_0 = 1$, that is,
\begin{equation}\label{eq1-thm1.1}
\|\mathcal{A}_{\eta,\S,1,\gamma}(\vec{f}\sigma)\|_{L^q(u)} \lesssim [\vec{\omega}]_{A^{\star}_{\vec{p},q}(X)}^{\max\{\f{1}{\gamma},\tfrac{p_1'}{q},\dotsc,\tfrac{p_m'}{q}\}} \prod_{i=1}^m \|f_i\|_{L^{p_i}(\sigma_i)}.
  \end{equation}
Throughout the proof, let $\sigma_i=\omega_i^{1-p_i'}$, $\vec{f}\sigma =(f_1\sigma_1,\dotsc, f_m\sigma_m)$ and $f_i\geq 0$.  Combining $\vec{\omega} \in A^{\star}_{\vec{p},q}(X)$ and Lemma \ref{vweight4}, we obtain $\sigma_i, u \in A_\infty$.

Let $\theta=\min\{q,\gamma \}$. We get
$$
\begin{aligned}
\|\mathcal{A}_{\eta,\S,1,\gamma}(\vec{f}\sigma)\|_{L^q(u)}^{\theta}&= \left(\int_{X} [\sum_{Q\in \mathcal{S}} \Big(\mu(Q)^{\an}\prod_{i=1}^m \f{1}{\mu(Q)}\int_Q f_i\sigma_i\Big)^\gamma \chi_{Q}(x)]^{\frac{q}{\gamma}}u       \right)^{\frac{\theta}{q}}\\
 &\leq \left(\int_{X} [\sum_{Q\in \mathcal{S}} \Big(\mu(Q)^{\an}\prod_{i=1}^m \f{1}{\mu(Q)}\int_Q f_i\sigma_i\Big)^{\theta} \chi_{Q}(x)]^{\frac{q}{\theta}}u       \right)^{\frac{\theta}{q}},
\end{aligned}
$$
where we used the fact $\theta \leq \gamma$. Thus
\begin{equation}\label{eqdual-thm1.1}
\|\mathcal{A}_{\eta,\S,1,\gamma}(\vec{f}\sigma)\|_{L^q(u)}^{\theta} \leq \|[\mathcal{A}_{\eta,\S,1,\theta}(\vec{f}\sigma)]^{\theta}\|_{L^{q/\theta}(u)}.
\end{equation}
Denote $\beta=\max\{\f{1}{\theta},\tfrac{p_1'}{q},\dotsc,\tfrac{p_m'}{q}\}$.  Also assume that $g\in L^{(q/\theta)'}(u)$ and $g\geq 0$.

We have
$$
\int_{X}[\mathcal{A}_{\eta,\S,1,\theta}(\vec{f}\sigma)]^{\theta} gu=\sum_{Q\in \mathcal{S}}\int_Q gu \times \Big(\mu(Q)^{\eta}\prod_{i=1}^m\f{1}{\mu(Q)}\int_Q f_i\sigma_i\Big)^{\theta}.
$$
From this and the definition of $[{\vec \omega}]_{A^{\star}_{\vec{p},q}(X)}$, we obtain
\begin{align*}
  &\quad \sum_{Q\in \mathcal{S}}\int_Q gu \times \Big(\mu(Q)^{\an}\prod_{i=1}^m\f{1}{\mu(Q)}\int_Q f_i\sigma_i\Big)^{\theta}\\
&\leq [\vec \omega]_{A^{\star}_{\vec{p},q}(X)}^{\beta \theta}\sum_{Q\in \mathcal{S}}\f{\mu(Q)^{(m - \an)\theta(\beta q-1)}}{u(Q)^{\beta \theta-1}\prod\limits_{i=1}^m \sigma_i(Q)^{\theta(\beta q/p'_i-1)}}\times \Big(\f{1}{u(Q)}\int_Q gu\Big) \times \Big(\prod_{i=1}^m \f{1}{\sigma_i(Q)}\int_Q f_i\sigma_i\Big)^{\theta}\\
&\leq (1/\delta)^{(m - \an)\theta(\beta q-1)}[\vec \omega]_{A^{\star}_{\vec{p},q}(X)}^{\beta \theta}\sum_{Q\in \mathcal{S}}\f{\mu(E_Q)^{^{(m - \an)\theta(\beta q-1)}}}{u(E_Q)^{\beta \theta-1}\prod\limits_{i=1}^m \sigma_i(E_Q)^{\theta(\beta q/p'_i-1)}}\times \Big(\f{1}{u(Q)}\int_Q gu\Big)\\
& \ \ \ \times \Big(\prod_{i=1}^m \f{1}{\sigma_i(Q)}\int_Q f_i\sigma_i\Big)^{\theta},
\end{align*}
where in the last inequality we used the facts $ u(Q) \geq u(E_Q)$, $\sigma_i(Q)\geq \sigma_i(E_Q)$ (refer to Definition \ref{vweight3} and \ref{D:sparse}) and the positivity of the exponents.

On the other hand, by H\"older's inequality, we have
\begin{equation}\label{eq2-thm1.1}
\mu(E_Q)=\int_{E_Q}u^{\f{1}{(m-\eta)q}}\prod_{i=1}^m\sigma_i^{\f{1}{(m-\eta)p'_i}}\leq u(E_Q)^{\f{1}{(m-\eta)q}}\prod_{i=1}^m\sigma_i(E_Q)^{\f{1}{(m-\eta)p'_i}}.
\end{equation}
Insert this into the estimate above to conclude that
\begin{align*}
 & \quad \sum_{Q\in \mathcal{S}}\int_Q gu \times \Big(\mu(Q)^{\an}\prod_{i=1}^m\f{1}{\mu(Q)}\int_Q f_i\sigma_i\Big)^{\theta}\\
&\leq (1/\delta)^{^{(m - \eta)\theta(\beta q-1)}}[\vec{\omega}]_{A^{\star}_{\vec{p},q}(X)}^{\beta \theta} \\
&\quad\times \sum_{Q\in \mathcal{S}} \left[ \Big(\f{1}{u(Q)}\int_Q gu\Big)u(E_Q)^{\f{1}{(q/\theta)^{'}}} \right] \left[\prod_{i=1}^m \Big(\f{1}{\sigma_i(Q)}\int_Q f_i\sigma_i\Big) \sigma_i(E_Q)^{\f{1}{p_i}} \right]^{\theta},
\end{align*}
which together with H\"older's inequality and the disjointness of the family $\{E_Q\}_{Q\in \mathcal{S}}$ gives
\begin{align*}
&\quad \sum_{Q\in \mathcal{S}}\int_Q gu \times \Big(\mu(Q)^{\an}\prod_{i=1}^m\f{1}{\mu(Q)}\int_Q f_i\sigma_i\Big)^{\theta}\\
&\leq (1/\delta)^{^{(m - \eta)\theta(\beta q-1)}}[\vec{\omega}]_{A^{\star}_{\vec{p},q}(X)}^{\beta \theta} [\sum_{Q\in \mathcal{S}} \Big(\f{1}{u(Q)}\int_Q gu\Big)^{(q/\theta)'}u(E_Q)]^{\f{1}{(q/\theta)'}}\\
 & \ \ \ \times \prod_{i=1}^m [\sum_{Q\in \mathcal{S}}\Big(\f{1}{\sigma_i(Q)}\int_Q f_i\sigma_i\Big)^{p_i}\sigma_i(E_Q)]^{\theta/p_i}\\
&\leq (1/\delta)^{^{(m - \eta)\theta(\beta q-1)}}[\vec \omega]_{A^{\star}_{\vec{p},q}(X)}^{\beta \theta}\|M^{u}_{\d}(g)\|_{L^{(q/\theta)'}(u)} \times \prod_{i=1}^m \|M^{\sigma_i}_{\d}(f_i)\|_{L^{p_i}(\sigma_i)}^{\theta}\\
&\lesssim (1/\delta)^{^{(m - \eta)\theta(\beta q-1)}}[\vec \omega]_{A^{\star}_{\vec{p},q}(X)}^{\beta \theta} \|g\|_{L^{(q/\theta)'}(u)} \times \prod_{i=1}^m\|f_i\|_{L^{p_i}(\sigma_i)}^{\theta},
\end{align*}
{where to get the last inequality we applied \eqref{dyadicmaximal}}. Hence,
\begin{align*}
\|\mathcal{A}_{\eta,\S,1,\gamma}(\vec{f}\sigma)\|^{\theta}_{L^{q}(u)} 
&\overset{\text{(\ref{eqdual-thm1.1})}}{\leq}\ \|[\mathcal{A}_{\eta,\S,1,\theta}(\vec{f}\sigma)]^{\theta}\|_{L^{q/\theta}(u)} \\
&\leq \sup_{\| g\|_{L^{(q/\theta)'}(u)}=1 } \int_{X}[\mathcal{A}^{\theta}_{\eta,1,\gamma,\S}(\vec{f}\sigma)]^{\theta} gu\\
&\leq (1/\delta)^{^{(m - \eta)\theta(\beta q-1)}}[\vec{\omega}]_{A^{\star}_{\vec{p},q}(X)}^{\beta \theta} \times \prod_{i=1}^m\|f_i\|_{L^{p_i}(\sigma_i)}^{\theta}.
\end{align*}
This proves \eqref{eq1-thm1.1}. \qedhere


\subsection{Proof of Theorem \ref{Thm:1}}

Before proving Theorems \ref{Thm:1}, \ref{Horm.pro_2}, and \ref{Thm:3}, we need some preliminary results. The first proposition is from \cite{cov2004}, Proposition \ref{dyadicsum}.
\begin{proposition}[\cite{cov2004}, Proposition 2.2]\label{dyadicsum}
Let $1<s<\infty$, $\sigma$ be a positive Borel measure and
\[
\phi=\sum_{Q\in\mathcal D} \alpha_Q \mathbf \chi_Q,\qquad \phi_Q=\sum_{Q'\subset Q}\alpha_{Q'} \mathbf \chi_{Q'}.
\]
Then
\begin{equation}\label{dyadicsum_}
    \|\phi\|_{L^s(\sigma)}\approx \Big( \sum_{Q\in \mathcal D} \alpha_Q (\langle\phi_Q\rangle_Q^\sigma)^{s-1}\sigma(Q) \Big)^{1/s}.
\end{equation}
\end{proposition}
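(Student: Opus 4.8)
The statement is an equivalence, so the plan is to prove the two implicit inequalities separately. Throughout one may assume $\alpha_Q\ge 0$ (replacing $\alpha_Q$ by $|\alpha_Q|$ only enlarges both sides), and I write $\langle h\rangle_Q^{\sigma}=\sigma(Q)^{-1}\int_Q h\,d\sigma$. The single structural fact I would lean on is that, since $\mathcal D$ is a dyadic lattice, the cubes of $\mathcal D$ containing a fixed point $x$ form a chain under inclusion; hence $\phi_Q\le\phi$ pointwise on $Q$ for every $Q$, and $\phi(x)=\sum_{Q\ni x}\alpha_Q$.

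First I would dispose of the soft bound $\sum_{Q\in\mathcal D}\alpha_Q(\langle\phi_Q\rangle_Q^{\sigma})^{s-1}\sigma(Q)\lesssim\|\phi\|_{L^{s}(\sigma)}^{s}$. Since $\langle\phi_Q\rangle_Q^{\sigma}\le\langle\phi\rangle_Q^{\sigma}\le M_{\mathcal D}^{\sigma}\phi(x)$ for $\sigma$-almost every $x\in Q$ and $s-1>0$, writing $\sigma(Q)=\int_Q d\sigma$ and interchanging the sum with the integral gives
\[
\sum_{Q\in\mathcal D}\alpha_Q(\langle\phi_Q\rangle_Q^{\sigma})^{s-1}\sigma(Q)\le\int_X (M_{\mathcal D}^{\sigma}\phi)^{s-1}\Big(\sum_{Q\ni x}\alpha_Q\Big)\,d\sigma(x)=\int_X (M_{\mathcal D}^{\sigma}\phi)^{s-1}\,\phi\,d\sigma .
\]
Hölder's inequality with exponents $s$ and $s'$ (so that $(s-1)s'=s$) together with the dyadic maximal inequality \eqref{dyadicmaximal} then closes this direction.

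For the reverse bound $\|\phi\|_{L^{s}(\sigma)}^{s}\lesssim\sum_{Q\in\mathcal D}\alpha_Q(\langle\phi_Q\rangle_Q^{\sigma})^{s-1}\sigma(Q)$, I would first establish a pointwise Hardy-type identity. Fixing $x$ and listing the cubes of $\mathcal D$ containing $x$ as a chain $\dots\subsetneq Q_{j+1}\subsetneq Q_{j}\subsetneq\dots$, set $t_{j}:=\phi_{Q_{j}}(x)=\sum_{i\ge j}\alpha_{Q_{i}}$, so that $\alpha_{Q_{j}}=t_{j}-t_{j+1}\ge 0$ and $\phi(x)=\sup_{j}t_{j}$; the elementary telescoping estimate $\tfrac1s\phi(x)^{s}\le\sum_{j}(t_{j}-t_{j+1})t_{j}^{s-1}\le\phi(x)^{s}$ yields $\phi(x)^{s}\approx_{s}\sum_{Q\ni x}\alpha_Q\,\phi_Q(x)^{s-1}$. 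Integrating against $\sigma$ and applying Tonelli reduces the problem to comparing $\sum_{Q}\alpha_Q\int_Q\phi_Q^{s-1}\,d\sigma$ with $\sum_{Q}\alpha_Q(\langle\phi_Q\rangle_Q^{\sigma})^{s-1}\sigma(Q)$. When $1<s\le 2$ this is immediate, since $t\mapsto t^{s-1}$ is concave and Jensen's inequality gives $\int_Q\phi_Q^{s-1}\,d\sigma\le\sigma(Q)(\langle\phi_Q\rangle_Q^{\sigma})^{s-1}$ term by term.

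The main obstacle is the range $s>2$, where $t\mapsto t^{s-1}$ is convex and that last application of Jensen reverses. Here I would run a stopping-time (principal-cube) argument: after fixing a top cube $R_{0}$, replacing $\phi$ by $\phi_{R_{0}}$, and exhausting, one selects within each stopping cube $R$ the maximal subcubes on which the relevant $\sigma$-average roughly doubles, so that on the complementary set $E_{R}$ the function $\phi_{R_{0}}$ is controlled by a fixed multiple of $\langle\phi_{R}\rangle_{R}^{\sigma}$ plus a term localized strictly below $R$; summing the resulting geometric gains over the stopping tree, and using the soft direction locally to reabsorb the localized pieces, produces the claimed bound. This is exactly the argument carried out in \cite[Proposition 2.2]{cov2004}, to which one may in the end simply appeal.
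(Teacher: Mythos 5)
The paper does not actually give a proof of this proposition --- it imports it wholesale from \cite{cov2004} --- so there is no ``paper's own proof'' to set yours against; I can only evaluate your argument on its merits.

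Your soft direction (right-hand side controlled by $\|\phi\|_{L^{s}(\sigma)}^{s}$) is correct and clean: the chain $\langle\phi_Q\rangle_Q^{\sigma}\le\langle\phi\rangle_Q^{\sigma}\le M_{\mathcal D}^{\sigma}\phi$ on $Q$, interchanging sum and integral, and then H\"older at exponents $(s,s')$ followed by \eqref{dyadicmaximal} works for every $s>1$. The pointwise telescoping identity $\frac1s\phi(x)^{s}\le\sum_{Q\ni x}\alpha_Q\phi_Q(x)^{s-1}\le\phi(x)^{s}$, obtained by viewing $\sum_j(t_j-t_{j+1})t_j^{s-1}$ as a right Riemann sum for $\int_0^{\phi(x)}u^{s-1}\,du$, is also correct, and in the range $1<s\le 2$ it combines with Jensen (concavity of $t\mapsto t^{s-1}$) to give the harder direction term by term. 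Note, incidentally, that for $s\ge 2$ the same identity plus the reversed Jensen gives an alternative proof of the soft direction; the dichotomy accurately isolates where the genuine difficulty sits.

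The genuine gap is exactly where you flagged it: the lower bound $\|\phi\|_{L^{s}(\sigma)}^{s}\lesssim\sum_Q\alpha_Q(\langle\phi_Q\rangle_Q^{\sigma})^{s-1}\sigma(Q)$ for $s>2$. After integrating the telescoping identity one is left needing to dominate $\sum_Q\alpha_Q\int_Q\phi_Q^{s-1}\,d\sigma$ by $\sum_Q\alpha_Q(\langle\phi_Q\rangle_Q^{\sigma})^{s-1}\sigma(Q)$, which Jensen now runs the wrong way. Your stopping-time sketch (select principal cubes where the $\sigma$-average roughly doubles, control $\phi_{R_0}$ off the children by $\langle\phi_R\rangle_R^{\sigma}$, sum the geometric gains) names the right circle of ideas, but it is not an argument: the selection rule, the claimed pointwise control on $E_R$, and in particular the ``reabsorption'' step that must show the localized error sums to at most a fixed fraction of the target quantity are all left undone, and then the paragraph closes by appealing to \cite[Proposition 2.2]{cov2004} --- which is the statement itself, so nothing has been proved. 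If you want a self-contained proof of the hard range, the standard route (and essentially what Cascante--Ortega--Verbitsky do) is to write $\int_Q\phi_Q^{s-1}\,d\sigma$ using the telescoping identity again inside $Q$ at the lower exponent $s-1$ and iterate, or to set up a bona fide stopping tree with a doubling threshold in the $\sigma$-averages and prove the Carleson packing condition; either way, several lines of real estimates replace the appeal to the reference.
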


\begin{proposition}[\cite{Li2018}, Proposition 4.8]\label{kolmogorov}
Let $\mathcal S$ be a sparse family and $0\le s_1, s_2<1$ satisfying $a+b<1$. Then
\begin{equation}\label{eq:kolmogorov}
\sum_{\substack{Q\in \mathcal S\\Q\subseteq R}}\langle u \rangle_Q^{s_1} \langle v\rangle_Q^{s_2} \mu(Q) \lesssim \langle u \rangle_R^{s_1} \langle v\rangle_R^{s_2} \mu(R).
\end{equation}
\end{proposition}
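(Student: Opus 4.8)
The plan is to prove \eqref{eq:kolmogorov} by using the sparseness of $\mathcal{S}$ to pass from the sum to an integral of a product of dyadic maximal functions, and then to close the estimate with H\"older's inequality and Kolmogorov's inequality. We may assume that $s_1,s_2>0$: if $s_1=s_2=0$ then \eqref{eq:kolmogorov} is simply $\sum_{Q\in\mathcal{S},\,Q\subseteq R}\mu(Q)\lesssim\mu(R)$, which is immediate from Definition~\ref{D:sparse} and the bounded overlap of $\{E_Q\}$, and if exactly one of $s_1,s_2$ vanishes the argument below applies verbatim with a single maximal function.

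First I would record the two basic ingredients. By Definition~\ref{D:sparse}, for each $Q\in\mathcal{S}$ there is a measurable $E_Q\subseteq Q$ with $\mu(E_Q)\geq\delta\mu(Q)$, and $\{E_Q\}_{Q\in\mathcal{S}}$ has bounded overlap; hence $\mu(Q)\leq\delta^{-1}\mu(E_Q)$. Moreover, for any dyadic $Q\subseteq R$ and any $x\in Q$ one has $\langle u\rangle_Q=\frac{1}{\mu(Q)}\int_Q u\chi_R\,d\mu\leq M_{\mathcal D}(u\chi_R)(x)$, and likewise for $v$, where $M_{\mathcal D}$ is the dyadic maximal function on $(X,d,\mu)$. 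Combining these two facts with $E_Q\subseteq Q\subseteq R$ gives
\[
\sum_{\substack{Q\in\mathcal{S}\\ Q\subseteq R}}\langle u\rangle_Q^{s_1}\langle v\rangle_Q^{s_2}\mu(Q)
\leq\delta^{-1}\sum_{\substack{Q\in\mathcal{S}\\ Q\subseteq R}}\int_{E_Q}M_{\mathcal D}(u\chi_R)^{s_1}\,M_{\mathcal D}(v\chi_R)^{s_2}\,d\mu
\lesssim\int_R M_{\mathcal D}(u\chi_R)^{s_1}\,M_{\mathcal D}(v\chi_R)^{s_2}\,d\mu,
\]
the last inequality using the bounded overlap of $\{E_Q\}$.

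Next, set $s:=s_1+s_2\in(0,1)$ and apply H\"older's inequality with exponents $s/s_1$ and $s/s_2$:
\[
\int_R M_{\mathcal D}(u\chi_R)^{s_1}M_{\mathcal D}(v\chi_R)^{s_2}\,d\mu
\leq\Big(\int_R M_{\mathcal D}(u\chi_R)^{s}\,d\mu\Big)^{s_1/s}\Big(\int_R M_{\mathcal D}(v\chi_R)^{s}\,d\mu\Big)^{s_2/s}.
\]
Since $0<s<1$ and $M_{\mathcal D}$ is of weak type $(1,1)$ with absolute constant (immediate from the disjointness of maximal dyadic cubes at a fixed level), Kolmogorov's inequality, obtained by the usual layer-cake splitting at the level $\lambda_0=\langle u\rangle_R$, yields $\int_R M_{\mathcal D}(u\chi_R)^{s}\,d\mu\lesssim_s\mu(R)^{1-s}\|u\chi_R\|_{L^1}^{s}=\mu(R)\langle u\rangle_R^{s}$, and the same for $v$. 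Inserting these bounds gives
\[
\int_R M_{\mathcal D}(u\chi_R)^{s_1}M_{\mathcal D}(v\chi_R)^{s_2}\,d\mu
\lesssim\big(\mu(R)\langle u\rangle_R^{s}\big)^{s_1/s}\big(\mu(R)\langle v\rangle_R^{s}\big)^{s_2/s}
=\mu(R)\langle u\rangle_R^{s_1}\langle v\rangle_R^{s_2},
\]
which combined with the previous display is precisely \eqref{eq:kolmogorov}.

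The only delicate point is the appeal to Kolmogorov's inequality, whose implied constant degenerates as $s\to1$; this is exactly where the hypothesis $s_1+s_2<1$ enters, and it also explains why the borderline case $s_1+s_2=1$ is excluded. Everything else—the sparseness reduction, the pointwise domination $\langle u\rangle_Q\leq M_{\mathcal D}(u\chi_R)$, the weak $(1,1)$ bound for the dyadic maximal operator on a space of homogeneous type, and the H\"older splitting with exponents $s/s_1,\,s/s_2$—is routine, so I expect no substantial obstacle beyond bookkeeping the overlap constant of $\{E_Q\}$ and the factor $\delta^{-1}$ into the implicit constant.
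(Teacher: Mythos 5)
Your proof is correct: the chain sparseness $\mu(Q)\le\delta^{-1}\mu(E_Q)$, pointwise domination $\langle u\rangle_Q\le M_{\mathcal D}(u\chi_R)$ on $Q\subseteq R$, H\"older with exponents $s/s_1,\,s/s_2$ for $s=s_1+s_2<1$, and Kolmogorov's inequality via the weak $(1,1)$ bound for the dyadic maximal operator is exactly the standard argument behind the cited result (the paper itself gives no proof, only the reference to \cite{Li2018}), and your handling of the degenerate cases $s_1=0$ or $s_2=0$ is fine. Note only that the hypothesis ``$a+b<1$'' in the statement is a typo for $s_1+s_2<1$, which is precisely the condition your Kolmogorov step uses.
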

A important observation can be stated as follows, as it was done in \cite{Li2017t}, \cite{HL}.
\begin{lemma}\label{lm:equivalent}
  Let $p_1, p_2 > \gamma$, and let $\mathcal N$ be the best constant for which the following inequality holds
\begin{equation}\label{eq:p01}
\|\mathcal A_{\eta,\mathcal S,1,\gamma }(f_1\sigma_1  ,f_2 \sigma_2 )\|_{L^q(X,u)}\le \mathcal N  \|f_1\|_{L^{p_1}(X,\sigma_1)} \|f_2\|_{L^{p_2}(X,\sigma_2)}.
\end{equation}
Then \eqref{eq:p01} is equivalent to the following inequality with $\mathcal N'\simeq \mathcal N^\gamma$
\begin{equation}\label{eq:gamma1}
 \Big\|\Big(\sum_{Q\in \mathcal S}\mu(Q)^{\an \cdot \gamma} \langle f_1   \rangle_{\sigma_1,Q} \langle   f_2  \rangle_{\sigma_2,Q} \langle \sigma_1\rangle_Q^\gamma \langle\sigma_2\rangle_Q^\gamma \mathbf \chi_Q\Big)^{\frac 1\gamma} \Big\|_{L^q(X,u)}^\gamma\le {\mathcal N'}  \|f_1\|_{L^{\frac{p_1}\gamma}(X,\sigma_1)} \|f_2\|_{L^{\frac{p_2}\gamma}(X,\sigma_2)}.
\end{equation}
\end{lemma}
\begin{proof}
  On the one hand, if \eqref{eq:gamma1} holds, we have
\begin{align*}
&\quad \|\mathcal A_{\eta,\mathcal S,1,\gamma }(f_1 \sigma_1 ,f_2 \sigma_2 )\|_{L^q(u)} \\
&\le \Big\|\Big(\sum_{Q\in \mathcal S} \mu(Q)^{\an \cdot \gamma} \langle M_{\mathcal D}^{\sigma_1}(f_1 )^\gamma \rangle_{\sigma_1,Q} \langle M_{\mathcal D}^{\sigma_2}(f_2)^\gamma \rangle_{\sigma_2,Q} \langle \sigma_1\rangle_Q^\gamma \langle\sigma_2\rangle_Q^\gamma \mathbf \chi_Q\Big)^{\frac 1\gamma} \Big\|_{L^q(u)}\\
&\lesssim\mathcal N  \| M_{\mathcal D}^{\sigma_1}(f_1 )^\gamma\|_{L^{p_1/\gamma}(\sigma_1)}^{1/\gamma}  \|M_{\mathcal D}^{\sigma_2}(f_2)^\gamma\|_{L^{p_2/\gamma}(\sigma_2)}^{1/\gamma}\\
&\le \mathcal N  \|f_1 \|_{L^{p_1}(\sigma_1)}   \|f_2\|_{L^{p_2}(\sigma_2)},
\end{align*}
where $M_{\mathcal{D}}^{\sigma}$ denotes the dyadic weighted maximal function defined in \eqref{dyadicmaximal}, and it is bounded on $L^p(X, \sigma)$ for all $p > 1$. 

On the other hand, if \eqref{eq:p01} holds, we have
\begin{align*}
&\quad \left\|\Big(\sum_{Q\in \mathcal S}\mu(Q)^{\an \cdot \gamma} \langle f_1   \rangle_{\sigma_1,Q} \langle   f_2  \rangle_{\sigma_2,Q} \langle \sigma_1\rangle_Q^\gamma \langle\sigma_2\rangle_Q^\gamma \mathbf \chi_Q\Big)^{\frac 1\gamma} \right\|_{L^q(u)}\\
&\le \Big\|\Big(\sum_{Q\in \mathcal S} \mu(Q)^{\an \cdot \gamma} (\langle M_{\gamma,\mathcal D}^{\sigma_1}(f_{1}^{1/\gamma})   \rangle_{\sigma_1,Q})^\gamma (\langle M_{\gamma,\mathcal D}^{\sigma_2}(f_{2}^{1/\gamma})   \rangle_{\sigma_2,Q})^\gamma \langle \sigma_1\rangle_Q^\gamma \langle\sigma_2\rangle_Q^\gamma \mathbf \chi_Q\Big)^{\frac 1\gamma} \Big\|_{L^q(u)}\\
&\le \mathcal N \|  M_{\gamma,\mathcal D}^{\sigma_1}(f_{1}^{1/\gamma})    \|_{L^{p_1}(\sigma_1)} \|  M_{\gamma,\mathcal D}^{\sigma_2}(f_{2}^{1/\gamma})    \|_{L^{p_2}(\sigma_2)}\\
&\lesssim  \mathcal N \| f_{1}^{1/\gamma}    \|_{L^{p_1}(\sigma_1)} \|  f_{2}^{1/\gamma}    \|_{L^{p_2}(\sigma_2)},
\end{align*}
where $M_{\gamma,\mathcal{D}}^{\sigma}(f) = \left( M_{\mathcal{D}}^{\sigma}(f^{\gamma}) \right)^{1/\gamma}$. In the final step, we used the fact that $p > \gamma$, which implies $p_1, p_2 > \gamma$ and ensures the boundedness of the maximal functions.
\end{proof}

Therefore, we simplify the problem to study \eqref{eq:gamma1}. We present the following lemma, which is crucial for proving Theorems \ref{Thm:1}, \ref{Horm.pro_2}, and \ref{Thm:3}.
\begin{lemma}\label{lm:testing}
Let $\gamma>0$. Let $1<p_1, p_2<\infty $ ,$\frac{1}{p}=\frac{1}{p_1}+ \frac{1}{p_2}$, and $\frac{1}{p}-\frac{1}{q}=\eta \in [0,2)$. Let $u$ and $\vec\sigma$ be weights. If $p_1,p_2>\gamma$, Then for any sparse collection $\mathfrak S$,
\begin{equation}\label{eq:testing}
\Big\|\Big(\sum_{Q\in \mathfrak S}\mu(Q)^{\an \cdot \gamma} \langle \sigma_1\rangle_Q^\gamma \langle \sigma_2\rangle_Q^\gamma \mathbf \chi_Q\Big)^{\frac 1\gamma}\Big\|_{L^q(u)}
\le \|u,\vec \sigma\|_{A_{\vec p,q}^{\star}(X)}^{1/q}
\left(\sum_{Q\in \mathfrak S} \langle\sigma_1\rangle^{\frac{q}{p_1}} \langle\sigma_2\rangle^{\frac{q}{p_2}} \mu(Q)^{1+\an \cdot {q}}\right)^{\frac{1}{q}}.
\end{equation} 
and if $q>\gamma$, then there holds
\begin{equation}\begin{split}\label{eq:dualtesting}
\Big\| \sum_{Q\in \mathfrak S} \mu(Q)^{\an \cdot \gamma} \langle \sigma_1\rangle_Q^\gamma \langle \sigma_2 &\rangle_Q^{\gamma-1}\langle u\rangle_Q \mathbf \chi_Q  \Big\|_{L^{(\frac{p_2}\gamma)'}(\sigma_2)} \\
&\le \|u,\vec \sigma\|_{A^{\star}_{\vec p,q}(X)}^{\frac \gamma q}\Big( \sum_{Q\in \mathfrak S} \langle \sigma_1\rangle_Q^{\frac {\gamma (\frac{p_2}{\gamma})'}{p_1}}\langle u\rangle_Q^{(\frac{p_2}\gamma)' (1-\frac \gamma q)} \mu(Q)^{1+\gamma \cdot\an \cdot (p_2/\gamma)'}\Big)^{\frac 1{(\frac{p_2}\gamma)'}}
\end{split}\end{equation}
and
\begin{equation}\begin{split}\label{eq:dualtesting1}
\Big\| \sum_{Q\in \mathfrak S}\mu(Q)^{\an \cdot \gamma} \langle \sigma_1\rangle_Q^{\gamma-1} \langle \sigma_2 &\rangle_Q^{\gamma}\langle u\rangle_Q \mathbf \chi_Q  \Big\|_{L^{(\frac{p_1}\gamma)'}(\sigma_1)}
\\
&\le \|u,\vec \sigma\|_{A^{\star}_{\vec p,q}(X)}^{\frac \gamma q}\Big( \sum_{Q\in \mathfrak S} \langle \sigma_2\rangle_Q^{\frac {\gamma (\frac{p_1}{\gamma})'}{p_2}}\langle w\rangle_Q^{(\frac{p_1}\gamma)' (1-\frac \gamma q)} \mu(Q)^{1+\gamma \cdot\an \cdot (p_1/\gamma)'}\Big)^{\frac 1{(\frac{p_1}\gamma)'}}.
\end{split}\end{equation}
\end{lemma}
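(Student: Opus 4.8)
\textbf{Proof plan for Lemma \ref{lm:testing}.}

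The plan is to prove the three estimates \eqref{eq:testing}, \eqref{eq:dualtesting}, \eqref{eq:dualtesting1} by a common scheme: reduce each left-hand side to a weighted dyadic sum via Proposition \ref{dyadicsum}, pull out the $A^{\star}_{\vec p,q}$-constant by comparing the local averages against the characteristic constant, and then control the residual geometric factor by the positivity/summability properties of a sparse family together with Proposition \ref{kolmogorov}. First, for \eqref{eq:testing}, I would set $\phi = \sum_{Q\in\mathfrak S}\mu(Q)^{\an\gamma}\langle\sigma_1\rangle_Q^\gamma\langle\sigma_2\rangle_Q^\gamma\mathbf\chi_Q$ (or rather its $1/\gamma$-power expressed through a linearization), and apply \eqref{dyadicsum_} with $s = q$ and measure $u$. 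This converts $\|(\cdot)^{1/\gamma}\|_{L^q(u)}^q$ into $\sum_{Q}\alpha_Q(\langle\phi_Q\rangle_Q^u)^{q-1}u(Q)$ where $\alpha_Q$ carries the $\mu(Q)^{\an\gamma}\langle\sigma_1\rangle_Q^\gamma\langle\sigma_2\rangle_Q^\gamma$ weight; the inner average $\langle\phi_Q\rangle_Q^u$ is where the sparseness and Proposition \ref{kolmogorov} (applied with the two exponents $s_1,s_2$ summing to less than $1$, which holds because $p_1,p_2 > \gamma$ forces the relevant exponents to be subunital) enters to dominate $\langle\phi_Q\rangle_Q^u \lesssim \langle\sigma_1\rangle_Q^{\cdot}\langle\sigma_2\rangle_Q^{\cdot}$ times a power of $\mu(Q)$. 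Collecting powers and recognizing the definition $\|u,\vec\sigma\|_{A^{\star}_{\vec p,q}} = \sup_Q\langle u\rangle_Q\prod\langle\sigma_i\rangle_Q^{q/p_i'}$ yields the claimed bound with exponent $1/q$ on the characteristic constant.

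For \eqref{eq:dualtesting} and \eqref{eq:dualtesting1}, the structure is symmetric (swap the roles of $1$ and $2$), so I would only carry out \eqref{eq:dualtesting} in detail. Here the target space is $L^{(p_2/\gamma)'}(\sigma_2)$ and the summand is $\langle\sigma_1\rangle_Q^\gamma\langle\sigma_2\rangle_Q^{\gamma-1}\langle u\rangle_Q\mathbf\chi_Q$; note $\gamma - 1$ may be negative, which is fine since it appears as a genuine power of the average, not inside an integral. Again invoke Proposition \ref{dyadicsum} with $s = (p_2/\gamma)'$ and measure $\sigma_2$: this produces $\sum_Q \beta_Q(\langle\psi_Q\rangle_Q^{\sigma_2})^{s-1}\sigma_2(Q)$ with $\beta_Q = \langle\sigma_1\rangle_Q^\gamma\langle\sigma_2\rangle_Q^{\gamma-1}\langle u\rangle_Q$. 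The inner average $\langle\psi_Q\rangle_Q^{\sigma_2}$ is estimated by Proposition \ref{kolmogorov} (the hypothesis $q>\gamma$ and $p_i>\gamma$ guaranteeing that the exponents of $\sigma_1$ and $u$ appearing there are nonnegative and sum to $<1$); this replaces the nested sum by $\langle\sigma_1\rangle_Q^{\cdot}\langle u\rangle_Q^{\cdot}\mu(Q)^{\cdot}$. Then one extracts $\|u,\vec\sigma\|_{A^{\star}_{\vec p,q}}^{\gamma/q}$ by writing the leftover combination of $\langle\sigma_1\rangle_Q, \langle\sigma_2\rangle_Q, \langle u\rangle_Q$ as (a power of the $A^{\star}$ ratio) $\times$ (the factors that remain on the right-hand side), matching exponents so that the remaining sum is precisely $\sum_Q\langle\sigma_1\rangle_Q^{\gamma(p_2/\gamma)'/p_1}\langle u\rangle_Q^{(p_2/\gamma)'(1-\gamma/q)}\mu(Q)^{1+\gamma\an(p_2/\gamma)'}$, and taking the $1/(p_2/\gamma)'$-th root.

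The main obstacle I anticipate is the bookkeeping of exponents: one must verify at each application of Proposition \ref{kolmogorov} that the pair of exponents genuinely satisfies $s_1 + s_2 < 1$ (this is where the hypotheses $p_1,p_2 > \gamma$ and $q > \gamma$ are used, and the fractional parameter $\an$ interacts with the scaling $\mu(Q)^{\an\gamma}$), and that after extracting $\|u,\vec\sigma\|_{A^{\star}_{\vec p,q}}$ the exponents $1 + \an\cdot q$, $1 + \gamma\an(p_2/\gamma)'$, etc., on $\mu(Q)$ are exactly reproduced. A secondary subtlety is that $\gamma - 1$ and $\gamma$ can be below $1$, so the quantities $\langle\sigma_i\rangle_Q^\gamma$ etc.\ are not themselves averages of anything and Proposition \ref{dyadicsum} must be applied to the honest dyadic function whose coefficients are these products; checking that the ``$\phi_Q$'' truncation in \eqref{dyadicsum_} still admits the sparse-summation bound is the delicate point. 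Everything else — Hölder, disjointness of $\{E_Q\}$, and the elementary inequality $\sigma_i(E_Q)\le\sigma_i(Q)$ — is routine and parallels the computation already carried out in the proof of Theorem \ref{Horm.pro}.
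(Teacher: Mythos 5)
Your framework is the same as the paper's: apply Proposition~\ref{dyadicsum} to reduce the $L^s$-norm to a discrete sum, estimate the inner truncated average via Proposition~\ref{kolmogorov}, and extract the $A^{\star}_{\vec p,q}$ constant. However, there is a genuine gap: you claim the hypothesis $p_1,p_2>\gamma$ alone forces the Kolmogorov exponents to be nonnegative and sum to less than $1$, and that is not true. For example, with $p_1=p_2=q=2$ and $\gamma=1.5$ the paper's condition $0\le 1-\gamma p_1'/q$ fails (since $\gamma p_1'/q=1.5>1$). The paper handles this by observing that the right-hand side of \eqref{eq:testing} is independent of $\gamma$ while the left-hand side is non-increasing in $\gamma$ (nested $\ell^\gamma$ norms), so it suffices to prove the bound for $\gamma<\min\{q,1\}$ small enough that $(q/\gamma)'<\max\{p_1,p_2\}$. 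Without this reduction, the application of Proposition~\ref{kolmogorov} breaks down for much of the admissible $\gamma$-range.

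A second omission: for \eqref{eq:dualtesting} the paper must split into the two cases $(q/\gamma)'<\max\{p_1,p_2\}$ and $(q/\gamma)'\ge\max\{p_1,p_2\}$, because the factorization of $\lambda_Q$ that isolates $\|u,\vec\sigma\|_{A^\star_{\vec p,q}}$ produces different residual exponents in the two regimes, and the Kolmogorov conditions must be re-verified separately in each. Your plan treats the dual estimate as a single uniform computation, which will not close. Two smaller points: Proposition~\ref{dyadicsum} should be applied with $s=q/\gamma$ (not $s=q$), since $\|\phi^{1/\gamma}\|_{L^q(u)}=\|\phi\|_{L^{q/\gamma}(u)}^{1/\gamma}$; and your concern about whether the truncation $\phi_Q$ in \eqref{dyadicsum_} "admits the sparse-summation bound" is a non-issue, since Proposition~\ref{dyadicsum} holds for an arbitrary nonnegative coefficient sequence $\{\alpha_Q\}$ regardless of its origin.
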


\begin{proof}
  We start by proving \eqref{eq:testing}. The right-hand side of \eqref{eq:testing} is independent of $\gamma$, so we can focus on the case where $\gamma$ is small. Specifically, for a fixed $(\vec p, q)$, we consider $\gamma < \min\{q, 1\}$ and $(q/\gamma)' < \max\{p_1, p_2\}$.

Without loss of generality, assume that $(q/\gamma)' < p_1 = \max\{p_1, p_2\}$. Under this assumption, it is straightforward to verify that
\begin{equation}\label{eq:cond1}
0 \leq \gamma - \frac{\gamma p_1'}{p_2'} < 1 \quad \text{and} \quad 0 \leq 1 - \frac{\gamma p_1'}{q} < 1,
\end{equation}
as well as
\begin{equation}\label{eq:cond2}
\gamma - \frac{\gamma p_1'}{p_2'} + 1 - \frac{\gamma p_1'}{q} < 1.
\end{equation}

For clarity, let us define
\[
\lambda_Q = \mu(Q)^{\eta \cdot \gamma} \langle \sigma_1 \rangle_Q^\gamma \langle \sigma_2 \rangle_Q^\gamma u(Q)
\]
throughout this proof. By invoking Proposition \ref{dyadicsum}, we obtain

\begin{align*}
&\quad \left\|\left(\sum_{Q\in \mathfrak S}\mu(Q)^{\an \cdot \gamma}\langle \sigma_1 \rangle_{Q}^{\gamma} \langle \sigma_2 \rangle_{Q}^{\gamma} \chi_Q \right)^{\frac{1}{\gamma}}\right\|_{L^q(u)}\\
&= \left\|\left(\sum_{Q\in \mathfrak S}\mu(Q)^{\an \cdot \gamma}\langle \sigma_1 \rangle_{Q}^{\gamma} \langle \sigma_2 \rangle_{Q}^{\gamma} \chi_Q \right)\right\|^{\frac{1}{\gamma}}_{L^{q/\gamma}(u)}\\
& \overset{\text{(\ref{dyadicsum_})}}{\approx} \left(\sum_{Q\in \mathfrak S} \lambda_Q \left[\frac{1}{u(Q)} \sum_{Q'\subseteq Q} \mu(Q')^{\an \cdot \gamma}\langle \sigma_1 \rangle_{Q'}^{\gamma} \langle \sigma_2 \rangle_{Q'}^{\gamma} {u(Q')} \right]^{\frac{q}{\gamma}-1}   \right)^{\frac{1}{q}}\\
& \leq \|u,\vec \sigma\|_{A^{\star}_{\vec p,q}}^{\frac{p'_1(q-\gamma)}{q^2}} \left(\sum_{Q\in \mathfrak S} \lambda_Q \left[\frac{1}{u(Q)} \sum_{Q'\subseteq Q} \langle u \rangle_{Q'}^{1-\frac{\gamma p'_1}{q}} \langle \sigma_2 \rangle_{Q'}^{\gamma(1-\frac{p'_1}{p'_2})} \mu(Q')^{\an \cdot \gamma + 1} \right]^{\frac{q}{\gamma}-1}   \right)^{\frac{1}{q}}\\
& \overset{\text{(\ref{eq:kolmogorov})}}{\lesssim} \|u,\vec \sigma\|_{A^{\star}_{\vec p,q}}^{\frac{p'_1(q-\gamma)}{q^2}} \left(\sum_{Q\in \mathfrak S} \lambda_Q  \left[\frac{\mu(Q)^{\an \cdot \gamma + 1}}{u(Q)}  \langle u \rangle_{Q}^{1-\frac{\gamma p'_1}{q}} \langle \sigma_2 \rangle_{Q}^{\gamma(1-\frac{p'_1}{p'_2})} \right]^{\frac{q}{\gamma}-1}   \right)^{\frac{1}{q}}\\
&=\|u,\vec \sigma\|_{A^{\star}_{\vec p,q}}^{\frac{p'_1(q-\gamma)}{q^2}}\left(\sum_{Q\in \mathfrak S} \mu(Q)^{1+\an \cdot { q}} \langle\sigma_1\rangle_Q^{\gamma} \langle\sigma_2\rangle_Q^{\gamma + (1-\frac{p'_1}{p'_2})(q-\gamma)} \langle u \rangle_{Q}^{1-\frac{p'_1(q-\gamma)}{q}} \right)^{\frac{1}{q}}\\
& \leq \|u,\vec \sigma\|_{A^{\star}_{\vec p,q}}^{(q - \gamma)p'_1/q^2 + 1/q - (q-\gamma)p'_1/q^2}
\left(\sum_{Q\in \mathfrak S} \langle\sigma_1\rangle^{\frac{q}{p_1}} \langle\sigma_2\rangle^{\frac{q}{p_2}} \mu(Q)^{1+\an \cdot{ q}}\right)^{\frac{1}{q}}\\
& \leq \|u,\vec \sigma\|_{A^{\star}_{\vec p,q}}^{1/q}
\left(\sum_{Q\in \mathfrak S} \langle\sigma_1\rangle^{\frac{q}{p_1}} \langle\sigma_2\rangle^{\frac{q}{p_2}} \mu(Q)^{1+\an \cdot { q}}\right)^{\frac{1}{q}},
\end{align*}
Next, we establish \eqref{eq:dualtesting}, which is equivalent to proving \eqref{eq:dualtesting1} due to symmetry. We will examine the cases where $(q/\gamma)' \geq \max\{p_1, p_2\}$ and $(q/\gamma)' < \max\{p_1, p_2\}$ separately.

Focusing on the case $(q/\gamma)' < \max\{p_1, p_2\}$, we assume without loss of generality that $p_1 > p_2$. By combining \eqref{eq:cond1}, \eqref{eq:cond2}, and Proposition \ref{dyadicsum}, we obtain

 \begin{align*}
& \Big\| \sum_{Q\in \mathfrak S} \mu(Q)^{\eta \cdot \gamma}\langle \sigma_1\rangle_Q^\gamma \langle \sigma_2\rangle_Q^{\gamma-1}\langle u \rangle_Q \mathbf \chi_Q  \Big\|_{L^{(\frac{p_2}\gamma)'}(\sigma_2)}\\
&\approx \Big( \sum_{Q\in \mathfrak S} \lambda_Q   \Big( \frac 1{\sigma_2(Q)}\sum_{Q'\subseteq Q}\mu(Q)^{\eta \cdot \gamma}\langle \sigma_1\rangle_{Q'}^\gamma \langle \sigma_2\rangle_{Q'}^{\gamma } u(Q')   \Big)^{(\frac{p_2}\gamma)'-1}              \Big)^{\frac 1{(\frac{p_2}\gamma)'}}\\
&\le \|u,\vec \sigma\|_{A_{\vec p,q}^{\star}}^{\frac {p_1'\gamma^2}{qp_2}}\Big( \sum_{Q\in \mathfrak S} \lambda_Q   \Big( \frac 1{\sigma_2(Q)}\sum_{Q'\subseteq Q}  \langle \sigma_2\rangle_{Q'}^{\gamma(1-\frac{p_1'}{p_2'}) }  \langle \omega\rangle_{Q'}^{  1-\frac{\gamma p_1'}{p}}   \mu(Q')^{1+\eta \cdot \gamma} \Big)^{(\frac{p_2}\gamma)'-1}   \Big)^{\frac 1{(\frac{p_2}\gamma)'}}\\
&\overset{\eqref{eq:kolmogorov}}{\lesssim} \|u,\vec \sigma\|_{A_{\vec p,q}^{\star}}^{\frac {p_1'\gamma^2}{qp_2}}\Big( \sum_{Q\in \mathfrak S} \lambda_Q  \Big( \frac 1{\sigma_2(Q)}   \langle \sigma_2\rangle_{Q}^{\gamma(1-\frac{p_1'}{p_2'}) }  \langle u\rangle_{Q}^{  1-\frac{\gamma p_1'}{p}}  \mu(Q)^{1+ \eta \cdot \gamma}   \Big)^{(\frac{p_2}\gamma)'-1}              \Big)^{\frac 1{(\frac{p_2}\gamma)'}}\\
&=\hspace{-0.10cm} \|u,\vec \sigma\|_{A_{\vec p,q}^{\star}}^{\frac {p_1'\gamma^2}{qp_2}}\hspace{-0.10cm} \Big( \hspace{-0.10cm} \sum_{Q\in \mathfrak S}\langle \sigma_1\rangle_Q^\gamma \langle \sigma_2\rangle_Q^{\gamma(\frac{p_2}{\gamma})'-(\frac{\gamma p_1'}{p_2'}+1)((\frac{p_2}\gamma)'-1) } \hspace{-0.15cm}  \langle u\rangle_Q^{(\frac {p_2}\gamma)'-\frac{\gamma p_1'}p((\frac {p_2}\gamma)'-1)}\hspace{-0.05cm} \mu(Q)^{1+\gamma \cdot \eta \cdot(p_2/\gamma)'} \Big)^{\frac 1{(\frac{p_2}\gamma)'}}\\
&\le \|u,\vec \sigma\|_{A_{\vec p,q}^{\star}}^{\frac \gamma q}\Big(\sum_{Q\in \mathfrak S} \langle \sigma_1\rangle_Q^{\frac {\gamma (\frac{p_2}{\gamma})'}{p_1}}\langle u\rangle_Q^{(\frac{p_2}\gamma)' (1-\frac \gamma q)} \mu(Q)^{1+\gamma \cdot \eta \cdot(p_2/\gamma)'}\Big)^{\frac 1{(\frac{p_2}\gamma)'}},
 \end{align*}

It remains to consider when $(q/\gamma)'\ge \max\{p_1,p_2\}$. In the current setting, combining $q>\gamma$, we conclude that 
 \[
   0 \leq \gamma-\frac q{p_1'} < 1,\,\, 0 \leq \gamma-\frac q{p_2'} < 1.
 \]
 \[
 \gamma-\frac q{p_1'}+\gamma-\frac q{p_2'}=2\gamma -2q+1<1.
 \]
 Invoking Proposition \ref{dyadicsum} again, we obtain
  \begin{align*}
& \Big\| \sum_{Q\in \mathfrak S} \mu(Q)^{\eta \cdot \gamma}\langle \sigma_1\rangle_Q^\gamma \langle \sigma_2\rangle_Q^{\gamma-1}\langle u\rangle_Q \mathbf \chi_Q  \Big\|_{L^{(\frac{p_2}\gamma)'}(\sigma_2)}\\
&\approx \Big( \sum_{Q\in \mathfrak S} \lambda_Q  \Big( \frac 1{\sigma_2(Q)}\sum_{Q'\subseteq Q}\mu(Q)^{\eta \cdot \gamma}\langle \sigma_1\rangle_{Q'}^\gamma \langle \sigma_2\rangle_{Q'}^{\gamma } u(Q')   \Big)^{(\frac{p_2}\gamma)'-1}              \Big)^{\frac 1{(\frac{p_2}\gamma)'}}\\
&\le \|u,\vec \sigma\|_{A_{\vec p,q}^{\star}}^{\frac {\gamma}{p_2}}\Big( \sum_{Q\in \mathfrak S} \lambda_Q   \Big( \frac 1{\sigma_2(Q)}\sum_{Q'\subseteq Q} \langle \sigma_1\rangle_{Q'}^{\gamma-\frac p{p_1'}} \langle \sigma_2\rangle_{Q'}^{\gamma-\frac p{p_2'} }   \mu(Q')^{\eta \cdot \gamma +1} \Big)^{(\frac{p_2}\gamma)'-1}              \Big)^{\frac 1{(\frac{p_2}\gamma)'}}\\
&\overset{\eqref{eq:kolmogorov}}{\lesssim} \|u,\vec \sigma\|_{A_{\vec p,q}^{\star}}^{\frac { \gamma }{ p_2}}\Big( \sum_{Q\in \mathfrak S} \lambda_Q  \Big( \frac 1{\sigma_2(Q)}  \langle \sigma_1\rangle_{Q }^{\gamma-\frac p{p_1'}} \langle \sigma_2\rangle_{Q}^{\gamma-\frac p{p_2'} }    \mu(Q)^{1+\gamma \cdot \eta \cdot(p_2/\gamma)'}   \Big)^{(\frac{p_2}\gamma)'-1}              \Big)^{\frac 1{(\frac{p_2}\gamma)'}}\\
&= \|u,\vec \sigma\|_{A_{\vec p,q}^{\star}}^{\frac { \gamma }{ p_2}} \Big( \sum_{Q\in \mathfrak S}\langle \sigma_1\rangle_Q^{\frac \gamma{p_2-\gamma}(p_2-\frac p{p_1'}) } \langle \sigma_2\rangle_Q^{\frac 1{p_2-\gamma}(p_2(1-\gamma)-\frac {p\gamma}{p_2'}) }   \langle u\rangle_Q  \mu(Q)^{1+\gamma \cdot \eta \cdot(p_2/\gamma)'}  \Big)^{\frac 1{(\frac{p_2}\gamma)'}}\\
&\le \|u,\vec \sigma\|_{A_{\vec p,q}^{\star}}^{\frac \gamma q}\Big( \sum_{Q\in \mathfrak S} \langle \sigma_1\rangle_Q^{\frac {\gamma (\frac{p_2}{\gamma})'}{p_1}}\langle u\rangle_Q^{(\frac{p_2}\gamma)' (1-\frac \gamma p)} \mu(Q)^{1+\gamma \cdot \eta \cdot(p_2/\gamma)'}\Big)^{\frac 1{(\frac{p_2}\gamma)'}}.\qedhere
 \end{align*}
\end{proof}
Now we are ready to prove our main results.

\begin{proof}[Proof of Theorem~\ref{Thm:1}]
Inspired by \cite{Li2018}, we require the following definitions  in order to advance our estimation.
\begin{itemize}
  \item Let $\mathcal{F}_i$ be the stopping family starting at $Q_0$, defined by the stopping condition,
  \begin{equation*}
    \operatorname{ch}_{\mathcal{F}_i}\left(F_i\right):=\left\{F_i^{\prime} \in \mathcal{S}: F_i^{\prime} \subseteq F_i \text { maximal such that }\left\langle f_i\right\rangle_{F_i^{\prime}}^{\sigma_i^{\prime}}>2\left\langle f_i\right\rangle_{F_i}^{\sigma_i}\right\} .
  \end{equation*}
  Since
  \begin{equation*}
    \sum_{F_i'\in \rm{ch}_{\mathcal{F}_i}(\mathit{F_i})} \sigma_i(F_i')\leq \frac{1}{2} \frac{\sum\limits_{F_i'\in \rm{ch}_{\mathcal{F}_i}(\mathit{F_i})} \int_{F_i'} f d\sigma}{\int_{F_i} f d\sigma} \sigma_i(F_i) \leq \frac{1}{2}  \sigma_i(F_i),
  \end{equation*}
  each collection $\mathcal{F}_i$ is $\sigma_i$-sparse.
  \item The $\mathcal{F}_i$-stopping parent $\pi_{\mathcal{F}_i}(Q)$ of a cube $Q$ is defined by
  $$
  \pi_{\mathcal{F}_i}(Q):=\{F_i \in\mathcal{F}_i : \text{$F_i$ minimal such that $F_i\supseteq Q$}\}.
  $$
\end{itemize}
By the stopping condition, for every cube $Q$, we have $\langle f_i \rangle^{\sigma_i}_{Q} \leq 2 \langle f_i \rangle^{\sigma_i}_{\pi_{\mathcal{F}_i}(Q)}$. Furthermore, based on the construction above, we conclude that
\begin{align}\label{eq:2.2}
  \sum_{F_i \in \mathcal{F}_i}\left(\langle f\rangle_{F_i}^{\gamma_1}\right)^{p_1} \sigma_1(F_i) \lesssim\|f\|_{L^{p_1}\left(\sigma_1\right)}^{p_1} .
\end{align}
Let $\mathcal{H}$ denote the stopping family corresponding to $h$ and the weight $u$, satisfying the relevant properties.

First, we consider the case where $q > \gamma$, denoting $t = q / \gamma$. By Lemma \ref{lm:equivalent}, we obtain
\begin{align*}
& \Big\|\Big(\sum_{Q\in \mathcal S}\mu(Q)^{\an} \langle f_1   \rangle_{\sigma_1,Q} \langle   f_2  \rangle_{\sigma_2,Q} \langle \sigma_1\rangle_Q^\gamma \langle\sigma_2\rangle_Q^\gamma \mathbf \chi_Q\Big) \Big\|_{L^{q/\gamma}(u)} \\&=\sup_{\|h\|_{L^{t'}(u)}=1}  \sum_{Q\in \mathcal S} \mu(Q)^{\an} \langle f_1   \rangle_{\sigma_1,Q} \langle   f_2  \rangle_{\sigma_2,Q} \langle \sigma_1\rangle_Q^\gamma \langle\sigma_2\rangle_Q^\gamma  \int_Q h \textup{d}u\\
&= \sup_{\|h\|_{L^{t'}(u)}=1} \sum_{Q\in \mathcal S} \mu(Q)^{\an}  \langle f_1\rangle_{\sigma_1,Q}   \langle f_2\rangle_{\sigma_2,Q}  \langle h\rangle_{u,Q} \langle \sigma_1\rangle_Q^\gamma \langle\sigma_2\rangle_Q^\gamma u(Q).
\end{align*}

By integrating rearrangements with stopping parent properties and eliminating the supremum, we conclude that
\begin{align*}
&\sum_{Q\in \mathcal S} \mu(Q)^{\an} \langle f_1\rangle_{\sigma_1,Q} \langle f_2\rangle_{\sigma_2,Q}  \langle h\rangle_{u,Q} \langle \sigma_1\rangle_Q^\gamma \langle\sigma_2\rangle_Q^\gamma u(Q)\\
&= \Bigg( \sum_{F_1\in{\mathcal F}_1} \sum_{\substack{F_2\in {\mathcal F}_2\\ F_2\subseteq F_1}} \sum_{\substack{H\in \mathcal H\\ H\subseteq F_2}}\sum_{\substack{Q \in\mathcal S\\ \pi(Q)=(F_1,F_2, H)}} + \sum_{F_2\in {\mathcal F}_2} \sum_{\substack{F_1\in {\mathcal F}_1\\ F_1\subseteq F_2}} \sum_{\substack{H\in \mathcal H\\ H\subseteq F_1}}\sum_{\substack{Q \in\mathcal S\\ \pi(Q)=(F_1,F_2, H)}}   \\
&+\sum_{F_1\in {\mathcal F}_1} \sum_{\substack{H\in \mathcal H\\ H\subseteq F_1}} \sum_{\substack{F_2\in {\mathcal F}_2\\ F_2\subseteq H}}\sum_{\substack{Q \in\mathcal S\\ \pi(Q)=(F_1,F_2, H)}} +\sum_{F_2\in{\mathcal F}_2} \sum_{\substack{H\in \mathcal H\\ H\subseteq F_2}} \sum_{\substack{F_1\in {\mathcal F}_1\\ F_1\subseteq H}}\sum_{\substack{Q \in\mathcal S\\ \pi(Q)=(F_1,F_2, H)}}\\
&+\sum_{H\in\mathcal H} \sum_{\substack{F_1\in {\mathcal F}_1\\ F_1\subseteq H}} \sum_{\substack{F_2\in {\mathcal F}_2\\ F_2\subseteq F_1}}\sum_{\substack{Q \in\mathcal S\\ \pi(Q)=(F_1,F_2, H)}} + \sum_{H\in\mathcal H} \sum_{\substack{F_2\in {\mathcal F}_2\\ F_2\subseteq H}} \sum_{\substack{F_1\in {\mathcal F}_1\\ F_1\subseteq F_2}}\sum_{\substack{Q \in\mathcal S\\ \pi(Q)=(F_1,F_2, H)}}\Bigg) \\ &\times \langle f_1\rangle_{\sigma_1,Q} \langle f_2\rangle_{\sigma_2,Q} \langle h\rangle_{u,Q}\lambda_Q\\
&:= I+I'+II+II'+III+III',
\end{align*}
where $\pi(Q) = F_i$ means that $\pi_{\mathcal{F}_i}(Q)=F_i$, for all $i=1,2$ and $\pi_{\mathcal{H}}(Q)=H$ and
\[
\lambda_Q:= \mu(Q)^{\an \cdot \gamma} \langle \sigma_1\rangle_Q^\gamma \langle\sigma_2\rangle_Q^\gamma u(Q).
\]

Due to symmetry, it suffices to estimate $I$, $II$, and $III$. We start with estimating $I$. Using the inequality 
$
\langle f_i \rangle^{\sigma_i}_{Q} \leq 2 \langle f_i \rangle^{\sigma_i}_{\pi_{\mathcal{F}_i}(Q)},
$ 
we obtain
\begin{align*}
I&\le \sum_{F_1\in{\mathcal F}_1} \sum_{\substack{F_2\in {\mathcal F}_2\\ F_2\subseteq F_1}} \sum_{\substack{H\in \mathcal H\\ H\subseteq F_2}}\sum_{\substack{Q \in\mathcal S\\ \pi(Q)=(F_1,F_2, H)}} \langle f_1\rangle_{\sigma_1,Q} \langle f_2\rangle_{\sigma_2,Q}  \langle h\rangle_{u,Q} \lambda_Q\\
&\le 8\sum_{F_1\in{\mathcal F}_1}  \langle f_1\rangle_{\sigma_1,F_1}  \sum_{\substack{F_2\in {\mathcal F}_2\\ F_2\subseteq F_1}}  \langle f_2\rangle_{\sigma_2,F_2} \sum_{\substack{H\in \mathcal H\\ H\subseteq F_2}} \langle h\rangle_{u,H}\sum_{\substack{Q \in\mathcal S\\ \pi(Q)=(F_1,F_2, H)}} \lambda_Q\\
&\lesssim  \sum_{F_1\in{\mathcal F}_1}  \langle f_1\rangle_{\sigma_1,F_1}  \sum_{\substack{F_2\in {\mathcal F}_2\\ F_2\subseteq F_1}}  \langle f_2\rangle_{\sigma_2,F_2}   \int \Big(\sup_{\substack{H'\in \mathcal H\\ \pi_{{\mathcal F}_2}(H')=F_2} } \langle h\rangle_{u,H'} \mathbf \chi_{H'} \Big) \\ &\times\sum_{\substack{H\in \mathcal H\\ H\subseteq F_2}} \sum_{\substack{Q \in\mathcal S\\ \pi(Q)=(F_1,F_2, H)}} \frac{\lambda_Q}{u(Q)} \mathbf \chi_Q d u\\
&\le \sum_{F_1\in{\mathcal F}_1}  \langle f_1\rangle_{\sigma_1,F_1}  \sum_{\substack{{F_2}\in {\mathcal F}_2\\ \pi_{{\mathcal F}_1}(F_2)= F_1}}  \langle f_2\rangle_{\sigma_2,F_2}   \Big\|\sum_{\substack{H\in \mathcal H\\ \pi_{{\mathcal F}_2}(H)= F_2}} \sum_{\substack{Q \in\mathcal S\\ \pi(Q)=(F_1,F_2, H)}}  \frac{\lambda_Q}{u(Q)}\mathbf \chi_Q \Big\|_{L^t(u)}\\
&\qquad\times \Big\|\sup_{\substack{H'\in \mathcal H\\ \pi_{{\mathcal F}_2}(H')=F_2} } \langle h\rangle_{u,H'} \mathbf \chi_{H'}  \Big\|_{L^{t'}(u)}\\
&\leq \Big(\hspace{-0.2cm}\sum_{F_1\in{\mathcal F}_1}  \sum_{\substack{F_2\in {\mathcal F}_2\\ \pi_{{\mathcal F}_2}(F_2)= F_1}}  \hspace{-0.5cm}(\langle f_1\rangle_{\sigma_1,F_1} \langle f_2\rangle_{\sigma_2,F_2} )^t  \Big\|\hspace{-0.3cm}\sum_{\substack{H\in \mathcal H\\ \pi_{{\mathcal F}_2}(H)= F_2}} \sum_{\substack{Q \in\mathcal S\\ \pi(Q)=(F_1,F_2, H)}} \hspace{-0.7cm} \frac{\lambda_Q}{u(Q)} \mathbf \chi_Q \Big\|_{L^t(u)}^t\Big)^{1/t}\\
&\qquad \times \Big(\sum_{F_1\in{\mathcal F}_1}  \sum_{\substack{F_2\in {\mathcal F}_2\\\pi_{{\mathcal F}_1}(F_2)= F_1}} \sum_{\substack{H'\in \mathcal H\\ \pi_{{\mathcal F}_2}(H')=F_2} } (\langle h\rangle_{u,H'})^{t'} u(H') \Big )^{1/{t'}}\\
&\lesssim \hspace{-0.1cm}\Big(\sum_{F_1\in{\mathcal F}_1} \hspace{-0.2cm} \sum_{\substack{F_2\in {\mathcal F}_2\\ \pi_{{\mathcal F}_1}(F_2)= F_1}}  \hspace{-0.5cm}(\langle f_1\rangle_{\sigma_1,F_1} \langle f_2\rangle_{\sigma_2,F_2} )^t  \Big\|\hspace{-0.2cm}\sum_{\substack{H\in \mathcal H\\ \pi_{\mathcal F_2}(H)= F_2}} \sum_{\substack{Q \in\mathcal S\\ \pi(Q)=(F_1,F_2, H)}} \hspace{-0.7cm}\frac{\lambda_Q}{u(Q)} \mathbf \chi_Q \Big\|_{L^t(u)}^t\Big)^{1/t}.
\end{align*}
By \eqref{eq:testing}, we have
\begin{align*}
  \Big\| \sum_{\substack{Q\in \mathcal S\\ \pi_{\mathcal F_2}(Q)= F_2}}\frac{\lambda_Q}{u(Q)}\mathbf \chi_Q\Big\|_{L^{q/\gamma}(u)}
 &\le \|u,\vec \sigma\|_{A_{\vec p,q}^{\star}}^{\frac \gamma q}\Big( \sum_{\substack{Q\in \mathcal S\\ \pi_{\mathcal F_2}(Q)= F_2}} \langle\sigma_1\rangle_Q^{\frac q{p_1}} \langle \sigma_2\rangle_Q^{\frac q{p_2}} \mu(Q)^{1+\an \cdot q} \Big)^{\gamma/q}.
\end{align*}
Therefore,
\begin{align*}
I&\le \|u,\vec \sigma\|_{A_{\vec{p},q}^{\star}}^{\frac \gamma q}  \Big(   \sum_{F_1\in{\mathcal F}_1}  \sum_{\substack{F_2\in {\mathcal F}_2\\ \pi_{{\mathcal F}_1}(F_2)= F_1}}  (\langle f_1\rangle_{\sigma_1,F_1} \langle f_2\rangle_{\sigma_2,F_2} )^t     \sum_{\substack{Q\in \mathcal S\\ \pi_{\mathcal F_2}(Q)= F_2}} \langle\sigma_1\rangle_Q^{\frac q{p_1}} \langle \sigma_2\rangle_Q^{\frac q{p_2}} \mu(Q)^{1+\an \cdot q}    \Big)^{1/t}\\
&\le \|u,\vec \sigma\|_{A_{\vec{p},q}^{\star}}^{\frac \gamma q}  \Big(   \sum_{F_1\in{\mathcal F}_1}  \sum_{\substack{F_2\in {\mathcal F}_2\\ \pi_{{\mathcal F}_1}(F_2)= F_1}}  (\langle f_1\rangle_{\sigma_1,F_1} \langle f_2\rangle_{\sigma_2,F_2} )^t     \\
&\times\Big(\sum_{\substack{Q\in \mathcal S\\ \pi_{\mathcal F_2}(Q)= F_2}} \langle\sigma_1\rangle_Q \mu(Q)\Big)^{\frac q{p_1}}  \Big(\sum_{\substack{Q\in \mathcal S\\ \pi_{\mathcal F_2}(Q)= F_2}} \langle\sigma_2\rangle_Q \mu(Q)\Big)^{\frac q{p_2}}   \Big)^{1/t}\\
&\le \|u,\vec \sigma\|_{A_{\vec{p},q}^{\star}}^{\frac \gamma q}  \Big( \sum_{F_1\in{\mathcal F}_1} (\langle f_1\rangle_{\sigma_1,F_1})^{p_1/\gamma}\sum_{\substack{F_2\in {\mathcal F}_2\\ \pi_{{\mathcal F}_1}(F_2)= F_1}} \sum_{\substack{Q\in \mathcal S\\ \pi_{\mathcal F_2}(Q)= F_2}} \langle\sigma_1\rangle_Q \mu(Q)  \Big)^{\frac \gamma{p_1}}\\
&\times \Big( \sum_{F_2\in{\mathcal F}_2} (\langle f_2\rangle_{\sigma_2,F_2})^{p_2/\gamma} \sum_{\substack{Q\in \mathcal S\\ \pi_{\mathcal F_2}(Q)= F_2}} \langle\sigma_2\rangle_Q \mu(Q)  \Big)^{\frac \gamma{p_2}}\\
&\le \|u,\vec \sigma\|_{A_{\vec{p},q}^{\star}}^{\frac \gamma q}(\prod_{i=1}^2 [\sigma_i]_{A_\infty}^{\frac \gamma{p_i}}) \|    f_1 \|_{L^{p_1/\gamma}(\sigma_1)}\|f_2\|_{L^{p_2/\gamma}(\sigma_2)}.
\end{align*}

Next we set $
\lambda'_Q:= \mu(Q)^{\an \cdot \gamma} \langle \sigma_1\rangle_Q^\gamma \langle\sigma_2\rangle_Q^{\gamma - 1} \langle u\rangle_Q\sigma_2(Q).
$ To estimate $II$, as shown above, it is straightforward to demonstrate
\begin{align*}
  II &\lesssim \sum_{F_1\in{\mathcal F}_1}  \langle f_1\rangle_{\sigma_1,F_1}  \sum_{\substack{H\in \mathcal H\\ H\subseteq F_1}} \langle h\rangle_{u,H}\sum_{\substack{F_2\in {\mathcal F}_2\\ F_2\subseteq H}}  \langle f_2\rangle_{\sigma_2,F_2}\sum_{\substack{Q \in\mathcal S\\ \pi(Q)=(F_1,F_2, H)}} \lambda'_Q\\
  &\le \sum_{F_1\in{\mathcal F}_1}  \langle f_1\rangle_{\sigma_1,F_1}  \sum_{\substack{H\in \mathcal H\\ \pi_{{\mathcal F}_1}(H)= F_1}}  \langle h\rangle_{u,H}   \Big\|\sum_{\substack{\substack{F_2\in {\mathcal F}_2 \\ \pi_{{\mathcal H}}(F_2)=H}}} \sum_{\substack{Q \in\mathcal S\\ \pi(Q)=(F_1,F_2, H)}}  \frac{\lambda'_Q}{\sigma_2(Q)}\mathbf \chi_Q \Big\|_{L^{(p_2/\gamma)'}(\sigma_2)}\\
&\qquad\times \Big\|\sup_{\substack{F'_2\in {\mathcal F}_2 \\ \pi_{{\mathcal H}}(F'_2)=H} } \langle f_2\rangle_{\sigma_2,F'_2} \mathbf \chi_{F'_2}  \Big\|_{L^{p_2/\gamma}(\sigma_2)}\\
&\leq \Big(\hspace{-0.2cm}\sum_{F_1\in{\mathcal F}_1}  \sum_{{\substack{H\in \mathcal H\\ \pi_{{\mathcal F}_1}(H)= F_1}}}  \hspace{-0.5cm}(\langle f_1\rangle_{\sigma_1,F_1} \langle h\rangle_{u,H} )^{(p_2/\gamma)'}  \Big\|\hspace{-0.3cm}\sum_{\substack{F_2\in {\mathcal F}_2 \\ \pi_{{\mathcal H}}(F_2)=H}} \sum_{\substack{Q \in\mathcal S\\ \pi(Q)=(F_1,F_2, H)}} \hspace{-0.7cm} \frac{\lambda'_Q}{\sigma_2(Q)} \mathbf \chi_Q \Big\|_{L^{(p_2/\gamma)'}(\sigma_2)}^{(p_2/\gamma)'}\Big)^{1/(p_2/\gamma)'}\\
&\qquad \times \Big(\sum_{F_1\in{\mathcal F}_1}  \sum_{{\substack{H\in \mathcal H\\ \pi_{{\mathcal F}_1}(H)= F_1}}} \sum_{\substack{F'_2\in {\mathcal F}_2 \\ \pi_{{\mathcal H}}(F'_2)=H} } (\langle f_2\rangle_{\sigma_2,F'_2})^{{(p_2/\gamma)}}\sigma_2(F'_2) \Big )^{1/{(p_2/\gamma)}}\\
&\leq \Big(\hspace{-0.2cm}\sum_{F_1\in{\mathcal F}_1}  \sum_{{\substack{H\in \mathcal H\\ \pi_{{\mathcal F}_1}(H)= F_1}}}  \hspace{-0.5cm}(\langle f_1\rangle_{\sigma_1,F_1} \langle h\rangle_{u,H} )^{(p_2/\gamma)'}  \Big\|\hspace{-0.3cm}\sum_{\substack{Q \in\mathcal S \\ \pi_{{\mathcal H}}(Q)=H}}  \hspace{-0.3cm} \frac{\lambda'_Q}{\sigma_2(Q)} \mathbf \chi_Q \Big\|_{L^{(p_2/\gamma)'}(\sigma_2)}^{(p_2/\gamma)'}\Big)^{1/(p_2/\gamma)'} \hspace{-0.3cm}\cdot  \|f_2\|_{L^{p_2/\gamma}(\sigma_2)}.\\
\end{align*}
Thus, it follows from \eqref{eq:testing} and \eqref{eq:2.2} that
\begin{align*}
  II&\le \|u,\vec \sigma\|_{A_{\vec{p},q}^{\star}}^{\frac \gamma q}  \Big(   \sum_{F_1\in{\mathcal F}_1}  \sum_{{\substack{H\in \mathcal H\\ \pi_{{\mathcal F}_1}(H)= F_1}}}  (\langle f_1\rangle_{\sigma_1,F_1} \langle h\rangle_{u,H} )^{(p_2/\gamma)'} \\  
  & \quad\times \sum_{\substack{Q \in\mathcal S \\ \pi_{{\mathcal H}}(Q)=H}} \langle \sigma_1\rangle_Q^{\frac {\gamma (\frac{p_2}{\gamma})'}{p_1}}\langle u\rangle_Q^{(\frac{p_2}\gamma)' (1-\frac \gamma q)} \mu(Q)^{1+\gamma \cdot\an \cdot (p_2/\gamma)'}    \Big)^{1/(p_2/\gamma)'} \cdot \|f_2\|_{L^{p_2/\gamma}(\sigma_2)}\\
  &\le \|u,\vec \sigma\|_{A_{\vec{p},q}^{\star}}^{\frac \gamma q}  \Big(   \sum_{F_1\in{\mathcal F}_1}  \sum_{{\substack{H\in \mathcal H\\ \pi_{{\mathcal F}_1}(H)= F_1}}}  (\langle f_1\rangle_{\sigma_1,F_1} \langle h\rangle_{u,H} )^{(p_2/\gamma)'}     \\
  &\times\Big(\sum_{\substack{Q \in\mathcal S \\ \pi_{{\mathcal H}}(Q)=H}} \langle\sigma_1\rangle_Q \mu(Q)\Big)^{\frac {\gamma (\frac{p_2}{\gamma})'}{p_1}}  \Big(\sum_{\substack{Q \in\mathcal S \\ \pi_{{\mathcal H}}(Q)=H}} \langle u \rangle_Q \mu(Q)\Big)^{(\frac{p_2}\gamma)' (1-\frac \gamma q)}   \Big)^{1/(p_2/\gamma)'} \cdot \|f_2\|_{L^{p_2/\gamma}(\sigma_2)}\\
  &\le \|u,\vec \sigma\|_{A_{\vec{p},q}^{\star}}^{\frac \gamma q}  \Big( \sum_{F_1\in{\mathcal F}_1} (\langle f_1\rangle_{\sigma_1,F_1})^{p_1/\gamma}\sum_{{\substack{H\in \mathcal H\\ \pi_{{\mathcal F}_1}(H)= F_1}}} \sum_{\substack{Q \in\mathcal S \\ \pi_{{\mathcal H}}(Q)=H}} \langle\sigma_1\rangle_Q \mu(Q)  \Big)^{\frac \gamma{p_1}}\\
  &\times \Big( \sum_{H\in{\mathcal H}} (\langle h\rangle_{u,H})^{1-\frac{\gamma}{q}} \sum_{\substack{Q \in\mathcal S \\ \pi_{{\mathcal H}}(Q)=H}}  \langle u \rangle_Q  \mu(Q)  \Big)^{1/(1-\frac{\gamma}{q})}\cdot\|f_2\|_{L^{p_2/\gamma}(\sigma_2)}\\
  &\le \|u,\vec \sigma\|_{A_{\vec{p},q}^{\star}}^{\frac \gamma q}[u]_{A_\infty}^{1-\frac \gamma {q}}[\sigma_1]_{A_\infty}^{\frac \gamma {p_1}}  \|f_1\|_{L^{p_1/\gamma}(\sigma_2)}\|f_2\|_{L^{p_2/\gamma}(\sigma_2)}.
  \end{align*}
Note that to prove $III$, we can repeat the steps from the proof of $II$. By combining \eqref{eq:dualtesting1}, we obtain that
 \begin{align*}
 III \le \|u,\vec \sigma\|_{A_{\vec{p},q}^{\star}}^{\frac \gamma q}[u]_{A_\infty}^{1-\frac \gamma {q}}[\sigma_2]_{A_\infty}^{\frac \gamma {p_1}}  \|f_1\|_{L^{p_1/\gamma}(\sigma_2)}\|f_2\|_{L^{p_2/\gamma}(\sigma_2)}.
 \end{align*}

Beyound that, it remains to consider the case $q\le \gamma$. By Lemma \ref{lm:equivalent}, we have
\begin{align*}
 &\quad \Big\|\Big(\sum_{Q\in \mathcal S} \mu(Q)^{\an \cdot \gamma} \langle f_1\rangle_{\sigma_1,Q}   \langle f_2\rangle_{\sigma_2,Q}   \langle\sigma_1\rangle_Q^\gamma \langle\sigma_2\rangle_Q^\gamma \mathbf \chi_Q\Big)^{\frac 1\gamma} \Big\|_{L^q(u)}^\gamma\\
&\lesssim  \Big\|\Big(\sum_{F_1\in \mathcal F_1}\langle f_1\rangle_{\sigma_1,F_1}\sum_{F_2\in \mathcal F_2}\langle f_2\rangle_{\sigma_2,F_2}\sum_{\substack{Q\in \mathcal S\\ \pi(Q)=(F_1, F_2)}}        \mu(Q)^{\an \cdot \gamma}\langle\sigma_1\rangle_Q^\gamma \langle\sigma_2\rangle_Q^\gamma \mathbf \chi_Q\Big)^{\frac 1\gamma}\Big\|_{L^q(u)}^\gamma\\
&\le  \Big(\sum_{F_1\in \mathcal F_1}(\langle f_1\rangle_{\sigma_1,F_1})^t\sum_{F_2\in \mathcal F_2}(\langle f_2\rangle_{\sigma_2,F_2})^t \Big\|\sum_{\substack{Q\in \mathcal S\\ \pi(Q)=(F_1, F_2)}} \mu(Q)^{\an \cdot \gamma}       \langle\sigma_1\rangle_Q^\gamma \langle\sigma_2\rangle_Q^\gamma \mathbf \chi_Q\Big\|_{L^t(u)}^t\Big)^{\frac 1t}\\
&\lesssim \Big( \sum_{F_1\in \mathcal F_1}(\langle f_1\rangle_{\sigma_1,F_1})^t\sum_{\substack{F_2\in \mathcal F_2\\ F_2\subseteq F_1}}(\langle f_2\rangle_{\sigma_2,F_2})^t \Big\|\sum_{\substack{Q\in \mathcal S\\ \pi(Q)=(F_1, F_2)}}  \mu(Q)^{\an \cdot \gamma}      \langle\sigma_1\rangle_Q^\gamma \langle\sigma_2\rangle_Q^\gamma \mathbf \chi_Q\Big\|_{L^t(u)}^t\Big)^{\frac 1t}\\
&\quad +\Big(\sum_{F_2\in \mathcal F_2}(\langle f_2\rangle_{\sigma_2,F_2})^t \sum_{\substack{F_1\in \mathcal F_1\\F_1\subseteq F_2}}(\langle f_1\rangle_{\sigma_1,F_1})^t\Big\|\sum_{\substack{Q\in \mathcal S\\ \pi(Q)=(F_1, F_2)}}  \mu(Q)^{\an \cdot \gamma}    \langle\sigma_1\rangle_Q^\gamma \langle\sigma_2\rangle_Q^\gamma \mathbf \chi_Q\Big\|_{L^t(u)}^t\Big)^{\frac 1t}.
\end{align*}
The desired estimate follows from the previous arguments. This concludes the proof.
\end{proof}

\subsection{Proof of Theorem \ref{Horm.pro_2}}
~~

The proof of Theorem \ref{Horm.pro_2} follows a similar approach to the previous theorem. Thus, we present only the key steps and omit others. We will check the estimate for $I$, as the other terms are analogous. By \eqref{eq:testing}, we can obtain
 \begin{align*}
   &\quad \Big\| \sum_{\substack{Q\in \mathcal S\\Q\subseteq F_2}} \mu(Q)^{\an \cdot \gamma} \langle \sigma_1\rangle_Q^\gamma\langle\sigma_2\rangle_Q^\gamma \mathbf \chi_Q\Big\|_{L^q(u)}\\
 &\le \|u,\vec \sigma\|_{A_{\vec p,q}^{\star}}^{\frac \gamma q}\Big( \sum_{\substack{Q\in \mathcal S\\Q\subseteq F_2}} \langle\sigma_1\rangle_Q^{\frac q{p_1}} \langle \sigma_2\rangle_Q^{\frac q{p_2}} \mu(Q)^{1+ \an \cdot q} \Big)^{\gamma/q}\\
 &\lesssim \|u,\vec \sigma\|_{A_{\vec p,q}^{\star}}^{\frac \gamma q} \Big( \int_{F_2} \prod_{i=1}^2 M(\mathbf \chi_{F_2}\sigma_i)^{q/{p_i}}d \mu               \Big)^{\gamma/q}\\
 &\le \|u,\vec \sigma\|_{A_{\vec p,q}^{\star}}^{\frac \gamma q}[\vec \sigma]_{W_{\vec p,q}^\infty}^{\gamma/q} \Big( \int_{F_2} \prod_{i=1}^2 \sigma_i^{q/{p_i}}d \mu   \Big)^{\gamma/q}.
 \end{align*}
Moreover, we have
  \begin{align*}
 I&\le \|u,\vec \sigma\|_{A^{\star}_{\vec{p},q}}^{\frac \gamma q} [\vec \sigma]_{W_{\vec p,q}^\infty}^{\gamma/q}\Big(   \sum_{F_1\in{\mathcal F}_1}  \sum_{\substack{F_2\in {\mathcal F}_2\\ \pi_{{\mathcal F}_1}(F_2)= F_1}}  (\langle f_1\rangle_{\sigma_1,F_1} \langle f_2\rangle_{\sigma_2,F_2} )^t      \int_{F_2} \prod_{i=1}^2 \sigma_i^{q/{p_i}}d \mu    \Big)^{1/t}\\
 &\le  \hspace{-0.05cm}\|u,\vec \sigma\|_{A^{\star}_{\vec{p},q}}^{\frac \gamma q} [\vec \sigma]_{W_{\vec p,q}^\infty}^{\gamma/q}\Big( \hspace{-0.10cm}         \int  \sum_{F_1\in{\mathcal F}_1} \hspace{-0.20cm}   (\langle f_1\rangle_{\sigma_1,F_1} )^t \mathbf \chi_{F_1} \hspace{-0.20cm}   \sum_{\substack{F_2\in {\mathcal F}_2\\ \pi_{{\mathcal F}_1}(F_2)= F_1}} \hspace{-0.20cm}   (\langle f_2\rangle_{\sigma_2,F_2} )^t  \mathbf \chi_{F_2}  \prod_{i=1}^2 \sigma_i^{q/{p_i}}d \mu               \Big)^{1/t}\\
 &\le \|u,\vec \sigma\|_{A^{\star}_{\vec{p},q}}^{\frac \gamma q}[\vec \sigma]_{W_{\vec p,q}^\infty}^{\gamma/q}\Big(          \int  M_{\mathcal D}^{\sigma_1} (f_1)^t  M_{\mathcal D}^{\sigma_2}(f_2)^t   \prod_{i=1}^2 \sigma_i^{q/{p_i}}d \mu               \Big)^{1/t}\\
 &\le \|u,\vec \sigma\|_{A^{\star}_{\vec{p},q}}^{\frac \gamma q} [\vec \sigma]_{W_{\vec p,q}^\infty}^{\gamma/q}  \| M_{\mathcal D}^{\sigma_1} (f_1)\|_{L^{p_1/\gamma}(\sigma_1)}\cdot \| M_{\mathcal D}^{\sigma_2} (f_2)\|_{L^{p_2/\gamma}(\sigma_2)}\\
 &\lesssim \|u,\vec \sigma\|_{A^{\star}_{\vec{p},q}}^{\frac \gamma q} [\vec \sigma]_{W_{\vec p,q}^\infty}^{\gamma /p} \| f_1\|_{L^{p_1/\gamma}(\sigma_1)}\cdot \|f_2\|_{L^{p_2/\gamma}(\sigma_2)}. \qedhere
 \end{align*}

\subsection{Proof of Theorem \ref{Thm:3}}
~~

Again, the proof of Theorem \ref{Thm:3} follows a similar approach to that of the previous theorem. Therefore, we present only the key steps and omit the rest.
\begin{proof}[Proof of Theorem \ref{Thm:3}]
  Similarly, we focus solely on estimating $I$.  
  Using \eqref{eq:testing} again, we have
  \begin{align*}
  & \Big\| \mu(Q)^{\eta \cdot \gamma}\sum_{\substack{Q\in \mathcal S\\ \pi(Q)=F_2}} \langle \sigma_1\rangle_Q^\gamma\langle\sigma_2\rangle_Q^\gamma \mathbf \chi_Q\Big\|_{L^t(u)}\\
   &\le \|u,\vec \sigma\|_{A^{\star}_{\vec p,q}}^{\frac \gamma q}\Big( \sum_{\substack{Q\in \mathcal S\\ \pi(Q)=F_2}} \langle\sigma_1\rangle_Q^{\frac q{p_1}} \langle \sigma_2\rangle_Q^{\frac q{p_2}} \mu(Q)^{1+\eta \cdot \gamma} \Big)^{\gamma/q}\\
  &\le \|u,\vec \sigma\|_{A^{\star}_{\vec p,q}}^{\frac \gamma q}[\vec \sigma]_{H_{\vec p,q}^\infty}^{\frac \gamma q}\Big( \sum_{\substack{Q\in \mathcal S\\ \pi(Q)=F_2}} \prod_{i=1}^2 \exp\Big( \dashint_Q \log \sigma_i  \Big)^{\frac p{p_i}} \mu(Q)^{1+\eta \cdot \gamma} \Big)^{\gamma /q}\\
  &\le \|u,\vec \sigma\|_{A^{\star}_{\vec p,q}}^{\frac \gamma q}[\vec \sigma]_{H_{\vec p,q}^\infty}^{\frac \gamma q}\prod_{i=1}^2 \Big( \sum_{\substack{Q\in \mathcal S\\ \pi(Q)=F_2}} \exp\Big( \dashint_Q \log \sigma_i  \Big)  \mu(Q)^{1+\eta \cdot \gamma} \Big)^{\gamma/{p_i}}\\
  &\lesssim \|u,\vec \sigma\|_{A^{\star}_{\vec p,q}}^{\frac \gamma q}[\vec \sigma]_{H_{\vec p,q}^\infty}^{\frac \gamma q}  \Big( \sum_{\substack{Q\in \mathcal S\\ \pi(Q)=F_2}} \exp\Big( \dashint_Q \log \sigma_1  \Big)  \mu(Q)^{1+\eta \cdot \gamma} \Big)^{\gamma /{p_1}}\|M_0 (\mathbf \chi_{F_2}\sigma_2)\|_{L^1}^{\frac \gamma {p_2}}\\
  &\le \|u,\vec \sigma\|_{A^{\star}_{\vec p,q}}^{\frac \gamma q}[\vec \sigma]_{H_{\vec p,q}^\infty}^{\frac \gamma q} \sigma_2(F)^{\frac \gamma {p_2}} \Big( \sum_{\substack{Q\in \mathcal S\\ \pi(Q)=F_2}} \exp\Big( \dashint_Q \log \sigma_1  \Big)  \mu(Q)^{1+\eta \cdot \gamma} \Big)^{\gamma /{p_1}},
  \end{align*}
  where
  \begin{equation}
  M_0 (f) := \sup_Q \exp{\left( \dashint_Q \log{|f|}\right)}\mathbf \chi_Q,
  \end{equation}
  is the logarithmic maximal function. 

  According to \cite[Lemma 2.1]{HP}, this maximal function is bounded on $L^p$ for $p \in (0, \infty)$.  
  Furthermore, we obtain
\begin{align*}
I&\le \|u,\vec \sigma\|_{A^{\star}_{\vec{p},q}}^{\frac \gamma q} [\vec \sigma]_{H_{\vec p,q}^\infty}^{\gamma /q}\Big(   \sum_{F_1\in{\mathcal F}_1} (\langle f_1\rangle_{F_1}^{\sigma_1})^t \sum_{\substack{F_2\in {\mathcal F}_2\\ \pi_{{\mathcal F}_1}(F_2)= F_1}}  ( \langle f_2\rangle_{F_2}^{\sigma_2} )^t \sigma_2(F)^{\frac q{p_2}}   \\
 &\quad\times \Big( \sum_{\substack{Q\in \mathcal S\\ \pi(Q)=F_2}} \exp\Big( \dashint_Q \log \sigma_1  \Big)  \mu(Q) \Big)^{q/{p_1}}\Big)^{\frac \gamma q} \\
 &\le \|u,\vec \sigma\|_{A^{\star}_{\vec{p},q}}^{\frac \gamma q} [\vec \sigma]_{H_{\vec p,q}^\infty}^{\gamma/q}\Big(   \sum_{F_1\in{\mathcal F}_1} (\langle f_1\rangle_{F_1}^{\sigma_1})^t \Big(\sum_{\substack{F_2\in {\mathcal F}_2\\ \pi_{{\mathcal F}_1}(F_2)= F_1}}  ( \langle f_2\rangle_{F_2}^{\sigma_2} )^{p_2/\gamma} \sigma_2(F)\Big)^{\frac q{p_2}}\\
 &\quad\times \Big( \sum_{\substack{F_2\in {\mathcal F}_2\\ \pi_{{\mathcal F}_1}(F_2)= F_1}}\sum_{\substack{Q\in \mathcal S\\ \pi(Q)=F_2}} \exp\Big( \dashint_Q \log \sigma_1  \Big)  \mu(Q) \Big)^{q/{p_1}}\Big)^{\frac \gamma q}\\
 &\le \hspace{-0.05cm}   \|u,\vec \sigma\|_{A^{\star}_{\vec{p},q}}^{\frac \gamma q} [\vec \sigma]_{H_{\vec p,q}^\infty}^{\gamma/q}\Big(   \sum_{F_1\in{\mathcal F}_1} (\langle f_1\rangle_{F_1}^{\sigma_1})^{p_1/\gamma} \Big(\hspace{-0.3cm}   \sum_{\substack{F_2\in {\mathcal F}_2\\ \pi_{{\mathcal F}_1}(F_2)= F_1}}\sum_{\substack{Q\in \mathcal S\\\pi(Q)=F_2}} \exp\Big( \dashint_Q \log \sigma_1  \Big)  \mu(Q) \Big) \Big)^{\frac \gamma{p_1}}\\
 &\quad\times \Big(\sum_{F_1\in{\mathcal F}_1}\sum_{\substack{F_2\in {\mathcal F}_2\\ \pi_{{\mathcal F}_1}(F_2)= F_1}}  ( \langle f_2\rangle_{F_2}^{\sigma_2} )^{p_2/\gamma} \sigma_2(F)\Big)^{\frac \gamma{p_2}}\\
 &\le \|u,\vec \sigma\|_{A^{\star}_{\vec{p},q}}^{\frac \gamma q} [\vec \sigma]_{H_{\vec p,q}^\infty}^{\gamma/q} \|f_1\|_{L^{p_1/\gamma}(\sigma_1)}\|f_2\|_{L^{p_2/\gamma}(\sigma_2)}.
\end{align*}
\end{proof}

\subsection{Proof of Theorem \ref{Caopro_1}}
~~

We first need some inequalities from \cite{Cao2018}, which can be extended to homogeneous type spaces.
\begin{lemma}[\cite{Cao2018},Lemma 2.9]
Let $r>0$ and $b \in \mathrm{B M O}(X)$. Then there is a constant $C>0$ independent of $b$ such that the following inequalities hold
\begin{align} \label{BMO.basic_1}
C^{-1} \langle|f|\rangle_Q & \leq\|f\|_{L(\log L)(X), Q} \leq C\langle|f|^{r+1}\rangle_Q^{\frac{1}{r+1}}; \\ \label{BMO.basic_2}
\sup _Q\big(\dashint_Q\left|b-b_Q\right|^r\big)^{\frac{1}{r}} & \leq C\|b\|_{\mathrm{BMO}(X)}; \\ \label{BMO.basic_3}
\left\|\left|b-b_Q\right|^r\right\|_{\exp L^{\frac{1}{r}}(X), Q} & \leq C\|b\|_{\mathrm{BMO}(X)}^r ;\\ \label{BMO.basic_4}
\langle\left|f_1 \ldots f_k g\right|\rangle_Q& \leq C\left\|f_1\right\|_{\exp L^{s_1}(X), Q} \cdots\left\|f_k\right\|_{\exp L^{s_k}(X),Q}\|g\|_{L(\log L)^{\frac{1}{s}}(X), Q},
\end{align}
where $\frac{1}{s}=\frac{1}{s_1}+\cdots+\frac{1}{s_k}$ with $s_1, \ldots, s_k \geq 1$.
\end{lemma}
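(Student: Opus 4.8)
We are to verify that the four inequalities \eqref{BMO.basic_1}--\eqref{BMO.basic_4}, which are classical on $\mathbb{R}^n$, persist on a space of homogeneous type $(X,d,\mu)$, where now every average is taken over a quasi-metric ball $B$ (equivalently, by the remark following Definition~\ref{weight.1}, over a dyadic cube) with respect to the normalized probability measure $d\mu/\mu(B)$. The plan is to reduce everything to three ingredients: the John--Nirenberg inequality on $(X,d,\mu)$, Young's inequality together with the Luxemburg definition of the Orlicz averages, and a weighted arithmetic--geometric mean inequality. First I would fix the relevant Young functions: $\Phi_{1/s}(t)=t\,(1+\log^{+}t)^{1/s}$, whose complementary function is comparable (with absolute constants) to $\Psi_{s}(t)=e^{t^{s}}-1$, so that $L(\log L)^{1/s}$ and $\exp L^{s}$ form a complementary pair and the two--factor generalized Hölder inequality
\[
\dashint_{B}|fg|\,d\mu\;\le\;2\,\|f\|_{L(\log L)^{1/s},B}\,\|g\|_{\exp L^{s},B}
\]
holds: it follows from $st\le \Phi_{1/s}(s)+\Psi_{s}(t)$ and the normalization of the Luxemburg norms, and uses nothing about $X$ beyond the fact that $d\mu/\mu(B)$ is a probability measure, so it transfers verbatim.

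The estimates \eqref{BMO.basic_2} and \eqref{BMO.basic_3} both reduce to the John--Nirenberg inequality, which is available on any space of homogeneous type: there are constants $c_{1},c_{2}>0$ depending only on the doubling constant $C_{\mu}$ such that for every ball $B$, every $b\in\mathrm{BMO}(X)$, and every $\lambda>0$,
\[
\mu\big(\{x\in B:\ |b(x)-b_{B}|>\lambda\}\big)\;\le\;c_{1}\,e^{-c_{2}\lambda/\|b\|_{\mathrm{BMO}(X)}}\,\mu(B).
\]
Integrating the distribution function of $|b-b_{B}|^{r}$ against this bound yields $\big(\dashint_{B}|b-b_{B}|^{r}\big)^{1/r}\lesssim_{r}\|b\|_{\mathrm{BMO}(X)}$, which is \eqref{BMO.basic_2}; and testing the Luxemburg norm of $|b-b_{B}|^{r}$ in $\exp L^{1/r}$ at the value $\lambda_{0}\simeq\|b\|_{\mathrm{BMO}(X)}^{r}$ amounts to bounding the very same exponential integral $\dashint_{B}\big(e^{(|b-b_{B}|^{r}/\lambda_{0})^{1/r}}-1\big)\,d\mu\le 1$, giving \eqref{BMO.basic_3}.

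For \eqref{BMO.basic_1}, the left inequality is immediate from $t\le\Phi_{1}(t)=t(1+\log^{+}t)$ together with the normalization of the $L(\log L)$ Luxemburg norm (so that $\dashint_{B}|f|\,d\mu\lesssim\|f\|_{L(\log L),B}$); for the right inequality I would use that $\Phi_{1}(t)\le C_{\varepsilon}\,t^{1+\varepsilon}$ for every $\varepsilon>0$, choose $\varepsilon=r$, and deduce $\|f\|_{L(\log L),B}\lesssim_{r}\|f\|_{L^{1+r},B}=\langle|f|^{1+r}\rangle_{B}^{1/(1+r)}$ by comparing the Luxemburg norms of the two comparable Young functions $\Phi_{1}$ and $t^{1+r}$. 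Finally \eqref{BMO.basic_4} follows by combining the two--factor inequality above with a submultiplicativity estimate for the $\exp L$ scale: from the weighted AM--GM inequality $\prod_{i=1}^{k}x_{i}\le\sum_{i=1}^{k}\tfrac{s}{s_{i}}\,x_{i}^{s_{i}/s}$ (legitimate since $\sum_{i}\tfrac1{s_{i}}=\tfrac1s$, hence $\sum_{i}\tfrac{s}{s_{i}}=1$) and convexity of the exponential one gets $e^{(\prod_{i}x_{i})^{s}}\le\sum_{i}\tfrac{s}{s_{i}}e^{x_{i}^{s_{i}}}$, whence $\Psi_{s}\big(\prod_{i}x_{i}\big)\le\sum_{i}\tfrac{s}{s_{i}}\Psi_{s_{i}}(x_{i})$ and therefore $\big\|\prod_{i}f_{i}\big\|_{\exp L^{s},B}\lesssim\prod_{i}\|f_{i}\|_{\exp L^{s_{i}},B}$; applying the complementary--pair Hölder inequality to $\prod_{i}f_{i}$ and $g$ then gives $\langle|f_{1}\cdots f_{k}g|\rangle_{B}\lesssim\prod_{i}\|f_{i}\|_{\exp L^{s_{i}},B}\,\|g\|_{L(\log L)^{1/s},B}$.

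The only genuine obstacle is the bookkeeping of constants: one must check that each normalized average, each comparability between complementary Young functions, and each comparability between Luxemburg norms contributes only an absolute factor (or one depending solely on $r$, $k$, and the $s_{i}$), so that the final constants are uniform in $B$, as the lemma asserts. Since the John--Nirenberg constants on $(X,d,\mu)$ depend only on $C_{\mu}$, and all the remaining manipulations are measure-theoretic identities valid for probability measures, the uniformity is automatic; no appeal to the quasi-triangle constant $A_{0}$ beyond what is already absorbed in $C_{\mu}$ is needed.
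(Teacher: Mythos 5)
Your argument is sound, but note that the paper itself gives no proof of this lemma: it is imported verbatim as \cite[Lemma 2.9]{Cao2018} with only the remark that the inequalities ``can be extended to space of homogeneous type.'' What you have written is essentially the standard Orlicz-space proof that the cited source relies on, and it correctly isolates the only two points where the underlying space enters: the John--Nirenberg inequality on $(X,d,\mu)$ (which depends only on the doubling constant $C_\mu$ and yields both \eqref{BMO.basic_2} and, by testing the Luxemburg norm at $\lambda_0\simeq\|b\|_{\mathrm{BMO}(X)}^r$, \eqref{BMO.basic_3}), and the fact that all remaining steps are statements about Young functions and the normalized probability measure $d\mu/\mu(Q)$, hence transfer verbatim; this is exactly the justification the paper leaves implicit, so your write-up supplies a self-contained verification rather than a genuinely different method. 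One small repair in \eqref{BMO.basic_4}: the intermediate inequality $\prod_i x_i\le\sum_i\frac{s}{s_i}x_i^{s_i/s}$, as you state it, does not pass to the exponential by convexity in the direction you need (the map $t\mapsto t^s$ is concave since $s\le 1$). Apply the weighted AM--GM directly to the $s$-th power instead: writing $(\prod_i x_i)^s=\prod_i\bigl(x_i^{s_i}\bigr)^{s/s_i}$ with weights $s/s_i$ summing to $1$ gives $(\prod_i x_i)^s\le\sum_i\frac{s}{s_i}x_i^{s_i}$, and then $e^{(\prod_i x_i)^s}\le\prod_i\bigl(e^{x_i^{s_i}}\bigr)^{s/s_i}\le\sum_i\frac{s}{s_i}e^{x_i^{s_i}}$, which yields $\Psi_s\bigl(\prod_i x_i\bigr)\le\sum_i\frac{s}{s_i}\Psi_{s_i}(x_i)$ and hence the submultiplicativity $\bigl\|\prod_i f_i\bigr\|_{\exp L^s,Q}\le\prod_i\|f_i\|_{\exp L^{s_i},Q}$ that you feed into the two-factor H\"older inequality. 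With that regrouping, and keeping track that your constants depend only on $r$, $k$, the $s_i$ and $C_\mu$ (as the lemma permits, since $C$ need only be independent of $b$ and $Q$), the proof is complete.
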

\begin{lemma}[\cite{Cao2018},Lemma 3.1]
  Let $s>1, t>0$, and $\omega \in A_{\infty}(X)$. Then there holds
  \begin{align}\label{lemma:Cao3.1_1}
  & \left\|\omega^{1 / s}\right\|_{L^s(\log L)^{s t}(X), Q} \lesssim[\omega]_{A_{\infty}(X)}^t\langle\omega\rangle_Q^{1 / s} \\ \label{lemma:Cao3.1_2}
  & \|f \omega\|_{L(\log L)^t(X), Q} \lesssim[\omega]_{A_{\infty}(X)}^t \inf _{x \in Q} M_\omega\left(|f|^s\right)(x)^{1 / s}\langle\omega\rangle_Q
  \end{align}
  \end{lemma}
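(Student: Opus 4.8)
The plan is to deduce both estimates from two standard ingredients, transferred to the space of homogeneous type because they rely only on the doubling property and on the dyadic structure recalled in Section \ref{Pre.}: (i) the sharp reverse Hölder inequality, which provides, for $\omega\in A_\infty(X)$, an exponent $r_\omega = 1 + \dfrac{c}{[\omega]_{A_\infty(X)}}$ with $c=c(C_\mu,A_0)$ such that $\langle\omega^{r_\omega}\rangle_Q^{1/r_\omega}\le 2\langle\omega\rangle_Q$ for every cube $Q$; and (ii) the elementary Orlicz bound $\|g\|_{L(\log L)^\alpha,Q}\lesssim_\alpha (r')^\alpha\langle|g|^r\rangle_Q^{1/r}$, valid for all $r>1$, together with the generalized Hölder inequality for Orlicz averages. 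First I would record these two facts and the Luxemburg-norm identity $\|\omega^{1/s}\|_{L^s(\log L)^{st},Q}\simeq_s\|\omega\|_{L(\log L)^{st},Q}^{1/s}$, obtained from the substitution $\lambda\mapsto\lambda^s$ in the defining functional of the Young function $t\mapsto t^s(\log(e+t))^{st}$ (using $\log(e+x^{1/s})\simeq_s\log(e+x)$).

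For \eqref{lemma:Cao3.1_1}, by the identity just mentioned it suffices to show $\|\omega\|_{L(\log L)^{st},Q}\lesssim[\omega]_{A_\infty(X)}^{st}\langle\omega\rangle_Q$. Applying (ii) with $\alpha=st$ and $r=r_\omega$, and noting $r_\omega'\simeq[\omega]_{A_\infty(X)}$, gives $\|\omega\|_{L(\log L)^{st},Q}\lesssim[\omega]_{A_\infty(X)}^{st}\langle\omega^{r_\omega}\rangle_Q^{1/r_\omega}$, and (i) bounds the last factor by $2\langle\omega\rangle_Q$. Raising to the power $1/s$ yields \eqref{lemma:Cao3.1_1}.

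For \eqref{lemma:Cao3.1_2}, I would factor $f\omega=(f\omega^{1/s})\cdot\omega^{1/s'}$ with $\tfrac1s+\tfrac1{s'}=1$, and take the Young functions $A(t)=t^s$ and $B(t)=t^{s'}(\log(e+t))^{s't}$; one checks $A^{-1}(t)B^{-1}(t)\simeq t(\log(e+t))^{-t}\simeq C^{-1}(t)$ for large $t$, with $C(t)=t(\log(e+t))^t$ generating $L(\log L)^t$, so the generalized Hölder inequality for Orlicz averages yields
\[
\|f\omega\|_{L(\log L)^t,Q}\lesssim\|f\omega^{1/s}\|_{L^s,Q}\,\|\omega^{1/s'}\|_{L^{s'}(\log L)^{s't},Q}.
\]
The first factor equals $\langle|f|^s\omega\rangle_Q^{1/s}=\langle\omega\rangle_Q^{1/s}\big(\tfrac1{\omega(Q)}\int_Q|f|^s\,d\omega\big)^{1/s}\le\langle\omega\rangle_Q^{1/s}\inf_{x\in Q}M_\omega(|f|^s)(x)^{1/s}$, since the $\omega$-average of $|f|^s$ over $Q$ is at most $M_\omega(|f|^s)(x)$ for every $x\in Q$; the second factor is controlled by \eqref{lemma:Cao3.1_1} with $s$ replaced by $s'$, giving $\|\omega^{1/s'}\|_{L^{s'}(\log L)^{s't},Q}\lesssim[\omega]_{A_\infty(X)}^t\langle\omega\rangle_Q^{1/s'}$. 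Multiplying the two bounds and using $\langle\omega\rangle_Q^{1/s}\langle\omega\rangle_Q^{1/s'}=\langle\omega\rangle_Q$ completes the proof.

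The hard part is not the chain of inequalities above, which is routine once the tools are in hand, but the bookkeeping: one must make sure the sharp reverse Hölder inequality and the Orlicz machinery (Luxemburg-norm identities, the generalized Hölder inequality, and the $L(\log L)^\alpha\hookrightarrow L^r$ inclusion with the sharp constant $(r')^\alpha$) are stated over $(X,d,\mu)$ with the \emph{explicit} dependence on $[\omega]_{A_\infty(X)}$; the proofs in \cite{Cao2018} for the Euclidean case are purely measure-theoretic and carry over, so this amounts to verifying each constant depends only on $C_\mu$, $A_0$, $s$, and $t$.
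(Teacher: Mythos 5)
Your argument is correct and is essentially the standard proof of the cited result: the paper itself gives no proof of this lemma (it simply quotes \cite{Cao2018}, Lemma 3.1, asserting it transfers to $(X,d,\mu)$), and the proof in that source runs along exactly your lines — sharp reverse H\"older exponent $r_\omega=1+c/[\omega]_{A_\infty}$, the local embedding $\|g\|_{L(\log L)^{\alpha},Q}\lesssim (r')^{\alpha}\langle |g|^{r}\rangle_Q^{1/r}$, the Luxemburg rescaling, and the generalized H\"older inequality for Orlicz averages. The only points needing care are the ones you already flag: in the homogeneous setting one should invoke the dyadic form of the sharp reverse H\"older inequality (the ball version in spaces of homogeneous type carries a dilation on the right-hand side), and the step $\langle |f|^{s}\omega\rangle_Q\le \langle\omega\rangle_Q\inf_{x\in Q}M_\omega(|f|^{s})(x)$ requires $M_\omega$ to be taken over a basis compatible with the dyadic cubes, which is the convention the paper uses elsewhere.
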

\begin{proof}[Proof of Theorem~\ref{Caopro_1}]
Following the approach in \cite{Cao2018}, proving our desired result is equivalent to
\begin{align*}
&\quad \left\|{\mathcal A}_{\eta ,\S,\tau,r}^\mathbf{b}(\vec f \sigma^{\frac{1}{r}})\right\|_{L^q\left(u\right)}\\
&\lesssim\left[u\right]_{A_{\infty}}^{|\tau|} \prod_{j \in \tau^{c}}\left[\sigma_j\right]_{A_{\infty}}[\vec{\omega}]_{A^{\star}_{\vec{p} / r,q/r}}^{\max _{1 \leq i \leq m}\left\{1, \frac{1}{q}\left(\frac{p_i}{r}\right)^{\prime}\right\}} \prod_{i=1}^m\left\|b_i\right\|_{\mathrm{BMO}} \left\|f_i\right\|_{L^{p_i}\left(\sigma_i\right)},
\end{align*}
where $\big(\vec{f} \sigma^{1 / r}\big):=\left(f_1 \sigma_1^{1 / r}, \ldots, f_m \sigma_m^{1 / r}\right)$.

Consider $q > 1$. Let $g \in L^{q'}(u)$ be a nonnegative function with $\|g\|_{L^{q'}( u)} = 1$, $\mathcal{D}$ a dyadic system in $(X, d, \mu)$, and $\mathcal{S}$ a sparse family from $\mathcal{D}$.
It follows from \eqref{BMO.basic_3} and \eqref{BMO.basic_4} that
\begin{align*}
& \quad\quad\int_{X} {\mathcal A}_{\eta ,\S,\tau,r}^\mathbf{b}(\vec{f} \sigma^{1 / r})(x) g(x) u d \mu \\
& \quad \lesssim \sum_{Q \in \mathcal{S}}\mu(Q)^{\an \cdot \frac{1}{r} +1}\Big\langle \prod_{i \in \tau}\langle f_i \sigma_i^{1 / r}\rangle_{Q, r}\left|b_i(x)-b_{i, Q}\right| g(x) u \Big\rangle_Q \\
& \quad \quad\times \prod_{j \in \tau^{c}}\left\|\left(b_j-b_{j, Q}\right)^r\right\|_{\exp L^{\frac{1}{r}}, Q}^{1 / r}\left\|f_j^r \sigma_j\right\|_{L(\log L)^r, Q}^{1 / r} \\
& \quad \lesssim \sum_{Q \in \mathcal{S}}\mu(Q)^{\an \cdot \frac{1}{r} + 1}\left\|g u\right\|_{L(\log L)^{|\tau|}, Q} \prod_{i \in \tau}\langle f_i \sigma^{1 / r}\rangle_{Q, r}\left\|b_i-b_{i, Q}\right\|_{\exp L, Q} \\
& \quad \quad\times \prod_{j \in \tau^c}\left\|\left(b_j-b_{j, Q}\right)^r\right\|_{\exp L^{\frac{1}{r}}, Q}^{1 / r}\left\|f_j^r \sigma_j\right\|_{L(\log L)^r, Q}^{1 / r} \\
& \quad \lesssim \sum_{Q \in \mathcal{S}}\mu(Q)^{\an \cdot \frac{1}{r}} \left(\prod_{i \in \tau}\langle f_i \sigma_i^{1 / r}\rangle_{Q, r} \prod_{j \in \tau^{c}}\left\|f_j^r \sigma_j\right\|_{L(\log L)^r, Q}^{1 / r}\right) \\
& \quad \quad\times\mu(Q)\left\|g u\right\|_{L(\log L)^{|\tau|}, Q} \prod_{i=1}^m\left\|b_i\right\|_{\mathrm{B M O}} .
\end{align*}
To proceed, setting $1<r s_j<p_j$ with $j \in \tau^{c}$, $1<s<p^{\prime}$, collecting the following estimates, 
\begin{align*}
  \langle f_i \sigma_i^{1 / r}\rangle_{Q, r}^r \leq \inf _{x \in Q} M_{\sigma_i}\left(f_i^r\right)(x)\left\langle\sigma_i\right\rangle_Q \leq\left\langle M_{\sigma_i}\left(f_i^r\right) \cdot \sigma_i\right\rangle_Q,
\end{align*}
\begin{align*}
  \left\|f_j^r \sigma_j\right\|_{L(\log L)^r, Q}^{1 / r} \overset{\text{(\ref{lemma:Cao3.1_1})}}{\lesssim}\left[\sigma_j\right]_{A_{\infty}}\left(\frac{1}{\mu(Q)} \int_Q M_{\sigma_j}\left(\left|f_j\right|^{r s_j}\right)(x)^{1 / s_j} \sigma_j d \mu\right)^{1 / r},
  \end{align*}
and 
$$
\mu(Q)\left\|g u\right\|_{L(\log L)^{|\tau|}, Q} \overset{\text{(\ref{lemma:Cao3.1_1})}}{\lesssim}\left[u\right]_{A_{\infty}}^{|\tau|} \inf _{x \in Q} M_{u}\left(|g|^s\right)(x)^{1 / s} u(Q),
$$ 
we obtain
\begin{align*}
  & \quad\int_{X} {\mathcal A}_{\eta ,\S,\tau,r}^\mathbf{b}(\vec{f} \sigma^{1 / r})(x) g(x) u d \mu \\
  & \lesssim[u]_{A_{\infty}}^{|\tau|} \prod_{j \in \tau^c}[\sigma_j]_{A_{\infty}} \prod_{i=1}^m\|b_i\|_{\mathrm{BMO}} \int_{X} \sum_{Q \in \mathcal{S}} \mu(Q)^{\an \cdot \frac{1}{r}} \prod_{i \in \tau}\big\langle M_{\sigma_i}\big(f_i^r\big) \cdot \sigma_i\big\rangle_Q^{1 / r} \\
  & \quad \times \prod_{j \in \tau^c}\big\langle M_{\sigma_j}\big(\big|f_j\big|^{r s_j}\big)^{1 / s_j} \cdot \sigma_j\big\rangle_Q ^{1 / r} \chi_Q(x) \cdot M_{u}\big(|g|^s\big)(x)^{1 / s} u d \mu \\
  & \lesssim[u]_{A_{\infty}}^{|\tau|} \prod_{j \in \tau^c}[\sigma_j]_{A_{\infty}} \prod_{i=1}^m\|b_i\|_{\mathrm{BMO}}\|M_{u}\big(|g|^s\big)^{1 / s}\|_{L^{q^{\prime}}(u)}\\
  &\quad \times\bigg\|\bigg(\sum_{Q \in \mathcal{S}} \mu(Q)^{\an \cdot \frac{1}{r}} \bigg(\prod_{i \in \tau}\big\langle M_{\sigma_i}\big(f_i^r\big) \cdot \sigma_i\big\rangle_Q \prod_{j \in \tau^{c}}\big\langle M_{\sigma_j}\big(\big|f_j\big|^{r s_j}\big)^{1 / s_j} \cdot \sigma_j\big\rangle_Q\bigg)^{1 / r} \chi_Q\bigg)^r\bigg\|_{L^{\frac{q}{r}}(u)}^{1 / r}\\
  & \overset{\text{$r \geq 1$}}{\lesssim}[u]_{A_{\infty}}^{|\tau|} \prod_{j \in \tau^c}[\sigma_j]_{A_{\infty}} \prod_{i=1}^m\|b_i\|_{\mathrm{BMO}} \left\||g|^s\right\|_{L^{p^{\prime} / s}(u)} \\
  &\quad \times\bigg\|\sum_{Q \in \mathcal{S}} \mu(Q)^{\an}\bigg(\prod_{i \in \tau}\big\langle M_{\sigma_i}\big(f_i^r\big) \cdot \sigma_i\big\rangle_Q \bigg)\bigg(\prod_{j \in \tau^{c}}\big\langle M_{\sigma_j}\big(\big|f_j\big|^{r s_j}\big)^{1 / s_j} \cdot \sigma_j\big\rangle_Q\bigg)\chi_Q\bigg\|_{L^{\frac{q}{r}}(u)}^{1 / r}
  \end{align*}

Since $\|g\|_{L^{q^{\prime}}\left(u\right)}=1$ and setting
\begin{align*}
  \mathcal{C}_0 = [u]_{A_{\infty}}^{|\tau|} \prod_{j \in \tau^c}[\sigma_j]_{A_{\infty}} \prod_{i=1}^m\|b_i\|_{\mathrm{BMO}}.
\end{align*}
Then, we demonstrate \eqref{cao.e0}.

It is derived from \eqref{eq1-thm1.1} with $\vec \omega \in A_{\vec p,q}^{\star}(X)$ that
\begin{align*}
& \quad  \int_{X} {\mathcal A}_{\eta ,\S,\tau,r}^\mathbf{b}(\vec{f} \sigma^{1 / r})(x) g(x) u d \mu \\
& \quad \lesssim[u]_{A_{\infty}}^{|\tau|} \prod_{j \in \tau^{c}}\left[\sigma_j\right]_{A_{\infty}}\left\||g|^s\right\|_{L^{p^{\prime} / s}\left(u\right)}^{1 / s}[\vec{\omega}]_{A^{\star}_{\vec{p} / r,q/r}}^{\max\limits_i \left\{1, \frac{r}{q}\left(\frac{p_i}{r}\right)^{\prime}\right\} \times \frac{1}{r}}\\
& \quad\quad \times \prod_{i=1}^m\left\|b_i\right\|_{\mathrm{BMO}} \prod_{i \in \tau}\left\|M_{\sigma_i}\left(f_i^r\right)\right\|_{L^{\frac{p_i}{r}}\left(\sigma_i\right)}^{1 / r} \prod_{j \in \tau^{c}}\left\|M_{\sigma_j}\left(\left|f_j\right|^{r s_j}\right)^{1 / s_j}\right\|_{L^{\frac{p_j}{r}}\left(\sigma_j\right)}^{1 / r} \\
& \quad \lesssim\left[u\right]_{A_{\infty}}^{|\tau|} \prod_{j \in \tau^c}\left[\sigma_j\right]_{A_{\infty}}[\vec{\omega}]_{A^{\star}_{\vec{p} / r,q/r}}^{\max\limits_i\left\{\frac{1}{r}, \frac{1}{q}\left(\frac{p_i}{r}\right)^{\prime}\right\}} \prod_{i=1}^m\left\|b_i\right\|_{\mathrm{BMO}}\left\|f_i\right\|_{L^{p_i}\left(\sigma_i\right)}.
\end{align*}

It remains to justify the case $0< q  \leq 1$. 
In light of \eqref{BMO.basic_3}, \eqref{BMO.basic_4}, and \eqref{lemma:Cao3.1_1}, we have
\begin{align*}
&\quad \left\|{\mathcal A}_{\eta ,\S,\tau,r}^\mathbf{b}(\vec{f} \sigma^{1 / r})\right\|_{L^q\left(u\right)}^q\\
&\leq \sum_{Q \in \mathcal{S}} \mu(Q)^{\an \cdot q/r} \left(\int_Q \prod_{i \in \tau}\langle f_i \sigma_i^{1 / r}\rangle_{Q, r}^q\left|b_i(x)-b_{i, Q}\right|^q u d \mu\right) \times \prod_{j \in \tau^{c}}\langle\left(b_j-b_{j, Q}\right) f_j \sigma_j^{1 / r}\rangle_{Q, r}^q\\
&\lesssim \sum_{Q \in \mathcal{S}}\mu(Q)^{\an \cdot q/r + 1}\left\|u\right\|_{L(\log L)^{q|\tau|}(X), Q} \prod_{i \in \tau}\left\langle f_i^r \sigma_i\right\rangle_Q^{q / r}\left\|\left(b_i-b_{i, Q}\right)^q\right\|_{\exp L^{\frac{1}{q}}(X), Q}\\
&\quad \times \prod_{j \in \tau^{c}}\left\|\left(b_j-b_{j, Q}\right)^r\right\|_{\exp L^{\frac{1}{r}}(X), Q}^{q/ r}\left\|f_j^r \sigma_j\right\|_{L(\log L)^r(X), Q}^{q/ r}\\
&\lesssim \sum_{Q \in \mathcal{S}}\left( \mu(Q)^{\an \cdot q/r} \prod _ { i \in \tau } \langle f_i^r \sigma_i\rangle_Q ^{q / r} \prod_{j \in \tau^{c}}\langle M_{\sigma_j}\left(\left|f_j\right|^{r s_j}\right)^{1 / s_j} \cdot \sigma_j\rangle_Q ^{q / r}\right) u(Q)\\
&\quad \times\left[u\right]_{A_{\infty}}^{q|\tau|} \prod_{j \in \tau^{c}}\left[\sigma_j\right]_{A_{\infty}}^q \prod_{i=1}^m\left\|b_i\right\|_{\mathrm{BMO}(X)}^q .
\end{align*}
utilizing the techniques in \cite[Theorem 1.2]{LMS}, one can get that
\begin{align*}
  &\big\|{\mathcal A}_{\eta ,\S,\tau,r}^\mathbf{b} (\vec{f} \sigma^{1 / r})\big\|_{L^q(u)}^q \\
  \lesssim & {[u]_{A_{\infty}}^{q|\tau|} \prod_{j \in \tau^c}[\sigma_j]_{A_{\infty}}^q[\vec{\omega}]_{A_{\vec{p} / r,q/r}}^{\max\limits_i  \big\{\big(\frac{p_i}{r}\big)^{\prime}\big\}} \prod_{i=1}^m\|b_i\|_{\mathrm{BMO}}^q } \\
  \times & \prod_{i \in \tau}\|f_i^r\|_{L^{\frac{p_i}{r}}(\sigma_i)}^{p_i / r} \prod_{j \in \tau^c}\big\|M_{\sigma_j}\big(\big|f_j\big|^{r s_j}\big)^{1 / s_j}\big\|_{L^{\frac{p_j}{r}}(\sigma_j)}^{p_j / r} \\
  \lesssim & [u]_{A_{\infty}}^{q|\tau|} \prod_{j \in \tau^c}[\sigma_j]_{A_{\infty}}^q[\vec{\omega}]_{A^{\star}_{\vec{p} / r,q/r}}^{\max\limits_i\big\{\big(\frac{p_i}{r}\big)^{\prime}\big\}} \prod_{i=1}^m\|b_i\|_{\mathrm{BMO}}^q \prod_{i=1}^m\|f_i\|_{L^{p_i}(\sigma_i)}^q.\qedhere
  \end{align*}  
\end{proof}
\subsection{Proof of Theorem \ref{est.new.}}
~~

  For simplicity, we set $\sigma_i = \omega_i^{1 - p'_i}$ and modify $f_j$ to $f_j \omega_j^{1 / p_j} \sigma_j^{-1 / p_j}$. If $0 < q \leq 1$, a similar technique applies as in the previous proof. Thus, we will focus on the case where $q > 1$. 

Let $g \in L^{q'}(\nu)$ be a nonnegative function with $\|g\|_{L^{q'}(\nu)} = 1$. Using equations \eqref{BMO.basic_3} and \eqref{BMO.basic_4}, it follows that
  \begin{align*}
      & \quad \quad \int_{X} {\mathcal A}_{\eta ,\S,\tau,r}^\mathbf{b}(\vec{f} \sigma)(x) g(x) \nu d x \\
      & \quad \lesssim \sum_{Q \in \mathcal{S}}\mu(Q)^{\an  +1}\left\langle \prod_{i \in \tau}\langle f_i \sigma_i\rangle_{Q}\left|b_i(x)-b_{i, Q}\right| g(x) \nu \right\rangle_Q  \times \prod_{j \in \tau^{c}}\left\langle\left(b_j-b_{j, Q}\right) f_j\right\rangle_{Q} \\
      & \quad \lesssim \sum_{Q \in \mathcal{S}}\mu(Q)^{\an  + 1}\left\|g \nu\right\|_{L(\log L)^{|\tau|}, Q} \prod_{i \in \tau}\langle f_i \sigma_i\rangle_{Q}\left\|b_i-b_{i, Q}\right\|_{\exp L, Q}  \times \prod_{j \in \tau^c}\left\langle\left(b_j-b_{j, Q}\right) f_j\right\rangle_{Q} \\
      & \quad \lesssim \sum_{Q \in \mathcal{S}}\mu(Q)^{\an } \prod_{i \in \tau}\langle f_i \sigma_i\rangle_{Q} \times\mu(Q)\left\|g \nu\right\|_{L(\log L)^{|\tau|}, Q} \prod_{i \in \tau}\left\|b_i\right\|_{\mathrm{BMO}} \times \prod_{j \in \tau^c}\left\langle\left(b_j-b_{j, Q}\right) f_j\right\rangle_{Q}
      \end{align*}
      
  Now we analyze the inner terms. Note that
  \begin{align*}
    \langle f_i \sigma_i\rangle_{Q} \leq \inf _{x \in Q} M_{\sigma_i}\left(f_i\right)(x)\left\langle\sigma_i\right\rangle_Q \leq\left\langle M_{\sigma_i}\left(f_i\right) \cdot \sigma_i\right\rangle_Q .
  \end{align*}
It follows from  \eqref{lemma:Cao3.1_1} that
  $$
  \mu(Q)\left\|g \nu\right\|_{L(\log L)^{|\tau|}, Q} \lesssim\left[\nu\right]_{A_{\infty}}^{|\tau|} \inf _{x \in Q} M_{\nu}\left(|g|^s\right)(x)^{1 / s} \nu(Q).
  $$
  where $1<s<p^{\prime}$. 
  
  Additionally, we claim that
  \begin{align}\label{9.16}
      &\prod_{j \in \tau^c}\langle (b_j - b_{j,Q})|f_j\sigma_j| \rangle_{Q} 
     \leq C_{n,r} \prod_{j \in \tau^c} \|b_j\|_{BMO_{\nu_j}}  \left\langle A_{\tilde{\mathcal{S}}_j}\left(|f_j\sigma_j|\right) {\nu_j}\right\rangle_Q.
  \end{align}
  Collecting these estimates, we obtain
  \begin{align*}
    & \quad \int_{X} {\mathcal A}_{\eta ,\S,\tau,r}^\mathbf{b}(\vec{f} \sigma)(x) g(x) \nu d x \\
    & \lesssim[\nu]_{A_{\infty}}^{|\tau|}  \prod_{i \in \tau}\|b_i\|_{\mathrm{BMO}} \prod_{j \in \tau^c}\|b_j\|_{\mathrm{BMO}_{\nu_j}}\int_{X} \sum_{Q \in \mathcal{S}} \mu(Q)^{\an } \prod_{i \in \tau}\big\langle M_{\sigma_i}\big(f_i\big) \cdot \sigma_i\big\rangle_Q \\
    & \times \prod_{j \in \tau^c}\big\langle A_{\tilde{\mathcal{S}}_j}\big(|f_j\sigma_j|\big) {\nu_j}\big\rangle_Q \cdot M_{\nu}\big(|g|^s\big)(x)^{1 / s} \nu d x \\
    & \lesssim[\nu]_{A_{\infty}}^{|\tau|}  \prod_{i \in \tau}\|b_i\|_{\mathrm{BMO}} \prod_{j \in \tau^c}\|b_j\|_{\mathrm{BMO}_{\nu_j}}\|M_{\nu}\big(|g|^s\big)^{1 / s}\|_{L^{q^{\prime}}(\nu)}\\
    & \times \left\|\sum_{Q \in \mathcal{S}} \mu(Q)^{\an } \big(\prod_{i \in \tau}\big\langle M_{\sigma_i}\big(f_i\big) \cdot \sigma_i\big\rangle_Q \prod_{j \in \tau^c}\big\langle A_{\tilde{\mathcal{S}}_j}\big(|f_j\sigma_j|\big) {\nu_j}\big\rangle_Q\big) \chi_Q\right\|_{L^{{q}}(\nu)}\\
    & \lesssim[\nu]_{A_{\infty}}^{|\tau|}  \prod_{i \in \tau}\|b_i\|_{\mathrm{BMO}} \prod_{j \in \tau^c}\|b_j\|_{\mathrm{BMO}_{\nu_j}}\left\||g|^s\right\|_{L^{p^{\prime} / s}\left(\nu\right)}^{1 / s}\\
    & \times \left\|\sum_{Q \in \mathcal{S}} \mu(Q)^{\an } \big(\prod_{i \in \tau}\big\langle M_{\sigma_i}\big(f_i\big) \cdot \sigma_i\big\rangle_Q \prod_{j \in \tau^c}\big\langle A_{\tilde{\mathcal{S}}_j}\big(|f_j\sigma_j|\big) {\nu_j}\big\rangle_Q\big) \chi_Q\right\|_{L^{{q}}(\nu)}\\
    \intertext{Since $\|g\|_{L^{q^{\prime}}\left(\nu\right)}=1$, }
    & \lesssim[\nu]_{A_{\infty}}^{|\tau|}  \prod_{i \in \tau}\|b_i\|_{\mathrm{BMO}} \prod_{j \in \tau^c}\|b_j\|_{\mathrm{BMO}_{\nu_j}} \times \|\mathcal{A}_{\eta,\tilde{\mathcal{S}}}(\vec {\bf A})\|_{L^{q}(\nu)}.\\
    \end{align*}
where $\vec {\bf A} = \big(\big(\big\langle M_{\sigma_i}\big(f_i\big) \cdot \sigma_i\big\rangle_Q\big)_{i \in \tau} ,\big(\big\langle A_{\tilde{\mathcal{S}}_j}\big(|f_j\sigma_j|\big) {\nu_j}\big\rangle_Q\big)_{j \in \tau^c}\big)$.


We now prove the key part of Remark~\ref{mix.remark}. Using Theorems~\ref{Horm.pro}--\ref{Thm:3}, we derive quantitative estimates for the boundedness of \(\|\mathcal{A}_{\eta,\tilde{\mathcal{S}}}(\vec{\bf A})\|_{L^{q}(\nu)}\), leading to
\begin{align*}
\|\mathcal{A}_{\eta,\tilde{\mathcal{S}}}(\vec{\bf A})\|_{L^{q}(\nu)}
&\lesssim \left\|\mathcal{A}_{\eta,\tilde{\mathcal{S}}}\right\|_{\prod_{i=1}^{m} L^{p_i}(X,\varpi_i) \to L^{q}(X,\nu)}
    \prod_{i \in \tau} \left\|M_{\sigma_i}(f_i)\right\|_{L^{p_i}(\sigma_i)} \prod_{j \in \tau^{c}} \left\|A_{\tilde{\mathcal{S}}_j}(|f_j \sigma_j|)\right\|_{L^{p_j}(\lambda_j \nu_j^{p_j})} \\
&\intertext{Since \(\nu_j = \lambda_j^{-\frac{1}{p_j}} \omega_j^{\frac{1}{p_j}}\) for \(j \in \tau^{c}\),}
&\lesssim \left\|\mathcal{A}_{\eta,\tilde{\mathcal{S}}}\right\|_{\prod_{i=1}^{m} L^{p_i}(X,\varpi_i) \to L^{q}(X,\nu)}
    \prod_{i \in \tau} \left\|M_{\sigma_i}(f_i)\right\|_{L^{p_i}(\sigma_i)} \prod_{j \in \tau^{c}} \left\|A_{\tilde{\mathcal{S}}_j}(|f_j \sigma_j|)\right\|_{L^{p_j}(\omega_j)} \\
&\lesssim \left\|\mathcal{A}_{\eta,\tilde{\mathcal{S}}}\right\|_{\prod_{i=1}^{m} L^{p_i}(X,\varpi_i) \to L^{q}(X,\nu)} \prod_{j \in \tau^{c}} \left[\omega_j\right]_{A_{p_j}}^{\max\left\{1, \frac{p_j'}{p_j}\right\}} \prod_{i=1}^{m} \|f_i\|_{L^{p_i}(\sigma_i)}.
\end{align*}

Setting
\begin{align*}
\mathcal{N}_{0} = [\nu]_{A_{\infty}}^{|\tau|} \prod_{i \in \tau} \|b_i\|_{\mathrm{BMO}} \prod_{j \in \tau^{c}} \|b_j\|_{\mathrm{BMO}_{\nu_j}} \prod_{j \in \tau^{c}} \left[\omega_j\right]_{A_{p_j}}^{\max\left\{1, \frac{p_j'}{p_j}\right\}},
\end{align*}
we obtain equation~\eqref{mix2,0}.
Combining Theorems~\ref{Horm.pro}--\ref{Thm:3}, we obtain the control constants \(\mathcal{N}_s\) for \(s = 1, 2, 3, 4\) in Remark~\ref{mix.remark}.

  Finally, we prove \eqref{9.16} as follows. 
  
Lemma \ref{zhang:6.1} implies that there exists a sparse family $\tilde{\mathcal{S}}_k \supseteq \mathcal{S}_k$ such that for all $Q \in \tilde{\mathcal{S}}_k$, we obtain
  $$
  \begin{aligned}
  \int_Q|b_j\left(x_j\right)-b_{j,Q} \| f_j\left(x_j\right)| & \lesssim  \sum_{R \in \mathcal{\mathcal { S }}_j, R \subseteq Q}\left|b_j-b_{j,R}\right|_R \int_R\left|f_j\left(x_j\right)\right| \\
  &=  \sum_{R \in \mathcal{\mathcal { S }}_j, R \subseteq Q} \frac{1}{{\nu_j}(R)} \int_{R} \left|b_j-b_{j,R}\right| dy \cdot \left|f_j\right|_R {\nu_j}(R)
  \\
  & \lesssim \left\|b_j\right\|_{\mathrm{BMO}_{\nu_j}} \sum_{R \in \tilde{\mathcal{S}}_j, R \subseteq Q}\left|f_j\right|_R {\nu_j}(R) \\
  & \lesssim \left\|b_j\right\|_{\mathrm{BMO}_{\nu_j}} \int_Q A_{\tilde{\mathcal{S}}_j}\left(\left|f_j\right|\right)\left(x_j\right) {\nu_j}\left(x_j\right).
  \end{aligned}
  $$

\subsection{Proof of Theorem \ref{zhang2018:th1.7} and  \ref{path2}}
~~


By duality, we begin by observing that
\begin{align*}
    &\quad \|{\mathcal A}_{\eta ,\S,\tau}^\mathbf{b,k,t} (\vec f)\|_{L^{q}(\lambda)}\\
&\leq\sup_{\|g\|_{L^{q'}(\lambda)}=1}\sum_{Q\in\mathcal{S}}\mu(Q)^{\an}\left(\int_{Q}|g\lambda|\prod_{i \in \tau}|b_i-b_{i,Q}|^{k_i-t_i}\right)\prod_{i \in \tau}\langle\left|f_i (b_i - b_{i,Q})^{t_i}\right|\rangle_{Q} \prod_{j \in \tau^c}\langle\left|f_j\right|\rangle_{Q}.\label{eq:DualAmk}
\end{align*}

By Lemma \ref{zhang:6.1}, there exists a sparse family $\tilde{\mathcal{S}}\subseteq {\mathcal D}$
containing $\mathcal{S}$ and such that if $Q\in\tilde{\mathcal{S}}$,
then for a.e. $x\in Q$,
\[
|b_i(x)-b_{i,Q}|\leq C\sum_{P\in\tilde{\mathcal{S}},\ P\subseteq Q} \langle|b_i-b_{j,P}|\rangle_{P} \chi_{P}(x).
\]
From this, assuming that $b_i \in {\rm BMO}_{\eta_i}$, where $\eta_i$ is a weight to be chosen later, we obtain
\[
|b_i(x)-b_{i,Q}|\leq C\|b_i\|_{{\rm BMO}_{\eta_i}(X)}\sum_{P\in\tilde{\mathcal{S}},\ P\subseteq Q}\eta_{i,P}\chi_{P}(x).
\]

In the following, we employ two methods to estimate the bloom type of ${\mathcal A}_{\eta ,\S,\tau}^\mathbf{b,k,t}$.

Before that, we claim that
\begin{align}\label{xxing}
  \int_Q|h|\left(\sum_{P \in \tilde{\mathcal{S}}, P \subseteq Q} \eta_{P} \chi_P\right)^l  \lesssim \int_Q A_{\tilde{\mathcal{S}},\eta}^l (|h|).
  \end{align}
  where $A^l_{\mathcal{\tilde{S}},{\eta}}$ denotes the operator $A_{\mathcal{\tilde{S}},{\eta}}$ iterated $l$ times.

  In fact, we observe that since the cubes in $\tilde{\mathcal S}$ are dyadic, for every $l \in \mathbb{N}$,
  \begin{eqnarray*}
  \left(\sum_{P\in\tilde{\mathcal{S}},\ P\subseteq Q}\eta_{j,P}\chi_{P}\right)^{l}&=&\sum_{P_1,P_2,\dots,P_l\subseteq Q,\, P_i\in {\tilde{\mathcal S}}}\eta_{j,P_1}\eta_{j,P_2}\dots \eta_{j,P_l}\chi_{P_1\cap P_2\cap\dots\cap P_l}\\
  &\le& l!\sum_{P_l\subseteq P_{l-1}\subseteq\dots\subseteq P_1\subseteq Q,\, P_i\in {\tilde{\mathcal S}}}\eta_{j,P_1}\eta_{j,P_2}\dots \eta_{j,P_l}\chi_{P_l}.
  \end{eqnarray*}
  Therefore,
  \[
  \int_{Q}|h|\Big(\sum_{P\in\tilde{\mathcal{S}},\ P\subseteq Q}\eta_{j,P}\chi_{P}\Big)^{l}\leq l!\sum_{P_{l}\subseteq P_{l-1}\subseteq\dots\subseteq P_{1}\subseteq Q,\,P_{i}\in\tilde{\mathcal{S}}}\eta_{j,P_{1}}\eta_{j,P_{2}}\dots\eta_{j,P_{l}}\langle|h|\rangle_{P_{l}}\mu(P_{l}).
  \]
  Further, for the first term of \eqref{eq:Int}, we obtain
  \[
  \begin{split} & \sum_{P_{l}\subseteq P_{l-1}\subseteq\dots\subseteq P_{1}\subseteq Q,\,P_{i}\in\tilde{\mathcal{S}}}\eta_{j,P_{1}}\eta_{j,P_{2}}\dots\eta_{j,P_{l}}\langle|h|\rangle_{P_{l}}\mu(P_{l})\\
   & =\sum_{P_{l-1}\subseteq\dots\subseteq P_{1}\subseteq Q,\,P_{i}\in\tilde{\mathcal{S}}}\eta_{j,P_{1}}\eta_{j,P_{2}}\dots\eta_{j,P_{l-1}}\sum_{P_{l}\subseteq P_{l-1},P_{l}\in\tilde{\mathcal{S}}}\langle|h|\rangle_{P_{l}}\int_{P_{l}}\eta_j.\\
   & \leq\sum_{P_{l-1}\subseteq\dots\subseteq P_{1}\subseteq Q,\,P_{i}\in\tilde{\mathcal{S}}}\eta_{j,P_{1}}\eta_{L_{2}}\dots\eta_{j,P_{l-1}}\int_{P_{l-1}}A_{\tilde{\mathcal{S}}}(|h|)\eta_j.\\
   & =\sum_{P_{l-1}\subseteq\dots\subseteq P_{1}\subseteq Q,\,P_{i}\in\tilde{\mathcal{S}}}\eta_{j,P_{1}}\eta_{L_{2}}\dots\eta_{j,P_{l-1}}\left\langle A_{\tilde{\mathcal{S}}}(|h|)\eta_j \right\rangle_{P_{l-1}}\mu(P_{l-1}),
  \end{split}
  \]
  where $A_{\mathcal{\tilde{S}},{\eta_j}}h=A_{\tilde{\mathcal{S}}}(h)\eta_j$ and $A_{\tilde{\mathcal{S}}}(h)=\sum\limits_{Q\in \tilde{\mathcal{S}}}h_Q\chi_Q$.
  
  Iterating this argument, we get \eqref{xxing}.

\subsection*{Maximal weight method}
~~

Without loss of generality, we consider the case for $m=3$, which means $3$-linear, and set $\tau_{m} = \{1, 2, 3\}$ and $\tau = \{1, 2\}$. We assume that $\eta_0:=\max\limits_{i=1,2}\{\eta_1,\eta_2\}$. Therefore,
  \begin{align}\notag
    & \quad \sum_{Q\in\mathcal{S}}\mu(Q)^{\an}\left(\int_{Q}|g\lambda|\prod_{i=1}^{2}|b_i-b_{i,Q}|^{k_i-t_i}\right)\prod_{i=1}^{2}\langle\left|f_i (b_i - b_{i,Q})^{t_i}\right|\rangle_{Q} \langle\left|f_3\right|\rangle_{Q}\\ \notag
    & \overset{\eqref{xxing}}{\lesssim} \prod_{i=1}^{2} \|b_i\|_{{\rm BMO}_{\eta_0}(X)}^{k_i-t_i} \|b_i\|_{{\rm BMO}_{\eta_i}(X)}^{t_i} \sum_{Q\in\mathcal{\tilde{S}}}\mu(Q)^{\an}\Big(\int_{Q}|g\lambda|\Big(\sum_{P\in\tilde{\mathcal{S}},\ P\subseteq Q}\eta_{0,P}\chi_{P}\Big)^{k_1+k_2-t_1-t_2}\Big)\\ \label{eq:Int}
    &\quad  \times\prod_{i=1}^{2}\Big(\frac{1}{\mu(Q)}\int_{Q}\Big(\sum_{P\in\tilde{\mathcal{S}},\ P\subseteq Q}\eta_{i,P}\chi_{P}\Big)^{t_i}|f_i|\Big) \langle\left|f_3\right|\rangle_{Q}.
    \end{align}
    
    From this we obtain that the right-hand side of \eqref{eq:Int} is controlled by
    \begin{align}\notag
    &\quad \prod_{i=1}^{2} \|b_i\|_{{\rm BMO}_{\eta_0}(X)}^{k_i-t_i} \|b_i\|_{{\rm BMO}_{\eta_i}(X)}^{t_i}\sum_{Q\in \mathcal{\tilde{S}}} \mu(Q)^{\an} \left(\frac{1}{\mu(Q)}\int_{Q}
    A^{k_1+k_2-t_1-t_2}_{\tilde{\mathcal{S}},\eta_0}(|g|\lambda)\right) \\ \notag
    &\quad \quad \times \left(\frac{1}{\mu(Q)}\int_{Q}A_{\tilde{\mathcal{S}},\eta_1}^{t_1}(|f_j|)\right) \left(\frac{1}{\mu(Q)}\int_{Q}A_{\tilde{\mathcal{S}},\eta_2}^{t_2}(|f_j|)\right) \mu(Q)\cdot \langle\left|f_3\right|\rangle_{Q}\\ \notag
    &=\prod_{i=1}^{2} \|b_i\|_{{\rm BMO}_{\eta_0}(X)}^{k_i-t_i} \|b_i\|_{{\rm BMO}_{\eta_i}(X)}^{t_i}\int_{X}\mathcal{A}_{\eta,\tilde{\mathcal{S}}}\left(A_{\tilde{\mathcal{S}},\eta_1}^{t_1}(|f_1|),A_{\tilde{\mathcal{S}},\eta_2}^{t_2}(|f_2|),f_3 \right)A^{k_1+k_2-t_1-t_2}_{\tilde{\mathcal{S}},\eta_0}(|g|\lambda)\\ \label{eq:DualAmk_}
    &=:\prod_{i=1}^{2} \|b_i\|_{{\rm BMO}_{\eta_0}(X)}^{k_i-t_i} \|b_i\|_{{\rm BMO}_{\eta_i}(X)}^{t_i}\int_{X}\mathcal{A}_{\eta,\tilde{\mathcal{S}}}(\vec {\bf A})A^{k_1+k_2-t_1-t_2}_{\tilde{\mathcal{S}},\eta_0}(|g|\lambda),
    \end{align}
    where $\vec {\bf A} = \left(A_{\tilde{\mathcal{S}},\eta_1}^{t_1}(|f_1|),A_{\tilde{\mathcal{S}},\eta_2}^{t_2}(|f_2|),f_3 \right)$.
    
    Using that the operator $A_{\tilde{\mathcal{S}}}$ is self-adjoint, we proceed as follows
    \begin{align*} 
        &\quad \int_{X}\mathcal{A}_{\eta,\tilde{\mathcal{S}}}(\vec {\bf A})A^{k_1+k_2-t_1-t_2}_{\tilde{\mathcal{S}},\eta_0}(|g|\lambda) =\int_{X}\mathcal{A}_{\eta,\tilde{\mathcal{S}}}(\vec {\bf A} )A_{\tilde{\mathcal{S}}}\left(A^{k_1+k_2-t_1-t_2-1}_{\tilde{\mathcal{S}},\eta_0}(|g|\lambda)\right)\\
        &=\int_{X} A_{\tilde{\mathcal{S}}}\left(A_{\tilde{\mathcal{S}},\eta_0}\left(\mathcal{A}_{\eta,\tilde{\mathcal{S}},\eta_0}(\vec {\bf A})\right)\right) \eta_0 \cdot A^{k_1+k_2-t_1-t_2-2}_{\tilde{\mathcal{S}},\eta_0}(|g|\lambda)\\
        &=\int_{X} A_{\tilde{\mathcal{S}}}\left(A^{k_1+k_2-t_1-t_2-1}_{\tilde{\mathcal{S}},\eta_0}\left(\mathcal{A}_{\eta,\tilde{\mathcal{S}},\eta_0}(\vec {\bf A})\right)\right) (|g|\lambda).
    \end{align*}
    Combining the obtained estimates with \eqref{eq:DualAmk_} yields
    \begin{equation}\label{AAm}
    \big\|{\mathcal A}_{\eta ,\S,\tau}^\mathbf{b,k,t}(\vec f) \big\|_{L^{q}(\lambda)}\lesssim\prod_{i=1}^{2} \|b_i\|_{\mathrm{BMO}_{\eta_0}}^{k_i-t_i} \|b_i\|_{\mathrm{BMO}_{\eta_i}}^{t_i}\left\|A_{\tilde{\mathcal{S}}}\left(A^{k_1+k_2-t_1-t_2-1}_{\tilde{\mathcal{S}},\eta_0}\left(\mathcal{A}_{\eta,\tilde{\mathcal{S}},\eta_0}(\vec {\bf A})\right)\right)\right\|_{L^{q}(\lambda)}.
    \end{equation}
    Setting $t = k_1+k_2-t_1-t_2-1$. Applying Theorem \ref{Horm.pro}, we obtain
\begin{align*}
    & \quad \left\|A_{\tilde{\mathcal{S}}}\left(A^{t}_{\tilde{\mathcal{S}},\eta_0}\left(\mathcal{A}_{\eta,\tilde{\mathcal{S}},\eta_0}(\vec {\bf A})\right)\right)\right\|_{L^{q}(\lambda)}\\
    & \lesssim [\lambda]_{A_{q}}^{\max\left\{ 1,\frac{1}{q-1}\right\}} \left\|A^{t}_{\tilde{\mathcal{S}},\eta_0}\left(\mathcal{A}_{\eta,\tilde{\mathcal{S}},\eta_0}(\vec {\bf A})\right)\right\|_{L^{q}(\lambda)}\\
    &= [\lambda]_{A_{q}}^{\max\left\{ 1,\frac{1}{q-1}\right\}} \left\|A^{t}_{\tilde{\mathcal{S}}}\left(\mathcal{A}_{\eta,\tilde{\mathcal{S}},\eta_0}(\vec {\bf A})\right)\right\|_{L^{q}(\lambda\eta_0^q)}\\
    & \lesssim \big([\lambda]_{A_{q}}[\lambda \eta_0^q]_{A_{q}}[\lambda \eta_0^{2q}]_{A_{q}}\dots [\lambda \eta_0^{tq}]_{A_{q}}\big)^{\max\left\{ 1,\frac{1}{q-1}\right\}}\left\|\mathcal{A}_{\eta,\tilde{\mathcal{S}},\eta_0}(\vec {\bf A})\right\|_{L^q(\lambda\eta_0^{tq})}\\
   & = \big([\lambda]_{A_{q}}[\lambda \eta_0^q]_{A_{q}}[\lambda \eta_0^{2q}]_{A_{q}}\dots [\lambda \eta_0^{tq}]_{A_{q}}\big)^{\max\left\{ 1,\frac{1}{q-1}\right\}}\left\|\mathcal{A}_{\eta,\tilde{\mathcal{S}}}(\vec {\bf A})\right\|_{L^q(\lambda\eta_0^{q(t+1)})}
    \end{align*}
    Hence, setting $\eta_0=(\mu_0/\lambda)^{1/q(t+1)}$ and applying (\ref{AAm}), we obtain
    \begin{align}\notag
    &\quad\big\|{\mathcal A}_{\eta ,\S,\tau}^\mathbf{b,k,t}(\vec f) \big\|_{L^{q}(\lambda)}\\ \label{zsabove}
    &\lesssim\prod_{i=1}^{2} \|b_i\|_{{\rm BMO}_{\eta_0}}^{k_i-t_i} \|b_i\|_{{\rm BMO}_{\eta_i}}^{t_i}
    \left([\lambda]_{A_{q}}[\mu_0]_{A_{q}}\prod_{i=1}^{t-1}[\lambda^{1-\frac{i}{t}}\mu_0^{\frac{i}{t}}]_{A_{q}}\right)^{\max\left\{ 1,\frac{1}{q-1}\right\}}\|\mathcal{A}_{\eta,\tilde{\mathcal{S}}}(\vec {\bf A})\|_{L^q(\mu_0)}.
    \end{align}
    
    By H\"older's inequality,
    $$
    \prod_{i=1}^{t-1}[\lambda^{1-\frac{i}{t}}\mu_0^{\frac{i}{t}}]_{A_{q}}\le \prod_{i=1}^{t-1}[\lambda]_{A_q}^{1-\frac{i}{t}}[\mu_0]_{A_q}^{\frac{i}{t}}=([\lambda]_{A_q}[\mu_0]_{A_q})^{\frac{t-1}{2}}.
    $$
This, along with the previous estimate, yields
    \begin{align*}
        \big\|{\mathcal A}_{\eta ,\S,\tau}^\mathbf{b,k,t}(\vec f)\big\|_{L^{q}(\lambda)}&\lesssim\prod_{i=1}^{2} \|b_i\|_{\mathrm{BMO}_{\eta_0}}^{k_i-t_i} \|b_i\|_{\mathrm{BMO}_{\eta_i}}^{t_i}
    \left([\lambda]_{A_{q}}[\mu_0]_{A_{q}}\right)^{\frac{t+1}{2}\max\left\{ 1,\frac{1}{q-1}\right\} }\|\mathcal{A}_{\eta,\tilde{\mathcal{S}}}(\vec {\bf A})\|_{L^{q}(\mu_0)}.
    \end{align*}

It follows that
\begin{align*}
\big\|{\mathcal A}_{\eta ,\S,\tau}^\mathbf{b,k,t}(\vec f)\big\|_{L^{q}(\lambda)} &\lesssim \prod_{i=1}^{2} \|b_i\|_{\mathrm{BMO}_{\eta_0}}^{k_i - t_i} \|b_i\|_{\mathrm{BMO}_{\eta_i}}^{t_i}
\left([\lambda]_{A_{q}} [\mu_0]_{A_{q}}\right)^{\frac{t + 1}{2} \max\left\{ 1, \frac{1}{q - 1} \right\} } \|\mathcal{A}_{\eta,\tilde{\mathcal{S}}}({\vec{\bf A}})\|_{L^{q}(\mu_0)} \\
&\lesssim \prod_{i=1}^{2} \|b_i\|_{\mathrm{BMO}_{\eta_0}}^{k_i - t_i} \|b_i\|_{\mathrm{BMO}_{\eta_i}}^{t_i}
\left([\lambda]_{A_{q}} [\mu_0]_{A_{q}}\right)^{\frac{t + 1}{2} \max\left\{ 1, \frac{1}{q - 1} \right\} } \\
&\times \left\|\mathcal{A}_{\eta,\tilde{\mathcal{S}}}\right\|_{\prod\limits_{i=1}^{m} L^{p_i}(X, \varpi_i) \rightarrow L^{q}(X, \mu_0)}
\prod_{i=1}^{2} \left\| A_{\tilde{\mathcal{S}}, \eta_i}^{t_i}(|f_i|) \right\|_{L^{p_i}(v_i)} \|f_3\|_{L^{p_3}(\mu_3)}.
\end{align*}

We now focus on obtaining quantitative estimates for the right side of the above inequality.

Repeat the same steps to estimate the last term in \eqref{zsabove} for $i = 1, 2$, yielding
\begin{align*}
    \left\|A_{\tilde{\mathcal{S}},\eta_i}^{t_i}(|f_i|)\right\|_{L^{p_i}(v_i)} & =  \left\|A_{\tilde{\mathcal{S}}}\left(A_{\tilde{\mathcal{S}},\eta_i}^{t_i - 1}(|f_i|)\right)\right\|_{L^{p_i}(v_i\eta_i^{p_i})}\\
    & \lesssim \left(\left[v_i \eta_i^{p_i}\right]_{A_{p_i}}\left[v_i \eta_i^{2 p_i}\right]_{A_{p_i}} \ldots\left[v_i \eta_i^{t_i p_i}\right]_{A_{p_i}}\right)^{\max \left\{1, \frac{1}{p_i-1}\right\}}\|f_i\|_{L^{p_i}(v_i\eta_i^{t_ip_i})}\\
    \intertext{Setting $\eta_i=(\mu_i/v_i)^{1/t_ip_i}$ and applying (\ref{AAm}),}
    &\lesssim \left([\mu_i]_{A_{p_i}} \prod_{k=1}^{t_i-1}\left[v_i^{1-\frac{k}{t_i}} \mu_i^{\frac{k}{t_i}}\right]_{A_{p_i}}\right)^{\max \left\{1, \frac{1}{p_i-1}\right\}}\|f_i\|_{L^{p_i}(\mu_i)}\\
    &\overset{\text{H\"older}}{\lesssim} \left([\mu_i]^{\frac{t_i+1}{2}}_{A_{p_i}} \left[v_i\right]^{\frac{t_i-1}{2}}_{A_{p_i}}\right)^{\max \left\{1, \frac{1}{p_i-1}\right\}}\|f_i\|_{L^{p_i}(\mu_i)},
\end{align*}
where $v_i,\mu_i \in A_{p_i}$, $i \in 1,2$.


Finally, it follows that \eqref{max.weight_}, by setting
\begin{align*}
  \mathcal{W}_{0}
  &= \prod_{i \in \tau} \left([\mu_i]^{\frac{t_i+1}{2}}_{A_{p_i}} \, [v_i]^{\frac{t_i-1}{2}}_{A_{p_i}}\right)^{\max \left\{1, \frac{1}{p_i-1}\right\}} \left([\lambda]_{A_q} [\mu_0]_{A_q}\right)^{\frac{t+1}{2} \max\left\{1, \frac{1}{q-1}\right\}} \\
  &\quad \times \prod_{i \in \tau} \|b_i\|_{\mathrm{BMO}_{\eta_0}}^{k_i - t_i} \|b_i\|_{\mathrm{BMO}_{\eta_i}}^{t_i}.
\end{align*}

Applying Theorems~\ref{Horm.pro}--\ref{Thm:3}, we can derive the boundedness constants $\mathcal{W}_s$ for $s = 1, 2, 3, 4$ as noted in Remark \ref{remarkb1}.

 \subsection*{Iterated weight method}
~~

Without loss generality, we consider the case for $m=4$, which means $4$-linear, and set $\tau = \{\tau(1),\tau(2),\tau(3)\} \subseteq \tau_4$ and $\tau^c = \{\tau(4)\}$.
It follows from \eqref{xxing} that
\begin{align*}
    &\quad \sum_{Q\in\mathcal{S}}\mu(Q)^{\an}\left(\int_{Q}|g\lambda|\prod_{i \in \tau}|b_i-b_{i,Q}|^{k_i-t_i}\right)\prod_{i \in \tau}\langle\left|f_i (b_i - b_{i,Q})^{t_i}\right|\rangle_{Q} \prod_{j \in \tau^c}\langle\left|f_j\right|\rangle_{Q}\\
    &\lesssim \prod_{i \in \tau}\|b_i\|^{k_i-t_i}_{\mathrm{BMO}_{\eta_i}} \sum_{Q\in\mathcal{S}}\mu(Q)^{\an}\left(\int_{Q}|g\lambda|\prod_{i \in \tau}\left(\sum_{P\in\tilde{\mathcal{S}},\ P\subseteq Q}\eta_{i,P}\chi_{P}(x)\right)^{k_i-t_i}\right)\\
    & \quad\quad\quad\quad\quad\quad\quad\quad\quad\times\prod_{i \in \tau}\Big(\frac{1}{\mu(Q)}\int_{Q}\Big(\sum_{P\in\tilde{\mathcal{S}},\ P\subseteq Q}\eta_{i,P}\chi_{P}\Big)^{t_i}|f_i|\Big) \prod_{j \in \tau^c}\langle\left|f_j\right|\rangle_{Q}
    \\
    &\lesssim \prod_{i \in \tau}\|b_i\|^{k_i}_{\mathrm{BMO}_{\eta_i}} \sum_{Q\in\mathcal{S}}\mu(Q)^{\an}\left(\int_{Q}|g\lambda|\prod_{i \in \tau}\left(\sum_{P\in\tilde{\mathcal{S}},\ P\subseteq Q}\eta_{i,P}\chi_{P}(x)\right)^{k_i-t_i}\right)\\
    & \quad\quad\quad\quad\quad\quad\quad\quad\quad\times \left(\prod_{i \in \tau}\left\langle  A_{\tilde{\mathcal{S}},\eta_i}^{t_i}(|f_i|)\right\rangle_{Q}\right) \prod_{j \in \tau^c}\langle\left|f_j\right|\rangle_{Q}.
    \\
\end{align*}

We claim that 
\begin{align}\label{path2_1}
  \int_{Q} |g \lambda| \prod_{i \in \tau} \Big( \sum_{P \in \tilde{\mathcal{S}},\ P \subseteq Q} \eta_{i,P} \chi_{P}(x) \Big)^{k_i - t_i} \lesssim \int_{Q} A_{\tilde{\mathcal{S}}, \eta_{\tau(1)}}^{k_{\tau(1)} - t_{\tau(1)}} \big( A_{\tilde{\mathcal{S}}, \eta_{\tau(2)}}^{k_{\tau(2)} - t_{\tau(2)}} \big( A_{\tilde{\mathcal{S}}, \eta_{\tau(3)}}^{k_{\tau(3)} - t_{\tau(3)}} |g \lambda| \big) \big ).
\end{align}

Clearly, this follows directly from the definitions that
\begin{align*}
  &\quad  \int_{Q} |g \lambda| \prod_{i \in \tau} \Big( \sum_{P \in \tilde{\mathcal{S}},\ P \subseteq Q} \eta_{i,P} \chi_{P}(x) \Big)^{k_i - t_i} \\
  &  \lesssim \int_{Q} A_{\tilde{\mathcal{S}}, \eta_{\tau(1)}}^{k_{\tau(1)} - t_{\tau(1)}} \Big( |g \lambda| \big( \sum_{P \in \tilde{\mathcal{S}},\ P \subseteq Q} \eta_{\tau(2),P} \chi_{P}(x) \big)^{k_{\tau(2)} - t_{\tau(2)}} \big( \sum_{P \in \tilde{\mathcal{S}},\ P \subseteq Q} \eta_{\tau(3),P} \chi_{P}(x) \big)^{k_{\tau(3)} - t_{\tau(3)}} \Big) \\
  &  = \int_{Q} A_{\tilde{\mathcal{S}}, \eta_{\tau(1)}}^{k_{\tau(1)} - t_{\tau(1)} - 1} \Big( A_{\tilde{\mathcal{S}}} \big( |g \lambda| \big( \sum_{P \in \tilde{\mathcal{S}},\ P \subseteq Q} \eta_{\tau(2),P} \chi_{P} \big)^{k_{\tau(2)} - t_{\tau(2)}} \big( \sum_{P \in \tilde{\mathcal{S}},\ P \subseteq Q} \eta_{\tau(3),P} \chi_{P} \big)^{k_{\tau(3)} - t_{\tau(3)}} \big) \eta_{\tau(1)} \Big) \\
  &  = \int_{Q} A_{\tilde{\mathcal{S}}, \eta_{\tau(1)}}^{k_{\tau(1)} - t_{\tau(1)} - 1} \Big( \eta_{\tau(1)} \sum_{Q' \in \tilde{\mathcal{S}}} \Big\langle |g \lambda| \big( \sum_{P \in \tilde{\mathcal{S}},\ P \subseteq Q} \eta_{\tau(2),P} \chi_{P} \big)^{k_{\tau(2)} - t_{\tau(2)}} \big( \sum_{P \in \tilde{\mathcal{S}},\ P \subseteq Q} \eta_{\tau(3),P} \chi_{P} \big)^{k_{\tau(3)} - t_{\tau(3)}} \Big\rangle_{Q'} \chi_{Q'} \Big) \\
  &\overset{\eqref{xxing}}{\lesssim} \int_{Q} A_{\tilde{\mathcal{S}}, \eta_{\tau(1)}}^{k_{\tau(1)} - t_{\tau(1)} - 1} \Big( \big( \sum_{Q' \in \tilde{\mathcal{S}}} \left\langle A_{\tilde{\mathcal{S}}, \eta_{\tau(2)}}^{k_{\tau(2)} - t_{\tau(2)}} \big( A_{\tilde{\mathcal{S}}, \eta_{\tau(3)}}^{k_{\tau(3)} - t_{\tau(3)}} |g \lambda| \big) \right\rangle_{Q'} \chi_{Q'} \big) \eta_{\tau(1)} \Big) \\
  &  = \int_{Q} A_{\tilde{\mathcal{S}}, \eta_{\tau(1)}}^{k_{\tau(1)} - t_{\tau(1)}} \big( A_{\tilde{\mathcal{S}}, \eta_{\tau(2)}}^{k_{\tau(2)} - t_{\tau(2)}} \big( A_{\tilde{\mathcal{S}}, \eta_{\tau(3)}}^{k_{\tau(3)} - t_{\tau(3)}} |g \lambda| \big) \big ).
\end{align*}

Combining above estimate \eqref{path2_1}, we obtain
\begin{align*}
  &\quad \sum_{Q\in\mathcal{S}}\mu(Q)^{\an}\left(\int_{Q}|g\lambda|\prod_{i \in \tau}|b_i-b_{i,Q}|^{k_i-t_i}\right)\prod_{i \in \tau}\langle\left|f_i (b_i - b_{i,Q})^{t_i}\right|\rangle_{Q} \prod_{j \in \tau}\langle\left|f_j\right|\rangle_{Q}\\
  &\lesssim  \prod_{i \in \tau}\|b_i\|^{k_i}_{\mathrm{BMO}_{\eta_i}}\int_{Q} A_{\tilde{\mathcal{S}}, \eta_{\tau(1)}}^{k_{\tau(1)} - t_{\tau(1)}} \big( A_{\tilde{\mathcal{S}}, \eta_{\tau(2)}}^{k_{\tau(2)} - t_{\tau(2)}} \big( A_{\tilde{\mathcal{S}}, \eta_{\tau(3)}}^{k_{\tau(3)} - t_{\tau(3)}} |g \lambda| \big) \big )\mathcal{A}_{\eta,\tilde{\mathcal{S}}}(\vec {\bf A})
    \intertext{Since $A_{\tilde{\mathcal{S}}}$ is self-adjoint, we conclude}
    &=\prod_{i \in \tau}\|b_i\|^{k_i}_{\mathrm{BMO}_{\eta_i}}\int_{X} A_{\tilde{\mathcal{S}}}\left(A_{\tilde{\mathcal{S}},\eta_{\tau(1)}}^{k_{\tau(1)} - t_{\tau(1)} - 1} \left(\mathcal{A}_{\eta,\tilde{\mathcal{S}},\eta_{\tau(1)}}(\vec {\bf A})\right)\right) A_{\tilde{\mathcal{S}}, \eta_{\tau(2)}}^{k_{\tau(2)} - t_{\tau(2)}} \big( A_{\tilde{\mathcal{S}}, \eta_{\tau(3)}}^{k_{\tau(3)} - t_{\tau(3)}} |g \lambda| \big)\\
    &=\prod_{i \in \tau}\|b_i\|^{k_i}_{\mathrm{BMO}_{\eta_i}}\int_{X} A_{\tilde{\mathcal{S}}} \left( A_{\tilde{\mathcal{S}}, \eta_{\tau(3)}}^{k_{\tau(3)} - t_{\tau(3)}} \left(A_{\tilde{\mathcal{S}},\eta_{\tau(2)}}^{k_{\tau(2)} - t_{\tau(2)}}\left(A_{\tilde{\mathcal{S}},\eta_{\tau(1)}}^{k_{\tau(1)} - t_{\tau(1)} - 1} \left(\mathcal{A}_{\eta,\tilde{\mathcal{S}},\eta_{\tau(1)}}(\vec {\bf A})\right)\right)\right)\right) |g \lambda|,\\
\end{align*}
where $\mathcal{A}_{\eta,\tilde{\mathcal{S}},\eta_{\tau(1)}}(\vec {\bf A})$ means that $\mathcal{A}_{\eta,\tilde{\mathcal{S}}}(\vec {\bf A}) \eta_{\tau(1)}$ and 
\begin{align*}
  \vec {\bf A} = \left(A_{\tilde{\mathcal{S}},\eta_{\tau(1)}}^{t_{\tau(1)}}(|f_{\tau(1)}|),A_{\tilde{\mathcal{S}},\eta_{\tau(2)}}^{t_{\tau(2)}}(|f_{\tau(2)}|),A_{\tilde{\mathcal{S}},\eta_{\tau(3)}}^{t_{\tau(3)}}(|f_{\tau(3)}|),f_{\tau(4)} \right).
\end{align*}

Firstly, applying Theorem \ref{Horm.pro}, we obtain 
\begin{align*}
  & \quad \left\| A_{\tilde{\mathcal{S}}} \left( A_{\tilde{\mathcal{S}}, \eta_{\tau(3)}}^{k_{\tau(3)} - t_{\tau(3)}} \left(A_{\tilde{\mathcal{S}},\eta_{\tau(2)}}^{k_{\tau(2)} - t_{\tau(2)}}\left(A_{\tilde{\mathcal{S}},\eta_{\tau(1)}}^{k_{\tau(1)} - t_{\tau(1)} - 1} \left(\mathcal{A}_{\eta,\tilde{\mathcal{S}},\eta_{\tau(1)}}(\vec {\bf A})\right)\right)\right)\right) \right\|_{L^q(\lambda)}\\
  &\lesssim [\lambda]_{A_q}^{\max\left\{ 1,\frac{1}{q-1}\right\}}\left\| A_{\tilde{\mathcal{S}}, \eta_{\tau(3)}}^{k_{\tau(3)} - t_{\tau(3)}} \left(A_{\tilde{\mathcal{S}},\eta_{\tau(2)}}^{k_{\tau(2)} - t_{\tau(2)}}\left(A_{\tilde{\mathcal{S}},\eta_{\tau(1)}}^{k_{\tau(1)} - t_{\tau(1)} - 1} \left(\mathcal{A}_{\eta,\tilde{\mathcal{S}},\eta_{\tau(1)}}(\vec {\bf A})\right)\right)\right) \right\|_{L^q(\lambda\eta_{\tau(3)}^q)}\\
  & \lesssim \big([\lambda]_{A_{q}}[\lambda \eta_{\tau(3)}^q]_{A_{q}}[\lambda \eta_{\tau(3)}^{2q}]_{A_{q}}\dots [\lambda \eta_{\tau(3)}^{(k_{\tau(3)}-t_{\tau(3)})q}]_{A_{q}}\big)^{\max\left\{ 1,\frac{1}{q-1}\right\}}\\
  &\quad \quad \quad \quad  \times \left\|A_{\tilde{\mathcal{S}},\eta_{\tau(2)}}^{k_{\tau(2)} - t_{\tau(2)}}\left(A_{\tilde{\mathcal{S}},\eta_{\tau(1)}}^{k_{\tau(1)} - t_{\tau(1)} - 1} \left(\mathcal{A}_{\eta,\tilde{\mathcal{S}},\eta_{\tau(1)}}(\vec {\bf A})\right)\right)\right\|_{L^q(\lambda\eta_{\tau(3)}^{(k_{\tau(3)}-t_{\tau(3)})q})}\\
    \intertext{Setting  $\eta_{\tau(3)}=(\xi_{\tau(3)}/\lambda)^{1/(k_{\tau(3)} - t_{\tau(3)})q}$ and following the previous progress, it}
    &\lesssim \left([\lambda]_{A_q}[\xi_{\tau(3)}]_{A_q}\right)^{\frac{k_{\tau(3)} - t_{\tau(3)} +1}{2} \max \left\{1, \frac{1}{q-1}\right\}}\\
    &\quad \quad \quad \quad \times \left\|A_{\tilde{\mathcal{S}},\eta_{\tau(2)}}^{k_{\tau(2)} - t_{\tau(2)}}\left(A_{\tilde{\mathcal{S}},\eta_{\tau(1)}}^{k_{\tau(1)} - t_{\tau(1)} - 1} \left(\mathcal{A}_{\eta,\tilde{\mathcal{S}},\eta_{\tau(1)}}(\vec {\bf A})\right)\right)\right\|_{L^q(\lambda\eta_{\tau(3)}^{(k_{\tau(3)}-t_{\tau(3)})q})},
    \intertext{We repeat the above process, setting  $\eta_{\tau(2)}=(\xi_{\tau(2)}/\xi_{\tau(3)})^{1/(k_{\tau(3)} - t_{\tau(3)})q}$, as the previous process, the above}
    &\lesssim \left([\lambda]_{A_q}[\xi_{\tau(3)}]_{A_q}\right)^{\frac{k_{\tau(3)} - t_{\tau(3)} +1}{2} \max \left\{1, \frac{1}{q-1}\right\}} \big([\xi_{\tau(3)}]^{\frac{k_{\tau(2)} - t_{\tau(2)} + 1}{2}}_{A_{q}} \left[\xi_{\tau(2)}\right]^{\frac{k_{\tau(2)} - t_{\tau(2)} - 1}{2}}_{A_{q}}\big) ^{\max \left\{1, \frac{1}{q-1}\right\}}\\
    &\quad \quad \quad \quad \times \left\|A_{\tilde{\mathcal{S}},\eta_{\tau(2)}}^{k_{\tau(2)} - t_{\tau(2)}}\left(A_{\tilde{\mathcal{S}},\eta_{\tau(1)}}^{k_{\tau(1)} - t_{\tau(1)} - 1} \left(\mathcal{A}_{\eta,\tilde{\mathcal{S}},\eta_{\tau(1)}}(\vec {\bf A})\right)\right)\right\|_{L^q(\lambda\eta_{\tau(3)}^{(k_{\tau(3)}-t_{\tau(3)})q})},
\end{align*}

Secondly, in the same way and setting $\eta_{\tau(1)}=(\xi_{\tau(1)}/\xi_{\tau(2)})^{1/(k_{\tau(1)}-t_{\tau(1)}-1)q}$, we have 
\begin{align*}
    &\quad \|A_{\tilde{\mathcal{S}}}^{k_{\tau(1)} - t_{\tau(1)} - 1} \left(\mathcal{A}_{\eta,\tilde{\mathcal{S}},\eta_{\tau(1)}}(\vec {\bf A})\right)\|_{L^q(\xi_{\tau(2)}\eta_{\tau(1)}^q)}\\
    &\lesssim \left([\xi_{\tau(2)}]^{\frac{k_{\tau(1)} - t_{\tau(1)} - 2}{2}}_{A_{q}} \left[\xi_{\tau(1)}\right]^{\frac{k_{\tau(1)} - t_{\tau(1)}}{2}}_{A_{q}}\right)^{\max \left\{1, \frac{1}{q-1}\right\}}\big\|\mathcal{A}_{\eta,\tilde{\mathcal{S}},\eta_{\tau(1)}}(\vec {\bf A})\big\|_{L^{q}(\xi_{\tau(1)})},
    \intertext{Setting $\eta_{\tau(1)}=(\zeta/\xi_{\tau(1)})^{1/q}$, the above}
    &= \left([\xi_{\tau(2)}]^{\frac{k_{\tau(1)} - t_{\tau(1)} - 2}{2}}_{A_{q}} \left[\xi_{\tau(1)}\right]^{\frac{k_{\tau(1)} - t_{\tau(1)}}{2}}_{A_{q}}\right)^{\max \left\{1, \frac{1}{q-1}\right\}}\big\|\mathcal{A}_{\eta,\tilde{\mathcal{S}}}(\vec {\bf A})\big\|_{L^{q}(\zeta)},
\end{align*}
where
  $\xi_{\tau(1)}, \xi_{\tau(2)} \in A_q$.


\begin{align*}
    &\quad\|\mathcal{A}_{\eta,\tilde{\mathcal{S}}}(\vec {\bf A})\|_{L^{q}(\zeta)} \le \left\|\mathcal{A}_{\eta,\tilde{\mathcal{S}}}\right\|_{  \prod\limits_{i=1}^{m} L^{p_i}(X,\varpi_i)\rightarrow L^{q}(X,\zeta)}
    \prod_{i \in \tau} \left\|A_{\tilde{\mathcal{S}},\eta_i}^{t_i}(|f_i|)\right\|_{L^{p_i}(\vartheta_i)} \prod_{j \in \tau^c}\|f_j\|_{L^{p_j}(\zeta_j)}.
    \end{align*}
    
    Just as in the previous proof, by setting
\begin{align*}
    \mathcal{I}_0 = \mathcal{I}_{I} \times \mathcal{I}_{II} \times \mathcal{I}_{III} \times  \prod\limits_{i \in \tau} \left([\zeta_i]^{\frac{t_i + 1}{2}}_{A_{p_i}} \left[\vartheta_i\right]^{\frac{t_i-1}{2}}_{A_{p_i}}\right)^{\max \left\{1, \frac{1}{p_i-1}\right\}},
\end{align*}
with
\begin{align*}
	\mathcal{I}_{I}& :=[\lambda]_{A_q}[\eta_{\tau({|\tau|})}]_{A_q}^{\frac{k_{\tau(|\tau|)} - t_{\tau(|\tau|)} + 1}{2}\max\{1,\frac{1}{q-1}\}},\\
	\mathcal{I}_{II}&:=\prod_{i = 2}^{|\tau|-1}\big([\xi_{\tau(i+1)}]^{\frac{k_{\tau(i)} - t_{\tau(i)} + 1}{2}}_{A_{q}} \left[\xi_{\tau(i)}\right]^{\frac{k_{\tau(i)} - t_{\tau(i)} - 1}{2}}_{A_{q}}\big) ^{\max \left\{1, \frac{1}{q-1}\right\}},\\
\mathcal{I}_{III}&:=\big([\xi_{\tau(2)}]^{\frac{k_{\tau(1)} - t_{\tau(1)} - 2}{2}}_{A_{q}} \left[\xi_{\tau(1)}\right]^{\frac{k_{\tau(1)} - t_{\tau(1)}}{2}}_{A_{q}}\big) ^{\max \left\{1, \frac{1}{q-1}\right\}},
\end{align*}
we can conclue \eqref{iter.weight_}.

Combing with Theorems \ref{Horm.pro}--\ref{Thm:3}, we get the control constants $\mathcal{I}_s$, $s=1,2,3,4$ in Remark \ref{remarkb2}.

\subsection{Proof of Theorem \ref{endingBound}}
~~

Given $\delta >0$, the maximal operator ${M_\delta }$ is defined by
\[{M_\delta }f(x) := {\left( {M({{\left| f \right|}^\delta })(x)} \right)^{\frac{1}{\delta }}}\]

\begin{definition}
Let \( f \in L^1_{\text{loc}}(X) \). The sharp maximal operator is defined by
\[
M^\sharp f(x) = \sup_{B \subseteq X} \frac{1}{\mu(B)} \int_B |f(y) - f_B| \, d\mu(y) \cdot \chi_B(x),
\]
where \( f_B = \frac{1}{\mu(B)} \int_B f(y) \, d\mu(y) \).

Building upon the structure of spaces of homogeneous type, there exist \( \mathcal{K} \) dyadic lattices \( \mathscr{D}^\mathfrak{k} \) for \( \mathfrak{k} = 1, 2, \ldots, \mathcal{K} \), such that
\[
M^\sharp f(x) \approx \sum_{\mathfrak{k}=1}^{\mathcal{K}} M_{\mathscr{D}^\mathfrak{k}}^\sharp f(x),
\]
where the dyadic sharp maximal operator \( M_{\mathscr{D}^\mathfrak{k}}^\sharp f \) is defined by
\[
M_{\mathscr{D}^\mathfrak{k}}^\sharp f(x) = \sup_{Q \in \mathscr{D}^\mathfrak{k}} \frac{1}{\mu(Q)} \int_Q |f(y) - f_Q| \, d\mu(y) \cdot \chi_Q(x).
\]
\end{definition}

\begin{lemma}[\cite{Feffer.1972}]\label{Caolemma2.7}
Let $0 < p, \delta < \infty$ and $\omega \in A_{\infty}$.

\begin{enumerate}[(i)]
    \item There exists a constant $C$, depending only on $[\omega]_{A_{\infty}}$, such that
    \[
    \| M_\delta(f) \|_{L^p(\omega)} \leq C \| M_\delta^{\sharp}(f) \|_{L^p(\omega)},
    \]
    for any function $f$ for which the left-hand side is finite.
    
    \item If $\varphi: (0, \infty) \to (0, \infty)$ is a doubling function, then there exists a constant $C$, depending on $[\omega]_{A_{\infty}}$ and the doubling constant of $\varphi$, such that
    \[
    \sup_{\lambda > 0} \varphi(\lambda) \, \omega\left( \left\{ x \in X \;\big|\; M_\delta f(x) > \lambda \right\} \right) \leq C \sup_{\lambda > 0} \varphi(\lambda) \, \omega\left( \left\{ x \in X \;\big|\; M_\delta^{\sharp} f(x) > \lambda \right\} \right),
    \]
    for every function $f$ where the left-hand side is finite.
\end{enumerate}
\end{lemma}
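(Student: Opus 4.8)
Here $M_\delta^{\sharp}f:=\bigl(M^{\sharp}(|f|^{\delta})\bigr)^{1/\delta}$, and the plan is to run the classical Fefferman--Stein argument via a good-$\lambda$ inequality, adapted to $(X,d,\mu)$. First I would reduce to $\delta=1$: writing $g=|f|^{\delta}$ one has $M_\delta f=(Mg)^{1/\delta}$ and $M_\delta^{\sharp}f=(M^{\sharp}g)^{1/\delta}$, so (i) for $(p,\delta,f)$ becomes the $L^{p/\delta}(\omega)$ estimate $\|Mg\|_{L^{p/\delta}(\omega)}\lesssim\|M^{\sharp}g\|_{L^{p/\delta}(\omega)}$, and (ii) becomes the analogous weak-type statement for $Mg$ with the doubling function $\psi(t)=\varphi(t^{1/\delta})$. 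Since $p/\delta$ may be $<1$ one cannot simply quote the Lebesgue-space Fefferman--Stein inequality as a black box, but the good-$\lambda$ method works verbatim for every exponent in $(0,\infty)$. Next, using the decomposition recorded just before Lemma~\ref{Caolemma2.7}, $M^{\sharp}g\approx\sum_{\mathfrak{k}}M^{\sharp}_{\mathscr{D}^{\mathfrak{k}}}g$, together with the parallel bound $Mg\lesssim\sum_{\mathfrak{k}}M_{\mathscr{D}^{\mathfrak{k}}}g$ (adjacent dyadic systems, \eqref{eq:ball;included}, \eqref{eq:contain}), it suffices to prove the inequalities with the dyadic operators $M_{\mathcal{D}}$ and $M^{\sharp}_{\mathcal{D}}$ of Section~\ref{Pre.}, for a single fixed dyadic lattice $\mathcal{D}$.

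\textbf{Good-$\lambda$ inequality.} For $\lambda>0$ let $\Omega_\lambda=\{x\in X: M_{\mathcal{D}}g(x)>\lambda\}$. By the dyadic Calderón--Zygmund decomposition, $\Omega_\lambda=\bigsqcup_j Q_j$ with $\{Q_j\}\subseteq\mathcal{D}$ the maximal cubes with $\langle|g|\rangle_{Q_j}>\lambda$, so $\lambda<\langle|g|\rangle_{Q_j}\le C_{\mu,0}\lambda$ by \eqref{Cmu0}. I would then show, for small $\gamma>0$ and each $j$,
\[
\mu\bigl(\{x\in Q_j:\ M_{\mathcal{D}}g(x)>2C_{\mu,0}\lambda,\ M^{\sharp}_{\mathcal{D}}g(x)\le\gamma\lambda\}\bigr)\le C\gamma\,\mu(Q_j),
\]
using that on $Q_j$ one has $M_{\mathcal{D}}g\le\max\{M_{\mathcal{D}}\bigl((g-\langle g\rangle_{Q_j})\chi_{Q_j}\bigr),\,C_{\mu,0}\lambda\}$, so the left-hand set lies in $\{x\in Q_j: M_{\mathcal{D}}((g-\langle g\rangle_{Q_j})\chi_{Q_j})(x)>C_{\mu,0}\lambda\}$, and then the dyadic weak-$(1,1)$ bound for $M_{\mathcal{D}}$ combined with $\langle|g-\langle g\rangle_{Q_j}|\rangle_{Q_j}\le\inf_{Q_j}M^{\sharp}_{\mathcal{D}}g\le\gamma\lambda$ (valid whenever the set is nonempty) gives the claim. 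Invoking the quantitative $A_\infty$ property of $\omega$ — if $E\subseteq Q$ with $\mu(E)\le\gamma\mu(Q)$ then $\omega(E)\le C\gamma^{\varepsilon}\omega(Q)$ with $C,\varepsilon$ depending only on $[\omega]_{A_\infty}$ — and summing over the disjoint family $\{Q_j\}$ yields
\[
\omega\bigl(\{M_{\mathcal{D}}g>2C_{\mu,0}\lambda,\ M^{\sharp}_{\mathcal{D}}g\le\gamma\lambda\}\bigr)\le C\gamma^{\varepsilon}\,\omega(\Omega_\lambda).
\]

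\textbf{From good-$\lambda$ to (i) and (ii).} For (i) I would split $\omega(\{M_{\mathcal{D}}g>2C_{\mu,0}\lambda\})\le\omega(\{M_{\mathcal{D}}g>2C_{\mu,0}\lambda,\ M^{\sharp}_{\mathcal{D}}g\le\gamma\lambda\})+\omega(\{M^{\sharp}_{\mathcal{D}}g>\gamma\lambda\})$, multiply by $r\lambda^{r-1}$ with $r=p/\delta$, integrate over $\lambda\in(0,\infty)$, rescale in $\lambda$, and apply the good-$\lambda$ bound; choosing $\gamma$ with $C\gamma^{\varepsilon}(2C_{\mu,0})^{r}\le\tfrac12$ absorbs the first term into the left side and gives $\|M_{\mathcal{D}}g\|_{L^{r}(\omega)}\lesssim\|M^{\sharp}_{\mathcal{D}}g\|_{L^{r}(\omega)}$; summing the finitely many dyadic contributions finishes (i). For (ii), let $N=\sup_{\lambda}\psi(\lambda)\omega(\{M^{\sharp}_{\mathcal{D}}g>\lambda\})$ and $N'=\sup_{\lambda}\psi(\lambda)\omega(\{M_{\mathcal{D}}g>\lambda\})$; applying the good-$\lambda$ inequality together with the doubling of $\psi$ (used a bounded number of times to pass from $\psi(2C_{\mu,0}\lambda)$ to $\psi(\lambda)$ and to $\psi(\gamma\lambda)$) gives $N'\le C\gamma^{\varepsilon}c_\psi^{m_0}N'+c_\psi^{m_1}N$, and choosing $\gamma$ small yields $N'\lesssim N$.

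\textbf{Main obstacle.} The delicate point in both parts is the absorption step, which is legitimate only once one knows \emph{a priori} that the left-hand quantity is finite; I would handle this, as is standard, by first replacing $g$ by a truncation $g_k=\min\{|g|,k\}\chi_{B(x_0,k)}$, proving the inequalities with constants independent of $k$, and then letting $k\to\infty$ by monotone convergence — this is precisely the reason the hypotheses of Lemma~\ref{Caolemma2.7} are stated with the finiteness proviso. The remaining work is purely the transfer of the Euclidean machinery to the space of homogeneous type, i.e. the dyadic structure of Proposition~\ref{theorem dyadic cubes}, the ball--cube comparability \eqref{eq:contain}, the doubling comparison \eqref{Cmu0}, the dyadic weak-$(1,1)$ bound for $M_{\mathcal{D}}$, and the quantitative $A_\infty$ comparison of $\mu$ and $\omega$; all of these are available in the setting of the paper, so once they are assembled the proof is routine, which is why it is reasonable to cite \cite{Feffer.1972} and present only this adaptation.
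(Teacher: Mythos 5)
The paper gives no proof of this lemma at all—it is simply quoted from Fefferman--Stein—and your good-$\lambda$ argument is exactly the classical proof of the cited result, correctly transferred to the homogeneous-type setting via the adjacent dyadic lattices, the reduction $M_\delta f=(M(|f|^\delta))^{1/\delta}$, the weak $(1,1)$ bound for $M_{\mathcal D}$, and the quantitative $A_\infty$ property, so your route coincides with the intended one. The argument is correct as written; the only cosmetic point is that the spatial cut-off $\chi_{B(x_0,k)}$ in your truncation is unnecessary and slightly delicate (multiplying by a characteristic function need not control $M^{\sharp}$), whereas the stated finiteness hypothesis—or integrating $\lambda$ over $(0,R)$ and letting $R\to\infty$—already legitimizes the absorption step.
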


\begin{lemma}[\cite{Hu2004}]\label{Cao2018:2.10}
Let $s \geq 0$ and $1 \leq q < \infty$. Suppose the operator $T$ satisfies
\[
\mu\left(\left\{ x \in X \;\big|\; |T f(x)| > \lambda \right\}\right) \lesssim \int_{X} \Phi_s\left(\frac{|f(x)|^q}{\lambda^q}\right) \, d\mu,
\]
where $\Phi_s(t) = t \left(1 + \log^{+} t \right)^s$. Then, there exists a constant $c_{p,q} > 0$ such that for any $0 < p < q$, any measurable set $E \subseteq X$ with finite measure, and any function $f$ supported on $E$, the following inequality holds:
\[
\left( \dashint_E |T f(x)|^p \, d\mu \right)^{1/p} \leq c_{p,q} \left\| f^q \right\|_{L(\log L)^s, E}^{1/q}.
\]
\end{lemma}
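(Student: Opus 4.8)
The statement is a Kolmogorov-type transference: it converts the weak-type control of $Tf$ by the Orlicz ``energy'' of $f$ into an $L^{p}$-average bound for every exponent $p$ below the weak-type exponent $q$. The plan is to integrate the distribution function of $Tf$ over $E$ and to split the $\lambda$-integral at a height tuned to the local Luxemburg norm $\Lambda:=\|f^{q}\|_{L(\log L)^{s},E}$. First, replacing $|Tf|$ by $\min\{|Tf|,N\}$ and letting $N\to\infty$ by monotone convergence at the end, one may assume the left-hand side is finite, so all manipulations below are legitimate. Since $\Phi_{s}(t)=t(1+\log^{+}t)^{s}$ is a continuous Young function satisfying a $\Delta_{2}$-condition and $\mu(E)<\infty$, the averaged Luxemburg norm obeys $\dashint_{E}\Phi_{s}\!\big(|f(x)|^{q}/\Lambda\big)\,d\mu\le 1$; and because $\Phi_{s}(t)\ge t$ this already gives $\dashint_{E}|f(x)|^{q}\,d\mu\le\Lambda$, whence $\dashint_{E}|f(x)|^{p}\,d\mu\le\Lambda^{p/q}$ by Jensen's inequality (using $p<q$ and $\supp f\subseteq E$).

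Next I would write, via the layer-cake formula,
\[
\dashint_{E}|Tf(x)|^{p}\,d\mu=\frac{p}{\mu(E)}\int_{0}^{\infty}\lambda^{p-1}\,\mu\big(\{x\in E:|Tf(x)|>\lambda\}\big)\,d\lambda,
\]
and split the $\lambda$-integral at $\lambda_{*}:=\Lambda^{1/q}$. On $(0,\lambda_{*})$ one bounds $\mu(\{|Tf|>\lambda\}\cap E)\le\mu(E)$, contributing exactly $\lambda_{*}^{\,p}=\Lambda^{p/q}$. On $(\lambda_{*},\infty)$ one invokes the hypothesis together with $\supp f\subseteq E$ to dominate the contribution by $\frac{Cp}{\mu(E)}\int_{E}\big(\int_{\lambda_{*}}^{\infty}\lambda^{p-1}\Phi_{s}(|f(x)|^{q}/\lambda^{q})\,d\lambda\big)\,d\mu(x)$, where $C$ is the implicit constant of the hypothesis. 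For fixed $x$ the inner integral is computed by the substitution $u=|f(x)|^{q}/\lambda^{q}$, which turns it into $\frac{|f(x)|^{p}}{q}\int_{0}^{T_{x}}u^{-p/q}(1+\log^{+}u)^{s}\,du$ with $T_{x}:=|f(x)|^{q}/\Lambda$. One then uses the elementary bound $\int_{0}^{T}u^{-p/q}(1+\log^{+}u)^{s}\,du\le\frac{1}{1-p/q}\big(1+T^{1-p/q}(1+\log^{+}T)^{s}\big)$, obtained by differentiating $u\mapsto u^{1-p/q}(1+\log u)^{s}$. Multiplying back by $|f(x)|^{p}$ and using the identity $|f(x)|^{p}\,T_{x}^{1-p/q}(1+\log^{+}T_{x})^{s}=\Lambda^{p/q}\,\Phi_{s}\!\big(|f(x)|^{q}/\Lambda\big)$, the tail is controlled by $\frac{Cp}{q-p}\big(\dashint_{E}|f|^{p}\,d\mu+\Lambda^{p/q}\dashint_{E}\Phi_{s}(|f|^{q}/\Lambda)\,d\mu\big)\le\frac{2Cp}{q-p}\,\Lambda^{p/q}$. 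Adding the two pieces gives $\dashint_{E}|Tf|^{p}\,d\mu\le\big(1+\tfrac{2Cp}{q-p}\big)\Lambda^{p/q}$, and taking $p$-th roots yields the claim with $c_{p,q}$ depending only on $p$, $q$, $s$ and $C$.

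I do not expect a genuine obstacle: this is the standard Kolmogorov/Hu--Yang transference argument. The only points that need care are (i) the correct choice of the splitting height $\lambda_{*}=\|f^{q}\|_{L(\log L)^{s},E}^{1/q}$, which is precisely what makes the two halves of the distribution-function integral balance; (ii) the Orlicz normalization $\dashint_{E}\Phi_{s}(|f|^{q}/\Lambda)\le 1$, where finiteness of $\mu(E)$ and the $\Delta_{2}$-property of $\Phi_{s}$ are used; and (iii) the one-variable tail estimate above, whose proof is a short integration by parts. All constants blow up as $p\uparrow q$, consistent with the factor $\tfrac{1}{q-p}$, and the restriction $f\in\supp\!\subseteq E$ with $\mu(E)<\infty$ is what allows replacing $\int_{X}$ in the hypothesis by $\int_{E}$ throughout.
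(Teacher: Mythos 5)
Your argument is correct: the layer-cake splitting at $\lambda_*=\Lambda^{1/q}$, the Fubini/substitution computation of the tail, and the normalization $\dashint_E\Phi_s(|f|^q/\Lambda)\,d\mu\le 1$ all check out, and the one-variable estimate $\int_0^T u^{-p/q}(1+\log^+u)^s\,du\le\frac{1}{1-p/q}\bigl(1+T^{1-p/q}(1+\log^+T)^s\bigr)$ is valid. The paper itself gives no proof of this lemma (it is quoted from Hu--Li), and your proof is exactly the standard Kolmogorov-type transference argument underlying the cited result, so there is nothing further to reconcile.
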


Given a sequence of Young functions $\left\{ \Phi_i \right\}_{i=1}^m$ and a dyadic lattice $\d$, we define the multilinear fractional dyadic Orlicz maximal operator as follows
\[
\mathcal{M}_{\eta,\vec{\Phi}}(\vec{f})(x) := \sup_{Q \in \d} \mu(Q)^{\eta} \prod_{i=1}^m \|f_i\|_{\Phi_i, Q}\chi_Q(x).
\]
In particular, when $\Phi_i(t) = t$ for each $i = 1, \ldots, m$, we denote $\mathcal{M}_{\eta,\vec{\Phi}}$ by $\mathcal{M}_{\eta}$. 

\begin{lemma}\label{Caolemma2.11}
Let $\left\{ \Phi_i \right\}_{i=1}^m$ be a sequence of submultiplicative Young functions. If $\vec{\omega} \in A^{\star}_{\vec{1}, q_0}(X)$, where $q_0 := \frac{1}{m - \eta}$ with $\eta \in [0,m)$, then for any $\lambda > 0$, the following estimate holds:
\[
\omega\left( \left\{ x \in X \;\big|\; \mathcal{M}_{\eta,\vec{\Phi}}(\vec{f})(x) > \lambda^m \right\} \right) \leq [\vec{\omega}]_{A^{\star}_{\vec{1}, q_0}} \prod_{i=1}^m \left( \int_{X} \Phi_1 \circ \cdots \circ \Phi_m\left( \frac{|f_i(x)|}{\lambda} \right) \omega_i(x) \, d\mu \right)^{q_0}.
\]
\end{lemma}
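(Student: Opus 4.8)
The goal is a weak-type estimate for the multilinear Orlicz maximal operator $\mathcal{M}_{\eta,r}$ against a multiple weight $\vec\omega\in A^{\star}_{\vec 1,q_0}(X)$, with $q_0=\frac{r}{rm-\eta}$. The plan is to linearize the maximal operator by selecting, for each point in the level set, a witnessing ball, then pass to a countable subfamily with controlled overlap (a Vitali/Besicovitch-type covering argument adapted to the space of homogeneous type, using the dyadic structure from Section~\ref{Pre.}), and finally estimate the weighted measure of the union of these balls using the $A^{\star}_{\vec 1,q_0}$ condition together with the submultiplicativity of the Young functions $\Phi_i$.

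\textbf{Step 1 (Reduction and normalization).} First I would fix $\lambda>0$ and set $E_\lambda:=\{x\in X:\mathcal{M}_{\eta,r}(\vec f)(x)>\lambda^m\}$. By definition of $\mathcal{M}_{\eta,r}$, for each $x\in E_\lambda$ there is a ball $B_x\ni x$ with $\mu(B_x)^{\eta/r}\prod_{i=1}^m\langle |f_i|^r\rangle_{B_x}^{1/r}>\lambda^m$, equivalently $\prod_{i=1}^m\big(\mu(B_x)^{\eta-rm}\int_{B_x}|f_i|^r\,d\mu\big)^{1/r}>\lambda^m$ after rewriting the averages. Since $q_0=\frac{r}{rm-\eta}$, raising to the power $q_0$ gives $\mu(B_x)^{-1}\prod_{i=1}^m\big(\int_{B_x}|f_i|^r\,d\mu\big)^{q_0/r}>\lambda^{mrq_0}$ up to the exponent bookkeeping; the point is that $\mu(B_x)$ appears to the first power so the chain-ratio in the $A^\star$ constant can be exploited cleanly.

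\textbf{Step 2 (Covering and overlap).} Next I would extract from $\{B_x\}_{x\in E_\lambda}$ a countable subfamily $\{B_j\}$ covering $E_\lambda$ (up to a null set) with bounded overlap — here one invokes the Besicovitch-type covering available in spaces of homogeneous type, or equivalently replaces balls by the comparable adjacent dyadic cubes of Proposition~\ref{thm:existence2} so that disjointification costs only a factor depending on $A_0$, $C_\mu$. Then
\[
\omega(E_\lambda)\le\sum_j\omega(B_j)=\sum_j\langle\omega\rangle_{B_j}\mu(B_j).
\]
Using the definition of $[\vec\omega]_{A^\star_{\vec 1,q_0}}$ (which for $\vec p=\vec 1$ reads $\sup_Q\langle\omega\rangle_Q\prod_i\langle\omega_i\rangle_Q^{q_0}<\infty$, since $p_i'=\infty$ is handled as the $L^\infty$-normalization giving $\langle\omega_i\rangle_Q^{q_0}$ in the appropriate convention used in Definition~\ref{vweight3}), I would bound $\langle\omega\rangle_{B_j}\le [\vec\omega]_{A^\star_{\vec 1,q_0}}\prod_{i=1}^m\langle\omega_i\rangle_{B_j}^{-q_0}$, wait — more precisely I would combine $\langle\omega\rangle_{B_j}\mu(B_j)$ with the selection inequality from Step~1 and the generalized Hölder inequality in Orlicz spaces $\langle|f_i|\rangle_{B_j}\lesssim\|f_i\|_{\Phi_i,B_j}\|\chi\|_{\bar\Phi_i,B_j}$ to replace the $L^r$ averages by $\Phi_i$-Luxemburg averages, producing the composed Young function $\Phi_1\circ\cdots\circ\Phi_m$ via iterated submultiplicativity (this is the standard mechanism: $\Phi$ submultiplicative lets $\|fg\|_\Phi$ be absorbed, and composing $m$ of them yields $\Phi_1\circ\cdots\circ\Phi_m$).

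\textbf{Step 3 (Assembling the product).} After bounding each $\omega(B_j)$ by $[\vec\omega]_{A^\star_{\vec 1,q_0}}$ times a product over $i$ of local quantities $\big(\int_{B_j}\Phi_1\circ\cdots\circ\Phi_m(|f_i|/\lambda)\,\omega_i\,d\mu\big)^{q_0}$ — here one uses that the weighted average $\langle\omega_i\rangle_{B_j}\mu(B_j)$ combined with the Orlicz average of $f_i$ against $\omega_i$ is controlled by $\int_{B_j}\Phi_i(\cdots)\omega_i$, via the weighted Jensen/Orlicz inequality — I would sum over $j$. Since $0<q_0\le 1$ (because $rm-\eta\ge rm-m=m(r-1)\ge r$ is not automatic, but in fact $\eta<m$ and $r\ge1$ force $rm-\eta>rm-m\ge 0$, and one checks $q_0\le 1\iff r\le rm-\eta\iff \eta\le r(m-1)$, which holds since $\eta<m\le r(m-1)+1$... this needs care but is routine given $m\ge1$), the superadditivity $\big(\sum_j a_j\big)^{q_0}\le\sum_j a_j^{q_0}$ runs the wrong way; instead I would use that the sets $E_{B_j}$ (from the sparse/overlap structure) are essentially disjoint so $\sum_j\big(\int_{B_j}\cdots\big)^{q_0}$ telescopes back into $\prod_i\big(\int_X\cdots\big)^{q_0}$ by Hölder across the index $j$ with the bounded-overlap bound. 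The main obstacle I anticipate is precisely this last bookkeeping step: correctly organizing the interchange of the product over $i=1,\dots,m$ with the sum over the covering balls $j$ so that bounded overlap yields the clean product $\prod_{i=1}^m(\int_X\Phi_1\circ\cdots\circ\Phi_m(|f_i|/\lambda)\,\omega_i\,d\mu)^{q_0}$ with only the constant $[\vec\omega]_{A^\star_{\vec 1,q_0}}$ in front — this is where the exponent $q_0=\frac{r}{rm-\eta}$ is forced, and where one must be careful that no extra overlap constant depending on $m$ spoils the sharp form; I would handle it following the multilinear maximal-function argument of Lerner–Ombrosi–Pérez–Torres–Trujillo-González adapted to $(X,d,\mu)$, using the doubling constant and the adjacent dyadic systems to keep all geometric constants absolute.
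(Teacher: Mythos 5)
Your overall architecture (select a witnessing ball for each point of the level set, extract a covering subfamily, then feed the selection inequality into the $A^\star_{\vec 1,q_0}$ condition) is the same as the paper's, and the $q_0(m-\eta/r)=1$ bookkeeping you gesture at is exactly what makes the measure $\mu(B_j)$ drop out cleanly. But there are two real gaps.

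First, and most importantly, you never explain how the composed Young function $\Phi_1\circ\cdots\circ\Phi_m$ applied \emph{identically to each} $f_i$ appears. You invoke ``generalized H\"older in Orlicz spaces'' and ``iterated submultiplicativity,'' but submultiplicativity of each $\Phi_i$ separately only lets you split $\|fg\|_{\Phi_i}\lesssim\|f\|_{\Phi_i}\|g\|_{\Phi_i}$; it does not tell you how a product of Luxemburg norms $\prod_i\|f_i\|_{\Phi_i,B}$ with $m$ \emph{different} Young functions collapses to a product of averages of the \emph{same} composed function $\Phi_1\circ\cdots\circ\Phi_m(|f_i|)$ over $i$. The paper's proof hinges on precisely this fact, which is Lemma~\ref{Cao18:2.12} (Grafakos--Liu--P\'erez--Torres, Lemma~6.2 of \cite{Gra2011}): if $\prod_i\|f_i\|_{\Phi_i,E}>1$ and each $\Phi_i$ is submultiplicative, then $\prod_i\|f_i\|_{\Phi_i,E}\lesssim\prod_i\dashint_E\Phi_1\circ\cdots\circ\Phi_m(|f_i|)$. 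Without this lemma (or a substitute argument deriving it), your Step~2/Step~3 do not produce the claimed right-hand side. Relatedly, you silently specialize $\mathcal{M}_{\eta,r}$ to $L^r$ averages ($\Phi_i(t)=t^r$), but in the paper $\mathcal{M}_{\eta,r}$ is the general Orlicz maximal operator $\mathcal{M}_{\eta/r,\vec\Phi}$ with the $\Phi_i$ of the hypothesis --- in the applications these are $L(\log L)^r$-type functions, not $t^r$ --- so the lemma must be proved at the Young-function level, not for power averages.

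Second, your Step~3 bookkeeping is off in a way that would derail the argument. The relevant fact is not whether $q_0\le 1$ (it need not be: for $r=1$, $m=2$, $\eta=3/2$ one has $q_0=2$), but that $mq_0=\frac{mr}{rm-\eta}\ge 1$ since $\eta\ge0$. Combined with pairwise disjointness of the Vitali balls (not merely bounded overlap), this yields
\[
\sum_j\prod_{i=1}^m a_{ij}^{q_0}\le\prod_{i=1}^m\Big(\sum_j a_{ij}^{mq_0}\Big)^{1/m}\le\prod_{i=1}^m\Big(\sum_j a_{ij}\Big)^{q_0}\le\prod_{i=1}^m\Big(\int_X\Phi_1\circ\cdots\circ\Phi_m\big(\tfrac{|f_i|}{\lambda}\big)\omega_i\,d\mu\Big)^{q_0},
\]
by discrete H\"older with exponents $(m,\dots,m)$ and $\ell^1\hookrightarrow\ell^{mq_0}$. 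Your framing --- worrying that ``superadditivity runs the wrong way'' for $q_0\le1$ and then appealing to H\"older ``across $j$'' with a bounded-overlap constant --- does not isolate $mq_0\ge1$ as the driving inequality and would leave an $m$-dependent constant if one used Besicovitch-type overlap rather than the Vitali--Wiener disjoint family the paper extracts.
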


The proof of Lemma \ref{Caolemma2.11} is based on the following.

\begin{lemma}[\cite{Gra2011}, Lemma 6.2]\label{Cao18:2.12}
Let $m \in \mathbb{N}$ and let $E \subseteq X$ be an arbitrary set. Suppose that each $\Phi_i$, for $i = 1, \ldots, m$, is a submultiplicative Young function. Then there exists a constant $c > 0$ such that whenever
\[
\prod_{i=1}^m \left\| f_i \right\|_{\Phi_i, E} > 1,
\]
it follows that
\[
\prod_{i=1}^m \left\| f_i \right\|_{\Phi_i, E} \leq c \prod_{i=1}^m \dashint_E \Phi_1 \circ \cdots \circ \Phi_m \left( |f_i(x)| \right) \, d\mu(x).
\]
\end{lemma}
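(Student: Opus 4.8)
The plan is to reduce the inequality to two elementary facts about normalized Orlicz averages and then to combine them through a rescaling. Write $\Psi := \Phi_1 \circ \cdots \circ \Phi_m$ and $a_i := \|f_i\|_{\Phi_i, E}$, and normalize so that $\Phi_j(1) = 1$ for each $j$ (harmless, and automatic for the functions $\Phi_{r,\ell}$ used later). First I would record the structural properties of $\Psi$: a composition of increasing convex functions is increasing and convex, so $\Psi$ is a Young function; a composition of submultiplicative functions is submultiplicative; $\Psi(1)=1$; and, since each $\Phi_j(t)/t$ is nondecreasing with value $1$ at $t=1$, one has $\Phi_j(t)\ge t$ on $[1,\infty)$, which composes to $\Psi(t)\ge\Phi_i(t)$ for all $t\ge 1$ and all $i$.

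Next I would establish the two workhorse lemmas. \emph{(A)} If $\Phi$ is a convex Young function with $\Phi(1)\le 1$ and $\|f\|_{\Phi,E}=\lambda\in(1,\infty)$, then $\dashint_E\Phi(|f|)\ge\lambda$: letting $\lambda'\uparrow\lambda$ in $\dashint_E\Phi(|f|/\lambda')>1$ forces $\dashint_E\Phi(|f|/\lambda)=1$, and since $\lambda\ge 1$ convexity gives $\Phi(|f|)\ge\lambda\,\Phi(|f|/\lambda)$ pointwise, which integrates to the claim. \emph{(B)} If $A,B$ are convex Young functions with $A(1)\le 1$ and $A\le B$ on $[1,\infty)$, then $\|f\|_{A,E}\le 2\|f\|_{B,E}$: with $\lambda:=\|f\|_{B,E}$, convexity gives $A(|f|/2\lambda)\le\tfrac12 A(|f|/\lambda)$, and splitting $E$ into $\{|f|\le\lambda\}$ (where $A(|f|/\lambda)\le A(1)\le 1$) and $\{|f|>\lambda\}$ (where $A(|f|/\lambda)\le B(|f|/\lambda)$) yields $\dashint_E A(|f|/2\lambda)\le\tfrac12\bigl(1+\dashint_E B(|f|/\lambda)\bigr)\le 1$. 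Applying (B) with $A=\Phi_i$, $B=\Psi$ gives $a_i\le 2\|f_i\|_{\Psi,E}$ for every $i$.

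Then I would combine these. Assuming $\prod_i a_i>1$, and using that all factors are now comparable to norms with respect to the \emph{single} function $\Psi$, I would rescale $f_i\mapsto c_i f_i$ with $\prod_i c_i=1$, choosing the $c_i$ so that $\|c_i f_i\|_{\Psi,E}>1$ for each $i$; submultiplicativity of $\Psi$ gives $\dashint_E\Psi(c_i|f_i|)\le\Psi(c_i)\dashint_E\Psi(|f_i|)$, so the rescaling costs only a bounded factor provided the $c_i$ are bounded. Lemma (A) applied to each $c_i f_i$ then gives $\dashint_E\Psi(c_i|f_i|)\ge c_i\|f_i\|_{\Psi,E}\ge\tfrac{c_i}{2}a_i$; multiplying over $i$ and undoing the rescaling produces $\prod_i a_i\le c\,\prod_i\dashint_E\Psi(|f_i|)$ for a constant $c$ depending only on $m$ and the $\Phi_j$, which is all the statement asks for.

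The hard part will be making the rescaling legitimate: one needs $c_i$ with $\prod_i c_i=1$, each $\|c_i f_i\|_{\Psi,E}>1$, and the $c_i$ uniformly bounded --- equivalently, writing $b_i=\|f_i\|_{\Psi,E}$ and $d_i=c_i b_i$, one needs $d_i>1$ with $\prod_i d_i=\prod_i b_i$ and $d_i/b_i$ bounded, which works cleanly only when the $b_i$ are comparable and their product is safely above $1$. The remaining degenerate configurations --- $\prod_i a_i$ barely above $1$, or one $b_i$ huge and another tiny --- I would dispatch by a direct crude bound: for a small factor, $\dashint_E\Psi(|f_i|)\ge\dashint_{E\cap\{|f_i|\ge 1\}}\Phi_i(|f_i|)\ge\dashint_E\Phi_i(|f_i|)-1$, combined with Lemma (A) for the large factor and the observation that in this range $\prod_i a_i$ is bounded above while $\prod_i\dashint_E\Psi(|f_i|)$ is bounded below, so the inequality holds after enlarging $c$. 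This degenerate regime is the only place where the quantitative behaviour of the Young functions is genuinely used; everything else is a formal consequence of convexity and submultiplicativity.
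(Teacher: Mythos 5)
You are proving a result the paper does not prove at all: Lemma \ref{Cao18:2.12} is quoted verbatim from \cite{Gra2011} (Lemma 6.2 there), so there is no in-paper argument to compare with and your proposal has to stand on its own. Its preparatory part does: the facts about $\Psi=\Phi_1\circ\cdots\circ\Phi_m$, your Lemma (A) (with the norm-attainment $\dashint_E\Phi(|f|/\lambda)\,d\mu=1$ justified by the doubling that submultiplicativity provides), your Lemma (B), and the resulting bound $a_i\le 2\|f_i\|_{\Psi,E}$ are all correct, as is the rescaled estimate $a_i\le \tfrac{2\Psi(c_i)}{c_i}\dashint_E\Psi(|f_i|)\,d\mu$ whenever one can take $c_i$ bounded above with $c_i\|f_i\|_{\Psi,E}>1$ (incidentally, the constraint $\prod_i c_i=1$ is never needed once you multiply these inequalities). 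As you yourself observe, this forces every $b_i=\|f_i\|_{\Psi,E}$ to be bounded below by a fixed constant, so the rescaling argument only covers the balanced configuration.

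The genuine gap is the unbalanced configuration, and your proposed dispatch of it does not work. First, the crude bound $\dashint_E\Psi(|f_i|)\,d\mu\ge\dashint_E\Phi_i(|f_i|)\,d\mu-1$ is vacuous precisely for a small factor, since there $\dashint_E\Phi_i(|f_i|)\,d\mu\le 1$ and the right-hand side is $\le 0$. Second, the claim that ``in this range $\prod_i a_i$ is bounded above'' is false: take $m=2$, $f_1\equiv\varepsilon$ and $f_2\equiv M$ constant on $E$, with $\varepsilon$ tiny and $\varepsilon M$ arbitrarily large; then one $b_i$ is tiny, the other huge, yet $\prod_i a_i\approx\varepsilon M$ is unbounded, so no enlargement of $c$ rescues an argument that only gives a constant lower bound for the right-hand side. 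This regime is in fact the entire content of the lemma (when all $b_i\gtrsim 1$ one gets $\dashint_E\Psi(|f_i|)\,d\mu\ge 1/\Psi(1/b_i)\gtrsim 1$ and the inequality is immediate, without any rescaling), and it cannot be handled by lower-bounding each factor $\dashint_E\Psi(|f_i|)\,d\mu$ separately in terms of $a_i$: one must couple the factors, e.g.\ via submultiplicativity used in the reverse direction, $\prod_i\Psi(s_i)\ge\Psi\bigl(\prod_i s_i\bigr)$, together with the hypothesis $\prod_i a_i>1$ and the monotonicity of $\Phi_i(t)/t$ (in the constant example above this is exactly what gives $\Psi(\varepsilon)\Psi(M)\ge\Psi(\varepsilon M)\ge\varepsilon M$). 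Until the small-norm factors are treated by such a coupling argument, the proof covers only the easy balanced case and the lemma is not established.
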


\begin{proof}[Proof of Lemma \ref{Caolemma2.11}]
~~

Give the set
  $$
  E_{\eta,\lambda, k}:={E_{\eta,\lambda }} \cap B(0,k), \text { where } E_{\eta,\lambda}=\left\{x \in \mathbb{R}^n ; \mathcal{M}_{\eta,\vec{\Phi}}(\vec{f})(x)>\lambda^m\right\}.
  $$
By the monotone convergence theorem, it suffices to bound $E_{\eta,\lambda, k}$.

For any $x \in E_{\eta,\lambda, k}$, there is a ball $B_x \ni x$ such that
  $$
  \lambda^m<\left(\prod_{j=1}^m\left\|f_j\right\|_{\Phi_j, B_x}\right) \mu(B_x)^{\eta}.
  $$
 Therefore, we have
$E_{\eta,\lambda, k} \subseteq \bigcup\limits_{x \in E_{\eta,\lambda, k}} B_x.$
Applying the Vitali-Wiener type covering lemma as presented in \cite[Theorem 3.1]{CW1977}, there exists a sequence \( \{B_j\}_{j \in \mathbb{N}} \subseteq \{B_x\}_{x \in E_{\eta,\lambda, k}} \) such that

  $$
  E_{\eta,\lambda, k} \subseteq \bigcup_{j \in \mathbb{N}} 4 B_j \quad \text { and } \quad \lambda^m<\left(\prod_{j=1}^m\left\|f_j\right\|_{\Phi_j, B_j}\right)\mu(B_j)^{\eta}.
  $$
  It is follows from Lemma \ref{Cao18:2.12} that
\begin{align}\label{laoda}
  1 \leq \prod_{j=1}^m\left\|\frac{f_j}{\lambda}\mu(B_j)^{\frac{\eta}{m}}\right\|_{\Phi_j, B_j} \lesssim \prod_{j=1}^m \dashint_{B_j} \Phi_1 \circ \cdots \circ \Phi_m\left(\frac{\left|f_j(x)\right|}{\lambda}\mu(B_j)^{\frac{\eta}{m}}\right) d \mu.
\end{align}
Then, we obtain
  \begin{align*}
    \omega\left(E_{\eta,\lambda, k}\right) &\lesssim \sum_{i} \omega(B_i)  \leq [\vec \omega]_{A^{\star}_{\vec{1},q_0}} \sum_i\mu(B_i)^{q_0(m-\an )} \prod_{j=1}^m (\inf _{x \in B_i} \omega_j(x))^{q_0}\\
    & \overset{\text{(\ref{laoda})}}{\leq} [\vec \omega]_{A^{\star}_{\vec{1},q_0}} \sum_i \mu(B_i)^{q_0(m-\an )} \prod_{j=1}^m\left(\dashint_{B_i} \Phi_1 \circ \cdots \circ \Phi_m\left(\frac{\left|f_j(x)\right|}{\lambda}\mu(B_i)^{\frac{\eta}{m}}\right) \omega_j(x) d \mu\right)^{q_0}\\
    & = [\vec \omega]_{A^{\star}_{\vec{1},q_0}} \sum_i \mu(B_i)^{- \an  \cdot q_0} \prod_{j=1}^m\left(\int_{B_i} \Phi_1 \circ \cdots \circ \Phi_m\left(\frac{\left|f_j(x)\right|}{\lambda}\mu(B_i)^{\frac{\eta}{m}}\right) \omega_j(x) d \mu\right)^{q_0}\\
    & = [\vec \omega]_{A^{\star}_{\vec{1},q_0}} \sum_i \prod_{j=1}^m\left(\int_{B_i} \Phi_1 \circ \cdots \circ \Phi_m\left(\frac{\left|f_j(x)\right|}{\lambda}\right) \omega_j(x) d \mu\right)^{q_0}\\
    & \leq [\vec \omega]_{A^{\star}_{\vec{1},q_0}} \prod_{j=1}^m \left(\int_X \Phi_1 \circ \cdots \circ \Phi_m\left(\frac{\left|f_j(x)\right|}{\lambda}\right) \omega_j(x) d \mu\right)^{q_0}.  \end{align*}
This completes the proof.
\end{proof}

A simple calculation gives that for $0<\delta \leq 1$,
\begin{align}\label{Cao:3.4}
  M_{\mathcal{D}, \delta}^{\sharp}\left(\mathcal{A}_{\eta,\mathcal{S}, L(\log L)^r}^{\tau^{c}}(\vec{f})\right)(x) \lesssim \mathcal{M}_{\eta,L(\log L)^r}^{\tau^{c}}(\vec{f}^r)(x)^{1 / r}.
\end{align}
Gathering Lemma \ref{Caolemma2.11}, this implies
\begin{align}\label{Cao2018:3.5}
  \omega\left(\left\{x \in X \;\big|\; \mathcal{A}_{\eta,\mathcal{S}, L(\log L)^r}^{\tau^{c}}(\vec{f})(x)>\lambda^m\right\}\right) \leq [\vec \omega]_{A^{\star}_{\vec{1},q_0}} \prod_{i=1}^m\left(\int_X \Phi_r^{\left(\left|\tau^{c}\right|\right)}\left(\frac{\left|f_i(x)\right|^r}{\lambda^r}\right) \omega_i d \mu\right)^{q_0},
\end{align}
where $\Phi_r(t)=t\left(1+\log ^{+} t\right)^r$ and $\Phi_r^{(j)}=\overbrace{\Phi_r \circ \cdots \circ \Phi_r}^j$.

Hence, the inequality \eqref{eq:LlogL_1} follows from \eqref{Sparse.to.g}.
\begin{proof}[\bf Proof of \eqref{eq:LlogL_2}]
~~

We begin with stating that
\begin{align}\label{Cao18:3.6}
  \omega\left(\left\{x \in X \;\big|\; {\mathcal A}_{\eta ,\S,\tau,r}^\mathbf{b}(\vec{f})(x)  > \lambda^m \right\}\right) \lesssim_{\bf b} \sum_{\tau \subseteq \tau_{\ell}} \prod_{i=1}^m \left( \int_{X} \Phi_r^{(\ell - |\tau|)} \left( \frac{|f_i(x)|^r}{\lambda^r} \right) \omega_i \, d\mu \right)^{q_0},
\end{align}
where $\Phi_r(t) = t \left( 1 + \log^{+} t \right)^r$ and $\Phi_r^{(j)}$ denotes the $j$-fold composition of $\Phi_r$.


To verify this, consider that
$$
\Phi_r \circ \Phi_r(t) = t \left( 1 + \log^{+} t \right)^r \left( 1 + \log^{+} \left( t \left( 1 + \log^{+} t \right)^r \right) \right)^r.
$$
Therefore, we have
$$
\begin{aligned}
  t \left( 1 + \log^{+} t \right)^{2r} &\leq \Phi_r \circ \Phi_r(t) \leq t \left( 1 + \log^{+} t \right)^r \left( 1 + \log^{+} t^{r + 1} \right)^r \\
  &\leq (r + 1)^r t \left( 1 + \log^{+} t \right)^r \left( 1 + \log^{+} t \right)^r \\
  &= (r + 1)^r t \left( 1 + \log^{+} t \right)^{2r}.
\end{aligned}
$$
Inductively, we have
$$
t \left( 1 + \log^{+} t \right)^{j r} \leq \Phi_r^{(j)}(t) \leq C_{j, r} t \left( 1 + \log^{+} t \right)^{j r},
$$
which implies $\Phi_r^{(j)} \equiv \Phi_{j r}$. Moreover,
$$
\begin{aligned}
  & t^r \left( 1 + \log^{+} t \right)^{j r} \leq \Phi_{j r}(t^r) \\
  & = t^r \left( 1 + \log^{+} t^r \right)^{j r} = t^r \left( 1 + r \log^{+} t \right)^{j r} \leq r^{j r} t^r \left( 1 + \log^{+} t \right)^{j r}.
\end{aligned}
$$
Thus, for $\tau \subseteq \tau_{\ell}$, we obtain
$$
\Phi_r^{(\ell - |\tau|)}(t^r) \leq \Phi_{r \ell}(t^r) \leq C_{r, \ell} t^r \left( 1 + \log^{+} t \right)^{r \ell} = C_{r, \ell} \Phi_{r, \ell}(t).
$$

Thus, we need to demonstrate \eqref{Cao18:3.6}. We will only provide the proof for the case where $\ell=m$. 
Note that ${\mathcal A}_{\eta ,\S,\tau,r}^\mathbf{b}(\vec{f})(x) \lesssim \Delta_{\eta,\mathcal{S}, \tau}^{b, \tau}(\vec{f})(x)$, where
\begin{align*}
  \varDelta_{\eta,\mathcal{S}, \tau}^{b, \sigma}(\vec{f})(x): & =\sum_{Q \in \mathcal{S}} \mu(Q)^{\an \cdot \frac{1}{r}}\prod_{i \in \sigma}\left|b_i(x)-b_{i, Q}\right| \\ & \times \prod_{i \in \tau}\left\langle f_i\right\rangle_{Q, r} \times \prod_{j \in \tau^{c}}\left\|b_j\right\|_{\mathrm{BMO}}\left\|f_j^r\right\|_{L(\log L)^r, Q}^{1 / r}.
\end{align*}

Accordingly, we derive \eqref{Cao18:3.6} from Theorem \ref{Cao2018:th3.3}, \eqref{Cao:3.4}, and Lemmas \ref{Caolemma2.7} and \ref{Caolemma2.11}, following the proof scheme in \cite[Theorem 3.16]{Preze2009}.

\end{proof}

\begin{theorem}\label{Cao2018:th3.3}
Let $1 \leq r<\infty$, $0<\delta<\min \left\{\epsilon, \frac{1}{m}\right\}$, and $\eta \in[0,m)$. Then there holds that
  \begin{align*} 
    M_{\mathcal{D}, \delta}^{\sharp}\big(\varDelta_{\eta,\mathcal{S}, \tau}^{b, \tau}(\vec{f})\big)(x) \lesssim & \prod_{i=1}^m\|b_i\|_{\mathrm{BMO}}\big(\mathcal{M}_{\eta,L(\log L)^r}^{\tau^{c}}(\vec{f}^r)(x)^{1 / r}+M_\epsilon\big(\mathcal{A}_{\eta,\mathcal{S}, L(\log L)^r}^{\tau^{c}}(\vec{f})\big)(x)\big) \\ & +\sum_{\sigma \subsetneq \tau} \prod_{i \in \sigma}\|b_i\|_{\mathrm{BMO}} M_\epsilon\big(\varDelta_{\eta,\mathcal{S}, \tau}^{b, \sigma^{\prime}}(\vec{f})\big)(x).
    \end{align*}    
\end{theorem}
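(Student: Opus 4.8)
\textbf{Proof proposal for Theorem \ref{Cao2018:th3.3}.}

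The plan is to follow the classical Fefferman--Stein sharp function approach adapted to the fractional multilinear setting, as in \cite[Theorem 3.16]{Preze2009} and \cite[Theorem 3.3]{Cao2018}. Fix a dyadic cube $Q \in \mathscr{D}$ and a point $x \in Q$. Since $0<\delta < \frac1m \le 1$, one has the elementary inequality $\big| |a|^\delta - |b|^\delta \big| \le |a-b|^\delta$, so it suffices to produce, for each $Q$, a constant $c_Q$ such that
\[
\Big(\frac{1}{\mu(Q)}\int_Q \big| \varDelta_{\eta,\mathcal{S},\tau}^{b,\tau}(\vec f)(z) - c_Q\big|^\delta \,d\mu(z)\Big)^{1/\delta}
\]
is bounded by the right-hand side of the asserted estimate evaluated at $x$. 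The natural splitting is $\vec f = \vec f\,\chi_{\bqo} + \vec f\,\chi_{X\setminus \bqo}$ (more precisely, writing $f_i = f_i^0 + f_i^\infty$ with $f_i^0 = f_i \chi_{C Q}$ for a fixed dilate $CQ$), and then expanding $\varDelta_{\eta,\mathcal{S},\tau}^{b,\tau}$ into $2^{|\tau|}$ pieces by replacing each factor $|b_i(z) - b_{i,R}|$, for $R \supseteq Q$, by $|b_i(z) - b_{i,Q}| + |b_{i,Q} - b_{i,R}|$ and multiplying out. This produces a ``local--local'' term, a ``good'' term carrying $|b_i(z)-b_{i,Q}|$ factors (which must be absorbed via John--Nirenberg, i.e. \eqref{BMO.basic_2}--\eqref{BMO.basic_3}), ``tail'' terms where at least one $f_i$ is replaced by $f_i^\infty$, and the terms $\prod_{i\in\sigma}|b_{i,Q}-b_{i,R}|$ with $\sigma\subsetneq\tau$ that regroup into $\varDelta_{\eta,\mathcal{S},\tau}^{b,\sigma'}$.

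First I would handle the tail/difference terms: for cubes $R \in \mathcal{S}$ with $R \supsetneq Q$, the contribution is essentially constant on $Q$ up to an error governed by $M_\epsilon$ of the full operator, using the $\delta$-sparseness, the doubling property \eqref{Cmu0}, and the standard trick that $\sum_{R \supseteq Q}$ telescopes against the geometric growth of $\mu(R)^{\eta/r}$; choosing $c_Q$ to be the value of these terms at (a neighbour of) $x$ yields the $M_\epsilon\big(\varDelta^{b,\sigma'}_{\eta,\mathcal S,\tau}(\vec f)\big)$ and $M_\epsilon\big(\mathcal{A}^{\tau^c}_{\eta,\mathcal{S},L(\log L)^r}(\vec f)\big)$ pieces. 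Next, for the purely local term (all $f_i$ replaced by $f_i^0$, all BMO factors recentred at $b_{i,Q}$), I would bound its $L^\delta$ average over $Q$ by Jensen and then by the generalized Hölder inequality in Orlicz spaces \eqref{BMO.basic_3}--\eqref{BMO.basic_4}: the factors $|b_i - b_{i,Q}|$, $i \in \tau$, go into $\exp L$ spaces contributing $\|b_i\|_{\mathrm{BMO}}$, the factors $\langle f_i\rangle_{Q,r}$ and $\|f_j^r\|_{L(\log L)^r,Q}^{1/r}$ for $j \in \tau^c$ are exactly what defines $\mathcal{M}^{\tau^c}_{\eta,L(\log L)^r}(\vec f^r)^{1/r}$, and the overall factor $\mu(Q)^{\eta/r}$ matches the fractional scaling. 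Here one must be careful that for $\ell = m$ the set $\tau^c$ may be empty, in which case the $L(\log L)^r$ maximal function degenerates to the ordinary one; the argument still runs.

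The main obstacle I anticipate is the bookkeeping of the intermediate terms $\varDelta_{\eta,\mathcal S,\tau}^{b,\sigma'}(\vec f)$ for $\sigma \subsetneq \tau$: one has to verify that after expanding $\prod_{i\in\tau}(|b_i(z)-b_{i,Q}| + |b_{i,Q}-b_{i,R}|)$, the cross terms genuinely reassemble into operators of the same form with a smaller index set $\sigma' = \tau\setminus\sigma$ (so the recursion in $|\sigma|$ closes), and that the BMO norms $\prod_{i\in\sigma}\|b_i\|_{\mathrm{BMO}}$ come out with the correct powers — this requires controlling $|b_{i,Q} - b_{i,R}| \lesssim \|b_i\|_{\mathrm{BMO}}\log\!\big(\mu(R)/\mu(Q)\big)$ and then checking that the logarithmic loss is absorbed by the geometric decay coming from sparseness and the fractional exponent, uniformly in the chain $Q \subseteq R$. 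The fractional factor $\mu(Q)^{\eta/r}$ is actually helpful here since $\eta \ge 0$, but when $\eta = 0$ one falls back on the $\delta$-sparse packing estimate exactly as in \cite{Cao2018}, so no genuinely new difficulty arises beyond adapting each covering/summation step to the space of homogeneous type using the dyadic structure of Section \ref{Pre.} and the doubling bound \eqref{Cmu0}.
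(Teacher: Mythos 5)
Your overall strategy — Fefferman--Stein sharp maximal, choose a constant $c_{Q'}$, recenter the $b_i$'s at $b_{i,Q'}$ and expand, use \eqref{BMO.basic_2}--\eqref{BMO.basic_4} to absorb BMO norms — is the right one and matches the paper. However there are two substantive gaps.

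First, the splitting $\vec f = \vec f\chi_{CQ} + \vec f\chi_{X\setminus CQ}$ is the wrong starting move here. That decomposition is appropriate when bounding $M^\sharp(Tf)$ for a genuine singular integral $T$, because one needs to localize the kernel. But $\varDelta_{\eta,\mathcal{S},\tau}^{b,\tau}$ is already a sparse operator; its only $z$-dependence is through the indicators $\chi_Q(z)$ and the factors $|b_i(z)-b_{i,Q}|$. For $z\in Q'$ and $Q\in\mathcal{S}$ with $z\in Q$, the dyadic structure forces $Q\subseteq Q'$ or $Q\supsetneq Q'$; the paper splits the sum accordingly, and $f_i$ is never decomposed. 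The correct constant is
\[
c_{Q'}=\sum_{\substack{Q\in\mathcal{S}\\Q\supseteq Q'}}\mu(Q)^{\eta/r}\prod_{i\in\tau}\big|b_{i,Q'}-b_{i,Q}\big|\langle f_i\rangle_{Q,r}\prod_{j\in\tau^c}\|b_j\|_{\mathrm{BMO}}\big\|f_j^r\big\|_{L(\log L)^r,Q}^{1/r},
\]
which is a genuine constant on $Q'$, not (as you suggest) the operator evaluated at a neighbouring point. Your ``tail terms where at least one $f_i$ is replaced by $f_i^\infty$'' therefore never arise, and carrying them along would only obscure the bookkeeping.

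Second, your proposed handling of the cross terms through $|b_{i,Q}-b_{i,R}|\lesssim\|b_i\|_{\mathrm{BMO}}\log\big(\mu(R)/\mu(Q)\big)$ followed by ``geometric decay from sparseness and the fractional exponent'' would not close. When $\eta=0$ there is no fractional decay at all, and the chain $Q\subsetneq R\subseteq Q'$ can have arbitrary length, so a direct $\log$-versus-geometry comparison is not available. The paper avoids this entirely: after the first recentering, each surviving difference $|b_{i,Q'}-b_{i,Q}|$ (with $Q\subsetneq Q'$) is expanded pointwise once more as $|b_{i,Q'}-b_{i,Q}|\le|b_i(z)-b_{i,Q'}|+|b_i(z)-b_{i,Q}|$, and the resulting pieces are pushed, via H\"older with exponents $s_1,s_2,s_3$ (respectively $t_1,t_2,t_3$) chosen so that $\delta s_1'<\epsilon$, $\delta s_3<\epsilon$, into $M_\epsilon(\varDelta^{b,\sigma'}_{\eta,\mathcal{S},\tau}(\vec f))$, $M_\epsilon(\mathcal{A}^{\tau^c}_{\eta,\mathcal{S},L(\log L)^r}(\vec f))$, or $\mathcal{M}^{\tau^c}_{\eta,L(\log L)^r}(\vec f^r)^{1/r}$. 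No $\log$ appears at any point, and the recursion in $|\sigma|$ closes automatically. You should also note that the local piece (your $\mathcal{N}_4$, i.e.\ the sum over $Q\subsetneq Q'$) requires a genuine Kolmogorov-type packing estimate in $L^{\delta}$ for the sparse sum — this is \eqref{claim:endingBound_1} in the paper, proved via the packing bound \eqref{N41_3} and Lemma \ref{Cao2018:2.10} — not just ``Jensen plus Orlicz H\"older'' as you suggest; that step is where the restriction $0<\delta<1/m$ and the $\epsilon$ ranges are actually used.
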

\begin{proof}
  To simplify notation and illustrate the iterative process, we focus on the case where $m=3$ and $\tau=\{1,2\}$. Without loss of generality, we assume that $\|b_j\|_{\mathrm{BMO}} = 1$ for all $j \in \tau^c$.

  For a fixed cube $Q^{\prime} \in \mathcal{D}$ containing $x$, we denote
  \begin{align*}
    c=\sum_{\substack{Q \in \mathcal{S} \\ Q \supseteq Q^{\prime}}} \mu(Q)^{\an \cdot \frac{1}{r}} \prod_{i \in \tau}\left|b_{i, Q^{\prime}}-b_{i, Q}\right|\left\langle f_i\right\rangle_{Q, r} \times \prod_{j \in \tau^{c}}\left\|b_j\right\|_{\mathrm{BMO}}\left\|f_j^r\right\|_{L(\log L)^r, Q}^{1 / r} .
  \end{align*}
Then by inserting $b_{i, Q^{\prime}}$, we obtain
\begin{align*}
  \left(\dashint_{Q^{\prime}}\left|| {\mathcal A}_{\eta ,\S,\tau}^\mathbf{b}(\vec{f})(x)|^\delta-c^\delta \right| d \mu\right)^{1 / \delta} 
  & \leq\left(\dashint_{Q^{\prime}}\left|\varDelta_{\eta,\mathcal{S}, \tau}^{b, \tau}(\vec{f})(x)-c\right|^\delta d \mu\right)^{1 / \delta} \\ & \leq \mathcal{N}_1+\mathcal{N}_2+\mathcal{N}_3+\mathcal{N}_4.\\
\end{align*}
where
\begin{align*}
  \mathcal{N}_1&=\Big(\dashint_ { Q ^ { \prime } } \big(\prod_{i=1}^2 \big| b_i(x)\big.-b_{i, Q^{\prime}} \big| \big.\\
  &\quad\quad\quad\quad\big. \times \big.\sum_{Q \in \mathcal{S}}\mu(Q)^{\an \cdot \frac{1}{r}}\left\langle f_1\right\rangle_{Q, r}\left\langle f_2\right\rangle_{Q, r}\big\|f_3^r\big\|_{L(\log L)^r, Q}^{1 / r} \chi_Q(x)\big)^\delta d \mu\Big)^{{1}/{\delta}},\\
  \mathcal{N}_2 &= \Big( \dashint_{Q'} \big( \big|b_1(x) - b_{1, Q'}\big| \big. \big.\\
  &\quad\quad\quad\quad\big.\times \big.\sum_{Q \in \mathcal{S}} \mu(Q)^{\an \cdot \frac{1}{r}} \big|b_{2, Q'} 
- b_{2,Q}\big| \langle f_1 \rangle_{Q,r} \langle f_2 \rangle_{Q,r} \big\| f_3 ^r\big\|_{L(\log L)^r, Q}^{1/r} 
\chi_{Q}(x) \big)^\delta \, d \mu \Big)^{1/\delta},\\
\mathcal{N}_3 & = \Big( \dashint_{Q'} \big( \big|b_2(x) - b_{2, Q'}\big| \big. \big.\\
&\quad\quad\quad\quad\big.\big.
\times \sum_{Q \in \mathcal{S}} \mu(Q)^{\an \cdot \frac{1}{r}} \big|b_{1, Q'} 
- b_{1,Q}\big| \langle f_1 \rangle_{Q,r} \langle f_2 \rangle_{Q,r} \big\| f_3^r \big\|_{L(\log L)^r, Q}^{1/r} 
\chi_{Q}(x) \big)^\delta \, d \mu \Big)^{1/\delta},\\
\mathcal{N}_4 & = \Big( \dashint_{Q'} \big| \sum_{Q \in \mathcal{S}} \mu(Q)^{\an \cdot \frac{1}{r}} \prod_{i=1}^{2} \big|b_{i,Q'} 
- b_{i,Q} \big|\langle f_i \rangle_{Q,r} \big\| f_3^r \big\|_{L(\log L)^r, Q}^{1/r} 
\chi_{Q}(x) - c \big|^\delta \, d \mu \Big)^{1/\delta}.\\
\end{align*}

We will examine them individually. Let \(s_1, s_2, s_3 \in (1, \infty)\) be chosen such that \(\delta s_3 < \epsilon\), \(\delta s_1^{\prime} < \epsilon\), and \(\frac{1}{s_1} + \frac{1}{s_2} + \frac{1}{s_3} = 1\). By applying Hölder's inequality and 
\eqref{BMO.basic_2}, we obtain
\begin{align*}
  \mathcal{N}_2 \leq & \Big(\dashint_{Q^{\prime}}\big|b_1(x)-b_{1, Q^{\prime}}\big|^{\delta s_1}\Big)^{\frac{1}{\delta s_1}} \\ 
  & \times\Big(\dashint_{Q^{\prime}}\big(\sum_{Q \in \mathcal{S}}\mu(Q)^{\an \cdot \frac{1}{r}}\big|b_{2, Q^{\prime}}-b_{2, Q}\big|\left\langle f_1\right\rangle_{Q, r}\left\langle f_2\right\rangle_{Q, r}\big\|f_3^r\big\|_{L(\log L)^r, Q^{\prime}}^{1 / r} \chi_Q(x)\big)^{\delta s_1^{\prime}} d \mu\Big)^{\frac{1}{\delta s_1^{\prime}}} \\ 
  \lesssim & \big\|b_1\big\|_{\mathrm{BMO}}\Big(\dashint_{Q^{\prime}}\big(\sum_{Q \in \mathcal{S}}\mu(Q)^{\an \cdot \frac{1}{r}}\big|b_2(x)-b_{2, Q}\big|\left\langle f_1\right\rangle_{Q, r}\left\langle f_2\right\rangle_{Q, r}\big\|f_3^r\big\|_{L(\log L)^r, Q^{\prime}}^{1 / r} \chi_Q(x)\big)^{\delta s_1^{\prime}} d \mu\Big)^{\frac{1}{\delta s_1^{\prime}}} \\ 
  & +\big\|b_1\big\|_{\mathrm{BMO}}\Big(\dashint_{Q^{\prime}}\big(\big|b_2(x)-b_{2, Q^{\prime}}\big| \sum_{Q \in \mathcal{S}}\mu(Q)^{\an \cdot \frac{1}{r}}\left\langle f_1\right\rangle_{Q, r}\left\langle f_2\right\rangle_{Q, r}\big\|f_3^r\big\|_{L(\log L)^r, Q}^{1 / r} \chi_Q(x)\big)^{\delta s_1^{\prime}} d \mu\Big)^{\frac{1}{\delta s_1^{\prime}}}\\
  & \lesssim\big\|b_1\big\|_{\mathrm{BMO}} M_{\delta s_1^{\prime}}\big(\varDelta_{\eta,\mathcal{S}, \tau}^{b,\{2\}}(\vec{f})\big)(x)+\big\|b_1\big\|_{\mathrm{BMO}}\Big(\dashint_{Q^{\prime}}\big|b_2(x)-b_{2, Q^{\prime}}\big|^{\delta s_2} d \mu\Big)^{\frac{1}{\delta s_2}} \\ 
  & \times\Big(\dashint_{Q^{\prime}}\big(\sum_{Q \in \mathcal{S}}\mu(Q)^{\an \cdot \frac{1}{r}}\left\langle f_1\right\rangle_{Q, r}\left\langle f_2\right\rangle_{Q, r}\big\|f_3^r\big\|_{L(\log L)^r, Q}^{1 / r} \chi_Q(x)\big)^{\delta s_3} d \mu\Big)^{\frac{1}{\delta s_3}} \\ & \lesssim\big\|b_1\big\|_{\mathrm{BMO}} M_{\delta s_1^{\prime}}\big(\varDelta_{\eta,\mathcal{S}, \tau}^{b,\{2\}}(\vec{f})\big)(x)+\prod_{i=1}^2\|b_i\|_{\mathrm{BMO}} M_{\delta s_3}\big(\mathcal{A}_{\eta,\mathcal{S}, L(\log L)^r}^{\{3\}}(\vec{f})\big)(x) \\ 
  & \leq\big\|b_1\big\|_{\mathrm{BMO}} M_\epsilon\big(\varDelta_{\eta,\mathcal{S}, \tau}^{b,\{2\}}(\vec{f})\big)(x)+\prod_{i=1}^2\|b_i\|_{\mathrm{BMO}} M_\epsilon\big(\mathcal{A}_{\eta,\mathcal{S}, L(\log L)^r}^{\{3\}}(\vec{f})\big)(x).
  \end{align*}  
Analogously, one can get that
\begin{align*}
  \mathcal{N}_3 \lesssim\left\|b_2\right\|_{\mathrm{BMO}} M_\epsilon\left(\varDelta_{\eta,\mathcal{S}, \tau}^{b,\{1\}}(\vec{f})\right)(x)+\prod_{i=1}^2\left\|b_i\right\|_{\mathrm{BMO}} M_\epsilon\left(\mathcal{A}_{\eta,\mathcal{S}, L(\log L)^r}^{\{3\}} r(\vec{f})\right)(x).
\end{align*}

In addition, it follows directly from Hölder's inequality and \eqref{BMO.basic_2} that
$$
\mathcal{N}_1 \lesssim \prod_{i=1}^2\left\|b_i\right\|_{\mathrm{BMO}} M_\epsilon\left(\mathcal{A}_{\eta,\mathcal{S}, L(\log L)^r}^{\tau^{c}}(\vec{f})\right)(x).
$$

It remains to analyze the last term. We dominate
\begin{align*}
\mathcal{N}_4 & =\left(\dashint_{Q^{\prime}}\left(\sum_{Q \subseteq Q^{\prime}} \mu(Q)^{\eta \cdot \frac{1}{r}} \prod_{i=1}^2\left|b_{i, Q^{\prime}}-b_{i, Q}\right|\left\langle f_i\right\rangle_{Q, r}\left\|f_3^r\right\|_{L(\log L)^r, Q}^{1 / r} \chi_Q(x)\right)^\delta d \mu\right)^{\frac{1}{\delta}} \\
& \leq \mathcal{N}_{41}+\mathcal{N}_{42}+\mathcal{N}_{43}+\mathcal{N}_{44}.
\end{align*}
where
\begin{align*}
  \mathcal{N}_{41}&=\Big(\dashint_ { Q ^ { \prime } } \big(\prod_{i=1}^2 \big| b_i(x)\big.-b_{i, Q^{\prime}} \big| \big.\\
  &\quad\quad\quad\quad\big. \times \big.\sum_{Q \subseteq Q'}\mu(Q)^{\an \cdot \frac{1}{r}}\left\langle f_1\right\rangle_{Q, r}\left\langle f_2\right\rangle_{Q, r}\big\|f_3^r\big\|_{L(\log L)^r, Q}^{1 / r} \chi_Q(x)\big)^\delta d \mu\Big)^{{1}/{\delta}},\\
  \mathcal{N}_{42} &= \Big( \dashint_{Q'} \big( \big|b_1(x) - b_{1, Q'}\big| \big. \big.\\
  &\quad\quad\quad\quad\big.\times \big.\sum_{Q \subseteq Q'} \mu(Q)^{\an \cdot \frac{1}{r}} \big|b_{2, Q'} 
- b_{2,Q}\big| \langle f_1 \rangle_{Q,r} \langle f_2 \rangle_{Q,r} \big\| f_3 ^r\big\|_{L(\log L)^r, Q}^{1/r} 
\chi_{Q}(x) \big)^\delta \, d \mu \Big)^{1/\delta},\\
\mathcal{N}_{43} & = \Big( \dashint_{Q'} \big( \big|b_2(x) - b_{2, Q'}\big| \big. \big.\\
&\quad\quad\quad\quad \times
\sum_{Q \subseteq Q'} \mu(Q)^{\an \cdot \frac{1}{r}} \big|b_{1, Q'} 
- b_{1,Q}\big| \langle f_1 \rangle_{Q,r} \langle f_2 \rangle_{Q,r} \big\| f_3^r \big\|_{L(\log L)^r, Q}^{1/r} 
\chi_{Q}(x) \big)^\delta \, d \mu \Big)^{1/\delta},\\
\mathcal{N}_{44} & = \Big( \dashint_{Q'} \big( \sum_{Q \subseteq Q'} \mu(Q)^{\an \cdot \frac{1}{r}} \prod_{i=1}^{2} \big|b_{i,Q'} 
- b_{i,Q} \big|\langle f_i \rangle_{Q,r} \big\| f_3^r \big\|_{L(\log L)^r, Q}^{1/r} 
\chi_{Q}(x) \big)^\delta \, d \mu \Big)^{1/\delta}.\\
\end{align*}

We start by controlling $\mathcal{N}_{41}$. Let $\delta_i \in (0, \infty)$ satisfy $\frac{1}{\delta} = \frac{1}{\delta_1} + \frac{1}{\delta_2} + \frac{1}{\delta_3}$. By Hölder's inequality, we obtain
\begin{align}\notag
	\mathcal{N}_{41} \leq & \Big(\dashint_{Q^{\prime}}\big|b_1(x)-b_{1, Q^{\prime}}\big|^{\delta_1}\big)^{\frac{1}{\delta_1}} \big(\dashint_{Q^{\prime}}\big|b_2(x)-b_{2, Q^{\prime}}\big|^{\delta_2}\Big)^{\frac{1}{\delta_2}} \\ \label{N41_1}
 & \times \Big(\dashint_{Q^{\prime}}\big(\sum_{Q \subseteq Q^{\prime}} \mu(Q)^{\an \cdot \frac{1}{r}}  \left\langle f_1\right\rangle_{Q, r}\left\langle f_2\right\rangle_{Q, r}\left\|f_3^r\right\|_{L(\log )^r, Q^{1 / r}} \chi_Q(x)\Big)^{\delta_3} d \mu\Big)^{\frac{1}{\delta_3}}.
\end{align}
To proceed, we claim that 
\begin{align}\notag
  &\quad\Big(\dashint_{Q^{\prime}}\big(\sum_{Q \subseteq Q^{\prime}} \mu(Q)^{\an \cdot \frac{1}{r}}  \left\langle f_1\right\rangle_{Q, r}\left\langle f_2\right\rangle_{Q, r}\left\|f_3^r\right\|_{L(\log )^r, Q^{1 / r}} \chi_Q(x)\big)^{\delta_3} d \mu\Big)^{\frac{1}{\delta_3}}\\ \label{claim:endingBound_1}
  &\lesssim \mu(Q')^{\an \cdot \frac{1}{r}} \prod_{i=1}^{2} \big\langle f_i\big\rangle_{Q^{\prime}, r} \big\|f_3^r\big\|_{L(\log L)^r, Q^{\prime}}^{1 / r}.
\end{align}

Gathering above estimates, we obtain
\begin{align*}
\mathcal{N}_{41} 
& \lesssim \prod_{i=1}^2\left\|b_i\right\|_{\mathrm{BMO}} \times \mu(Q')^{\an \cdot \frac{1}{r}} \prod_{i=1}^{2} \left\langle f_i\right\rangle_{Q^{\prime}, r} \left\|f_3^r\right\|_{L(\log L)^r, Q^{\prime}}^{1 / r} \\
& \leq \prod_{i=1}^2\left\|b_i\right\|_{\mathrm{BMO}} \mathcal{M}_{\eta,L(\log L)^r}^{\tau^{c}}(\vec{f})(x).
\end{align*}

Similarly, combining Hölder's inequality, \eqref{BMO.basic_2} and Lemma \ref{Cao2018:2.10}, we deduce that for any $t_i \in\left(0, \delta^{-1}\right)$ with $1=\frac{1}{t_1}+\frac{1}{t_2}+\frac{1}{t_3}$ and \eqref{N41_3}
\begin{align*}
  \mathcal{N}_{42} \lesssim & \big\|b_1\big\|_{\mathrm{BMO}}\Big(\dashint_{Q^{\prime}}\big(\sum_{Q \subseteq Q^{\prime}} \big\langle f_1\big\rangle_{Q, r} \chi_Q(x)\big)^{\delta t_1} d \mu\Big)^{\frac{1}{\delta t_1}} \times \mu(Q')^{\an \cdot \frac{1}{r}} \\
  & \times\Big(\dashint_{Q^{\prime}}\big(\sum_{Q \subseteq Q^{\prime}} \big|b_2(x)-b_{2, Q}\big|\big\langle f_2\big\rangle_{Q, r} \chi_Q(x)\big)^{\delta t_2} d \mu\Big)^{\frac{1}{\delta t_2}} \\
  & \times\Big(\dashint_{Q^{\prime}}\big(\sum_{Q \subseteq Q^{\prime}}\big\|f_3^r\big\|_{L(\log L)^r, Q} \chi_Q(x)\big)^{\delta t_3} d \mu\Big)^{\frac{1}{\delta t_3}} \\
  \lesssim & \prod_{i=1}^2\|b_i\|_{\mathrm{BMO}}\big\langle f_i\big\rangle_{Q^{\prime}, r} \times\big\|f_3^r\big\|_{L(\log L)^r, Q^{\prime}}^{1 / r} \times \mu(Q)^{\an \cdot \frac{1}{r}} \\
  \leq & \prod_{i=1}^2\|b_i\|_{\mathrm{BMO}} \mathcal{M}_{\eta,L(\log L)^r}^{\tau^{c}}(\vec{f})(x).
  \end{align*}

Since $\mathcal{N}_{43}$ is symmetric to $\mathcal{N}_{42}$, it yields that
\begin{align*}
	\mathcal{N}_{43} \lesssim \prod_{i=1}^2\left\|b_i\right\|_{\mathrm{BMO}} \mathcal{M}_{\eta,L(\log L)^r}^{\tau^{c}}(\vec{f})(x).
\end{align*}
Finally, applying the techniques in the estimates of $\mathcal{N}_{42}$ again, together with \eqref{N41_3}, we obtain
\begin{align*}
  \mathcal{N}_{44} \leq & \prod_{i=1}^2\Big(\dashint_{Q^{\prime}}\big(\sum_{Q \subsetneq Q^{\prime}}\big|b_i(x)-b_{i, Q^{\prime}}\big|\big\langle f_i\big\rangle_{Q, r} \chi_Q(x)\big)^{\delta t_i}\Big)^{\frac{1}{\delta t_i}} \\
  & \times\Big(\dashint_{Q^{\prime}}\big(\sum_{Q \subsetneq Q^{\prime}}\big\|f_3^r\big\|_{L(\log L)^r, Q^{\prime}}^{1 / r} \chi_Q(x)\big)^{\delta t_3} d \mu\Big)^{\frac{1}{\delta t_3}} \\
  \lesssim & \prod_{i=1}^2\|b_i\|_{\mathrm{BMO}} \mathcal{M}_{\eta,L(\log L)^r}^{\tau^{c}}(\vec{f})(x).
  \end{align*}  
This completes the proof.

It remains to justify \eqref{claim:endingBound_1}.
Firstly, fix cube $Q^{\prime} \in \mathcal{D}$ containing $x$ and define
 \begin{align*}
   C_{Q'}=\sum_{\substack{Q \in \mathcal{S} \\ Q \supseteq Q^{\prime}}} \mu(Q)^{\an \cdot \frac{1}{r}} \prod_{i \in \tau}\left\langle f_i\right\rangle_{Q, r} \times \prod_{j \in \tau^{c}}\left\|f_j^r\right\|_{L(\log L)^r, Q}^{1 / r} \chi_{Q'}(x).
 \end{align*}
Since
\begin{align*}
   M^{\sharp}_{\mathcal{D}}(h)(x)&= \sup_{Q' \in \mathcal{D}}\inf_{C}\left(\frac{1}{\mu(Q')}\int_{Q'}\left||h(y)|^{\delta} - C_{Q'}^{\delta} \right|dy\right)^{\frac{1}{\delta}}\\
   & \leq \sup_{Q' \in \mathcal{D}}\inf_{C} \left(\frac{1}{\mu(Q')}\int_{Q'}\left||h(y)| - C_{Q'} \right|^{\delta}dy\right)^{\frac{1}{\delta}},
\end{align*}
 we set $h (y) = \mathcal{A}_{\eta,\mathcal{S}, L(\log L)^r}^{\tau^{c}}(\vec{f})(y)$ and deduce that 
\begin{align*}
   |h(y) - C_{Q'}| = \sum_{\substack{Q \in \mathcal{S} \\ Q' \supsetneq Q}} \mu(Q)^{\an \cdot \frac{1}{r}} \prod_{i \in \tau}\left\langle f_i\right\rangle_{Q, r} \times \prod_{j \in \tau^{c}}\left\|f_j^r\right\|_{L(\log L)^r(X), Q}^{1 / r} \chi_{Q}(x).
\end{align*}
Applying Hölder's inequality with exponents $\gamma_i \in(0,1)$ satisfying $\frac{1}{\delta_3}=\frac{1}{\gamma_1}+\frac{1}{\gamma_2}+\frac{1}{\gamma_3}$, we conclude that
\begin{align}\notag
  &\quad\bigg(\frac{1}{\mu(Q')}\int_{Q'} \big|h(y) - C_{Q'}\big|^{\delta_3} d \mu \bigg)^{\frac{1}{\delta_3}} \\ \notag
  & \leq  \bigg(\prod_{i=1}^2\bigg(\dashint_{Q^{\prime}}\bigg(\sum_{Q \subsetneq Q^{\prime}}\big\langle f_i\big\rangle_{Q, r} \chi_Q(x)\bigg)^{\gamma_i} d \mu\bigg)^{\frac{1}{\gamma_i}}\bigg) \times \mu(Q')^{\an \cdot \frac{1}{r}}\\ \label{N41_2}
  & \quad  \times \bigg(\dashint_{Q^{\prime}}\bigg(\sum_{Q \subsetneq Q^{\prime}}\big\|f_3^r\big\|_{L(\log )^r(X), Q}^{1 / r} \chi_Q(x)\bigg)^{\gamma_3} d \mu\bigg)^{\frac{1}{\gamma_3}}.
\end{align}
To estimate the first product in \eqref{N41_2}, for $i = 1,2$, we have
\begin{align}\notag
  \bigg( \dashint_{Q^{\prime}} \bigg( \sum_{Q \subsetneq Q^{\prime}} \big\langle f_i \big\rangle_{Q, r} \chi_Q(x) \bigg)^{\gamma_i} \, d \mu \bigg)^{\frac{1}{\gamma_i}} &\leq  \bigg( \sum_{Q \subsetneq Q'} \frac{\mu(Q)}{\mu(Q')} \big\langle f_i \big\rangle^{\gamma_i}_{Q, r}\bigg)^{\frac{1}{\gamma_i}}\\ \notag
&\lesssim \big\langle f_i \big\rangle_{Q, r} \frac{\mu(\bigcup_{Q \subsetneq Q'}E_Q)}{\mu(Q')} \\ \label{N41_3}
&\lesssim \big\langle f_i \big\rangle_{Q', r}.
\end{align}
Together with Lemma \ref{Cao2018:2.10}, it yields that
\begin{align*}
	\bigg(\dashint_{Q^{\prime}}\bigg(\sum_{Q \subsetneq Q^{\prime}}\big\|f_3^r\big\|_{L(\log )^r(X), Q}^{1 / r} \chi_Q(x)\bigg)^{\gamma_3} d \mu\bigg)^{\frac{1}{\gamma_3}} \lesssim \big\|f_3^r\big\|_{L(\log L)^r, Q^{\prime}}^{1 / r}.
\end{align*}
 We complete the proof of \eqref{claim:endingBound_1}.\qedhere

\end{proof}

\vspace{1cm}
\noindent{\bf Acknowledgements } 
The authors would like to thank the editors and reviewers for careful reading and valuable comments, which lead to the improvement of this paper. 

\medskip 

\noindent{\bf Data Availability} Our manuscript has no associated data.

\medskip 
\noindent{\bf\Large Declarations}
\medskip 

\noindent{\bf Conflict of interest} The authors state that there is no conflict of interest.

\end{document}